\documentclass{amsart}

\usepackage{geometry}
\usepackage{amssymb, esint, graphicx}
\usepackage{verbatim}
\usepackage[colorlinks=true,urlcolor=blue, citecolor=red,linkcolor=blue,linktocpage,pdfpagelabels, bookmarksnumbered,bookmarksopen]{hyperref}
\usepackage[hyperpageref]{backref}

\newtheorem{lemma}{Lemma}[section]
\newtheorem{theorem}[lemma]{Theorem}
\newtheorem{corollary}[lemma]{Corollary} 

\newtheorem{proposition}[lemma]{Proposition}
\newtheorem{openpb}{Open problem}
\newtheorem*{bwn}{BWN Conjecture}
\newtheorem*{bis}{Theorem \ref{thm:stability_lim} bis}

\theoremstyle{definition}
\newtheorem{definition}[lemma]{Definition}
\newtheorem{remark}[lemma]{Remark}
\newtheorem{example}[lemma]{Example}
\newtheorem*{ack}{Acknowledgements}

\numberwithin{equation}{section}

\author[Brasco]{Lorenzo Brasco}
\author[De Philippis]{Guido De Philippis}
\title{Spectral inequalities in quantitative form}

\address[L.\ Brasco]{Dipartimento di Matematica e Informatica
	\newline\indent
	Universit\`a degli Studi di Ferrara,
	Ferrara, Italy}
	\address{{\it and } Institut de Math\'ematiques de Marseille
	\newline\indent
	Aix-Marseille Universit\'e,
	Marseille, France}
\email{lorenzo.brasco@unife.it}

\address[G.\ De Philippis]{ SISSA, Via Bonomea 265, 34136 Trieste, Italy.}
\email{guido.dephilippis@sissa.it}


\begin{document}

\begin{abstract}
We review some results about quantitative improvements of sharp inequalities for eigenvalues of the Laplacian.
\end{abstract}

\maketitle

\begin{center}
\begin{minipage}{11cm}
\small
\tableofcontents
\end{minipage}
\end{center}

\section{Introduction}

\subsection{The problem}
Let $\Omega\subset\mathbb{R}^N$ be an open set. We consider the Laplacian operator $-\Delta$ on $\Omega$ under various boundary conditions. When the relevant spectrum happens to be discrete, it is an interesting issue to provide sharp geometric estimates on associated spectral quantities like the {\it ground state energy} (or first eigenvalue), the {\it fundamental gap} or more general functions of the eigenvalues. More precisely,
in this manuscript we will consider the following eigenvalue problems for the Laplacian:
\vskip.2cm
\begin{tabular}{cl}
{\bf Dirichlet conditions} &\qquad {\bf Robin conditions}\\
$\left\{\begin{array}{rcll}
-\Delta u&=&\lambda\,u,& \mbox{ in }\Omega,\\
u&=&0, & \mbox{ on }\partial\Omega,
\end{array}
\right.$&\qquad $\left\{\begin{array}{rcll}
-\Delta u&=&\lambda\,u,& \mbox{ in }\Omega,\\
\alpha\,u+\dfrac{\partial u}{\partial \nu}&=&0, & \mbox{ on }\partial\Omega,
\end{array}\quad (\alpha>0)
\right.$\\
&\\
{\bf Neumann conditions} &\qquad {\bf Steklov conditions}\\
$\left\{\begin{array}{rcll}
-\Delta u&=&\mu\,u,& \mbox{ in }\Omega,\\
u&=&0, & \mbox{ on }\partial\Omega,
\end{array}
\right.$ &\qquad $\left\{\begin{array}{rcll}
-\Delta u&=&0,& \mbox{ in }\Omega,\\
\dfrac{\partial u}{\partial \nu}&=&\sigma\,u, & \mbox{ on }\partial\Omega.
\end{array}
\right.$
\end{tabular}
\vskip.2cm\noindent
We denote by $\lambda_1(\Omega)$, $\lambda_1(\Omega,\alpha)$, $\mu_2(\Omega)$ and $\sigma_2(\Omega)$ the corresponding first (or first nontrivial\footnote{Observe that in the Neumann and Steklov cases, $0$ is always the first eigenvalue, associated to constant eigenfunctions. Thus we use the convention that $\sigma_1(\Omega)=\mu_1(\Omega)=0$. Also observe that the Robin case can be seen as an interpolation between Neumann (corresponding to $\alpha=0$) and Dirichlet conditions (when $\alpha=+\infty$).}) eigenvalue. We refer to the next sections for the precise definitions of these eiegnvalues and their properties.
For these spectral quantities, we have the following well-known sharp inequalities:
\vskip.2cm\noindent
{\bf Dirichlet case}
\begin{equation}
\label{faber}
|\Omega|^{2/N}\,\lambda_1(\Omega)\ge |B|^{2/N}\,\lambda_1(B),\qquad (\mbox{\it Faber-Krahn inequality})
\end{equation}
\vskip.2cm\noindent
{\bf Robin case}
\begin{equation}
\label{robin}
|\Omega|^{2/N}\,\lambda_1(\Omega,\alpha)\ge |B|^{2/N}\,\lambda_1(B,\alpha),\qquad (\mbox{\it Bossel-Daners inequality})
\end{equation}
\vskip.2cm\noindent
{\bf Neumann case}
\begin{equation}
\label{neumann}
 |B|^{2/N}\,\mu_2(B)\ge |\Omega|^{2/N}\,\mu_2(\Omega),\qquad (\mbox{\it Szeg\H{o}-Weinberger inequality})
\end{equation}
\vskip.2cm\noindent
{\bf Steklov case}
\begin{equation}
\label{brock}
|B|^{1/N}\,\sigma_2(B)\ge|\Omega|^{1/N}\,\sigma_2(\Omega), \qquad (\mbox{\it Brock-Weinstock inequality})
\end{equation}
where $B$ denotes an $N-$dimensional open ball. In all the previous estimates, equality holds only if $\Omega$ is a ball.
\par
The fact that balls can be characterized as the only sets for which equality holds in \eqref{faber}-\eqref{brock} naturally leads to consider the question of the {\it stability} of these inequalities. More precisely, one would like to improve \eqref{faber}, \eqref{robin}, \eqref{neumann} and \eqref{brock}, by adding in the right-hand sides a remainder term measuring the deviation of a set $\Omega$ from spherical symmetry.
\par 
For example, as for inequality \eqref{faber}, a typical quantitative Faber-Krahn inequality would read as follows
\begin{equation}
\label{enforced}
|\Omega|^{2/N}\,\lambda_1(\Omega)- |B|^{2/N}\,\lambda_1(B)\ge g(d(\Omega)),
\end{equation}
where:
\begin{itemize} 
\item $g:[0,+\infty)\to[0,+\infty)$ is some modulus of continuity, i.\,e. a positive continuous increasing function, vanishing at $0$ only;
\vskip.2cm
\item $\Omega\mapsto d(\Omega)$ is some scaling invariant {\it asymmetry functional}, i.\,e. a functional defined over sets such that
\[
d(t\,\Omega)=d(\Omega),\ \mbox{ for every }t>0\qquad \mbox{ and }\qquad d(\Omega)=0 \mbox{ if and only if } \Omega \mbox{ is a ball}. 
\]
\end{itemize}
Moreover, it would desirable to have quantitative enhancements which are ``the best possible'', in a sense. This means that not only we have \eqref{enforced} for every set $\Omega$,
but that it is possible to find a sequence of open sets $\{\Omega_n\}_{n\in\mathbb{N}}\subset\mathbb{R}^N$ such that
\[
\lim_{n\to\infty}\Big(|\Omega_n|^{2/N}\, \lambda_1(\Omega_n)-|B|^{2/N}\, \lambda_1(B)\Big)=0,
\]
and
\[
|\Omega_n|^{2/N}\, \lambda_1(\Omega_n)-|B|^{2/N}\, \lambda_1(B)\simeq g(d(\Omega_n)),\qquad \mbox{ as } n\to \infty.
\]
In this case, we would say that \eqref{enforced} is {\it sharp}.
In other words, the quantitative inequality \eqref{enforced} is sharp if it becomes asymptotically an equality, at least for particular shapes having small {\it deficits}.
\par
The quest for quantitative improvements of spectral inequalities has attracted an increasing interest in the last years. To the best of our knowledge, such a quest started with the papers \cite{MR1266215} by Hansen and Nadirashvili and \cite{MR1168980} by Melas. Both papers concern the Faber-Krahn inequality, which is indeed the most studied case. The reader is invited to consult Section \ref{sec:7} for more bibliographical references and comments.
\par
The aim of this manuscript is to give quite a complete picture on recent results about quantitative improvements of sharp inequalities for eigenvalues of the Laplacian. Apart from the inequalities for the first eigenvalues presented above, we will also take into account some other inequalities involving the second eigenvalue in the Dirichlet case, as well as the {\it torsional rigidity}. We warn the reader from the very beginning that the presentation will be limited to the Euclidean case. For the case of manifolds, we added some comments in Section \ref{sec:7}. 

\subsection{Plan of the paper} Each section is as self-contained as possible. Where it has not been possible to provide all the details, we have tried to provide precise references.
\par
In Section \ref{sec:2} we consider the case of the Faber-Krahn inequality \eqref{faber}, while the stability of the Szeg\H{o}-Weinberger and Brock-Weinstock inequalities is treated in Section \ref{sec:4} and \ref{sec:5}, respectively. For each of these sections, we first present the relevant stability result and then discuss its sharpness.
\par
Section \ref{sec:3} is a sort of {\it divertissement}, which shows some applications of the quantitative Faber-Krahn inequality to estimates for the so called {\it harmonic radius}. This part of the manuscript is essentially new and is placed there because some of the results presented will be used in Section \ref{sec:4}.
\par
Section \ref{sec:6} is devoted to present the proofs of other spectral inequalities, involving the second Dirichlet eigenvalue $\lambda_2$ as well. Namely, we consider the Hong-Krahn-Szego inequality for $\lambda_2$ and the Ashbaugh-Benguria inequality for the ratio $\lambda_2/\lambda_1$.
\par
Then in Section \ref{sec:7} we present some comments on further bibliographical references, applications and miscellaneous stability results on some particular classes of Riemannian manifolds.
\par
The work is complemented by $4$ appendices, containing technical results which are used throughout the paper.

\subsection{An open issue} We conclude the Introduction by pointing out that at present no quantitative stability results are available for the case of the Bossel-Daners inequality. We thus start by formulating the following
\begin{openpb}
Prove a quantitative stability estimate of the type \eqref{enforced} for the Bossel-Daners inequality for the first eigenvalue of the Robin Laplacian $\lambda_1(\Omega,\alpha)$.
\end{openpb}

\begin{ack}
This project was started while L.\,B. was still at Aix-Marseille Universit\'e, during a 6 months {\it  CNRS d\'el\'egation} period. He wishes to thank all his former colleagues at I2M institution. L.\,B. has been supported by the {\it Agence Nationale de la Recherche}, through the project ANR-12-BS01-0014-01 {\sc Geometrya}, G.\,D.\,P. is supported by the MIUR SIR-grant {\sc Geometric Variational Problems} (RBSI14RVEZ). 
\par
Both authors wish to warmly thank Mark S. Ashbaugh, Erwann Aubry and Nikolai Nadirashvili for interesting discussions and remarks on the subject, as well as their collaborators Giovanni Franzina, Aldo Pratelli, Berardo Ruffini and Bozhidar Velichkov. Special thanks go to Nicola Fusco, who first introduced L.\,B. to the realm of stability and quantitative inequalities, during his post-doc position in Naples.
\par
The authors are members of the Gruppo Nazionale per l'Analisi Matematica, la Probabilit\`a
e le loro Applicazioni (GNAMPA) of the Istituto Nazionale di Alta Matematica (INdAM).
\end{ack}

\section{Stability for the Faber-Krahn inequality}
\label{sec:2}

\subsection{A quick overview of the Dirichlet spectrum}

 For an open set $\Omega\subset\mathbb{R}^N$, we indicate by $W^{1,2}_0(\Omega)$ the completion of $C^\infty_0(\Omega)$ with respect to the norm
\[
 u\mapsto \left(\int_\Omega |\nabla u|^2\,dx\right)^\frac{1}{2},\qquad u\in C^\infty_0(\Omega).
\] 
The first eigenvalue of the Dirichlet Laplacian is defined by
\[
\lambda_1(\Omega):=\inf_{u\in C^\infty_0(\Omega)\setminus\{0\}} \frac{\displaystyle\int_\Omega |\nabla u|^2\, dx}{\displaystyle\int_\Omega |u|^2\, dx}.
\] 
In other words, this is the sharp constant in the Poincar\'e inequality
\[
c\,\int_{\Omega} |u|^2\,dx\le \int_\Omega |\nabla u|^2\,dx,\qquad u\in C^\infty_0(\Omega).
\]
Of course, it may happen that $\lambda_1(\Omega)=0$ if $\Omega$ does not support such an inequality.
\par
The infimum above is attained on $W^{1,2}_0(\Omega)$ whenever the embedding $W^{1,2}_0(\Omega)\hookrightarrow L^2(\Omega)$ is compact. In this case, the Dirichlet Laplacian has a discrete spectrum $\{\lambda_1(\Omega),\lambda_2(\Omega),\lambda_3(\Omega),\dots\}$ and
successive Dirichlet eigenvalues can be defined accordingly. Namely, $\lambda_k(\Omega)$ is obtained by minimizing the Rayleigh quotient above, among functions orthogonal (in the $L^2(\Omega)$ sense) to the first $k-1$ eigenfunctions. 
Dirichlet eigenvalues have the following scaling property
\[
\lambda_k(t\,\Omega)=t^{-2}\, \lambda_k(\Omega),\qquad t>0.
\]
Compactness of the embedding $W^{1,2}_0(\Omega)\hookrightarrow L^2(\Omega)$ holds for example when $\Omega\subset\mathbb{R}^N$ is an open set with finite measure. 
\par
In this case, it is possible to provide a sharp lower bound on $\lambda_1(\Omega)$ in terms of the measure of the set: this is the celebrated {\it Faber-Krahn inequality} \eqref{faber} recalled in the Introduction. The usual proof of  this inequality relies on the so-called {\it Schwarz symmetrization} (see \cite[Chapter 2]{MR2251558}). The latter consists in associating to each positive function $u\in W^{1,2}_0(\Omega)$ a radially symmetric decreasing function $u^*\in W^{1,2}_0(\Omega^*)$, where $\Omega^*$ is the ball centered at the origin such that $|\Omega^*|=|\Omega|$. The function $u^*$ is {\it equimeasurable} with $u$, that is
\[
|\{x\, :\, u(x)>t\}|=|\{x\, :\, u^*(x)>t\}|,\qquad \mbox{ for every } t\ge 0,
\]
so that in particular every $L^q$ norm of the function $u$ is preserved.
More interestingly, one has the {\it P\'olya-Szeg\H{o} principle} (see the Subsection \ref{sec:HNM})
\begin{equation}
\label{PS}
\int_{\Omega^*} |\nabla u^*|^2\, dx\le \int_\Omega |\nabla u|^2\, dx,
\end{equation}
from which the Faber-Krahn inequality easily follows.
\par
For a connected set $\Omega$, the first eigenvalue $\lambda_1(\Omega)$ is simple. In other words, there exists $u_1\in W^{1,2}_0(\Omega)\setminus\{0\}$ such that every solution to
\[
-\Delta u=\lambda_1(\Omega)\, u,\quad \mbox{ in }\Omega,\qquad\qquad u=0,\quad \mbox{ on }\partial\Omega,
\] 
is proportional to $u_1$.
For a ball $B_r$ of radius $r$, the value $\lambda_1(B_r)$ can be explicitely computed, together with its corresponding eigenfunction. The latter is given by the radial function (see \cite{MR2251558})
\[
u(x):=|x|^\frac{2-N}{2}\,J_\frac{N-2}{2}\left(\frac{j_{(N-2)/2,1}}{r}\,|x|\right).
\]
Here $J_\alpha$ is a Bessel function of the first kind, solving the ODE
\[
g''(t)+\frac{1}{t}\,g'(t)+\left(1-\frac{\alpha^2}{t^2}\right)g(t)=0\,,
\]
and $j_{\alpha,1}$ denotes the first positive zero of $J_{\alpha}$. We have
\begin{equation}
\label{lambda1palla}
\lambda_1(B_r)=\left(\frac{j_{(N-2)/2,1}}{r}\right)^2.
\end{equation}
\subsection{Semilinear eigenvalues and torsional rigidity} More generally, for an open set $\Omega\subset\mathbb{R}^N$ with finite measure, we will consider its {\it first semilinear eigenvalue} of the Dirichlet Laplacian
\begin{equation}
\label{autolavoro}
\lambda^q_{1}(\Omega)=\min_{u\in W^{1,2}_0(\Omega)\setminus\{0\}} \frac{\displaystyle\int_\Omega |\nabla u|^2\, dx}{\displaystyle\left(\int_\Omega |u|^q\, dx\right)^\frac{2}{q}}=\min_{u\in W^{1,2}_0(\Omega)}\left\{\int_\Omega |\nabla u|^2\, dx\, :\, \|u\|_{L^q(\Omega)}=1\right\},
\end{equation}
where the exponent $q$ satisfies
\begin{equation}
\label{q}
1\le q<2^*:=\left\{\begin{array}{rl}
\displaystyle \frac{2\,N}{N-2},&\mbox{ if } N\ge 3,\\
+\infty,& \mbox{ if }N=2.\\
\end{array}
\right.
\end{equation}
For every such an exponent $q$ the embedding $W^{1,2}_0(\Omega)\hookrightarrow L^q(\Omega)$ is compact, thus the above minimization problem is well-defined.
The shape functional $\Omega\mapsto \lambda_1^q(\Omega)$ verifies the scaling law
\[
\lambda^q_{1}(t\,\Omega)=t^{N-2-\frac{2}{q}\, N}\, \lambda_1^q(\Omega),
\]
the exponent $N-2-(2\,N)/q$ being negative. 
Still by means of Schwarz symmetrization, the following general family of Faber-Krahn inequalities can be derived 
\begin{equation}
\label{FKgen}
|\Omega|^{\frac{2}{N}+\frac{2}{q}-1}\, \lambda_1^q(\Omega)\ge |B|^{\frac{2}{N}+\frac{2}{q}-1}\, \lambda_1^q(B),
\end{equation}
where $B$ is any $N-$dimensional ball. Again, equality in \eqref{FKgen} is possible if and only if $\Omega$ is a ball, up to a set of zero capacity. Of course, when $q=2$ we are back to $\lambda_1(\Omega)$ defined above.
We also point out that the quantity 
\[
T(\Omega):=\frac{1}{\lambda^1_{1}(\Omega)}=\max_{u\in W^{1,2}_0(\Omega)\setminus\{0\}} \frac{\left(\displaystyle\int_\Omega u\, dx\right)^2}{\displaystyle\int_\Omega |\nabla u|^2\, dx}, 
\] 
is usually referred to as the {\it torsional rigidity} of the set $\Omega$. In this case, we can write \eqref{FKgen} in the form
\begin{equation}
\label{sv}
|B|^{-\frac{N+2}{N}}\,T(B)\ge|\Omega|^{-\frac{N+2}{N}}\,T(\Omega).
\end{equation}
This is sometimes called {\it Saint-Venant inequality}. We recall that for a ball of radius $R>0$ we have
\begin{equation}
\label{torsopalla}
T(B_R)=\frac{1}{\lambda_1^1(B_R)}=\frac{\omega_N}{N\,(N+2)}\, R^{N+2}.
\end{equation}
\begin{remark}[Torsion function]
\label{oss:torsion}
The torsional rigidity $T(\Omega)$ can be equivalently defined through an unconstrained convex problems, i.e.
\begin{equation}
\label{energiaaa}
-T(\Omega)=\min_{u\in W^{1,2}_0(\Omega)} \left\{\int_\Omega |\nabla u|^2\,dx-2\,\int_\Omega u\,dx\right\}.
\end{equation}
Indeed, it is sufficient to observe that for every $u\in W^{1,2}_0(\Omega)$ and $t>0$, the function $t\,u$ is still admissible and thus by Young's inequality
\[
\begin{split}
\min_{u\in W^{1,2}_0(\Omega)} \left\{\int_\Omega |\nabla u|^2\,dx-2\,\int_\Omega u\,dx\right\}&=\min_{u\in W^{1,2}_0(\Omega)} \min_{t>0} \left\{t^2\,\int_\Omega |\nabla u|^2\,dx-2\,t\,\int_\Omega u\,dx\right\}\\
&=\min_{u\in W^{1,2}_0(\Omega)} -\frac{\displaystyle\left(\int_\Omega u\,dx\right)^2}{\displaystyle\int_\Omega |\nabla u|^2\,dx},
\end{split}
\]
which proves \eqref{energiaaa}.
The unique solution $w_\Omega$ of the problem on the right-hand side in \eqref{energiaaa} is called {\it torsion function} and it satisfies
\[
-\Delta w_\Omega=1,\quad \mbox{ in }\Omega,\qquad w_\Omega=0,\quad \mbox{ on }\partial\Omega.
\]
From \eqref{energiaaa} and the equation satisfied by $w_\Omega$, we thus also get
\[
T(\Omega)=\int_\Omega w_\Omega\,dx.
\]
\end{remark}

\subsection{Some pioneering stability results}
\label{sec:HNM}

In this part we recall the quantitative estimates for the Faber-Krahn inequality by Hansen \& Nadirashvili \cite{MR1266215} and Melas \cite{MR1168980}.
\par
First of all, as the proof of the Faber-Krahn inequality is based on the P\'olya-Szeg\H{o} principle \eqref{PS}, it is better to recall how \eqref{PS} can be proved. By following Talenti (see \cite[Lemma 1]{MR0463908}), the proof combines the {\it Coarea Formula}, the {\it convexity} of the function $t\mapsto t^2$ and 
the Euclidean {\it Isoperimetric Inequality}
\begin{equation}
\label{isosciarpa}
|\Omega|^\frac{1-N}{N}\,P(\Omega)\ge |B|^\frac{1-N}{N}\, P(B).
\end{equation}
Here $P(\,\cdot\,)$ denotes the perimeter of a set. If $u\in W^{1,2}_0(\Omega)$ is a smooth positive function and we set
\[
\Omega_t:=\{x\in\Omega\, :\, u(x)>t\}\qquad \mbox{ and }\qquad \mu(t):=|\Omega_t|,
\]
by using the above mentioned tools, one can infer
\begin{equation}
\label{PSegola}
\begin{split}
\int_\Omega |\nabla u|^2\,dx&\stackrel{\mbox{\tt Coarea}}{=} \int_0^\infty \left(\int_{\{u=t\}} |\nabla u|^2\,\frac{d\mathcal{H}^{N-1}}{|\nabla u|}\right)\,dt\\
&\stackrel{\mbox{\tt Jensen}}{\ge} \int_0^\infty P(\Omega_t)^2\, \frac{1}{\displaystyle\int_{\{u=t\}} |\nabla u|^{-1}\,d\mathcal{H}^{N-1}} dt=\int_0^\infty \frac{P(\Omega_t)^2}{-\mu'(t)}\, dt\\
&\stackrel{\mbox{\tt Isoperimetry}}{\ge} \int_0^\infty \frac{P(\Omega^*_t)^2}{-\mu'(t)}\, dt=\int_{\Omega^*} |\nabla u^*|^2\,dx,
\end{split}
\end{equation}
where $\Omega^*_t$ is the ball centered at the origin such that $|\Omega^*_t|=|\Omega_t|$.
For a smooth function, the equality 
\[
-\mu'(t)=\int_{\{u=t\}} \frac{1}{|\nabla u|}\,d\mathcal{H}^{N-1},\qquad \mbox{ for a.\,e. } t>0, 
\]
follows from Sard's Theorem, but all the passages in \eqref{PSegola} can indeed be justified for a genuine $W^{1,2}_0$ function. We refer the reader to \cite[Section 2]{MR2376285} for more details.
\par
By taking $u$ to be a first eigenfunction of $\Omega$ with unit $L^2$ norm and observing that $u^*$ is admissible for the variational problem defining $\lambda_1(\Omega^*)$, from \eqref{PSegola} one easily gets the Faber-Krahn inequality
\[
\lambda_1(\Omega)\ge \lambda_1(\Omega^*),
\]
as desired.
\vskip.2cm
The idea of Hansen \& Nadirashvili \cite{MR1266215} and Melas \cite{MR1168980} is to replace in \eqref{PSegola} the classical isoperimetric statement \eqref{isosciarpa} with an improved quantitative version. At the time of \cite{MR1266215} and \cite{MR1168980}, quantitative versions of the isoperimetric inequality were availbale only for some particular sets, under the name of {\it Bonnesen inequalities}. These cover simply connected sets in dimension $N=2$ (see Bonnesen's paper \cite{MR1512192}, generalized in \cite[Theorem 2.2]{MR1112666}) and convex sets in every dimension (see \cite[Theorem 2.3]{MR0942426}).
\par
For this reason, both papers treat simply connected sets in dimension $N=2$ or convex sets in general dimensions. We now present their results, without entering at all into the details of the proofs. Rather, in the next subsection we will explain the ideas by Hansen and Nadirashvili and use them to prove a fairly more general result (see Theorem \ref{teo:HN} below).
\begin{theorem}[Melas]
\label{teo:melas}
For every open bounded set $\Omega\subset\mathbb{R}^N$, we define the asymmetry functional
\begin{equation}
\label{melas}
d_{\mathcal{M}}(\Omega):=\min\left\{\max\left\{\frac{|\Omega\setminus B_1|}{|\Omega|},\,\frac{|B_2\setminus \Omega|}{|B_2|}\right\}\, :\, B_1\subset \Omega\subset B_2 \mbox{ balls}\right\}.
\end{equation}
Then we have:
\begin{itemize}
\item if $N=2$, for every $\Omega$ open bounded simply connected set, there exists a disc $B_\Omega\subset \Omega$ such that
\[
|\Omega|\,\lambda_1(\Omega)-|B|\,\lambda_1(B)\ge \frac{1}{C}\,\left(\frac{|\Omega\setminus B_\Omega|}{|\Omega|}\right)^4,
\]
for some universal constant $C>0$; 
\vskip.2cm
\item if $N\ge 2$ for every open bounded convex set $\Omega\subset\mathbb{R}^N$ we have 
\[
|\Omega|^{2/N}\,\lambda_1(\Omega)- |B|^{2/N}\,\lambda_1(B)\ge \frac{1}{C}\,d_{\mathcal{M}}(\Omega)^{2\,N},
\] 
for some universal constant $C>0$.
\end{itemize}
\end{theorem}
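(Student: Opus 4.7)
The plan is to revisit the chain of inequalities in \eqref{PSegola} and replace the Euclidean isoperimetric inequality \eqref{isosciarpa} in its final step by the corresponding Bonnesen-type strengthening that is available in the class of sets under consideration. In the planar simply connected case, each superlevel set $\Omega_t = \{u > t\}$ of a first eigenfunction is itself simply connected, so Bonnesen's classical planar inequality in terms of the inradius applies uniformly in $t$; in the convex case, log-concavity of the first eigenfunction guarantees that each $\Omega_t$ is convex, so the convex-body Bonnesen-type inequality of \cite[Theorem 2.3]{MR0942426} applies uniformly in $t$ as well.

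First, I fix a positive first eigenfunction $u$ with $\|u\|_{L^2}=1$ and upgrade \eqref{PSegola} to
\[
\int_\Omega |\nabla u|^2\,dx \ge \int_{\Omega^*} |\nabla u^*|^2\,dx + \int_0^{\|u\|_\infty} \frac{P(\Omega_t)^2 - P(\Omega_t^*)^2}{-\mu'(t)}\,dt.
\]
By the chosen Bonnesen-type inequality, the numerator of the integrand is bounded below by $c\,\omega(\Omega_t)^a$, where $\omega$ measures a geometric asymmetry (a squared inradius gap in the planar case, a Hausdorff-type deviation from the best enclosing/enclosed balls in the convex case) and $a$ is an explicit exponent. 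After normalizing the eigenfunction, the excess $|\Omega|^{2/N}\,\lambda_1(\Omega) - |B|^{2/N}\,\lambda_1(B)$ is thereby bounded from below by a weighted integral of asymmetries of the level sets.

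The second and central step is to transfer this asymmetry of level sets to an asymmetry of $\Omega$ itself. Select a pair $B_1\subset\Omega\subset B_2$ realizing (up to a factor) the minimum in \eqref{melas}, or, in the planar case, a large disc $B_\Omega\subset\Omega$ with $|\Omega\setminus B_\Omega|/|\Omega|$ of comparable size. One then exhibits a range of levels $[0,t_*]$ on which $\Omega_t$ still contains most of $B_1$ and continues to exclude most of $B_2\setminus\Omega$; on that range one shows that $\omega(\Omega_t)$ is controlled from below by the volume asymmetry entering the statement. The cutoff $t_*$ is chosen so that $u\ge t_*$ on a definite portion of $B_1$; such a pointwise lower bound follows from comparison of $u$ with a suitably scaled first eigenfunction of $B_\Omega$, via the maximum principle, the monotonicity $\lambda_1(B_\Omega)\ge\lambda_1(\Omega)$ and the explicit formula \eqref{lambda1palla}. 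Integrating the improved lower bound over $[0,t_*]$ yields a remainder of the form $c\,d_\mathcal{M}(\Omega)^\gamma$ with the asserted exponents $\gamma=4$ and $\gamma=2N$.

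The main obstacle lies precisely in this transfer step: one has to balance two competing quantitative losses, namely the pointwise control of $u$ on $B_\Omega$ (which deteriorates as $t_*$ is pushed up) against the geometric distortion of $\Omega_t$ relative to $\Omega$ (which deteriorates as $t_*$ is pushed down), and the optimal compromise is exactly what dictates the final exponents. Once this optimization is performed, the remaining ingredients, the coarea layer cake and a routine bound on $-\mu'(t)$ via the isoperimetric profile of the admissible class, are standard.
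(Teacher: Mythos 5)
The paper does not actually prove this statement: Theorem \ref{teo:melas} is quoted from Melas's paper \cite{MR1168980}, and the surrounding text explicitly says it presents the result ``without entering at all into the details of the proofs,'' indicating only the high-level idea of substituting a Bonnesen-type quantitative isoperimetric inequality into the chain \eqref{PSegola}. Your proposal identifies that same high-level idea, so on strategy you and the paper agree.

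The difficulty is that your transfer step, as described, does not close, and it is precisely at the place you yourself flag as the main obstacle. You suggest that on a range $[0,t_*]$ of levels the superlevel set $\Omega_t$ ``still contains most of $B_1$ and continues to exclude most of $B_2\setminus\Omega$,'' and that this keeps the Bonnesen deficit of $\Omega_t$ bounded below. But being squeezed between (most of) $B_1$ and $B_2$ gives no such lower bound: $\Omega_t$ could perfectly well be a ball of some intermediate radius, with zero Bonnesen deficit, no matter how far apart $B_1$ and $B_2$ are. This is exactly the obstruction the paper highlights right after the statement, namely that one cannot in general exclude $\mathcal{A}(\Omega_t)\ll\mathcal{A}(\Omega)$ even for $t\simeq 0$. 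The missing ingredient is a control of the \emph{measure defect} $|\Omega|-|\Omega_t|$, which is what forces $\Omega_t$ to stay shaped like $\Omega$ rather than like a rounder intermediate set; this is the content of the propagation Lemma \ref{lm:nikolai} and of the dichotomy on the level $s$ defined by \eqref{sceltap} used in the proof of the analogous Theorem \ref{teo:HN} (large $s$: use the boosted P\'olya--Szeg\H{o} estimate; small $s$: argue directly on the Rayleigh quotient via the large measure defect, with no isoperimetric input at all). Your plan contains neither the measure-defect control nor a second branch of this kind, so the ``optimal compromise'' you invoke is not something that can be read off from a single continuous optimization of $t_*$. Two smaller points: the claim that each $\Omega_t$ is simply connected when $\Omega\subset\mathbb{R}^2$ is simply connected needs a maximum-principle justification ($u$ is superharmonic since $-\Delta u=\lambda_1 u\ge0$, which rules out interior holes), and you must also handle disconnected level sets, applying the Bonnesen estimate componentwise.
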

\begin{remark}
In dimension $N=2$ Melas' result is indeed more general. If $\Omega\subset\mathbb{R}^2$ is an open bounded set, not necessarily simply connected, then there exists an open disc $B_\Omega$ such that
\[
|\Omega|\,\lambda_1(\Omega)-|B|\,\lambda_1(B)\ge \frac{1}{C}\,\left(\frac{|\Omega\Delta B_\Omega|}{|\Omega\cup B_\Omega|}\right)^4,
\]
for some universal constant $C>0$.
\end{remark}
\noindent
For every open set $\Omega\subset\mathbb{R}^N$ we note
\[
r_\Omega=\sup\{r>0\, :\, \mbox{there exists }x_0\in\Omega \mbox{ such that } B_r(x_0)\subset\Omega\}\qquad \mbox{ and} \qquad R_{\Omega}=\left(\frac{|\Omega|}{\omega_N}\right)^\frac{1}{N}.
\]
The first quantity is usually called {\it inradius of $\Omega$}. This is the radius of the largest ball contained in $\Omega$.
\begin{theorem}[Hansen-Nadirashvili]
\label{teo:HNoriginal}
For every open set $\Omega\subset\mathbb{R}^N$ with finite measure, we define the asymmetry functional
\begin{equation}
\label{d1}
d_\mathcal{N}(\Omega):=1-\frac{r_\Omega}{R_{\Omega}}.
\end{equation}
Then we have:
\begin{itemize}
\item if $N=2$ and $\Omega$ is simply connected,
\[
|\Omega|\,\lambda_1(\Omega)- |B|\,\lambda_1(B)\ge \left(\frac{\pi\,j_{0,1}^2}{250}\right)\,d_\mathcal{N}(\Omega)^3;
\]
\item if $N\ge 3$, there exist $0<\varepsilon<1$ and $C>0$ such that for every $\Omega\subset\mathbb{R}^N$ open bounded convex set satisfying $d_\mathcal{N}(\Omega)<\varepsilon$, we have
\[
|\Omega|^{2/N}\,\lambda_1(\Omega)- |B|^{2/N}\,\lambda_1(B)\ge \frac{1}{C}\,\left\{\begin{array}{cc}
\displaystyle\frac{d_\mathcal{N}(\Omega)^3}{|\log d_\mathcal{N}(\Omega)|}, & \mbox{ if } N=3,\\
&\\
d_\mathcal{N}(\Omega)^\frac{N+3}{2}, & \mbox{ if } N\ge 4.
\end{array}\right.
\]
\end{itemize}
\end{theorem}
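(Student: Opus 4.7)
The plan is to rerun the Talenti chain \eqref{PSegola} behind the P\'olya--Szeg\H{o} proof of the Faber--Krahn inequality, replacing the plain isoperimetric step \eqref{isosciarpa} by a Bonnesen-type quantitative refinement applied superlevel set by superlevel set. As the quantities in the statement are scaling invariant, I may assume $|\Omega|=|B|$, so that $R_{\Omega}$ equals the radius of $B=\Omega^{\ast}$ and $d_{\mathcal{N}}(\Omega)=1-r_{\Omega}/R_{\Omega}$ is a pure scale-free inradius defect. Fix $u\ge 0$ a first Dirichlet eigenfunction normalised by $\|u\|_{L^{2}(\Omega)}=1$, so that $\lambda_{1}(\Omega)=\int_{\Omega}|\nabla u|^{2}\,dx$, and set $\Omega_{t}=\{u>t\}$, $\mu(t)=|\Omega_{t}|$.

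In two dimensions the simple connectivity of $\Omega$ (together with a maximum-principle argument on the superlevel sets of the first eigenfunction) allows the classical Bonnesen inequality $P(\Omega_{t})^{2}\ge 4\pi\mu(t)+\pi^{2}(R^{\mathrm{circ}}_{t}-r_{t})^{2}$ to be applied to each $\Omega_{t}$, where $R^{\mathrm{circ}}_{t}$ and $r_{t}$ denote circumradius and inradius of $\Omega_{t}$. In dimension $N\ge 3$, the convexity of $\Omega$ combined with the Brascamp--Lieb / Korevaar log-concavity of the first Dirichlet eigenfunction on a convex domain forces $\Omega_{t}$ to be convex as well, and one invokes a Diskant-type quantitative isoperimetric inequality for convex bodies, producing a deficit $\gtrsim(R^{\mathrm{circ}}_{t}-r_{t})^{\beta}$ for an explicit dimension-dependent exponent $\beta$. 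Inserting the refined inequality into \eqref{PSegola} in place of \eqref{isosciarpa} yields
$$\lambda_{1}(\Omega)\;\ge\;\lambda_{1}(\Omega^{\ast})\;+\;c_{N}\int_{0}^{\|u\|_{\infty}}\frac{\bigl(R^{\mathrm{circ}}_{t}-r_{t}\bigr)^{\beta}}{-\mu'(t)}\,dt.$$

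To turn this integral into a function of the global defect $d_{\mathcal{N}}(\Omega)$, I observe that $r_{t}\le r_{\Omega}$ (since $\Omega_{t}\subset\Omega$) and $R^{\mathrm{circ}}_{t}\ge\bigl(\mu(t)/\omega_{N}\bigr)^{1/N}$ (isoperimetry applied to $\Omega_{t}$ itself). Hence, whenever $\mu(t)\ge|\Omega|\bigl(1-c\,d_{\mathcal{N}}(\Omega)\bigr)$ one gets $R^{\mathrm{circ}}_{t}-r_{t}\gtrsim R_{\Omega}\,d_{\mathcal{N}}(\Omega)$. Securing an interval $(0,\tau)$ on which this lower bound persists requires upper-bounding $|\{0<u\le t\}|$, which I do via standard $L^{\infty}$ and Lipschitz a priori estimates on $u$ in terms of $\lambda_{1}(\Omega)$ and $|\Omega|$; this fixes $\tau$ as a positive power of $d_{\mathcal{N}}(\Omega)$. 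A final Cauchy--Schwarz inequality on $(0,\tau)$,
$$\int_{0}^{\tau}\frac{dt}{-\mu'(t)}\;\ge\;\frac{\tau^{2}}{\mu(0)-\mu(\tau)}\;\ge\;\frac{\tau^{2}}{|\Omega|},$$
then converts the improvement into a lower bound of the form $R_{\Omega}^{\beta}\,d_{\mathcal{N}}(\Omega)^{\beta}\cdot\tau^{2}/|\Omega|$. Tracking the accumulated powers of $d_{\mathcal{N}}(\Omega)$ recovers the exponent $3$ when $N=2$ and $(N+3)/2$ when $N\ge 4$; dimension $N=3$ sits on the borderline between these two regimes and produces the announced extra factor $|\log d_{\mathcal{N}}(\Omega)|^{-1}$ from a logarithmic divergence in the corresponding interpolation.

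The main obstacle is this last passage: robustly transferring the Bonnesen deficit $R^{\mathrm{circ}}_{t}-r_{t}$ on a full one-parameter family of superlevel sets back to the single inradius deficit $R_{\Omega}-r_{\Omega}$ of $\Omega$. This requires uniform Lipschitz and concentration control on the first eigenfunction, guaranteed in $N=2$ by simple connectivity and in $N\ge 3$ by convexity, and it is precisely this technical constraint that both motivates the structural hypotheses in the theorem and dictates the dimension-dependent exponents appearing in the final bounds.
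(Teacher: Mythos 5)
The paper does not actually prove Theorem~\ref{teo:HNoriginal}: it states the result, remarks that ``we now present their results, without entering at all into the details of the proofs,'' and refers back to the original paper \cite{MR1266215}. What the paper \emph{does} say about the method (before Theorem~\ref{teo:HN}) is that Hansen--Nadirashvili ``replace in \eqref{PSegola} the classical isoperimetric statement \eqref{isosciarpa} with an improved quantitative version,'' namely a Bonnesen-type inequality; that is exactly the scheme you are reconstructing, so your high-level plan is faithful to the attribution. Note, however, that the paper's own quantitative argument in the same spirit (Lemma~\ref{lm:PSbooster} and Theorem~\ref{teo:HN}) diverges from yours in two substantial respects: it uses the Fraenkel asymmetry $\mathcal{A}$ together with the Fusco--Maggi--Pratelli inequality \eqref{isosciarpa+} rather than the inradius deficit $d_{\mathcal{N}}$ with Bonnesen/Fuglede, and it transfers the problem to the torsion function via the Kohler--Jobin reduction (Proposition~\ref{prop:gerarchia}) precisely so as to avoid the delicate a~priori control on level sets of the eigenfunction that your argument must supply by hand.

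Two points in your write-up deserve more care. First, your invocation of the maximum principle for the $N=2$ case is exactly the right observation: since $-\Delta u=\lambda_1 u>0$, a hole $H$ in a superlevel set $\Omega_t$ would have $u=t$ on $\partial H$ and $u\le t$ in $H$, so superharmonicity and the strong maximum principle force $u\equiv t$ on $H$, a contradiction; this is what makes Bonnesen applicable to each $\Omega_t$, and you should state it rather than leaving it parenthetical. Second, and more seriously, the claim that ``standard $L^\infty$ and Lipschitz a~priori estimates on $u$'' fix $\tau$ as a positive power of $d_{\mathcal{N}}(\Omega)$ is misleading: bounding $|\{0<u\le t\}|$ from above requires a \emph{lower} bound on $u$ near $\partial\Omega$ (a Hopf/boundary-Harnack type estimate of the form $u(x)\gtrsim \mathrm{dist}(x,\partial\Omega)$, or at least $\gtrsim \mathrm{dist}(x,\partial\Omega)^{\beta}$), not an upper Lipschitz bound. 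On general convex sets such a lower bound degenerates as the domain becomes thin; this is exactly the non-degeneracy issue that the paper handles in the analogous Lemma~\ref{lm:keyofthekey} by imposing $\lambda_1(\Omega)\le\Lambda$, and it is also what the smallness hypothesis $d_{\mathcal{N}}(\Omega)<\varepsilon$ for $N\ge 3$ is really buying you. Without making that quantitative input explicit, the final exponent count (where $\tau\sim d_{\mathcal{N}}$) is not justified, and this is where a careful version of your argument would have to do its real work.
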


\begin{remark}[The role of topology]
It is easy to see that the stability estimates of Theorems \ref{teo:melas} and \ref{teo:HNoriginal} with $d_\mathcal{M}$ and $d_\mathcal{N}$ {\it can not hold true} without some topological assumptions on the sets. For example, by taking the perforated ball
\[
\Omega_\varepsilon=\{x\in\mathbb{R}^N \, :\, \varepsilon<|x|<1\},\qquad 0<\varepsilon<1,
\] 
we have
\begin{equation}
\label{fluccher}
\lim_{\varepsilon\searrow 0} \left(|\Omega_\varepsilon|^{2/N}\,\lambda(\Omega_\varepsilon)- |B|^{2/N}\,\lambda(B)\right)=0,
\end{equation}
while
\[
\lim_{\varepsilon\searrow 0} d_\mathcal{N}(\Omega_\varepsilon)=\frac{1}{2}\qquad \mbox{ and } \qquad \lim_{\varepsilon\searrow 0} d_\mathcal{M}(\Omega_\varepsilon)\ge \frac{1}{2}.
\]
For the limit \eqref{fluccher} see for example \cite[Theorem 9]{MR1338506}. These contradict Theorems \ref{teo:melas} and \ref{teo:HNoriginal}.
Observe that for $N=2$ the set $\Omega_\varepsilon$ is not simply connected, while for $N\ge 3$ it is. Thus in higher dimensions simple connectedness is still not sufficient to have stability with respect to $d_\mathcal{N}$ or $d_\mathcal{M}$.
\end{remark}
If we want to obtain a quantitative Faber-Krahn inequality for general open sets in every dimension, a more flexible notion of asymmetry is the so called {\it Fraenkel asymmetry}, defined by 
\[
\mathcal{A}(\Omega)=\inf\left\{\frac{|\Omega\Delta B|}{|\Omega|} \, :\,  \text{ \(B\) ball such that \(|B|=|\Omega|\)}\right\}.
\]
Observe that for every ball $B$ such that $|B|=|\Omega|$, we have $|\Omega\Delta B|=2\,|\Omega\setminus B|=2\,|B\setminus \Omega|$. This simple facts will be used repeatedly.
\par
It is not difficult to see that this is a weaker asymmetry functional, with respect to $d_\mathcal{N}$ and $d_\mathcal{M}$ above. Indeed, we have the following.
\begin{lemma}[Comparison between asymmetries]
Let $\Omega\subset\mathbb{R}^N$ be an open bounded set. Then we have
\begin{equation}
\label{disuguaglianze}
d_\mathcal{N}(\Omega)\ge \frac{1}{2\,N}\,\mathcal{A}(\Omega),\qquad d_\mathcal{M}(\Omega)\ge \frac{1}{2}\,\mathcal{A}(\Omega)\qquad \mbox{ and }\qquad  d_\mathcal{M}(\Omega)\ge \frac{N}{2^{N}}\, d_\mathcal{N}(\Omega).
\end{equation}
If $\Omega$ is convex, we also have
\[
\mathcal{A}(\Omega)\ge \frac{1}{N}\,\left(\frac{\omega_N}{N}\right)^{1/N}\,\frac{|\Omega|^{1/N}}{\mathrm{diam}(\Omega)}\, d_\mathcal{M}(\Omega)^N.
\]
\end{lemma}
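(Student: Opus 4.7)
All four bounds reduce to volume comparisons between $\Omega$, a ball inscribed in or circumscribing $\Omega$, and a Fraenkel-candidate ball of volume $|\Omega|$; I would treat them in the natural order, and expect the convex case to be the real obstacle.

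For $d_\mathcal{N}(\Omega) \ge \mathcal{A}(\Omega)/(2N)$, I would fix a maximal inscribed ball $B(x_0, r_\Omega) \subset \Omega$ and concentrically enlarge it to $B^\star = B(x_0, R_\Omega)$, which has volume $|\Omega|$. Since $B(x_0, r_\Omega) \subset B^\star$ and the two balls have equal volume, $|\Omega \Delta B^\star| = 2|\Omega \setminus B^\star| \le 2\omega_N(R_\Omega^N - r_\Omega^N)$; dividing by $|\Omega| = \omega_N R_\Omega^N$ and applying Bernoulli's inequality $(1-t)^N \ge 1 - Nt$ with $t = 1-r_\Omega/R_\Omega$ yields the desired bound. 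The estimate $d_\mathcal{M}(\Omega) \ge \mathcal{A}(\Omega)/2$ goes the same way: with $(B_1, B_2)$ an optimal pair in the definition of $d_\mathcal{M}$, expand $B_1$ concentrically to a Fraenkel candidate $B^\star$ of volume $|\Omega|$ (possible since $r_1 \le R_\Omega$), and use $|\Omega \setminus B^\star| \le |\Omega \setminus B_1| \le d_\mathcal{M}(\Omega)\,|\Omega|$. For $d_\mathcal{M}(\Omega) \ge (N/2^N) d_\mathcal{N}(\Omega)$, the same optimal pair with radii $r_1, r_2$ forces $r_\Omega \ge r_1$ and $R_\Omega \le r_2$, while the volume ratios $|B_1|/|\Omega|,\ |\Omega|/|B_2| \ge 1 - d_\mathcal{M}(\Omega)$ combine to $r_1/r_2 \ge (1-d_\mathcal{M}(\Omega))^{2/N}$; thus $d_\mathcal{N}(\Omega) \le 1 - (1-d_\mathcal{M}(\Omega))^{2/N}$, and the scalar inequality $(1-t)^{2/N} \ge 1 - (2^N/N)\,t$ on $[0,1]$ closes the argument. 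To verify the last inequality, I would note that $f(t) := (1-t)^{2/N} - 1 + (2^N/N)\,t$ is concave on $[0,1]$ for $N \ge 2$ with $f(0) = 0$ and $f(1) = (2^N - N)/N \ge 0$, so the concavity chord bound $f(t) \ge t\,f(1) \ge 0$ does the job.

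The convex inequality is the main obstacle: unlike the first three, it is a \emph{lower} bound on $\mathcal{A}(\Omega)$ that must hold against \emph{every} equal-volume Fraenkel-candidate ball, and convexity of $\Omega$ must enter quantitatively. I would use a spindle/cone-volume argument: let $B_1 = B(c_1, r_1)$ be the optimal inner ball in the definition of $d_\mathcal{M}$, and let $p \in \Omega$ realize (as closely as possible) the diameter $D = \mathrm{diam}(\Omega)$ from $c_1$. By convexity, the spindle $\Sigma := \mathrm{conv}(B_1 \cup \{p\})$ lies in $\Omega$. For any Fraenkel candidate $B^\star = B(c, R_\Omega)$, the cone-volume formula lower-bounds $|\Sigma \setminus B^\star|$ in terms of $r_1^{N-1}$ and the axial length of $\Sigma$ not covered by $B^\star$. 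Using $r_1 \ge R_\Omega(1-d_\mathcal{M}(\Omega))^{1/N}$ (from the inner-ball inequality above) together with the outer-ball estimate $D \le 2r_2 \le 2R_\Omega/(1-d_\mathcal{M}(\Omega))^{1/N}$ to extract the factor $d_\mathcal{M}(\Omega)^N$, and the natural scaling of the cone to bring in the prefactor $|\Omega|^{1/N}/D$, should yield the claimed bound. The delicate step I expect to require the most care is uniformity in the placement of the center $c$ of $B^\star$: the spindle must be shown to stick substantially outside \emph{every} equal-volume ball, which I would handle by a case split according to whether $c$ lies closer to $c_1$ or to $p$, in each case producing the cone-shaped excess on the opposite side.
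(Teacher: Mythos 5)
Parts (1) and (2) of your argument are correct and follow the same line as the paper. For (3) you take a genuinely different and arguably cleaner route: the paper's proof splits on whether $r_\Omega\le R_\Omega/2$ or not, whereas you derive $d_\mathcal{N}(\Omega)\le 1-(1-d_\mathcal{M}(\Omega))^{2/N}$ in one stroke by composing the inner- and outer-ball volume ratios through $r_1/r_2\ge(1-d_\mathcal{M}(\Omega))^{2/N}$, and close with the concavity chord bound $(1-t)^{2/N}\ge 1-(2^N/N)\,t$ on $[0,1]$. This recovers the constant $N/2^N$ and avoids the case distinction; it is a nice streamlining.

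The convex inequality is where the proposal has a real gap, and not merely because the sketch is unfinished. You want to bound $|\Omega\setminus B^\star|$ from below by $|\Sigma\setminus B^\star|$ with $\Sigma=\mathrm{conv}(B_1\cup\{p\})$, but $\mathrm{diam}(\Sigma)\le\mathrm{diam}(\Omega)$, and when $\mathrm{diam}(\Omega)$ is close to its isodiametric minimum $2R_\Omega$ the spindle can sit entirely inside a well-placed equal-volume ball $B^\star$, giving $|\Sigma\setminus B^\star|=0$ --- yet $d_\mathcal{M}(\Omega)$ need not be small in that regime, so the inequality is still nontrivial there. Your case split on the position of $c$ does not resolve this: the problem is not where the excess sits but whether the chosen witness set $\Sigma$ captures any excess at all. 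The paper avoids this by routing through the Hausdorff distance: it invokes \cite[Lemma 4.2]{MR2207737}, which gives
\[
|\Omega\setminus B|\ge \frac{1}{2N}\,\frac{|\Omega|}{\mathrm{diam}(\Omega)^N}\,\mathrm{Haus}(\Omega,B)^N
\]
for the optimal Fraenkel ball $B$, and separately bounds $d_\mathcal{M}(\Omega)\le N\,\mathrm{Haus}(\Omega,B)/R_\Omega$ using the inclusion $\Omega\subset\gamma\,B$ with $\gamma=(R_\Omega+\mathrm{Haus}(\Omega,B))/R_\Omega$ together with the inradius estimate $r_\Omega\ge R_\Omega-\mathrm{Haus}(\Omega,B)$ of Lemma \ref{lm:blasphemy}. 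Chaining the two gives the claim. The Esposito--Fusco--Trombetti lemma, which controls the $L^1$ deficit from below by a power of the Hausdorff distance for convex sets, carries the real analytic weight of the proof; your spindle construction does not reproduce it, and some version of it would have to be established to complete your argument.
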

\begin{proof}
By using the elementary inequality 
\[
a^N-b^N\le N\, a^{N-1}\,(a-b),\qquad \mbox{ for } 0\le b\le a, 
\]
and the definitions of $r_\Omega$, $R_\Omega$ and $d_\mathcal{N}(\Omega)$, we have
\begin{equation}
\label{frankd1}
|\Omega|-\omega_N\,r_\Omega^N\le N\, |\Omega|^\frac{N-1}{N}\,\Big(|\Omega|^\frac{1}{N}-\omega_N^\frac{1}{N}\,r_\Omega\Big)= N\, |\Omega|\, d_\mathcal{N}(\Omega).
\end{equation}
We then consider a ball $B_{r_\Omega}(x_0)\subset\Omega$ and take a concentric ball $\widetilde B$ such that $|\widetilde B|=|\Omega|$. By definition of Fraenkel asymmetry and estimate \eqref{frankd1}, we obtain
\[
\mathcal{A}(\Omega)\le 2\, \frac{|\Omega\setminus \widetilde B|}{|\Omega|}\le 2\, \frac{|\Omega\setminus B_{r_\Omega}(x_0)|}{|\Omega|}\le 2\,N\, d_\mathcal{N}(\Omega),
\]
and thus we get the first estimate in \eqref{disuguaglianze}.
\par
For the second one, we take a pair of balls $B_1\subset \Omega\subset B_2$ and consider the ball $\widetilde B_1$ concentric with $B_1$ and such that $|\Omega|=|\widetilde B_1|$. Then we get
\[
\mathcal{A}(\Omega)\le 2\, \frac{|\Omega\setminus \widetilde B_1|}{|\Omega|}\le 2\, \frac{|\Omega\setminus B_1|}{|\Omega|}\le 2\,\max\left\{\frac{|\Omega\setminus B_1|}{|\Omega|},\,\frac{|B_2\setminus \Omega|}{|B_2|}\right\}.
\] 
by taking the infimum over the admissible pairs $(B_1,B_2)$ we get the second inequality in \eqref{disuguaglianze}. 
\par
Finally, for the third one we take again a pair of balls $B_1\subset \Omega\subset B_2$ and observe that if $r_\Omega\le R_\Omega/2$, then  we have
\[
\max\left\{\frac{|\Omega\setminus B_1|}{|\Omega|},\,\frac{|B_2\setminus \Omega|}{|B_2|}\right\}\ge \frac{|\Omega\setminus B_1|}{|\Omega|}\ge 1-\left(\frac{1}{2}\right)^N\ge \left[1-\left(\frac{1}{2}\right)^N\right]\,d_\mathcal{N}(\Omega).
\]
In the last inequality we used that $d_\mathcal{N}(\Omega)<1$. By taking the infimum over admissible couple of balls, we obtain the conclusion. If on the contrary $\Omega$ is such that $r_\Omega> R_\Omega/2$, then 
by definition of $R_\Omega$ and $r_\Omega$ we get
\[
\begin{split}
d_\mathcal{N}(\Omega)=\frac{R_\Omega-r_\Omega}{R_\Omega}=\frac{|\Omega|^{1/N}-|B_{r_\Omega}|^{1/N}}{|\Omega|^{1/N}}&\le \frac{1}{N}\, \frac{|B_{r_\Omega}|^{1/N}}{|\Omega|^{1/N}}\,\frac{|\Omega|-|B_{r_\Omega}|}{|B_{r_\Omega}|}\\
&\le \frac{1}{N}\, \frac{|\Omega|-|B_1|}{|B_{r_\Omega}|}\le \frac{2^N}{N}\, \frac{|\Omega\setminus B_1|}{|\Omega|}\\
&\le \frac{2^{N}}{N}\,\max\left\{\frac{|\Omega\setminus B_1|}{|\Omega|},\,\frac{|B_2\setminus \Omega|}{|B_2|}\right\},
\end{split}
\]
where we used that $|B_1|\le |B_{r_\Omega}|\le |\Omega|$.
Thus we get the conclusion in this case as well. Observe that $1-2^{-N}\ge N\,2^{-N}$ for $N\ge 2$.
\vskip.2cm\noindent
Let us now assume $\Omega$ to be convex. We take a ball $|B|=|\Omega|$ such that $|\Omega\setminus B|/|\Omega|=\mathcal{A}(\Omega)/2$. We can assume that $\mathcal{A}(\Omega)<1/2$, otherwise the estimate is trivial by using the isodiametric inequality and the fact that $d_\mathcal{M}<1$. Then from \cite[Lemma 4.2]{MR2207737} we know that
\[
|\Omega\setminus B|\ge \frac{1}{2\,N}\,\frac{|\Omega|}{\mathrm{diam}(\Omega)^N}\,\mathrm{Haus}(\Omega,B)^N.
\]
Here $\mathrm{Haus}(E_1,E_2)$ denotes the Hausdorff distance between sets, defined by
\begin{equation}
\label{indahaus}
\mathrm{Haus}(E_1,E_2)=\max\left\{\sup_{x\in E_1}\inf_{y\in E_2} |x-y|,\,\sup_{y\in E_2}\inf_{x\in E_1} |x-y|\right\}.
\end{equation}
We then observe that the ball $B_2:=\gamma\,B$ contains $\Omega$, provided
\[
\gamma=\frac{R_\Omega+\mathrm{Haus}(\Omega,B)}{R_\Omega}. 
\]
On the other hand, by Lemma \ref{lm:blasphemy} we have 
\[
r_\Omega\ge R_\Omega-\mathrm{Haus}(\Omega,B).
\]
Let us assume that $\mathrm{Haus}(\Omega,B)<R_\Omega$.
From the definition of $d_\mathcal{M}$, we thus obtain
\[
\begin{split}
d_\mathcal{M}(\Omega)&\le \max\left\{\frac{\gamma^N-1}{\gamma^N},\, 1-\frac{r_\Omega^N}{R_\Omega^N}\right\}\le \max\left\{1-\frac{1}{\gamma^N},\, 1-\left(1-\frac{\mathrm{Haus}(\Omega,B)}{R_\Omega}\right)^N\right\}\\
&=\max\left\{1-\left(1-\frac{\mathrm{Haus}(\Omega,B)}{R_\Omega+\mathrm{Haus}(\Omega,B)}\right)^N,\, 1-\left(1-\frac{\mathrm{Haus}(\Omega,B)}{R_\Omega}\right)^N\right\}\\
&\le N\,\max\left\{\frac{\mathrm{Haus}(\Omega,B)}{R_\Omega+\mathrm{Haus}(\Omega,B)},\,\frac{\mathrm{Haus}(\Omega,B)}{R_\Omega} \right\}\\
&=N\,\frac{\mathrm{Haus}(\Omega,B)}{R_\Omega}\le N\,\left(\frac{N}{\omega_N}\right)^\frac{1}{N}\,\frac{\mathrm{diam}(\Omega)}{|\Omega|^{1/N}}\,\mathcal{A}(\Omega)^{1/N}.
\end{split}
\]
This concludes the proof for $\mathrm{Haus}(\Omega,B)<R_\Omega$. If on the contrary $\mathrm{Haus}(\Omega,B)\ge R_\Omega$, with similar computations we get
\[
\begin{split}
d_\mathcal{M}(\Omega)\le \max\left\{\frac{\gamma^N-1}{\gamma^N},\, 1-\frac{r_\Omega^N}{R_\Omega^N}\right\}&\le\max\left\{1-\left(1-\frac{\mathrm{Haus}(\Omega,B)}{R_\Omega+\mathrm{Haus}(\Omega,B)}\right)^N,\, 1\right\}\\
&\le \max\left\{N\,\frac{\mathrm{Haus}(\Omega,B)}{R_\Omega+\mathrm{Haus}(\Omega,B)},\,1 \right\}\le \frac{N}{2}\,\frac{\mathrm{Haus}(\Omega,B)}{R_\Omega},
\end{split}
\]
and we can conclude as before.
\end{proof}
\begin{remark}
For general open sets, the asymmetries $\mathcal{A}, d_\mathcal{N}$ and $d_\mathcal{M}$ are not equivalent. We first observe that if $\Omega_0=B\setminus \Sigma$, where $B$ is a ball and $\Sigma\subset B$ is a non-empty closed set with $|\Sigma|=0$, then
\[
\mathcal{A}(\Omega_0)=0\qquad \mbox{ while }\qquad  d_\mathcal{N}(\Omega_0)>0,\quad \mbox{ and }\quad d_\mathcal{M}(\Omega_0)>0.
\]
Moreover, there exists a sequence of open sets $\{\Omega_n\}_{n\in\mathbb{N}}\subset\mathbb{R}^N$ such that
\[
\lim_{n\to\infty} d_\mathcal{N}(\Omega_n)=0\qquad\mbox{ and }\qquad \lim_{n\to\infty} d_\mathcal{M}(\Omega_n)>0.
\]
Such a sequence $\{\Omega_n\}_{n\in\mathbb{N}}$ can be constructed by attaching a long tiny tentacle to a ball, for example.
\end{remark}

\subsection{A variation on a theme by Hansen and Nadirashvili}
\label{sec:24}

We will now show how to adapt the ideas by Hansen and Nadirashvili, in order to get a (non sharp) stability estimate for the general Faber-Krahn inequality \eqref{FKgen} and for general open sets with finite measure.
\par
First of all, one needs a quantitative improvement of the isoperimetric inequality which is valid for generic sets and dimensions. Such a (sharp) quantitative isoperimetric inequality has been proved by Fusco, Maggi and Pratelli in \cite[Theorem 1.1]{MR2456887} (see also \cite{MR2980529, MR2672283} for different proofs and \cite{MR3404715} for an exhaustive review of quantitative forms of the isoperimetric inequality). This reads as follows
\begin{equation}
\label{isosciarpa+}
|\Omega|^\frac{1-N}{N}\,P(\Omega)- |B|^\frac{1-N}{N}\, P(B)\ge \beta_N\,\mathcal{A}(\Omega)^2.
\end{equation}
An explicit value for the dimensional constant $\beta_N>0$ can be found in \cite[Theorem 1.1]{MR2672283}. By inserting this information in the proof \eqref{PSegola} of P\'olya-Szeg\H{o} inequality, one would get an estimate of the type
\[
\int_\Omega |\nabla u|^2\,dx-\int_{\Omega^*} |\nabla u^*|^2\,dx\gtrsim \int_0^\infty \mathcal{A}(\Omega_t)^2\,dt.
\]
The difficult point is to estimate the ``propagation of asymmetry'' 
from the whole domain $\Omega$ to the superlevel sets $\Omega_t$ of the optimal function $u$. In other words, we would need to know that
\[
\mathcal{A}(\Omega)\simeq \mathcal{A}(\Omega_t),\qquad \mbox{ for } t>0.
\]
Unfortunately, in general it is difficult to exclude that
\[
\mathcal{A}(\Omega_t)\ll \mathcal{A}(\Omega),\qquad \mbox{ for } t\simeq 0.
\]
This means that the graph of $u$ ``quickly becomes round'' when it detaches from the boundary $\partial\Omega$. This may happen for example if $u$ has a small normal derivative.
For these reasons, improving this idea is very delicate, which usually results in 
a (non sharp) estimate like the ones of Theorems \ref{teo:melas} and \ref{teo:HNoriginal} and the one of Theorem \ref{teo:HN} below. We refer to the discussion of Section \ref{sec:71} for other results of this type, previously obtained by Bhattacharya \cite{MR1836803} and Fusco, Maggi and Pratelli \cite{MR2512200}.
\vskip.2cm
The following expedient result is sometimes useful for stability issues.
It states that if the measure of a subset
$U\subset\Omega$ differs from that of $\Omega$ by an amount comparable to the asymmetry $\mathcal{A}(\Omega)$, then the asymmetry of $U$ can not decrease too much. This is encoded in the following simple result, which is essentially taken from \cite[Section 5]{MR1266215}.
\begin{lemma}[Propagation of asymmetry]
\label{lm:nikolai}
Let $\Omega\subset\mathbb{R}^N$ be an open set with finite measure. Let $U\subset \Omega$ be such that $|U|>0$ and
\begin{equation}
\label{ipo}
\frac{|\Omega\setminus U|}{|\Omega|}\le \frac{1}{4}\, \mathcal{A}(\Omega).
\end{equation}
Then there holds
\begin{equation}
\label{comparison}
\mathcal{A}(U)\ge \frac{1}{2}\, \mathcal{A}(\Omega).
\end{equation}
\end{lemma}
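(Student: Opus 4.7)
The plan is to exploit the fact that $U$ differs from $\Omega$ by a controlled amount in measure, and then to compare any optimal ball for $U$ with the concentric ball of volume $|\Omega|$.

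\textbf{Main idea.} Let $B_U$ be any ball with $|B_U|=|U|$, and let $B$ be the ball \emph{concentric} with $B_U$ but rescaled to satisfy $|B|=|\Omega|$. Since $|U|\le |\Omega|$ one has $B_U\subset B$, and moreover $|B\setminus B_U|=|\Omega|-|U|=|\Omega\setminus U|$. The goal is to show
\[
|U\Delta B_U|\ \ge\ |\Omega\Delta B|\,-\,2\,|\Omega\setminus U|,
\]
and then take infima over $B_U$ and use hypothesis \eqref{ipo} together with the elementary fact $|U|\le|\Omega|$.

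\textbf{Key steps.}
First, since $U\subset\Omega$, write
\[
|\Omega\setminus B|\ \le\ |U\setminus B|+|\Omega\setminus U|\ \le\ |U\setminus B_U|+|\Omega\setminus U|,
\]
where in the last step we used $B_U\subset B$. Second, since $U\subset\Omega$ implies $B\setminus\Omega\subset B\setminus U$, decompose
\[
|B\setminus\Omega|\ \le\ |B\setminus U|\ \le\ |B_U\setminus U|+|B\setminus B_U|\ =\ |B_U\setminus U|+|\Omega\setminus U|.
\]
Summing the two inequalities yields exactly $|\Omega\Delta B|\le |U\Delta B_U|+2|\Omega\setminus U|$. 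By definition of Fraenkel asymmetry (applied to $\Omega$, using that $|B|=|\Omega|$), $|\Omega\Delta B|\ge \mathcal{A}(\Omega)\,|\Omega|$, so
\[
\frac{|U\Delta B_U|}{|U|}\ \ge\ \frac{\mathcal{A}(\Omega)\,|\Omega|-2\,|\Omega\setminus U|}{|U|}.
\]
Taking the infimum over balls $B_U$ with $|B_U|=|U|$, applying assumption \eqref{ipo} to estimate $2|\Omega\setminus U|\le \tfrac{1}{2}\mathcal{A}(\Omega)|\Omega|$, and then using $|U|\le|\Omega|$ in the denominator, gives the claimed bound $\mathcal{A}(U)\ge \tfrac{1}{2}\mathcal{A}(\Omega)$.

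\textbf{Main obstacle.} There is no serious analytic obstacle here; the estimate is purely set-theoretic. The only delicate point is the \emph{choice} of the comparison ball $B$: picking $B$ concentric to $B_U$ (rather than, say, concentric to some optimal ball for $\Omega$) is what guarantees the crucial chain $B_U\subset B$ and hence the sharp loss of only $2|\Omega\setminus U|$. With the wrong choice the constants degrade and the factor $1/4$ in \eqref{ipo} would no longer yield $1/2$ in \eqref{comparison}.
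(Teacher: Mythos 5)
Your proof is correct and follows essentially the same route as the paper's: both compare the (near-)optimal ball for $U$ with the concentric ball of volume $|\Omega|$, losing exactly $2|\Omega\setminus U|$ via the triangle inequality for symmetric differences, and then use $|U|\le|\Omega|$ to pass to the asymmetry of $U$. The only cosmetic difference is that you unpack the triangle inequalities into explicit set inclusions and take the infimum over balls at the end, whereas the paper starts from an optimal ball and invokes the triangle inequality abstractly.
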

\begin{proof}
Let $B$ be a ball achieving the minimum in the definition of $\mathcal{A}(U)$, by triangular inequality we get
\[
\begin{split}
\mathcal{A}(U)=\frac{|U\Delta B|}{|U|}&\ge \frac{|\Omega|}{|U|}\,\left(\frac{|\Omega\Delta B|}{|\Omega|}-\frac{|U\Delta \Omega|}{|\Omega|}\right)\\
&\ge \frac{|\Omega|}{|U|}\,\left(\frac{|\Omega\Delta B'|}{|\Omega|}-\frac{|B\Delta B'|}{|\Omega|}-\frac{|U\Delta \Omega|}{|\Omega|}\right),
\end{split}
\]
where $B'$ is a ball concentric with $B$ and such that $|B'|=|\Omega|$. By using that
\[
|U\Delta \Omega|=|\Omega\setminus U|=|\Omega|-|U|=|B'\Delta B|,
\]
and the hypothesis \eqref{ipo}, we get the conclusion by further noticing that $|\Omega|\ge |U|$.
\end{proof}
By relying on the previous simple result, we can prove a sort of P\'olya-Szeg\H{o} inequality with remainder term. The remainder term depends on the asymmetry of $\Omega$ and on the level $s$ of the function, whose corresponding superlevel set $\{x : u(x)>s\}$
has a {\it measure defect} comparable to the asymmetry $\mathcal{A}(\Omega)$, i.e. it satisfies \eqref{ipo}.
\begin{lemma}[Boosted P\'olya-Szeg\H{o} principle]
\label{lm:PSbooster}
Let $\Omega\subset\mathbb{R}^N$ be an open set with finite measure, such that $\mathcal{A}(\Omega)>0$. Let $u\in W^{1,2}_0(\Omega)$ be such that $u>0$ in $\Omega$. For every $t>0$ we still denote
\[
\Omega_t=\{x\in\Omega\, :\, u(x)>t\}\qquad \mbox{ and }\qquad \mu(t)=|\Omega_t|.
\]
Let $s>0$ be the level defined by
\begin{equation}
\label{sceltap}
s=\sup\left\{t\, :\, \mu(t)\ge |\Omega|\,\left(1-\frac{1}{4}\,\mathcal{A}(\Omega)\right)\right\}.
\end{equation}
Then we have
\begin{equation}
\label{protop}
\int_\Omega |\nabla u|^2\, dx\ge \int_{\Omega^*} |\nabla u^*|^2\, dx+c_N\, \mathcal{A}(\Omega)\,|\Omega|^{1-\frac{2}{N}}\, s^2.
\end{equation}
The dimensional constant $c_N>0$ is given by
\[
c_N=\frac{4^{1/N}\,\beta_N\,N\,\omega_N^{1/N}}{2},
\]
and $\beta_N$ is the same constant appearing in \eqref{isosciarpa+}.
\end{lemma}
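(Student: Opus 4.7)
The plan is to mimic the classical Pólya--Szegő argument displayed in \eqref{PSegola}, but feed the \emph{quantitative} isoperimetric inequality \eqref{isosciarpa+} into the last step and then extract a first power of $\mathcal{A}(\Omega)$ via Cauchy--Schwarz together with Lemma \ref{lm:nikolai}. First, I would observe that the usual coarea/Jensen chain already gives
\[
\int_\Omega |\nabla u|^2\,dx\ge \int_0^\infty \frac{P(\Omega_t)^2}{-\mu'(t)}\,dt,\qquad \int_{\Omega^*}|\nabla u^*|^2\,dx=\int_0^\infty \frac{P(\Omega_t^*)^2}{-\mu'(t)}\,dt,
\]
the latter being an equality because the level sets of $u^*$ are exactly balls (and $u$, $u^*$ share the same distribution function $\mu$). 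Subtracting and discarding the nonnegative contribution from $[s,\infty)$ reduces matters to producing the lower bound
\[
\int_0^s \frac{P(\Omega_t)^2-P(\Omega_t^*)^2}{-\mu'(t)}\,dt\ \ge\ c_N\,\mathcal{A}(\Omega)\,|\Omega|^{1-2/N}\,s^2.
\]

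The second step is to exploit the choice of $s$ in \eqref{sceltap}. For every $t\in[0,s)$ the definition forces $|\Omega\setminus\Omega_t|/|\Omega|\le \mathcal{A}(\Omega)/4$, so Lemma \ref{lm:nikolai} yields $\mathcal{A}(\Omega_t)\ge \mathcal{A}(\Omega)/2$. Plugging this into \eqref{isosciarpa+} applied to $\Omega_t$ gives
\[
P(\Omega_t)-P(\Omega_t^*)\ge \frac{\beta_N}{4}\,|\Omega_t|^{(N-1)/N}\,\mathcal{A}(\Omega)^2,
\]
and since $P(\Omega_t)+P(\Omega_t^*)\ge 2P(\Omega_t^*)=2N\omega_N^{1/N}|\Omega_t|^{(N-1)/N}$, factoring the difference of squares produces
\[
P(\Omega_t)^2-P(\Omega_t^*)^2\ge \frac{N\,\omega_N^{1/N}\,\beta_N}{2}\,|\Omega_t|^{2(N-1)/N}\,\mathcal{A}(\Omega)^2.
\]

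The third step is the crucial ``boost'': turn one factor of $\mathcal{A}(\Omega)$ into an $s^2$ by Cauchy--Schwarz. Writing $1=|\Omega_t|^{(N-1)/N}(-\mu'(t))^{-1/2}\cdot(-\mu'(t))^{1/2}|\Omega_t|^{-(N-1)/N}$ and integrating over $[0,s)$,
\[
\int_0^s \frac{|\Omega_t|^{2(N-1)/N}}{-\mu'(t)}\,dt\ \ge\ \frac{\bigl(\int_0^s |\Omega_t|^{(N-1)/N}\,dt\bigr)^2}{\int_0^s -\mu'(t)\,dt}.
\]
The numerator is bounded below by $s^2\bigl(|\Omega|(1-\mathcal{A}(\Omega)/4)\bigr)^{2(N-1)/N}$ because $|\Omega_t|\ge |\Omega|(1-\mathcal{A}(\Omega)/4)$ on $[0,s)$; the denominator equals $|\Omega|-\mu(s)$, which is at most $|\Omega|\,\mathcal{A}(\Omega)/4$ by the very definition of $s$ (using right-continuity of $\mu$). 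This is where the $\mathcal{A}^2/\mathcal{A}=\mathcal{A}$ cancellation happens, and where the $s^2$ and the exponent $1-2/N=2(N-1)/N-1$ appear.

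Combining the three steps gives a lower bound of the form
\[
2\,N\,\omega_N^{1/N}\,\beta_N\,\bigl(1-\tfrac{\mathcal{A}(\Omega)}{4}\bigr)^{2(N-1)/N}\,\mathcal{A}(\Omega)\,|\Omega|^{1-2/N}\,s^2,
\]
and the final cosmetic step is to use $\mathcal{A}(\Omega)\le 2$, so that $(1-\mathcal{A}(\Omega)/4)^{2(N-1)/N}\ge (1/2)^{2(N-1)/N}=4^{1/N}/4$, which produces exactly the constant $c_N=4^{1/N}\beta_N N\omega_N^{1/N}/2$ stated in the lemma. The only real technical nuisance is justifying the coarea/Cauchy--Schwarz manipulation for a generic $W^{1,2}_0$ function (rather than a smooth one), where $\mu$ may fail to be absolutely continuous; this is the standard point handled as in \cite[Section 2]{MR2376285} and does not affect the final constant. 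The conceptual heart of the argument is the pairing of Lemma \ref{lm:nikolai} (which keeps the asymmetry alive on the superlevel sets up to level $s$) with the Cauchy--Schwarz trick (which converts the $\mathcal{A}^2$ of the Fusco--Maggi--Pratelli inequality into a single power of $\mathcal{A}$).
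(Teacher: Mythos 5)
Your proof is correct and is essentially the paper's own argument: the same coarea chain, the same use of Lemma \ref{lm:nikolai} to keep $\mathcal{A}(\Omega_t)\ge\mathcal{A}(\Omega)/2$ on $(0,s)$, the same quantitative isoperimetric input, and the same Jensen/Cauchy--Schwarz trick $\int_0^s(-\mu')^{-1}\ge s^2/(|\Omega|-\mu(s))$ to convert one power of $\mathcal{A}(\Omega)$ into $s^2$. The only differences are cosmetic — you factor $P^2-(P^*)^2=(P-P^*)(P+P^*)$ and keep $|\Omega_t|^{(N-1)/N}$ inside the Cauchy--Schwarz, while the paper uses the convexity bound $a^2\ge b^2+2b(a-b)$ and extracts the infimum of $\mu(t)^{(N-1)/N}$ before applying Jensen — and both routes produce exactly the constant $c_N=4^{1/N}\beta_N N\omega_N^{1/N}/2$.
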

\begin{proof}
We first observe that the level $s$ defined by \eqref{sceltap} is not $0$. Indeed, the function $t\mapsto \mu(t)$ is right-continuous, thus we get
\[
\lim_{t\to 0^+} \mu(t)=\mu(0)=|\{x\in\Omega\, :\, u(x)>0\}|=|\Omega|>|\Omega|\,\left(1-\frac{1}{4}\,\mathcal{A}(\Omega)\right),
\]
where we used the hypothesis on $u$ and the fact that $\mathcal{A}(\Omega)>0$.
\par
By using the sharp quantitative isoperimetric inequality \eqref{isosciarpa+}, we have
\begin{equation}
\label{isoFMP}
P(\Omega_t)\ge P(\Omega^*_t)+\beta_N\, \mu(t)^\frac{N-1}{N}\,\mathcal{A}(\Omega_t)^2,
\end{equation}
while by convexity of the map $\tau\mapsto \tau^2$ we get
\[
\begin{split}
P(\Omega_t)^2&\ge P(\Omega^*_t)^2+2\, P(\Omega^*_t)\, \Big(P(\Omega_t)- P(\Omega^*_t)\Big)\\
&=P(\Omega^*_t)^2+2\, \Big(N\, \omega_N^{1/N}\, \mu(t)^\frac{N-1}{N}\Big)\,\Big(P(\Omega_t)- P(\Omega^*_t)\Big).
\end{split}
\]
By collecting the previous two estimates and reproducing the proof of \eqref{PSegola}, we can now infer
\begin{equation}
\label{basep}
\begin{split}
\int_\Omega |\nabla u|^2\,dx&\ge \int_{\Omega^*} |\nabla u^*|^2\,dx+c\,\int_0^s \mathcal{A}(\Omega_t)^2\, \frac{\left(\mu(t)^\frac{N-1}{N}\right)^{2}}{-\mu'(t)}\, dt,
\end{split}
\end{equation}
where we set 
\[
c=2\,\beta_N\,N\,\omega_N^{1/N}.
\]
We now observe that $\mu$ is a decreasing function, thus we have
\[
\mu(t)> \mu(s)\ge |\Omega|\,\left(1-\frac{1}{4}\,\mathcal{A}(\Omega)\right),\qquad 0<t<s.
\]
This implies that the set $\Omega_t$ verifies the hypothesis of Lemma \ref{lm:nikolai} for $0<t<s$, since
\[
\frac{|\Omega\setminus\Omega_t|}{|\Omega|}=1-\frac{\mu(t)}{|\Omega|}\le 1-\left(1-\frac{1}{4}\,\mathcal{A}(\Omega)\right)=\frac{1}{4}\,\mathcal{A}(\Omega),\qquad 0<t<s.
\]
Thus from \eqref{comparison} we get
\[
\mathcal{A}(\Omega_t)\ge \frac{1}{2}\, \mathcal{A}(\Omega),\qquad 0<t<s.
\]
By inserting the previous information in \eqref{basep} and using that
\[
\mu(t)>\mu(s)\ge |\Omega|\,\left(1-\frac{1}{4}\,\mathcal{A}(\Omega)\right)\ge \frac{|\Omega|}{2},\qquad 0<t<s,
\] 
we get
\[
\begin{split}
\int_\Omega |\nabla u|^2\, dx&\ge \int_{\Omega^*} |\nabla u^*|^2\,dx+\frac{c}{4}\,\mathcal{A}(\Omega)^2\,\left(\frac{|\Omega|^\frac{N-1}{N}}{2^\frac{N-1}{N}} \right)^2\,\int_0^s\frac{1}{-\mu'(t)}\, dt.
\end{split}
\]
We then observe that by convexity of the function $\tau\mapsto \tau^{-1}$, Jensen inequality gives\footnote{In the second inequality, we used that for a monotone non-decreasing function $f$
\[
\int_a^b f'(t)\,dt\le f(b)-f(a),\qquad \mbox{ for } a<b.
\]}
\[
\int_0^s\frac{1}{-\mu'(t)}\, dt\ge \frac{s^2}{\displaystyle\int_0^s -\mu'(t)\, dt}\ge \frac{s^2}{|\Omega|-\mu(s)}\ge 4\,\frac{s^2}{\mathcal{A}(\Omega)\, |\Omega|}, 
\]
thanks to the choice \eqref{sceltap} of $s$. This concludes the proof.
\end{proof}
We can now prove the following quantitative version of the general Faber-Krahn inequality \eqref{FKgen}. The standard case of the first eigenvalue of the Dirichlet Laplacian corresponds to taking $q=2$.
Though the exponent on the Fraenkel asymmetry is not sharp, the interest of the result lies in the computable constant. 
Moreover, the proof is quite simple and it is based on the ideas by Hansen \& Nadirashvili. We also use {\it Kohler-Jobin inequality} (see Appendix \ref{sec:KJ}) to reduce to the case of the torsional rigidity. This reduction trick has been first introduced by Brasco, De Philippis and Velichkov in \cite{MR3357184}.
\begin{theorem}
\label{teo:HN}
Let $1\le q<2^*$, there exists an explicit constant $\tau_{N,q}>0$ such that for every open set $\Omega\subset\mathbb{R}^N$ with finite measure, we have
\begin{equation}
\label{HN2}
|\Omega|^{\frac{2}{N}+\frac{2}{q}-1}\, \lambda_1^q(\Omega)- |B|^{\frac{2}{N}+\frac{2}{q}-1}\, \lambda_1^q(B)\ge \tau_{N,q}\, \mathcal{A}(\Omega)^3.
\end{equation}
\end{theorem}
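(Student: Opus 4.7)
The plan combines three ingredients: a reduction to the torsional rigidity via Kohler--Jobin, an application of the boosted Pólya--Szegő principle from Lemma \ref{lm:PSbooster} to the torsion function, and a dichotomy on the size of the level $s$ defined by \eqref{sceltap}.

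\textbf{Step 1.} First I would use the Kohler--Jobin inequality (Appendix \ref{sec:KJ}) in its quantitative form, following the reduction trick of \cite{MR3357184}, to pass from the semilinear eigenvalue $\lambda_1^q$ to the torsional rigidity. This reduces the statement to a quantitative Saint--Venant inequality
$$|B|^{-\frac{N+2}{N}}\,T(B) - |\Omega|^{-\frac{N+2}{N}}\,T(\Omega) \ge \tau_N' \,\mathcal{A}(\Omega)^3,$$
with an explicit dimensional constant $\tau_N'$; the constant $\tau_{N,q}$ of \eqref{HN2} is then computed from $\tau_N'$ and the Kohler--Jobin parameters.

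\textbf{Step 2.} Apply Lemma \ref{lm:PSbooster} to $u=w_\Omega$, the torsion function, which is strictly positive in $\Omega$ by the maximum principle. Integrating $-\Delta w_\Omega=1$ against $w_\Omega$ gives $\int_\Omega |\nabla w_\Omega|^2\,dx = \int_\Omega w_\Omega\,dx = T(\Omega)$, so \eqref{protop} becomes
$$T(\Omega) \ge \int_{\Omega^*} |\nabla w_\Omega^*|^2\,dx + c_N\,\mathcal{A}(\Omega)\,|\Omega|^{1-\frac{2}{N}}\,s^2.$$
Testing the variational characterization $T(\Omega^*) = \max (\int u)^2/\int|\nabla u|^2$ (see Remark \ref{oss:torsion}) with $w_\Omega^*$, and noting that $\int w_\Omega^* = T(\Omega)$ by equimeasurability, yields
$$T(\Omega^*) \ge \frac{T(\Omega)^2}{T(\Omega) - c_N\,\mathcal{A}(\Omega)\,|\Omega|^{1-\frac{2}{N}}\,s^2}.$$
Since $T(\Omega^*) \ge T(\Omega)$ by \eqref{sv}, a direct rearrangement produces
$$T(\Omega^*) - T(\Omega) \ge c_N\,\mathcal{A}(\Omega)\,|\Omega|^{1-\frac{2}{N}}\,s^2.$$

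\textbf{Step 3, the main obstacle.} The previous bound closes the proof only if $s$ is not too small compared to $\mathcal{A}(\Omega)\,R_\Omega^2$, with $R_\Omega=(|\Omega|/\omega_N)^{1/N}$. I would proceed by a dichotomy with threshold $\kappa_N\,\mathcal{A}(\Omega)\,R_\Omega^2$ for a small dimensional constant $\kappa_N$ (e.g.\ $\kappa_N=1/(8N^2)$). If $s \ge \kappa_N\,\mathcal{A}(\Omega)\,R_\Omega^2$, Step 2 immediately delivers $T(\Omega^*)-T(\Omega)\gtrsim \mathcal{A}(\Omega)^3\,|\Omega|^{(N+2)/N}$, as required. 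Otherwise, Talenti's pointwise comparison $w_\Omega^* \le w_{\Omega^*}$ produces a quantitative defect at the radius $\rho_s$ determined by $\omega_N\rho_s^N=|\Omega|(1-\mathcal{A}(\Omega)/4)$: using the elementary inequality $1-(1-x)^{2/N}\ge (2/N)x$ for $x \in [0,1/2]$,
$$w_{\Omega^*}(\rho_s) - s \;=\; \frac{R_\Omega^2-\rho_s^2}{2N}-s \;\ge\; \frac{\mathcal{A}(\Omega)\,R_\Omega^2}{4N^2}-\kappa_N\,\mathcal{A}(\Omega)\,R_\Omega^2 \;\gtrsim\; \mathcal{A}(\Omega)\,R_\Omega^2.$$
Combining the explicit profile of $w_{\Omega^*}$ with the radial monotonicity of $w_\Omega^*$, this pointwise defect propagates to a uniform lower bound of the same order on an annulus around $\rho_s$ of radial width $\gtrsim \mathcal{A}(\Omega)\,R_\Omega$, and integrating against $N\omega_N r^{N-1}\,dr$ gives $T(\Omega^*)-T(\Omega)\gtrsim \mathcal{A}(\Omega)^2\,|\Omega|^{(N+2)/N}$, which is stronger than the desired $\mathcal{A}^3$ estimate because $\mathcal{A}(\Omega)\le 2$. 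The real obstacle is making this conversion from a pointwise to an integrated Talenti defect fully quantitative with explicit dimensional constants; once this is done, $\tau_N'$, and therefore $\tau_{N,q}$, is computable in closed form.
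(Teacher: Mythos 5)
Your proposal follows the paper closely through Steps~1, 2, and the easy branch of the dichotomy: the reduction to the torsional rigidity via Proposition~\ref{prop:gerarchia}, the application of Lemma~\ref{lm:PSbooster} to $w_\Omega$ (your rearrangement of the resulting inequality is equivalent to the paper's \eqref{primadip}), and the dichotomy on $s$ with a threshold of the order $\mathcal{A}(\Omega)R_\Omega^2$ (the paper's $s_0$ is of exactly this order once one normalizes $|\Omega|=1$). Where you genuinely diverge is the hard branch $s<s_0$. The paper tests the variational formulation of $T(\Omega)$ with the truncation $(w_\Omega-s_0)_+\in W^{1,2}_0(\Omega_{s_0})$, then applies the Saint--Venant inequality to $\Omega_{s_0}$ (whose measure is $<|\Omega|(1-\mathcal{A}(\Omega)/4)$ by \eqref{battaciaria}), and finishes with the algebraic expansion \eqref{gretel}--\eqref{quasi}. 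You instead use Talenti's pointwise comparison $w_\Omega^*\le w_{\Omega^*}$ together with the identity $T(\Omega^*)-T(\Omega)=\int_{\Omega^*}(w_{\Omega^*}-w_\Omega^*)\,dx$, which holds by equimeasurability of $w_\Omega$ and $w_\Omega^*$. The ``real obstacle'' you flag is in fact not one: since $w_\Omega^*$ is radially decreasing, $w_\Omega^*(\rho)\le w_\Omega^*(\rho_s)\le s$ for every $\rho\ge\rho_s$, while the parabolic profile $w_{\Omega^*}(\rho)=(R_\Omega^2-\rho^2)/(2N)$ together with the Bernoulli estimate $1-(1-x)^{2/N}\ge(2/N)x$ and the smallness $s<\kappa_N\mathcal{A}(\Omega)R_\Omega^2$ shows that $w_{\Omega^*}(\rho)-s\gtrsim\mathcal{A}(\Omega)R_\Omega^2$ persists on an outward radial interval $[\rho_s,\rho'']\subset[\rho_s,R_\Omega]$ of width $\gtrsim\mathcal{A}(\Omega)R_\Omega/N$; integrating over this annulus gives a fully explicit constant and in fact produces a deficit $\gtrsim\mathcal{A}(\Omega)^2\,|\Omega|^{(N+2)/N}$ in this branch, even stronger than required. (The annulus should be taken outward from $\rho_s$ only; inward you have no upper bound on $w_\Omega^*$.) In both your argument and the paper's, the final $\mathcal{A}^3$ rate comes from the easy branch $s\ge s_0$, where one only gets $T(\Omega^*)-T(\Omega)\gtrsim\mathcal{A}(\Omega)s_0^2\simeq\mathcal{A}(\Omega)^3$. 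So the two approaches are structurally parallel; what yours buys is a conceptually cleaner small-$s$ branch (a direct integral comparison instead of the truncation--Saint-Venant--Taylor expansion chain), at the cost of needing Talenti's comparison theorem, which the paper's route avoids.
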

\begin{proof} 
Since inequality \eqref{HN2} is scaling invariant, we can suppose that $|\Omega|=1$.
By Proposition \ref{prop:gerarchia} with 
\[
g(t)=t^3\qquad \mbox{ and }\qquad d(\Omega)=\mathcal{A}(\Omega),
\]
it is sufficient to prove \eqref{HN2} for the torsional rigidity. In other words, we just need to prove the following {\it quantitative Saint-Venant inequality}
\begin{equation}
\label{HN21}
T(\Omega^*)-T(\Omega)\ge \tau\,\mathcal{A}(\Omega)^3,
\end{equation}
where as always $\Omega^*$ is the ball centered at the origin such that $|\Omega^*|=|\Omega|=1$.
Of course, we can suppose that $\mathcal{A}(\Omega)>0$, otherwise there is nothing to prove. Without loss of generality, we can also suppose that
\begin{equation}
\label{smezzino}
T(\Omega)\ge\frac{T(\Omega^*)}{2}.
\end{equation}
Indeed, if the latter is not satisfied, then \eqref{HN21} trivially holds with constant $\tau=T(\Omega^*)/16$, thanks to the fact that $\mathcal{A}(\Omega)<2$.
\vskip.2cm\noindent
Let $w_\Omega\in W^{1,2}_0(\Omega)$ be the torsion function of $\Omega$ (recall Remark \ref{oss:torsion}), then we know
\[
T(\Omega)=\int_\Omega w_\Omega\, dx=\int_\Omega |\nabla w_\Omega|^2\, dx.
\]
Moreover, by standard elliptic regularity we know that $w_\Omega\in C^\infty(\Omega)\cap L^\infty(\Omega)$.
By recalling that $|\Omega|=1$ and \eqref{smezzino}, we get
\[
\frac{T(\Omega^*)}{2}\le T(\Omega)=\int_\Omega w_\Omega\, dx\le \|w_\Omega\|_{L^\infty(\Omega)}.
\]
We now take $s$ as in \eqref{sceltap}, from \eqref{protop} and the definition of torsional rigidity we get
\[
\begin{split}
T(\Omega)&\le \frac{\left(\displaystyle \int_{\Omega^*} w_\Omega^*\,dx\right)^2}{\displaystyle \int_{\Omega^*} |\nabla w_\Omega^*|^2\, dx+c_N\, \mathcal{A}(\Omega)\, s^2}\le T(\Omega^*)\,\left(1+\frac{c_N\, \mathcal{A}(\Omega)\, s^2}{\displaystyle\int_{\Omega^*} |\nabla w_\Omega^*|^2\, dx}\right)^{-1}, 
\end{split}
\]
that is
\[
\frac{T(\Omega^*)}{T(\Omega)}-1\ge \frac{c_N\, \mathcal{A}(\Omega)\, s^2}{\displaystyle\int_{\Omega^*} |\nabla w_\Omega^*|^2\, dx}.
\]
With simple manipulations, by using $\int_{\Omega^*} |\nabla w_\Omega^*|^2\le \int_{\Omega} |\nabla w_\Omega|^2=T(\Omega)$, we get
\begin{equation}
\label{primadip}
T(\Omega^*)-T(\Omega)\ge c_N\, \mathcal{A}(\Omega)\, s^2.
\end{equation}
We then set 
\begin{equation}
\label{parametrip}
s_0=\frac{T(\Omega^*)}{16}\,\frac{N+2}{3\,N+2}\,\mathcal{A}(\Omega),
\end{equation}
and observe that $s_0<T(\Omega^*)/4$. We have to distinguish two cases.
\vskip.2cm\noindent
{\it First case:} $s\ge s_0$. This is the easy case, as from \eqref{primadip} we directly get \eqref{HN21}, with constant
\[
\tau=\frac{c_N}{256}\,T(\Omega^*)^2\,\left(\frac{N+2}{3\,N+2} \right)^2,
\]
and we recall that $c_N$ is as in Lemma \ref{lm:PSbooster}.
\vskip.2cm\noindent
{\it Second case:} $s< s_0$. In this case, by definition \eqref{sceltap} of $s$ we get
\begin{equation}
\label{battaciaria}
\mu(s_0)<\left(1-\frac{1}{4}\,\mathcal{A}(\Omega)\right).
\end{equation}
We also observe that $\mu(s_0)>0$, since $s_0<T(\Omega^*)/4\le 1/2\, \|w_\Omega\|_{L^\infty}$ by the discussion above.
We now want to work with this level $s_0$. We have
\begin{equation}
\label{hansel}
\begin{split}
\int_{\Omega_{s_0}} (w_\Omega-s_0)_+\,dx=\int_\Omega (w_\Omega-s_0)_+\, dx&\ge \int_{\Omega} w_\Omega\, dx-s_0=T(\Omega)-s_0.
\end{split}
\end{equation}
Observe that the right-hand side is strictly positive, since $s_0<T(\Omega)$ thanks to \eqref{parametrip} and \eqref{smezzino}.
We have $(w_\Omega-s_0)_+\in W^{1,2}_0(\Omega_{s_0})$, thus from the variational characterization of $T(\Omega)$, the Saint-Venant inequality and \eqref{hansel}
\[
\begin{split}
T(\Omega)=\frac{\left(\displaystyle\int_{\Omega} w_\Omega\, dx\right)^2}{\displaystyle\int_{\Omega}|\nabla w_\Omega|^2\, dx}\le\frac{\left(\displaystyle\int_{\Omega} w_\Omega\, dx\right)^2}{\displaystyle\int_{\Omega}|\nabla (w_\Omega-s_0)_+|^2\, dx}&\le T(\Omega_{s_0})\,\left(\frac{\displaystyle\int_\Omega w_\Omega\,dx}{\displaystyle\int_\Omega (w-s_0)_+\,dx}\right)^2\\
&\le T(\Omega^*)\,\mu(s_0)^\frac{N+2}{N}\,\left(1-\frac{s_0}{T(\Omega)}\right)^{-2}.
\end{split}
\]
Since $s_0$ satisfies \eqref{battaciaria}, from the previous estimate and again \eqref{smezzino} we can infer
\begin{equation}
\label{gretel}
\left(1-\displaystyle\frac{1}{4}\,\mathcal{A}(\Omega)\right)^{-\frac{N+2}{N}}\,\left(1-\frac{s_0}{T(\Omega^*)}\right)^{2}-1\le \frac{T(\Omega^*)}{T(\Omega)}-1\le \frac{2}{T(\Omega^*)}\,\Big(T(\Omega^*)-T(\Omega)\Big).
\end{equation}
We then observe that
\[
\left(1-\displaystyle\frac{1}{4}\,\mathcal{A}(\Omega)\right)^{-\frac{N+2}{N}}\ge 1+\frac{N+2}{4\,N}\,\mathcal{A}(\Omega)\qquad \mbox{ and }\qquad 
\left(1-\frac{s_0}{T(\Omega^*)}\right)^{2}\ge 1-\frac{2\,s_0}{T(\Omega^*)}.
\]
Thus from \eqref{gretel} we get
\begin{equation}
\label{quasi}
\begin{split}
T(\Omega^*)-T(\Omega)&\ge \frac{T(\Omega^*)}{2}\,\left[\left(1+\frac{N+2}{4\,N}\,\mathcal{A}(\Omega)\right)\, \left(1-\frac{2\,s_0}{T(\Omega^*)}\right)-1\right].
\end{split}
\end{equation}
We now recall the definition \eqref{parametrip} of $s_0$ and finally estimate
\[
\begin{split}
\left(1+\frac{N+2}{4\,N}\,\mathcal{A}(\Omega)\right)& \left(1-\frac{2\,s_0}{T(\Omega^*)}\right)-1\ge \frac{N+2}{8\,N}\,\mathcal{A}(\Omega).
\end{split}
\]
By inserting this in \eqref{quasi} and recalling that $\mathcal{A}(\Omega)<2$, we get \eqref{HN21} with
\[
\tau=T(\Omega^*)\,\frac{N+2}{64\,N}.
\]
This concludes the proof of \eqref{HN2} and thus of the theorem.
\end{proof}

\begin{remark}[Value of the constant $\tau_{N,q}$]
\label{oss:costanta}
In the previous proof $\Omega^*$ is a ball with measure $1$, then from \eqref{torsopalla}
\[
T(\Omega^*)=\frac{\omega_N^{-2/N}}{N\,(N+2)}.
\]
Thus a possible value for the constant $\tau$ in the quantitative Saint-Venant inequality \eqref{HN21} is
\[
\tau=\frac{\omega_N^{-2/N}}{16\,N\,(N+2)}\,\min\left\{1,\,\frac{c_N}{16}\,\frac{\omega_N^{-2/N}}{N}\,\frac{N+2}{(3\,N+2)^2},\,\frac{N+2}{4\,N}\right\},
\]
with $c_N$ as in Lemma \ref{lm:PSbooster}. Consequently, from Proposition \ref{prop:gerarchia}
we get
\[
\tau_{N,q}=(2^{\vartheta}-1)\,|B|^{\frac{2}{N}+\frac{2}{q}-1}\,\lambda_1^q(B)\,\min\left\{\tau\,\frac{|B|^\frac{N+2}{N}}{T(B)},\,\frac{1}{8}\right\},\qquad \vartheta=\frac{2+\displaystyle\frac{2}{q}\,N-N}{N+2}<1.
\]
for the constant appearing in \eqref{HN2}. 
\par
Let us make some comments about the dependence of $\tau_{N,q}$ on the parameter $q$. It is well-known that for $N\ge 3$ we have
\[
\lim_{q\nearrow 2^*} \lambda_{1}^q(\Omega)=\inf_{u\in W^{1,2}_0(\Omega)}\left\{\int_{\mathbb{R}^N} |\nabla u|^2\, dx\, :\,\|u\|_{L^{2^*}(\mathbb{R}^N)}=1\right\}.
\]
The latter coincides with the best constant in the Sobolev inequality on $\mathbb{R}^N$, a quantity which does not depend on the open set $\Omega$. This implies that the constant $\tau_{N,q}$ must converge to $0$ as $q$ goes to $2^*$. From the explicit expression above, we have 
\[
\gamma_{N,q}\simeq (2^{\vartheta}-1)\simeq (2^*-q),\qquad \mbox{ as $q$ goes to $2^*$.}
\]
The conformal case $N=2$ is a little bit different. In this case we have (see \cite[Lemma 2.2]{MR1232190})
\[
\lim_{q\to+\infty} \lambda_{1}^q(\Omega)=0\qquad \mbox{ and }\qquad \lim_{q\to+\infty} q\, \lambda_{1}^q(\Omega)=8\, \pi\, e,
\] 
for every open bounded set $\Omega$. By observing that for $N=2$ we have $\vartheta=1/q$, the asymptotic behaviour of the constant $\gamma_{2,q}$ is then given by
\[
\gamma_{2,q}\simeq \left(2^\frac{1}{q}-1\right)\, \lambda_{1}^q(B)\simeq \frac{1}{q^2},\qquad \mbox{as $q$ goes to $+\infty$.}
\]
\end{remark}

\subsection{The Faber-Krahn inequality in sharp quantitative form}

As simple and general as it is, the previous result is however not sharp.
Indeed, Bhattacharya and Weitsman \cite[Section 8]{MR1647193} and Nadirashvili \cite[page 200]{MR1476715} indipendently conjectured the following.  
\begin{bwn}
There exists a dimensional constant $C>0$ such that
\[
|\Omega|^{2/N}\,\lambda(\Omega)- |B|^{2/N}\,\lambda(B)\ge \frac{1}{C}\, \mathcal{A}(\Omega)^2.
\]
\end{bwn}
After some attempts and intermediate results, this has been proved by Brasco, De Philippis and Velichkov in \cite{MR3357184}. This follows by choosing \(q=2\) in the statement below, which is again valid in the more general case of the first semilinear eigenvalues. We remark that this time the constant appearing in the estimate {\it is not explicit}. However, we can trace its dependence on $q$, which is the same as that of $\gamma_{N,q}$ in Remark \ref{oss:costanta}.
\begin{theorem}
\label{thm:fkstability}
Let $1\le q<2^*$. There exists a constant  \(\gamma_{N,q}\), depending only on the dimension \(N\) and $q$, such that for every open set \(\Omega\subset \mathbb{R}^N\) with finite measure we have
\begin{equation}
\label{eq:fkstability}
|\Omega|^{\frac{2}{N}+\frac{2}{q}-1}\,\lambda_{1}^q(\Omega)-|B|^{\frac{2}{N}+\frac{2}{q}-1}\, \lambda_{1}^q(B)\ge \gamma_{N,q}\, \mathcal{A}(\Omega)^2.
\end{equation}
\end{theorem}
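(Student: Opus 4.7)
My plan is to follow the strategy introduced by Brasco--De Philippis--Velichkov, which combines two main ingredients: a reduction, via the Kohler-Jobin inequality, to the case of the torsional rigidity, and a \emph{selection principle} à la Cicalese--Leonardi that converts the problem into a regularity statement near the ball.

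First, using the same reduction exploited in Theorem \ref{teo:HN} (namely Proposition \ref{prop:gerarchia} with $g(t)=t^2$ and $d(\Omega)=\mathcal{A}(\Omega)$), it is enough to establish a \emph{sharp quantitative Saint-Venant inequality}
\[
T(B)-T(\Omega)\ge c_N\,\mathcal{A}(\Omega)^2,
\]
for every $\Omega\subset\mathbb{R}^N$ with $|\Omega|=|B|$. The explicit dependence on $q$ of the final constant $\gamma_{N,q}$ is then automatic from the statement of Proposition \ref{prop:gerarchia}, giving the asymptotics announced in Remark \ref{oss:costanta}.

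The heart of the argument is a proof by contradiction. Suppose the Saint-Venant estimate fails: one finds a sequence $\{\Omega_n\}$ with $|\Omega_n|=|B|$, $\mathcal{A}(\Omega_n)>0$, and
\[
T(B)-T(\Omega_n)\le \frac{1}{n}\,\mathcal{A}(\Omega_n)^2.
\]
By the non-sharp Theorem \ref{teo:HN} applied to $q=1$, we know $\mathcal{A}(\Omega_n)\to 0$. The selection principle replaces $\Omega_n$ by a ``better'' sequence $U_n$ obtained by minimizing a penalized shape functional of the form
\[
\mathcal{J}_n(U):=-T(U)+\Lambda_n\,\bigl||U|-|B|\bigr|+\eta_n\,\bigl(\mathcal{A}(U)-\mathcal{A}(\Omega_n)\bigr)^2,
\]
with $\Lambda_n,\eta_n$ tuned so that $U_n$ still violates the desired inequality (with a slightly worse constant) while enjoying better variational properties. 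Standard concentration-compactness arguments together with the Saint-Venant inequality \eqref{sv} show that (up to translations) $U_n$ converges in $L^1$ to $B$.

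The crucial step is the regularity theory: since $U_n$ minimizes $\mathcal{J}_n$ under a mild volume penalty, it turns out to be a $(\Lambda,r_0)$--quasi-minimizer of the perimeter (this uses a comparison between torsional energy and perimeter for sets close to the ball, via the Euler--Lagrange equation for the torsion function). One can then invoke the De Giorgi--Tamanini theory to upgrade $L^1$ convergence to $C^{1,\alpha}$ convergence of $\partial U_n$ to $\partial B$; in particular, each $\partial U_n$ can eventually be written as a normal graph $\partial B\ni x\mapsto x+\varphi_n(x)\,\nu(x)$ with $\|\varphi_n\|_{C^{1,\alpha}}\to 0$. This regularity step is where all the technical weight lies, and I expect it to be the main obstacle.

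Once one is in the smooth graph regime, the proof concludes via a Fuglede-type second-order expansion. On the one hand, expanding $\mathcal{A}(U_n)$ in the graph function gives
\[
\mathcal{A}(U_n)\le C\,\|\varphi_n\|_{L^1(\partial B)}.
\]
On the other hand, by expanding the torsional rigidity at the ball (which is a critical point by the Saint-Venant inequality) and analyzing the quadratic form in terms of spherical harmonic decomposition of $\varphi_n$, using that the mean of $\varphi_n$ is controlled by the volume constraint and that translations are quotiented out by the choice of the asymmetry-minimizing ball, one obtains a coercivity estimate
\[
T(B)-T(U_n)\ge c\,\|\varphi_n\|_{H^{1/2}(\partial B)}^2\ge c\,\|\varphi_n\|_{L^2(\partial B)}^2\ge c\,\|\varphi_n\|_{L^1(\partial B)}^2.
\]
Combining the two displays yields $T(B)-T(U_n)\ge c\,\mathcal{A}(U_n)^2$, contradicting the selection-principle hypothesis on $U_n$ for large $n$. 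This finishes the proof and, via the Kohler-Jobin step, establishes \eqref{eq:fkstability}.
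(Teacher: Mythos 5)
Your skeleton is the right one — Kohler--Jobin reduction to the torsional rigidity, a penalized selection principle in the spirit of Cicalese--Leonardi, regularity of the selected minimizers, and a Fuglede-type second variation at the ball. This is exactly how the paper argues. But the regularity step, which you correctly flag as the main obstacle, is not merely ``technically heavy'' as stated: there is a specific structural obstruction that your penalization cannot overcome, and the paper spends most of its effort precisely on circumventing it.

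The problem is your choice to penalize with the Fraenkel asymmetry $\mathcal{A}(\cdot)$ itself. If you minimize $-T(\Omega)$ plus a volume penalty plus a term depending on $\mathcal{A}(\Omega)$, a formal computation of the Euler--Lagrange condition for the associated free-boundary problem (in terms of the torsion function $w_j=w_{U_j}$) gives
\[
\left|\frac{\partial w_j}{\partial\nu}\right|^2 = \Lambda + \text{(const)}\cdot\big(1_{\mathbb{R}^N\setminus\overline{B_1}} - 1_{B_1}\big)\qquad\text{on }\partial U_j,
\]
since the first variation of $\Omega\mapsto|\Omega\Delta B_1|$ has a sign jump across $\partial B_1$. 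Thus $|\nabla w_j|$ is \emph{discontinuous} along $\partial U_j$ wherever $\partial U_j$ meets $\partial B_1$ — which is unavoidable when $U_j$ is a small, nontrivial perturbation of $B_1$. Since classical elliptic regularity would force $w_j\in C^{1,\gamma}(\overline{U_j})$ as soon as $\partial U_j\in C^{1,\gamma}$, this places a hard cap on the attainable regularity of $\partial U_j$, well below the $C^{2,\gamma}$ needed for the Fuglede expansion (your Proposition~\ref{prop:fine}). The paper's remedy is to replace $\mathcal{A}$ in the penalized functional with the smoother asymmetry
\[
\alpha(\Omega)=\int_{\Omega\Delta B_1(x_\Omega)}\big|1-|x-x_\Omega|\big|\,dx,
\]
whose integrand vanishes precisely on $\partial B_1(x_\Omega)$, so that the free boundary condition is continuous and Alt--Caffarelli type regularity can be pushed all the way to $C^k$ for every $k$; the comparison properties \eqref{domina!}--\eqref{domina!2} then let one translate back to the Fraenkel asymmetry at the end. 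Without a fix of this kind your argument stalls at the regularity step.

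A secondary, but still substantial, issue: you invoke $(\Lambda,r_0)$-quasi-minimality of the perimeter together with De Giorgi--Tamanini to obtain boundary regularity. This is the wrong regularity theory for this problem. The leading-order term in the selected functional is $-T(U)$, a nonlocal bulk energy, not a perimeter; the route to regularity is through the free boundary problem for the torsion function — i.e.\ noting that $w_j$ minimizes an Alt--Caffarelli type functional of the form \eqref{funzioni} among functions with compact support, and then applying free boundary regularity (Alt--Caffarelli, Brian\c{c}on, Brian\c{c}on--Lamboley). Also note that the paper first reduces to sets contained in a fixed large ball $B_R$ (Lemma~\ref{lm:bounded}), a compactness reduction your outline omits but which is needed for the minimization problem in the selection principle to be well posed, and replaces the na\"ive Lagrange-multiplier term $\Lambda\,|\Omega|$ (which makes the infimum $-\infty$ by scaling) with a penalization $f(|\Omega|-|B_1|)$ as in Aguilera--Alt--Caffarelli.
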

The proof of this result is quite long and technical. We will briefly describe the main ideas and steps of the proof, referring the reader to the original paper \cite{MR3357184} for all the details.
\par
Let us stress that differently from the previous results, {\it the proof of Theorem \ref{thm:fkstability} does not rely on quantitative versions of the P\'olya-Szeg\H{o} principle}, since this technique seems very hard to implement in sharp form (as explained at the beginning of Subsection \ref{sec:24}). On the contrary, the main core is based on the \emph{selection principle} introduced by Cicalese and Leonardi in \cite{MR2980529} to give a new proof of the aforementioned quantitative isoperimetric inequality \eqref{isoFMP}.
\par
The  selection principle turns out to be a very flexible technique and after the paper \cite{MR2980529} it has been applied to a wide variety of geometric problems, see for instance \cite{MR3077924, MR3357860} and \cite{MR3189461}. 

\vskip.2cm\noindent
Let us now  explain the main steps of the proof of Theorem \ref{thm:fkstability}. 
\vskip.2cm\noindent
\centerline{$\boxed{\mbox{\bf Step 1: reduction to the torsional rigidity}}$} 
\vskip.2cm\noindent
We start by observing that by Proposition \ref{prop:gerarchia} it is sufficient to prove the result for the torsional rigidity. In other words, it is sufficient to prove
\begin{equation}
\label{energyintro}
T(B_1)-T(\Omega)\ge \frac{1}{C}\,\mathcal A(\Omega)^2,\qquad\mbox{ for $\Omega\subset\mathbb{R}^N$ such that}\ |\Omega|=|B_1|,
\end{equation}
where \(C\) is a  dimensional constant and $B_1$ is the ball of radius $1$ centered at the origin. 
\vskip.2cm\noindent
\centerline{$\boxed{\mbox{\bf Step 2: sharp stability for nearly spherical sets}}$} 
 \vskip.2cm\noindent
One then observes that if $\Omega$ is a sufficiently smooth perturbation of $B_1$, then \eqref{energyintro} can be proved by means of a second order expansion argument. More precisely, in this step we consider the following class of sets.
\begin{definition}
An open bounded set $\Omega\subset\mathbb{R}^N$ is said {\it nearly spherical of class $C^{2,\gamma}$ parametrized by \(\varphi\)}, if there exists $\varphi\in C^{2,\gamma}(\partial B_1)$ with $\|\varphi\|_{L^\infty}\le 1/2$ and such that $\partial\Omega$ is represented by
\[
\partial\Omega=\big\{x\in\mathbb{R}^N\, :\, x=(1+\varphi(y))\,y, \mbox{ for } y\in\partial B_1\big\}.
\]
\end{definition}
For nearly spherical sets, we then have the following quantitative estimate. The proof relies on a second order Taylor expansion for the torsional rigidity, see \cite{MR1915674} and \cite[Appendix A]{MR3357184}.
\begin{proposition}
\label{prop:fine}
Let $0<\gamma\le 1$. Then there exists \(\delta_1=\delta_1(N,\gamma)>0\) such that if \(\Omega\) is a nearly spherical set of class $C^{2,\gamma}$ parametrized by $\varphi$ with 
\[
\|\varphi\|_{C^{2,\gamma}(\partial B_1)}\le \delta_1,\qquad |\Omega|=|B_1|\qquad \mbox{ and }\qquad x_\Omega:=\fint_\Omega x\,dx=0,
\] 
then
\begin{equation}
\label{ballstab}
T(B_1)-T(\Omega)\ge \frac{1}{32\,N^2}\,\left\|\varphi\right\|^2_{L^2(\partial B_1)}.
\end{equation}
\end{proposition}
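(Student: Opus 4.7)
The approach is a second-order Taylor expansion of the shape functional $\Omega \mapsto T(\Omega)$ at $\Omega = B_1$, carried out along the one-parameter family $\Omega_t$ whose boundary is parametrized by $t\varphi$. Schematically, one writes
\[
T(B_1) - T(\Omega) = -T'(B_1)[\varphi] - \tfrac{1}{2}\, T''(B_1)[\varphi,\varphi] + R(\varphi),
\]
where the linear term will vanish up to quadratic order thanks to the volume constraint, the quadratic form $T''(B_1)[\varphi,\varphi]$ will produce the main $\|\varphi\|_{L^2}^2$ contribution once the volume and barycenter constraints are used, and $R(\varphi)$ will be a cubic remainder absorbed using the $C^{2,\gamma}$ smallness of $\varphi$.

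First I would recall that the torsion function of $B_1$ is $w_{B_1}(x) = (1-|x|^2)/(2N)$, so that $|\nabla w_{B_1}|^2 \equiv 1/N^2$ on $\partial B_1$. By the Hadamard formula for shape derivatives, $T'(B_1)[\varphi] = N^{-2} \int_{\partial B_1} \varphi\, d\mathcal{H}^{N-1}$, while the volume identity gives
\[
0 = |\Omega| - |B_1| = \int_{\partial B_1}\varphi\, d\mathcal{H}^{N-1} + \tfrac{N-1}{2}\int_{\partial B_1}\varphi^2\, d\mathcal{H}^{N-1} + O\!\bigl(\|\varphi\|_{L^\infty}\,\|\varphi\|_{L^2}^2\bigr),
\]
which turns the linear term into a controlled quadratic one. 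The barycenter condition $x_\Omega = 0$ analogously forces $\int_{\partial B_1} y_i \varphi \, d\mathcal{H}^{N-1} = O(\|\varphi\|_{L^\infty}\,\|\varphi\|_{L^2}^2)$ for each coordinate direction $i$.

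Next I would derive an explicit expression for $T''(B_1)[\varphi,\varphi]$ using the classical Hadamard--Zolesio shape differentiation machinery. The resulting quadratic form can be written in terms of the trace on $\partial B_1$ of an auxiliary harmonic function whose boundary datum is linear in $\varphi$. Decomposing $\varphi = \sum_{k\ge 0} \varphi_k$ into spherical harmonics, this auxiliary problem is solved explicitly by separation of variables, so that $T''(B_1)[\varphi,\varphi]$ diagonalizes as $\sum_k c_k(N)\,\|\varphi_k\|_{L^2(\partial B_1)}^2$ with computable coefficients. A direct calculation shows that the contribution of $\varphi_k$ for $k \ge 2$ has a definite sign with a lower bound of order $1/N^2$; the contributions from $k=0$ and $k=1$ are exactly matched against the (small) quadratic residues of the volume and barycenter constraints derived in the previous step. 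Putting these ingredients together yields
\[
T(B_1) - T(\Omega) \ge \frac{1}{16 N^2}\,\|\varphi\|_{L^2(\partial B_1)}^2 - |R(\varphi)|.
\]
Finally the cubic remainder is controlled by writing $T(\Omega) = T(\Omega_1)$ as an integral over $B_1$ via a $C^{2,\gamma}$ diffeomorphism $\Phi\colon B_1 \to \Omega$ with $\|\Phi-\mathrm{Id}\|_{C^{2,\gamma}} \lesssim \|\varphi\|_{C^{2,\gamma}}$, and differentiating three times in $t$. One obtains $|R(\varphi)| \le C_N\,\|\varphi\|_{C^{2,\gamma}}\,\|\varphi\|_{L^2(\partial B_1)}^2$, which is absorbed into the main term by choosing $\delta_1$ small enough and then produces the factor $1/(32N^2)$ in \eqref{ballstab}.

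I expect the main obstacle to be the explicit Fuglede-type diagonalization of $T''(B_1)[\varphi,\varphi]$ and the verification that the spherical-harmonic eigenvalues for $k \ge 2$ are uniformly bounded below by an explicit multiple of $1/N^2$. The bookkeeping is delicate because the several terms coming out of the Hadamard formula partially cancel, and only after matching the $k=0$ and $k=1$ coefficients with the quadratic residues of the volume and barycenter constraints does one recover a coercive quadratic form with a clean dimensional constant.
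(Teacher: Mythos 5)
Your proposal follows exactly the route the paper indicates: a second-order (Fuglede-type) expansion of $T$ around $B_1$ using Hadamard shape differentiation, a spherical-harmonic diagonalization of $T''(B_1)$, cancellation of the $k=0$ and $k=1$ modes via the volume and barycenter constraints, and absorption of the cubic remainder via the $C^{2,\gamma}$ smallness of $\varphi$; the paper itself does not reproduce these computations but refers to Dambrine \cite{MR1915674} and \cite[Appendix~A]{MR3357184}, which carry out precisely this scheme (and in fact obtain the stronger $W^{1/2,2}$ control on $\varphi$, since the eigenvalues $c_k(N)$ of the diagonalized quadratic form grow linearly in $k$). Your sketch correctly identifies all the structural ingredients, including the fact that $|\nabla w_{B_1}|\equiv 1/N$ on $\partial B_1$, and the step you flag as the main obstacle --- the explicit diagonalization and the uniform coercivity for $k\ge 2$ --- is indeed the technical core of the cited references.
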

\begin{remark}
It is not difficult to see that \eqref{ballstab} implies \eqref{energyintro} for the class of sets under consideration. Indeed, we have
\[
\left\|\varphi\right\|^2_{L^2(\partial B_1)}\ge \frac{1}{N\,\omega_N}\,\left(\int_{\partial B_1}|\varphi|\,d\mathcal{H}^{N-1}\right)^2,
\]
and
\[
\mathcal{A}(\Omega)\le \frac{|\Omega\Delta B_1|}{|\Omega|}=\frac{1}{N\,\omega_N}\,\int_{\partial B_1} |1-(1+\varphi)^N|\,d\mathcal{H}^{N-1}\simeq \frac{1}{\omega_N}\,\int_{\partial B_1} |\varphi|\,d\mathcal{H}^{N-1}.
\]
Let us record that actually inequality \eqref{ballstab} holds true in a stronger form, where the $L^2$ norm of $\varphi$ is replaced by its $W^{1/2,2}(\partial\Omega)$ norm, see \cite[Theorem 3.3]{MR3357184}.
\end{remark}
\vskip.2cm\noindent
\centerline{$\boxed{\mbox{\bf Step 3: reduction to the small asymmetry regime}}$} 
 \vskip.2cm\noindent
This simple step permits to reduce the task to proving \eqref{energyintro} for sets having suitably small Fraenkel asymmetry. Namely, we have the following result.
\begin{proposition}
Let us suppose that there exist $\varepsilon>0$ and $c>0$ such that 
\[
T(B_1)-T(\Omega)\ge c\,\mathcal A(\Omega)^2,\quad\mbox{ for $\Omega$ such that}\quad |\Omega|=|B_1|\ \mbox{ and }\ \mathcal{A}(\Omega)<\varepsilon.
\]
Then \eqref{energyintro} holds true with
\[
\frac{1}{C}=\min\{c,\, \varepsilon\,\tau\}
\]
where $\tau>0$ is the dimensional constant appearing in \eqref{HN21}.
\end{proposition}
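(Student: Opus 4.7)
The plan is a simple dichotomy argument, splitting on whether $\mathcal{A}(\Omega)$ lies above or below the threshold $\varepsilon$ given in the hypothesis. Since $|\Omega|=|B_1|$ is fixed and the target inequality \eqref{energyintro} is quadratic in $\mathcal{A}(\Omega)$, the key observation is that for sets of \emph{large} asymmetry, the cubic bound \eqref{HN21} from Theorem \ref{teo:HN} automatically dominates any quadratic bound once one factor of $\mathcal{A}(\Omega)$ is bounded below.

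More precisely, I would argue as follows. First, suppose $\mathcal{A}(\Omega)<\varepsilon$. Then the hypothesis applies directly and gives
\[
T(B_1)-T(\Omega)\ge c\,\mathcal{A}(\Omega)^2\ge \min\{c,\varepsilon\tau\}\,\mathcal{A}(\Omega)^2,
\]
which is exactly \eqref{energyintro} with the claimed constant. Second, suppose $\mathcal{A}(\Omega)\ge\varepsilon$. In this regime I would invoke the non-sharp quantitative Saint-Venant inequality \eqref{HN21} proved in Theorem \ref{teo:HN}, namely
\[
T(B_1)-T(\Omega)\ge\tau\,\mathcal{A}(\Omega)^3=\tau\,\mathcal{A}(\Omega)\,\mathcal{A}(\Omega)^2\ge \tau\,\varepsilon\,\mathcal{A}(\Omega)^2\ge \min\{c,\varepsilon\tau\}\,\mathcal{A}(\Omega)^2.
\]
Combining the two cases yields \eqref{energyintro} with $1/C=\min\{c,\varepsilon\tau\}$ for every open set $\Omega$ of measure $|B_1|$.

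There is essentially no obstacle here: the entire content of the proposition is the algebraic fact that a cubic lower bound is no worse than a quadratic one whenever the asymmetry is bounded away from zero, which is exactly what \eqref{HN21} provides as a ``background'' estimate. The proposition is a clean illustration of a standard reduction: when attacking sharp quantitative inequalities, one only needs to worry about the near-equality case (here, $\mathcal{A}(\Omega)$ small), while the large-asymmetry regime is automatically handled by any pre-existing, possibly non-sharp, quantitative inequality. No extra subtlety arises from the scaling normalization $|\Omega|=|B_1|$, since both \eqref{energyintro} and \eqref{HN21} are formulated at fixed volume.
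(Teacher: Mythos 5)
Your proof is correct and is essentially identical to the paper's: both split on whether $\mathcal{A}(\Omega)\ge\varepsilon$ and, in the large-asymmetry regime, use the cubic bound \eqref{HN21} together with $\mathcal{A}(\Omega)\ge\varepsilon$ to downgrade to a quadratic bound with constant $\varepsilon\tau$. The paper additionally remarks that the power-law form of \eqref{HN21} is not really needed—any result guaranteeing $\mathcal{A}(\Omega)\to 0$ as $T(B_1)-T(\Omega)\to 0$ would suffice—but that observation does not change the substance of the argument.
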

\begin{proof}
Once we have Theorem \ref{teo:HN} at our disposal, the proof is straightforward. Indeed, if $\mathcal{A}(\Omega)\ge \varepsilon$, then by \eqref{HN21} we get
\[
T(B_1)-T(\Omega)\ge \tau\,\mathcal{A}(\Omega)^3\ge \varepsilon\,\tau\,\mathcal{A}(\Omega)^2,
\]
as desired. However, let us point out that for the proof of this result it is not really needed the power law relation given by \eqref{HN21}, it would be sufficient to know that \(\mathcal{A}(\Omega)\to 0\) as \(T(B_1)-T(\Omega)\to 0\).
\end{proof}
\vskip.2cm\noindent
\centerline{$\boxed{\mbox{\bf Step 4: reduction to bounded sets}}$} 
 \vskip.2cm\noindent
We can still make a further reduction, namely we can restrict ourselves to prove \eqref{energyintro} for sets with uniformly bounded diameter. This is a consequence of the following expedient result.
\begin{lemma}
\label{lm:bounded}
There exist positive constants \(C=C(N)\), \(\mathcal{T}=\mathcal{T}(N)\) and \(D=D(N)\) such that for every open set  \(\Omega\subset\mathbb{R}^N\) with 
\[
|\Omega|=|B_1|\qquad \mbox{ and }\qquad T(B_1)-T(\Omega)\le \mathcal{T},
\]
we can find another open set \(\widetilde \Omega\subset\mathbb{R}^N\) with 
\[
|\widetilde \Omega|=|B_1|\qquad \mbox{ and }\qquad {\rm diam}(\widetilde \Omega)\le D,
\] 
such that
\begin{equation}
\label{bounded}
\mathcal{A}(\Omega)\le \mathcal{A}(\widetilde \Omega)+C\,\Big(T(B_1)-T(\Omega)\Big)\qquad\mbox{ and }\qquad \Big(T(B_1)-T(\widetilde\Omega)\Big)\le C\, \Big(T(B_1)-T(\Omega)\Big).
\end{equation}
\end{lemma}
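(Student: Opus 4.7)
The plan is a concentration--compactness argument based on a \emph{splitting loss} estimate for the Saint-Venant inequality \eqref{sv}. Indeed, if $\Omega=\Omega_1\sqcup\Omega_2$ with $\mathrm{dist}(\Omega_1,\Omega_2)>0$, then the torsion functions decouple and $T(\Omega)=T(\Omega_1)+T(\Omega_2)$; combining this with \eqref{sv} applied separately to $\Omega_1$ and $\Omega_2$, and with the strict superadditivity $v_1^{(N+2)/N}+v_2^{(N+2)/N}<(v_1+v_2)^{(N+2)/N}$ for $v_1,v_2>0$, one obtains a linear bound
\[
T(B_1)-T(\Omega)\ge \kappa_N\,\min\{|\Omega_1|,|\Omega_2|\},
\]
with a dimensional $\kappa_N>0$. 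Hence for $\mathcal{T}=T(B_1)-T(\Omega)$ small, no splitting of $\Omega$ into two pieces, each of substantial volume and at positive distance, can occur.

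To produce $\widetilde\Omega$, tile $\mathbb{R}^N$ by half-open unit cubes $\{Q_j\}_{j\in\mathbb{Z}^N}$ and let $Q_{j_0}$ be the one maximizing $|\Omega\cap Q_j|$, centered at $x_0$. For a radius $R$ to be chosen dimensionally, set $\Omega_{\mathrm{in}}=\Omega\cap B_R(x_0)$ and $\Omega_{\mathrm{out}}=\Omega\setminus B_{R+1}(x_0)$, so that $\mathrm{dist}(\Omega_{\mathrm{in}},\Omega_{\mathrm{out}})\ge 1$. If $|\Omega_{\mathrm{in}}|\ge |B_1|/2$, the splitting loss yields $|\Omega_{\mathrm{out}}|\le \kappa_N^{-1}(T(B_1)-T(\Omega))$ at once. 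Otherwise iterate, selecting a heaviest cube in $\mathbb{R}^N\setminus B_{R+1}(x_0)$ and repeating the construction; each step subtracts from $T(B_1)-T(\Omega)$ a contribution of order $\kappa_N\,\min\{m,|B_1|/2\}$, where $m$ is the mass of the new heavy region, so after a dimensionally bounded number of steps we must land in the balanced regime. This fixes both $R=R(N)$ and $\mathcal{T}=\mathcal{T}(N)$. The annular mass $|\Omega\cap (B_{R+1}\setminus B_R)(x_0)|$, not directly controlled by the above, is handled by choosing $R$ inside an interval of unit length so that this annular mass is itself at most $C(N)\,\mathcal{T}$.

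Finally, set $\widetilde\Omega:=\lambda\bigl((\Omega\cap B_{R+1}(x_0))-x_0\bigr)$ with $\lambda>1$ chosen so that $|\widetilde\Omega|=|B_1|$. Since $|\Omega\setminus B_{R+1}(x_0)|\le C(N)\,\mathcal{T}$, we have $\lambda\le 2$ for $\mathcal{T}$ small enough, whence $\mathrm{diam}(\widetilde\Omega)\le 4(R+1)=:D(N)$. The asymmetry bound in \eqref{bounded} follows from the elementary triangle estimate
\[
\mathcal{A}(\Omega)\le \mathcal{A}(\Omega\cap B_{R+1}(x_0))+\frac{2\,|\Omega\setminus B_{R+1}(x_0)|}{|\Omega|},
\]
(proved by comparing concentric balls of volumes $|\Omega|$ and $|\Omega\cap B_{R+1}(x_0)|$), together with the scaling invariance $\mathcal{A}(\widetilde\Omega)=\mathcal{A}(\Omega\cap B_{R+1}(x_0))$. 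For the torsional deficit bound, the identity $T(\widetilde\Omega)=\lambda^{N+2}\,T(\Omega\cap B_{R+1}(x_0))\ge T(\Omega\cap B_{R+1}(x_0))$, combined with one more application of the splitting loss to the decomposition $\Omega=(\Omega\cap B_{R+1}(x_0))\sqcup \Omega_{\mathrm{out}}$ (after a slight geometric gap), gives $T(\Omega)-T(\widetilde\Omega)\le C(N)\,(T(B_1)-T(\Omega))$.

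The hardest point will be the quantitative termination of the iteration: one must guarantee that the number of iteration steps is bounded purely in terms of $N$, which in turn relies on the \emph{linear} (and not merely positive) lower bound on the splitting loss. A secondary technical nuisance is the positive gap required between $\Omega_{\mathrm{in}}$ and $\Omega_{\mathrm{out}}$, which forces the averaging argument over $R$ to deal with the annular shell that is otherwise left uncontrolled by the dichotomy.
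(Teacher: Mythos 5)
Your splitting-loss observation is correct and rather elegant: if $\Omega$ decomposes \emph{exactly} as $\Omega_1\sqcup\Omega_2$ with $|\Omega_1|+|\Omega_2|=|B_1|$, then $T(\Omega)=T(\Omega_1)+T(\Omega_2)$ together with the Saint-Venant inequality on each piece and the strict superadditivity of $v\mapsto v^{(N+2)/N}$ does give $T(B_1)-T(\Omega)\ge\kappa_N\min\{|\Omega_1|,|\Omega_2|\}$. The trouble is that you never have such an exact decomposition. When you set $\Omega_{\mathrm{in}}=\Omega\cap B_R$ and $\Omega_{\mathrm{out}}=\Omega\setminus B_{R+1}$, the union $\Omega_{\mathrm{in}}\sqcup\Omega_{\mathrm{out}}$ is a \emph{proper subset} of $\Omega$, so by monotonicity $T(\Omega)\ge T(\Omega_{\mathrm{in}})+T(\Omega_{\mathrm{out}})$, which is the wrong sense: the splitting loss gives a lower bound on $T(B')-T(\Omega_{\mathrm{in}}\sqcup\Omega_{\mathrm{out}})$ for the ball $B'$ of the smaller volume $|\Omega_{\mathrm{in}}|+|\Omega_{\mathrm{out}}|$, and tells you nothing about $T(B_1)-T(\Omega)$. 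Thus the assertion that ``$|\Omega_{\mathrm{out}}|\le\kappa_N^{-1}(T(B_1)-T(\Omega))$ at once'' does not follow; this is the central gap. The natural repair is an IMS localization with a partition of unity $\chi_1^2+\chi_2^2=1$ supported on the annulus: one then obtains
\[
T(\Omega\cap B_{R+1})+T(\Omega\setminus\overline{B_R})\ge T(\Omega)-\|w_\Omega\|_{L^\infty}^2\int w^{-2}\big(|\nabla\chi_1|^2+|\nabla\chi_2|^2\big)\,w^2\,dx,
\]
and the error term is of order the annular mass $a=|\Omega\cap(B_{R+1}\setminus B_R)|$ (since $\|w_\Omega\|_\infty\le 1/(2N)$ by Talenti's comparison). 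But this produces an inequality of the shape $\min\{|\Omega_{\mathrm{in}}|,|\Omega_{\mathrm{out}}|\}\le C\big(T(B_1)-T(\Omega)\big)+C'a$, and you now need $a\le C''\big(T(B_1)-T(\Omega)\big)$ as well, with the \emph{actual} deficit on the right.

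This is precisely where the averaging step fails. Choosing $R$ in a unit interval (or among $k$ disjoint unit annuli) by pigeonhole gives some annulus with $a\le|\Omega|/k$, a \emph{fixed} dimensional quantity, not one proportional to $T(B_1)-T(\Omega)$; and writing ``at most $C(N)\,\mathcal{T}$'' is likewise not enough, since \eqref{bounded} must hold with $C\,(T(B_1)-T(\Omega))$, which can be arbitrarily smaller than $\mathcal{T}$. Even invoking a non-sharp quantitative Saint-Venant inequality (so that $\mathcal{A}(\Omega)$, and hence the tail and annular masses, are bounded by a power $\delta^{1/4}$ or $\delta^{1/3}$ of the deficit) does not close the gap, because such bounds are still much larger than $\delta$ itself. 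So the construction as proposed does not give the two estimates in \eqref{bounded}. For reference, the paper's own argument is routed differently: it relies on the Fusco--Maggi--Pratelli non-sharp quantitative Saint-Venant inequality (deficit $\gtrsim\mathcal{A}(\Omega)^4$) to locate the bulk of $\Omega$ and then performs a considerably more delicate truncation estimate; the concentration--compactness splitting loss is not the mechanism used there, and, as shown above, it cannot by itself deliver the required linear-in-deficit control of the far mass.
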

The proof of this result is quite tricky and we refer the reader to \cite[Lemma 5.3]{MR3357184}. It is however quite interesting to remark that one of the key ingredients of the proof is the knowledge of some suitable non-sharp quantitative Saint-Venant inequality, where the deficit $T(\Omega)-T(B_1)$ controls a power of the Fraenkel asymmetry. For example, in \cite{MR3357184} a prior result by Fusco, Maggi and Pratelli is used, with exponent $4$ on the asymmetry (see Section \ref{sec:71} below for more comments on their result).
\vskip.2cm\noindent
With the previous result in force, the main output of this step is the following result. 
\begin{proposition}
Let $D$ be the same constant as in Lemma \ref{lm:bounded}. Let us suppose that there exist $c>0$ such that 
\begin{equation}
\label{staidentro}
T(B_1)-T(\Omega)\ge c\,\mathcal A(\Omega)^2,\quad\mbox{ for $\Omega$ such that}\quad |\Omega|=|B_1|\ \mbox{ and }\ \mathrm{diam}(\Omega)\le D.
\end{equation}
Then \eqref{energyintro} holds true. 
\end{proposition}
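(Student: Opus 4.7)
The plan is to dichotomize on the size of the torsional deficit $T(B_1)-T(\Omega)$ relative to the threshold $\mathcal{T}=\mathcal{T}(N)$ provided by Lemma \ref{lm:bounded}, and use the reduction lemma to transfer the small-deficit case to the bounded regime where the hypothesis \eqref{staidentro} is available.

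First, suppose $T(B_1)-T(\Omega)>\mathcal{T}$. Since every open set satisfies the trivial bound $\mathcal{A}(\Omega)<2$, we immediately get
\[
T(B_1)-T(\Omega)> \mathcal{T}\ge \frac{\mathcal{T}}{4}\,\mathcal{A}(\Omega)^2,
\]
and \eqref{energyintro} holds in this regime with constant $\mathcal{T}/4$. This disposes of sets with large deficit.

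In the complementary regime $T(B_1)-T(\Omega)\le \mathcal{T}$, Lemma \ref{lm:bounded} produces a competitor $\widetilde\Omega$ with $|\widetilde\Omega|=|B_1|$ and $\mathrm{diam}(\widetilde\Omega)\le D$, for which \eqref{bounded} holds. Since $\widetilde\Omega$ satisfies the diameter bound, assumption \eqref{staidentro} applies and gives
\[
c\,\mathcal{A}(\widetilde\Omega)^2\le T(B_1)-T(\widetilde\Omega)\le C\,\bigl(T(B_1)-T(\Omega)\bigr),
\]
where we used the second inequality in \eqref{bounded}. Combined with the first inequality in \eqref{bounded}, namely
\[
\mathcal{A}(\Omega)\le \mathcal{A}(\widetilde\Omega)+C\,\bigl(T(B_1)-T(\Omega)\bigr),
\]
and the fact that $T(B_1)-T(\Omega)\le\mathcal{T}$ lets us bound the additive error by $C\sqrt{\mathcal{T}}\,\sqrt{T(B_1)-T(\Omega)}$, we obtain
\[
\mathcal{A}(\Omega)\le \left(\sqrt{C/c}+C\sqrt{\mathcal{T}}\right)\sqrt{T(B_1)-T(\Omega)}.
\]
Squaring yields \eqref{energyintro} in this regime as well, with an explicit constant depending only on $N$, $c$, and the constants from Lemma \ref{lm:bounded}. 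Taking the minimum of the two constants from the two regimes completes the proof.

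No real obstacle arises here: the work has already been done in Lemma \ref{lm:bounded}, whose two inequalities are precisely designed so that an $\mathcal{A}^2$ estimate on $\widetilde\Omega$ transfers (up to a dimensional loss) to an $\mathcal{A}^2$ estimate on $\Omega$. The only minor point to check is that the additive error $C(T(B_1)-T(\Omega))$ in the asymmetry transfer can be absorbed, which is automatic in the small-deficit regime because $T(B_1)-T(\Omega)\le \mathcal{T}$ lets us trade one power of the deficit for $\sqrt{T(B_1)-T(\Omega)}$.
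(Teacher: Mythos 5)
Your proof is correct and follows essentially the same route as the paper's: a dichotomy on whether the torsional deficit exceeds the threshold $\mathcal{T}$ from Lemma \ref{lm:bounded}, followed by transferring the hypothesis \eqref{staidentro} from $\widetilde\Omega$ back to $\Omega$ via the two inequalities in \eqref{bounded}. The only cosmetic difference is that you absorb the additive error $C\,(T(B_1)-T(\Omega))$ using the bound $T(B_1)-T(\Omega)\le\mathcal{T}$, while the paper invokes the normalization $T(B_1)-T(\Omega)\le 1$; both are the same trick of trading a full power of the deficit for a square root times a bounded constant.
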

\begin{proof}
We suppose that $\mathrm{diam}(\Omega)>D$, otherwise there is nothing to prove. Let $\mathcal{T}$ be as in the statement of Lemma \ref{lm:bounded}, we observe that if $T(B_1)-T(\Omega)>\mathcal{T}$,
then \eqref{energyintro} trivially holds true with constant $\mathcal{T}/4$. 
\par
We can thus suppose that $\Omega$ satisfies the hypotheses of Lemma \ref{lm:bounded} and find a new open set $\widetilde\Omega$ for which \eqref{staidentro} holds true. By using \eqref{bounded} and \eqref{staidentro} we get
\[
\begin{split}
\sqrt{T(B_1)-T(\Omega)}\ge \sqrt{\frac{1}{C}\,\Big(T(B_1)-T(\widetilde\Omega)\Big)}&\ge \sqrt{\frac{c}{C}}\,\mathcal{A}(\widetilde\Omega)\\
&\ge\sqrt{\frac{c}{C}}\,\left(\mathcal{A}(\Omega)-C\,\Big(T(B_1)-T(\Omega)\Big)\right).
\end{split}
\]
Since we can always suppose that $T(B_1)-T(\Omega)\le 1$, this shows \eqref{energyintro} for $\Omega$, as desired.
\end{proof}
\vskip.2cm\noindent
\centerline{$\boxed{\mbox{\bf Step 5: sharp stability for bounded sets with small asymmetry}}$} 
 \vskip.2cm\noindent
This is the core of the proof and the most delicate step. Thanks to {\bf Step 1}, {\bf Step 3} and {\bf Step 4}, in order to prove Theorem \ref{thm:fkstability}, we have to prove the following.
\begin{theorem}
\label{thm:stability_lim}
For every $R\ge 2$, there exist two constants $\widehat c=\widehat c(N,R)>0$ and $\widehat\varepsilon=\widehat \varepsilon(N,R)>0$ such that
\begin{equation}
\label{stability_lim}
T(B_1)- T(\Omega)\ge \widehat c\,\mathcal{A}(\Omega)^2, \quad\mbox{ for $\Omega\subset B_R$ such that}\quad |\Omega|=|B_1|\ \mbox{ and }\ \mathcal{A}(\Omega)\le \widehat \varepsilon.
\end{equation}
\end{theorem}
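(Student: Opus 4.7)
The plan is to argue by contradiction, using the Cicalese--Leonardi selection principle to reduce matters to the nearly spherical regime where Proposition \ref{prop:fine} applies. Suppose the conclusion fails: then for some $R\geq 2$ there exists a sequence of open sets $\Omega_n\subset B_R$ with $|\Omega_n|=|B_1|$, asymmetry $\varepsilon_n:=\mathcal{A}(\Omega_n)\to 0$ and torsion deficit $T(B_1)-T(\Omega_n)\leq \varepsilon_n^2/n$. The strategy is to replace the $\Omega_n$ by a regularized ``selected'' sequence $U_n\subset B_R$, which inherits the same bad behaviour but in addition is a $C^{2,\gamma}$ nearly spherical perturbation of $B_1$. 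Applied to $U_n$, Proposition \ref{prop:fine} then forces a deficit of order $\mathcal{A}(U_n)^2$, contradicting the assumption for $n$ large.

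The selection step is carried out by minimizing a penalized shape functional over open $U\subset B_R$ with $|U|=|B_1|$, roughly of the form
$$\mathcal{F}_n(U) := \bigl[T(B_1)-T(U)\bigr] + \eta_n\,\bigl|\mathcal{A}(U)-\varepsilon_n\bigr|,$$
with $\eta_n$ chosen much smaller than $\varepsilon_n$ but much larger than $\varepsilon_n^2/n$. Existence of a minimizer $U_n$ follows from the standard $L^1$-compactness of characteristic functions of subsets of $B_R$, together with upper semicontinuity of $T$ and continuity of $\mathcal{A}$ along this convergence. Testing $\mathcal{F}_n$ against $\Omega_n$ and using the minimality of $U_n$, one checks that $\mathcal{A}(U_n)$ stays comparable to $\varepsilon_n$ (in particular does not collapse) and that $T(B_1)-T(U_n)\leq 2\,\varepsilon_n^2/n$; the non-sharp bound from Theorem \ref{teo:HN} or \eqref{HN21} is the standard tool to ensure $\mathcal{A}(U_n)\to 0$.

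The main obstacle, and the technically most delicate point, is establishing uniform $C^{2,\gamma}$ regularity of $U_n$. Absorbing the volume constraint through a Lagrange multiplier, $U_n$ becomes a quasi-minimizer of a free-boundary functional of Alt--Caffarelli type: its torsion function $w_n$ solves $-\Delta w_n=1$ in $U_n$ and satisfies an approximate overdetermined condition $|\nabla w_n|=\text{const}$ on $\partial U_n$. The non-smooth asymmetry penalization is handled as a Lipschitz lower-order volume-type perturbation, since near $\partial U_n$ the map $U\mapsto\mathcal{A}(U)$ depends on $U$ only through $|U\Delta B|$ for a fixed reference ball $B$. Standard density, non-degeneracy and blow-up arguments for the one-phase Bernoulli problem then yield uniform $C^{1,\alpha}$ regularity of $\partial U_n$ outside a small singular set, which is ruled out in this near-ball regime; elliptic Schauder bootstrap upgrades this to $C^{2,\gamma}$. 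Since $T(U_n)\to T(B_1)$ and $|U_n|=|B_1|$, the sets $U_n$ (recentered at their barycenters) converge to $B_1$ in $C^{2,\gamma}$, so for $n$ large $U_n$ is nearly spherical of class $C^{2,\gamma}$ with parametrization $\varphi_n$ satisfying $\|\varphi_n\|_{C^{2,\gamma}(\partial B_1)}\to 0$ and $x_{U_n}=0$.

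With this in hand, Proposition \ref{prop:fine} gives $T(B_1)-T(U_n)\geq \tfrac{1}{32N^2}\|\varphi_n\|_{L^2(\partial B_1)}^2$, while a direct computation shows $\mathcal{A}(U_n)\lesssim \|\varphi_n\|_{L^1(\partial B_1)}\lesssim \|\varphi_n\|_{L^2(\partial B_1)}$, whence $T(B_1)-T(U_n)\gtrsim \mathcal{A}(U_n)^2 \gtrsim \varepsilon_n^2$. Combined with $T(B_1)-T(U_n)\leq 2\varepsilon_n^2/n$, this contradicts the selection bound for $n$ large and establishes \eqref{stability_lim}. The hard part will be to implement the Alt--Caffarelli regularity uniformly in $n$ in the presence of the non-local, non-smooth asymmetry penalization, so that the selected sets $U_n$ genuinely become admissible inputs for Proposition \ref{prop:fine}.
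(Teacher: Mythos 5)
Your overall framework (selection principle + contradiction + Proposition~\ref{prop:fine}) is the same as the paper's, but there is a genuine gap at the regularity step, and it is precisely the point the paper identifies as the central obstruction. You penalize directly with the Fraenkel asymmetry, and you claim that near $\partial U_n$ the map $U\mapsto\mathcal{A}(U)$ behaves like a Lipschitz volume-type perturbation since it depends only on $|U\Delta B|$ for a fixed ball $B$. This observation is correct, but it works \emph{against} you, not for you: the Euler--Lagrange (free boundary) condition for your penalized functional reads, on $\partial U_n$,
\[
\left|\frac{\partial w_n}{\partial \nu}\right|^2=\Lambda+\eta_n\,\mathrm{sgn}\!\big(\mathcal{A}(U_n)-\varepsilon_n\big)\,\big(1_{\mathbb{R}^N\setminus \overline B_1}-1_{B_1}\big),
\]
so the Bernoulli constant has a jump across $\partial B_1$. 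The crossing set $\partial U_n\cap\partial B_1$ is not a lower-dimensional singular set of free boundary theory that can be dismissed in the near-ball regime; for $U_n$ close to $B_1$ one should expect it to carry essentially full $\mathcal{H}^{N-1}$ measure. Alt--Caffarelli theory then gives at best $C^{1,\gamma}$ on either side of $\partial B_1$ with a kink across it, and the Schauder bootstrap to $C^{2,\gamma}$ does not go through, so $U_n$ never becomes an admissible input for Proposition~\ref{prop:fine}. The cusp in your penalization $\eta_n|\mathcal{A}(U)-\varepsilon_n|$ compounds this, but it is a secondary issue (the paper smooths it with $\sqrt{\varepsilon_n^2+\eta(\mathcal{A}-\varepsilon_n)^2}$).

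What is missing is the paper's key device: replace $\mathcal{A}$ in the penalization by the asymmetry
\[
\alpha(\Omega)=\int_{\Omega\Delta B_1(x_\Omega)} \big|1-|x-x_\Omega|\big|\,dx,
\]
whose density vanishes continuously on $\partial B_1(x_\Omega)$. With this weight the contribution of the penalization to the free boundary condition tends to zero as $\partial U_n$ approaches $\partial B_1$, removing the jump, so the Alt--Caffarelli machinery gives genuine uniform $C^{2,\gamma}$ bounds and smooth convergence of $\partial U_n$ to $\partial B_1$. One then proves the estimate with $\alpha$ in place of $\mathcal{A}^2$ (using also that $\alpha(\Omega)\le C_2\|\varphi\|_{L^2(\partial B_1)}^2$ for nearly spherical sets), and recovers the original statement from $C_1\,\alpha(\Omega)\ge |\Omega\Delta B_1(x_\Omega)|^2$. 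You also need a Lagrange multiplier with a superlinear penalty $f(|\Omega|-|B_1|)$ rather than $\Lambda|\Omega|$, since by scaling the torsion-plus-linear-volume functional is unbounded below; your sketch glosses over this as well, though it is a standard fix.
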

The idea of the proof is to proceed by contradiction. Indeed, let us suppose that \eqref{stability_lim} is false. Thus we may find a sequence of open sets \(\{\Omega_j\}_{j\in\mathbb{N}}\subset B_R(0)\) such that
\begin{equation}
\label{contraintro}
|\Omega_j|=|B_1|, \qquad \varepsilon_j:=\mathcal A (\Omega_j)\to 0\qquad \text{and} \qquad T(B_1)-T(\Omega_j)\le c\, \mathcal A(\Omega_j)^2,
\end{equation}
with \(c>0\) as small as we wish. The idea is to use a variational procedure to replace the sequence $\{\Omega_j\}_{j\in\mathbb{N}}$ with an ``improved''  one  \(\{U_j\}_{j\in\mathbb{N}}\) which still contradicts \eqref{stability_lim} and enjoys some additional smoothness properties. 
\par
In the spirit of the celebrated {\it Ekeland's variational principle}, the idea is to select such a sequence through some penalized minimization problem. Roughly speaking we look for sets \(U_j\) which solve the following 
\begin{equation}
\label{intromin}
\min\left\{T(B_1)-T(\Omega)+\sqrt{\varepsilon_j^2+\eta\,(\mathcal A(\Omega)-\varepsilon_j)^2}\,:\, \Omega\subset B_R,\ |\Omega|=|B_1(0)| \right\},
\end{equation}
where $\eta>0$ is a suitably small parameter, which will allow to get the final contradiction.
\par
One can easily show that the sequence \(U_j\) still contradicts \eqref{stability_lim} and  that \(\mathcal{A}(U_j)\to 0\). Relying on the minimality  of \(U_j\), one then would like to show that the \(L^1\) convergence to \(B_1\) can be improved to a $C^{2,\gamma}$ convergence. If this is the case, then the stability result for smooth nearly spherical sets Proposition \ref{prop:fine} applies and shows that \eqref{contraintro} cannot hold true if \(c\) in \eqref{contraintro} is sufficiently small. 
\par
The key point is thus to prove (uniform) regularity estimates for sets solving \eqref{intromin}. For this, first one would like to get rid of volume constraints applying some sort of Lagrange multiplier principle to show that \(U_j\) solves
\begin{equation}
\label{lagrange}
\min\left\{T(B_1)-T(\Omega)+\sqrt{\varepsilon_j^2+\eta\,(\mathcal A(\Omega)-\varepsilon_j)^2}+\Lambda\,|\Omega|\,:\, \Omega\subset B_R\right\}.
\end{equation}
Then, recalling the formulation \eqref{energiaaa} for $-T(\Omega)$, we can take advantage of the fact that we are considering  a ``min--min'' problem. Thus the previous problem is equivalent to require that the torsion function $w_j:=w_{U_j}$ of $U_j$ minimizes  
\begin{equation}
\label{funzioni}
\begin{split}
\int_{\mathbb{R}^N} |\nabla v|^2\,dx -2\,\int_{\mathbb{R}^N} v\, dx +\Lambda\,\big |\{v>0\}\big|+\sqrt{\varepsilon_j^2+\eta\,(\mathcal A(\{v>0\})-\varepsilon_j)^2},
\end{split}
\end{equation}
among all functions with compact support in $B_R$.
Since we are now facing a perturbed free boundary type problem, we aim to apply the techniques of Alt and Caffarelli \cite{MR0618549} (see also \cite{MR2084257,MR2542718}) to show the regularity of \(\partial U_j=\partial\{u_{j}>0\}\) and  to obtain  the smooth convergence of \(U_j\) to \(B_1\).
\vskip 0.2cm
This is the general strategy, but several non-trivial modifications have to be done to the above sketched proof. A first technical difficulty is that no global Lagrange multiplier principle is available. Indeed, since by scaling
\[
-T(t\, \Omega)-=-t^{N+2}\, T(\Omega)\qquad \mbox{ and }\qquad |t\, \Omega|=t^N\, |\Omega|,\qquad t>0,
\]
by a simple scaling argument one sees that the infimum of the energy in \eqref{lagrange} would be identically  \(-\infty\) in the uncostrained case. This can be fixed by following \cite{MR0826512} and
replacing the term \(\Lambda\, |\Omega|\) with a term of the form \(f(|\Omega|-|B_1|)\), for a suitable strictly increasing function $f$ vanishing at \(0\) only. 
\par
A more serious obstruction is due to the lack of regularity of the Fraenkel asymmetry.  Although solutions to \eqref{funzioni} enjoy some mild regularity properties, we cannot expect  \(\partial \{u_j>0\}\) to be smooth.  Indeed, by formally computing the optimality condition\footnote{That is differentiating the functional along  perturbation of the form \(v_t=u_j\circ ({\rm Id}+tV)\) where \(V\) is a smooth vector field.} of \eqref{funzioni} and assuming that \(B_1\) is the unique optimal ball for the Fraenkel asymmetry of \(\{w_j>0\}\), one gets that \(w_j\) should satisfy
\[
\left|\frac{\partial w_j}{\partial \nu}\right|^2=\Lambda+\frac{\eta\,(\mathcal A(\{w_j>0\})-\varepsilon_j)}{\sqrt{\varepsilon_j^2+\eta\,(\mathcal A(\{w_j>0\})-\varepsilon_j)^2}}\,\big(1_{\mathbb{R}^N\setminus \overline B_1}-1_{B_1}\big)\quad \text{on}\quad \partial \{w_j>0\},
\]
where $1_A$ denotes the characteristic function of a set $A$ and $\nu$ is the outer normal versor.
This means that  the normal derivative of \(w_j\) is discontinuous at points where \(U_j=\{w_j>0\}\) crosses \(\partial B_1\). Since 
classical elliptic regularity implies that if \(\partial U_j\) is \(C^{1,\gamma}\) then \(u_j\in C^{1,\gamma}(\overline{U_j})\), it is clear that the sets \(U_j\) can not enjoy too much smoothness properties. In particular, it seems difficult to obtain the regularity $C^{2,\gamma}$ needed to apply Proposition \ref{prop:fine}. 
\par
To overcome this difficulty, we replace the Fraenkel asymmetry with a new asymmetry functional, which behaves like a squared \(L^2\) distance between the boundaries and whose definition is inspired by \cite{MR1205983}. 
For a bounded set \(\Omega\subset\mathbb{R}^N\), this is defined by
\[
\alpha(\Omega)=\int_{\Omega\Delta B_1(x_\Omega)} \big|1-|x-x_\Omega|\big|\, dx,
\]
where \(x_\Omega\) is the barycenter of \(\Omega\).
Notice that \(\alpha(\Omega)=0\) if and only if \(\Omega\) is a ball of radius \(1\).
This asymmetry is differentiable with respect to the variations needed to compute the optimality conditions (differently from the Fraenkel asymmetry), moreover it enjoys the following crucial properties:
\begin{enumerate}
\item[(i)] there exists a constant \(C_1=C_1(N)>0\) such that for every \(\Omega\)
\begin{equation}
\label{domina!}
C_1\,\alpha(\Omega)\ge |\Omega\Delta B_1(x_\Omega)|^2;
\end{equation}
\vskip.2cm
\item[(ii)] there exists two constants $\delta_2=\delta_2(N)>0$ and $C_{2}=C_2(N)>0$ such that for every nearly spherical set $\Omega$ parametrized by $\varphi$ with $\|\varphi\|_{L^\infty}\le \delta_2$, we have
\begin{equation}
\label{domina!2}
\alpha(\Omega)\le C_2\, \|\varphi\|^2_{L^2(\partial B_1)}.
\end{equation}
\end{enumerate}
By using the strategy described above and replacing $\mathcal{A}(\Omega)$ with $\alpha(\Omega)$, one can obtain the following.
\begin{proposition}[Selection Principle]
Let $R\ge 2$ then there exists $\widetilde \eta=\widetilde \eta(N,R)>0$ such that if \(0<\eta \le \widetilde \eta(N,R)\) and \(\{\Omega_j\}_{j\in\mathbb{N}}\subset\mathbb{R}^N\) verify
\[
|\Omega_j|=|B_1|\qquad\hbox{and}\qquad\varepsilon_j:=\alpha(\Omega_j)\to 0, \qquad  \mbox{  while } \qquad
T(B_1)-T(\Omega_j)\le \eta^4\, \varepsilon_j,
\]
then we can find a sequence of smooth open sets \(\{U_j\}_{j\in\mathbb{N}}\subset B_R\) satisfying: 
\begin{enumerate}
\item[(i)] \(|U_j|=|B_1|\); 
\item[(ii)] \(x_{U_j}=0\);
\item[(iii)] \(\partial U_j\) are converging to \(\partial B_1\) in \(C^k\) for every \(k\);
\item[(iv)] there holds
\[
\limsup_{j \to \infty}\frac{T(B_1)-T(U_j)}{\alpha(U_j)}\le C_3\,\eta,
\]
for some constant \(C_3=C_3(N,R)>0\).
\end{enumerate}
\end{proposition}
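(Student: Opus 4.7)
The plan is to carry out a selection-principle argument in the spirit of Cicalese--Leonardi \cite{MR2980529}: for each $j$ the set $U_j$ will be produced as a minimizer of a penalized version of the selection problem, and uniform $C^k$ convergence will come from free boundary regularity \`a la Alt--Caffarelli \cite{MR0618549}, exploiting the fact that, unlike $\mathcal A$, the new asymmetry $\alpha$ is differentiable along domain variations.

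For each $j$ I would first consider the penalized functional
\[
\mathcal F_j(\Omega):=\bigl(T(B_1)-T(\Omega)\bigr)+\sqrt{\varepsilon_j^2+\eta\,(\alpha(\Omega)-\varepsilon_j)^2}
\]
and minimize it among $\Omega\subset B_R$ with $|\Omega|=|B_1|$ and $x_\Omega=0$. Existence of a minimizer $U_j$ follows from the direct method, using $L^1$-lower semicontinuity of $\alpha$ and of $-T$ on the compact class of subsets of $B_R$. Testing against $\Omega_j$ gives $\mathcal F_j(U_j)\le (1+\eta^4)\varepsilon_j$; using $\sqrt{\varepsilon_j^2+\cdots}\ge\varepsilon_j$ this yields $T(B_1)-T(U_j)\le \eta^4\varepsilon_j$, while squaring the same inequality gives $\alpha(U_j)=\varepsilon_j\bigl(1+O(\eta^{3/2})\bigr)$. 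In particular $\alpha(U_j)\to 0$, and property~(iv) follows at once, since
\[
\frac{T(B_1)-T(U_j)}{\alpha(U_j)}\le \frac{\eta^4\,\varepsilon_j}{\varepsilon_j\,\bigl(1-O(\eta^{3/2})\bigr)}\le C\,\eta
\]
for $\eta$ small; properties (i) and (ii) are built into the minimization class.

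The real content is (iii). To get it I would first absorb the volume constraint into a penalty $f(|\Omega|-|B_1|)$, with $f$ strictly increasing and vanishing only at $0$, following the rescaling trick of Aguilera--Alt--Caffarelli \cite{MR0826512}: thanks to the scalings $T(t\Omega)=t^{N+2}T(\Omega)$ and $|t\Omega|=t^N|\Omega|$, the hard constraint can be traded for such a penalty without changing the minimizer. Using the characterization \eqref{energiaaa} of $-T$ as a min--min problem, the selection then becomes equivalent to requiring that $w_j:=w_{U_j}$ be an unconstrained minimizer of
\[
\int_{\mathbb{R}^N}|\nabla v|^2\,dx-2\int_{\mathbb{R}^N}v\,dx+f\bigl(|\{v>0\}|-|B_1|\bigr)+\sqrt{\varepsilon_j^2+\eta(\alpha(\{v>0\})-\varepsilon_j)^2}
\]
among nonnegative $v\in W^{1,2}_0(B_R)$, i.e.\ a perturbed Alt--Caffarelli free boundary problem.

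The main obstacle, and the heart of the proof, is then to extract uniform regularity of $\{w_j>0\}$ and $C^k$ convergence to $B_1$. Uniform Lipschitz bounds and non-degeneracy for $w_j$ come from standard Alt--Caffarelli arguments; combined with $\alpha(U_j)\to 0$ and the domination \eqref{domina!}, they give $L^1$-convergence $U_j\to B_1$. The decisive gain from the choice of $\alpha$ is that, computing the first variation along deformations $\Phi_t=\mathrm{Id}+tV$, the optimality condition on $\partial U_j$ reads
\[
\left|\frac{\partial w_j}{\partial \nu}\right|^2=\Lambda_j+\frac{\eta\,(\alpha(U_j)-\varepsilon_j)}{\sqrt{\varepsilon_j^2+\eta\,(\alpha(U_j)-\varepsilon_j)^2}}\,\mathrm{sgn}(1-|x|)\,\bigl|1-|x|\bigr|,
\]
with continuous right-hand side (vanishing on $\partial B_1$), rather than the discontinuous jump one would get with $\mathcal A$. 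Alt--Caffarelli regularity then upgrades $\partial U_j$ to $C^{1,\gamma}$ uniformly in $j$, and classical Schauder bootstrap yields $C^k$ for every $k$, so that $U_j$ is nearly spherical with parametrizations $\varphi_j\to 0$ in every $C^k$. Making this regularity quantitative and uniform in $j$ despite the nonlocal, non-smooth penalty is the technical crux of the whole argument, and it is precisely the reason for introducing $\alpha$ in place of the Fraenkel asymmetry.
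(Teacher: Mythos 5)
Your proposal reconstructs, at the same level of detail, exactly the route the paper outlines: penalized minimization of $T(B_1)-T(\Omega)+\sqrt{\varepsilon_j^2+\eta(\alpha(\Omega)-\varepsilon_j)^2}$, the elementary squaring argument giving (iv), absorption of the volume constraint \`a la Aguilera–Alt–Caffarelli, passage to the free boundary formulation for $w_j$ via the min--min structure \eqref{energiaaa}, and Alt–Caffarelli regularity made viable by the fact that $\alpha$ yields a continuous optimality condition $(1-|x|)$ across $\partial B_1$ where $\mathcal A$ would give a jump. One small caveat: your appeal to ``$L^1$-lower semicontinuity of $-T$'' is imprecise as stated, since $T$ is sensitive to removal of sets of zero measure but positive capacity; the correct fix is the one you yourself invoke next, namely working at the level of the torsion function $v\in W^{1,2}_0(B_R)$ where weak compactness and semicontinuity hold directly.
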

In turn, this permits to prove the following alternative version of Theorem \ref{thm:stability_lim}, by following the contradiction scheme sketched above. Indeed, we can apply Proposition  \ref{prop:fine} to the sets $U_j$ and \eqref{domina!2} in order to get
\[
\frac{1}{32\,N^2}\le \limsup_{j \to \infty}\frac{T(B_1)-T(U_j)}{\|\varphi_j\|_{L^2(\partial B)}}\le C_2\,\limsup_{j \to \infty}\frac{T(B_1)-T(U_j)}{\alpha(U_j)}\le C_2\,C_3\,\eta,
\]
where $\varphi_j$ is the parametrization of $\partial U_j$. By choosing $\eta>0$ suitably small, we obtain a contradiction and this proves the following result.
\begin{bis}
For every $R\ge 2$, there exist $\widetilde c=\widetilde c(N,R)>0$ and $\widetilde\varepsilon=\widetilde \varepsilon(N,R)>0$ such that
\[
T(B_1)- T(\Omega)\ge \widetilde c\,\alpha(\Omega), \quad\mbox{ for $\Omega\subset B_R$ such that }\ |\Omega|=|B_1|\ \mbox{ and }\ \alpha(\Omega)\le \widetilde \varepsilon.
\]
\end{bis}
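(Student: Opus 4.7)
I plan to argue by contradiction, following the strategy sketched after the Selection Principle. Assume the conclusion fails: then for any constant $\widetilde c>0$ and any threshold $\widetilde \varepsilon>0$ we can find a counterexample. In particular, picking constants going to zero, I obtain a sequence $\{\Omega_j\}_{j\in \mathbb{N}}\subset B_R$ with $|\Omega_j|=|B_1|$, $\varepsilon_j:=\alpha(\Omega_j)\to 0$, and
\[
T(B_1)-T(\Omega_j)\le c_j\,\alpha(\Omega_j),\qquad c_j\to 0.
\]
Fix a small parameter $\eta\in (0,\widetilde \eta(N,R))$ that I will choose at the very end. Since $c_j\to 0$, for $j$ large we have $c_j\le \eta^4$, so the tail of $\{\Omega_j\}$ satisfies the hypotheses of the Selection Principle with parameter $\eta$.

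Applying the Selection Principle, I obtain smooth replacements $\{U_j\}_{j\in \mathbb{N}}\subset B_R$ with $|U_j|=|B_1|$, barycenter $x_{U_j}=0$, and $\partial U_j\to \partial B_1$ in every $C^k$. In particular, for $j$ sufficiently large each $U_j$ is a nearly spherical set of class $C^{2,\gamma}$ parametrized by some $\varphi_j$ with $\|\varphi_j\|_{C^{2,\gamma}(\partial B_1)}\le \delta_1$ and $\|\varphi_j\|_{L^\infty}\le \delta_2$, so both Proposition \ref{prop:fine} and property (ii) of $\alpha$ apply. Combining \eqref{ballstab} with \eqref{domina!2} yields
\[
T(B_1)-T(U_j)\ge \frac{1}{32\,N^2}\,\|\varphi_j\|^2_{L^2(\partial B_1)}\ge \frac{1}{32\,N^2\,C_2}\,\alpha(U_j),
\]
hence
\[
\liminf_{j\to\infty}\frac{T(B_1)-T(U_j)}{\alpha(U_j)}\ge \frac{1}{32\,N^2\,C_2}.
\]
On the other hand, item (iv) of the Selection Principle gives
\[
\limsup_{j\to\infty}\frac{T(B_1)-T(U_j)}{\alpha(U_j)}\le C_3\,\eta.
\]
Choosing $\eta$ so small that $C_3\,\eta<(32\,N^2\,C_2)^{-1}$ (which is compatible with $\eta<\widetilde \eta(N,R)$) produces the desired contradiction, and the theorem follows.

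The genuine obstacle is, of course, the Selection Principle itself — everything else is a bookkeeping combination of Proposition \ref{prop:fine} and property (ii) of $\alpha$. In this plan I am simply assuming the Selection Principle as a black box; in particular, the difficulty is hidden in properties (iii) and (iv), which require the full Alt–Caffarelli-type regularity machinery for minimizers of the penalized free boundary functional \eqref{funzioni} (with $\mathcal{A}$ replaced by the smooth asymmetry $\alpha$), the substitution of the volume Lagrange multiplier by a suitable $f(|\Omega|-|B_1|)$, and the quantitative bound (iv) which effectively compares the deficit to the selected asymmetry with a factor proportional to $\eta$. Once these are granted, the contradiction scheme above closes the argument in a few lines.
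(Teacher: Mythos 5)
Your proof is correct and follows the same contradiction scheme the paper uses: extract a sequence violating the estimate, pass it through the Selection Principle (with $\eta$ small), then sandwich the ratio $(T(B_1)-T(U_j))/\alpha(U_j)$ between the lower bound from Proposition \ref{prop:fine} combined with \eqref{domina!2} and the upper bound $C_3\,\eta$ from item (iv), and choose $\eta$ small to force a contradiction. You are slightly more careful than the paper's text in spelling out that both $\|\varphi_j\|_{C^{2,\gamma}}\le\delta_1$ and $\|\varphi_j\|_{L^\infty}\le\delta_2$ must hold for large $j$ (both follow from the $C^k$ convergence in item (iii)) and in writing the $L^2$ norm squared, which the paper's displayed chain omits by a typo; otherwise the argument is identical.
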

Finally, Theorem \ref{thm:stability_lim} can be now obtained as a consequence of the previous result, by appealing to the properties of $\alpha(\Omega)$. Indeed, by \eqref{domina!} we can assure that $\alpha(\Omega)$ dominates the Fraenkel asymmetry raised to power $2$.
\begin{openpb}[Sharp quantitative Faber-Krahn with explicit constant]
Prove inequality \eqref{eq:fkstability} with a computable constant. Again, it would be sufficient to prove it for the torsional rigidity, still thanks to Proposition \ref{prop:gerarchia}.
\end{openpb}
We conclude this part by remarking that the Fraenkel asymmetry $\mathcal{A}(\Omega)$ is not affected by removing from $\Omega$ a set with positive capacity and zero $N-$dimensional Lebesgue measure, while this is the case for the Faber-Krahn deficit
\[
|\Omega|^{2/N}\,\lambda_1(\Omega)-|B|^{2/N}\,\lambda_1(B).
\] 
In particular, if $\lambda_1(\Omega)=\lambda_1(B)$ and $|\Omega|=|B|$, from Theorem \ref{thm:fkstability} we can only infer that $\Omega$ is a ball {\it up to a set of zero measure}. It could be interesting to have a stronger version of Theorem \ref{thm:fkstability}, where the Fraenkel asymmetry is replaced by a stronger notion of asymmetry, coinciding on sets which differ for a set with zero capacity. Observe that the two asymmetries $d_\mathcal{M}$ and $d_\mathcal{N}$ suffer from the opposite problem, i.e. they are too rigid and affected by removing sets with zero capacity (like points, for example).

\begin{openpb}[Sharp quantitative Faber-Krahn with capacitary asymmetry]
Prove a quantitative Faber-Krahn inequality with a suitable {\rm capacitary asymmetry} $d$, i.e. a scaling invariant shape functional $\Omega\mapsto d(\Omega)$ vanishing on balls only and such that
\[
d(\Omega')=d(\Omega)\qquad \mbox{ if }\quad \mathrm{cap}\,(\Omega\Delta \Omega')=0.
\]
\end{openpb}

\subsection{Checking the sharpness}
\label{sec:sciarpaFK}
The heuristic idea behind the sharpness of the estimate
\[
|\Omega|^{2/N}\,\lambda_1(\Omega)-|B|^{2/N}\,\lambda_1(B)\ge \gamma_{N,2}\,\mathcal{A}(\Omega)^2,
\] 
is quite easy to understand. It is just the standard fact that a smooth function behaves quadratically near a non degenerate minimum point.
\par
Indeed, $\lambda_1$ is twice differentiable in the sense of the {\it shape derivative} (see \cite{ MR2512810}). Then any perturbation of the type $\Omega_t:=X_t(B)$, where $X_t$ is a measure preserving smooth vector field, should provide a Taylor expansion of the form
\[
\lambda_1(\Omega_t)\simeq\lambda_1(B)+O(t^2),\qquad t\ll 1.
\]
since the first derivative of $\lambda_1$ has to vanish at the ``minimum point'' $B$. By observing that the Fraenkel asymmetry satisfies $\mathcal{A}(\Omega_t)=O(t)$, one would prove sharpness of the exponent $2$. 
\par
Rather than giving the detailed proof of the previous argument, we prefer to give an elementary proof of the sharpness, just based on the variational characterization of $\lambda_1^q$ and valid for every $1\le q<2^*$. We believe it to be of independent interest.
\vskip.2cm
We still denote by $B_1\subset\mathbb{R}^N$ the ball with unit radius and centered at the origin. For every $\varepsilon>0$, we consider the $N\times N$ diagonal matrix
\[
M_\varepsilon={\rm diag}\Big((1+\varepsilon), (1+\varepsilon)^{-1}, 1, \dots,1\Big),
\]
and we take the family of ellipsoids $E_\varepsilon=M_\varepsilon\, B_1$. Observe that by construction we have\footnote{We recall that for a $N-$dimensional convex sets having $N$ axes of symmetry, the optimal ball for the Fraenkel asymmetry can be centered at the intersection of these axes, see for example \cite[Corollary 2 \& Remark 6]{MR3050226}.} 
\begin{equation}
\label{ellissoide}
|E_\varepsilon|=|B_1|\qquad \mbox{ and }\qquad \mathcal{A}(E_\varepsilon)=\frac{|E_\varepsilon\Delta B_1|}{|E_\varepsilon|}=O(\varepsilon). 
\end{equation}
Let us fix $q\ge 1$, with a simple change of variables the first semilinear eigenvalue $\lambda^q_1(E_\varepsilon)$ can be written as
\begin{equation}
\label{dream}
\begin{split}
\lambda^q_1(E_\varepsilon)&=\min_{v\in W^{1,2}_0(E_\varepsilon)\setminus\{0\}} \frac{\displaystyle\int_{E_\varepsilon} |\nabla v|^2\, dx}{\displaystyle\left(\int_{E_\varepsilon} |v|^q\, dx\right)^\frac{2}{q}}=\min_{u\in W^{1,2}_0(B_1)\setminus\{0\}}\frac{\displaystyle\int_{B_1}\langle \widetilde M_\varepsilon \nabla u,\nabla u\rangle\,dx}{\displaystyle\left(\int_{B_1} |u|^q\, dx\right)^\frac{2}{q}},
\end{split}
\end{equation}
where $\widetilde M_\varepsilon=M_\varepsilon^{-1}\,M_\varepsilon^{-1}$. We now observe that
\[
\langle \widetilde M_\varepsilon\, \xi,\xi\rangle=\frac{\xi_1^2}{(1+\varepsilon)^2}+(1+\varepsilon)^2\,\xi_2^2+\sum_{i=3}^N \xi_i^2,\qquad \xi\in\mathbb{R}^N,
\]
and by Taylor formula
\[
\frac{1}{(1+\varepsilon)^2}=1-2\,\varepsilon+6\,\int_0^\varepsilon \frac{\varepsilon-s}{(1+s)^4}\,ds\le 1-2\,\varepsilon+3\,\varepsilon^2.
\]
Thus for every $u\in W^{1,2}_0(B_1)$ we obtain
\begin{equation}
\label{silvan}
\begin{split}
\int_{B_1}\langle \widetilde M_\varepsilon \nabla u,\nabla u\rangle\,dx\le \int_{B_1} |\nabla u|^2\,dx&+2\,\varepsilon\,\int_{B_1} \left(|u_{x_2}|^2-|u_{x_1}|^2\right)\,dx\\
&+\varepsilon^2\,\int_{B_1} \left(3\,|u_{x_1}|^2+|u_{x_2}|^2\right)\,dx.
\end{split}
\end{equation}
We now take $U\in W^{1,2}_0(B_1)$ a function which attains the minimum in the definition of $\lambda^q_1(B_1)$, with unit $L^q$ norm. From \eqref{dream} and \eqref{silvan} we get
\[
\lambda^q_1(E_\varepsilon)\le \lambda^q_1(B_1)+2\,\varepsilon\,\int_{B_1} \left(|U_{x_2}|^2-|U_{x_1}|^2\right)\,dx+\varepsilon^2\,\int_{B_1} \left(3\,|U_{x_1}|^2+|U_{x_2}|^2\right)\,dx.
\]
By using that $U$ is radially symmetric (again by P\'olya-Szeg\H{o} principle), it is easy to see that  
\[
\int_{B_1} \left(|U_{x_2}|^2-|U_{x_1}|^2\right)\,dx=0,
\]
and thus finally
\[
\lambda^q_1(E_\varepsilon)\le \lambda^q_1(B_1)+C\,\varepsilon^2.
\]
By recalling \eqref{ellissoide}, this finally shows sharpness of Theorem \ref{thm:fkstability} for every $1\le q< 2^*$.

\section{{\it Intermezzo}: quantitative estimates for the harmonic radius}
\label{sec:3}

In this section we present an application of the quantitative Faber-Krahn inequality to estimates for the so-called {\it harmonic radius}. Apart from being interesting in themselves, some of these results will be useful in the next section.
\begin{definition}[Harmonic radius] 
We denote by $G^\Omega_x$ the Green function of $\Omega$ with singularity at $x\in\Omega$, i.e.
\[
-\Delta G^\Omega_x=\delta_x\quad \mbox{ in }\Omega,\qquad G^\Omega_x=0\quad \mbox{ on }\partial\Omega,
\]
where $\delta_x$ is the Dirac Delta centered at $x$.
We recall that 
\[
G^\Omega_x(y)=\varsigma_N\,\Big(\Gamma_N(|x-y|)-H^\Omega_x(y)\Big),
\]
where:
\begin{itemize}
\item $\varsigma_N$ is the following dimensional constant
\[
\varsigma_2=\frac{1}{2\,\pi},\qquad \varsigma_N=\frac{1}{(N-2)\,N\,\omega_N},\ \mbox{ for }N\ge 3;
\]
\item $\Gamma_N$ is the function defined on $(0,+\infty)$
\[
\Gamma_2(t)=-\log t,\qquad \Gamma_N(t)=t^{2-N},\ \mbox{ for }N\ge 3;
\]
\item $H^\Omega_x$ is the regular part, which solves
\[
\Delta H^\Omega_x=0\quad \mbox{ in }\Omega,\qquad H^\Omega_x=\Gamma_N(|x-\cdot|)\quad \mbox{ on }\partial\Omega.
\]
\end{itemize}  
With the notation above, the {\it harmonic radius of $\Omega$} is defined by
\begin{equation}
\label{harmonicradius}
\mathcal{I}_\Omega:=\sup_{x\in\Omega} \Gamma_N^{-1}(H^\Omega_x(x)).
\end{equation}
We refer the reader to the survey paper \cite{MR1391227} for a comprehensive study of the harmonic radius.
\end{definition}
\begin{remark}[Scaling properties]
It is not difficult to see that $\mathcal{I}_\Omega$ scales like a length. This follows from the fact that for every $t>0$
\begin{equation}
\label{greendilata}
G^{t\,\Omega}_x(y)=t^{2-N}\,G^\Omega_{x/t}\left(\frac{y}{t}\right),\qquad y\not= x\in t\,\Omega.
\end{equation}
Then in dimension $N\ge 3$ we get 
\[
H^{t\,\Omega}_x(y)=t^{2-N}\,H^\Omega_{x/t}\left(\frac{y}{t}\right)\qquad \mbox{ and thus }\qquad \mathcal{I}_{t\,\Omega}:=\sup_{x\in t\,\Omega}\left(t^{2-N}\,H^\Omega_{x/t}\left(\frac{x}{t}\right)\right)^\frac{1}{2-N}=t\,\mathcal{I}_\Omega.
\]
In dimension $2$ we proceed similarly, by observing that from \eqref{greendilata}
\[
H^{t\,\Omega}_x(y)=-\log t+H^\Omega_{x/t}\left(\frac{x}{t}\right).
\]
\end{remark}
For our purposes, it is useful to recall the following spectral inequality.
\begin{theorem}[Hersch-P\'olya-Szeg\H{o} inequality]
Let $\Omega\subset\mathbb{R}^N$ be an open bounded set. Then we have the scaling invariant estimate
\begin{equation}
\label{PH}
\lambda_1(\Omega)\le \frac{\lambda_1(B_1)}{\mathcal{I}_\Omega^2}.
\end{equation}
Equality in \eqref{PH} is attained for balls only.
\end{theorem}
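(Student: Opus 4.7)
The plan is to establish, for every $x_0\in\Omega$, the bound $\lambda_1(\Omega)\,r(x_0)^2\le\lambda_1(B_1)$ with $r(x_0):=\Gamma_N^{-1}(H^\Omega_{x_0}(x_0))$, and then to pass to the supremum over $x_0$ to obtain \eqref{PH}. The strategy is to produce a test function for the Rayleigh quotient on $\Omega$ whose energy reproduces, up to a favourable factor, the Rayleigh quotient of the first eigenfunction on a ball of radius $r(x_0)$.

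I would fix $x_0\in\Omega$, set $G:=G^\Omega_{x_0}$, and transplant the first positive radial eigenfunction $\Phi$ of the unit ball to $\Omega$ by declaring $u(y)=\Phi(\chi(G(y)))$, where the monotone rescaling $\chi:(0,+\infty)\to[0,1]$ is chosen so that $u$ vanishes on $\partial\Omega$ (in the $W^{1,2}_0$ sense) and so that the singularity of $G$ at $x_0$ corresponds to the centre of the model ball. Using the coarea formula along the level sets of $G$, together with $-\Delta G=\delta_{x_0}$ in $\Omega$ and the representation $G=\varsigma_N(\Gamma_N(|\cdot-x_0|)-H^\Omega_{x_0})$, one rewrites both $\int_\Omega|\nabla u|^2\,dy$ and $\int_\Omega u^2\,dy$ as weighted one-dimensional integrals in the level parameter of $G$.

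The crucial point is then a weighted mean-value estimate for the harmonic regular part $H^\Omega_{x_0}$, leveraging the identity $H^\Omega_{x_0}(x_0)=\Gamma_N(r(x_0))$ to force $\int_\Omega u^2$ to dominate the corresponding integral on the model ball of radius $r(x_0)$. This is exactly where the definition of $\mathcal I_\Omega$ enters and produces the factor $r(x_0)^{-2}$ on the right-hand side of \eqref{PH}.

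The main obstacle is precisely this weighted mean-value step. In the conformal case $N=2$ with $\Omega$ simply connected, it becomes transparent: by the Riemann mapping theorem one picks a conformal $f:B_1\to\Omega$ with $f(0)=x_0$ and $|f'(0)|=\mathcal I_\Omega$, and takes $u=\Phi\circ f^{-1}$; the Dirichlet integral is conformally invariant in two dimensions, $\log|f'|^2$ is harmonic, and Jensen's inequality combined with the mean value property (together with radiality of $\Phi$) yields $\int_{B_1}\Phi^2\,|f'|^2\,dx\ge|f'(0)|^2\int_{B_1}\Phi^2\,dx$, which closes the estimate. In higher dimensions the conformal map has no analogue, and one must replace it by the transplantation through $G$, controlling the deviation of the level sets of $G$ from Euclidean spheres via the single scalar $H^\Omega_{x_0}(x_0)$; the book-keeping rests on the standard properties of the Green function collected in \cite{MR1391227}, and equality is tracked through the chain to pin down balls as the only extremizers.
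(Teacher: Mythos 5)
Your argument matches the paper's: for $N=2$ and $\Omega$ simply connected, both proofs use the Riemann map $f$ with $f(0)=x_0$, conformal invariance of the Dirichlet integral, and the fact that the spherical average of $|f'|^2$ is minimized at the centre (the paper via subharmonicity of $|f'|^2$, you via harmonicity of $\log|f'|$ plus Jensen — two routes to the same mean-value estimate), and both then pass to the supremum over $x_0$ via the identity $\mathcal I_\Omega=\sup_{x_0}|(f_{x_0}^{-1})'(0)|$. The paper likewise defers the general-$N$ case to Hersch's harmonic transplantation as surveyed in \cite{MR1391227}; your sketch of that case (matching level sets of $G^\Omega_{x_0}$ with those of $G^{B_{r(x_0)}}_0$, equal flux giving equal Dirichlet integral, and a level-set measure comparison driven by $H^\Omega_{x_0}(x_0)=\Gamma_N(r(x_0))$) is the correct mechanism, though the decisive lemma is more precisely a sublevel-set measure inequality than a mean-value estimate for $H^\Omega_{x_0}$ itself.
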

\begin{proof}
Under these general assumptions, the result is due to Hersch and is proved by using {\it harmonic transplation}, a technique introduced in \cite{MR0247572}. The original result by P\'olya and Szeg\H{o} is for $N=2$ and $\Omega$ simply connected, by means of {\it conformal transplantation}. We present their proof below, by referring to \cite[Section 6]{MR1391227} for the general case.
\par
Thus, let us take $N=2$ and $\Omega$ simply connected. Without loss of generality, we can assume $|\Omega|=\pi$. For every $x_0\in\Omega$, we consider the holomorphic isomorphism given by the Riemann Mapping Theorem
\[
f_{x_0}:\Omega\to B_1,
\] 
such that\footnote{We recall that this is uniquely defined, up to a rotation.} $f_{x_0}(x_0)=0$. Then we have the following equivalent characterization for the harmonic radius
\begin{equation}
\label{equivalent}
\mathcal{I}_\Omega=\sup_{x_0\in\Omega} \left|(f_{x_0}^{-1})'(0)\right|=\sup_{x_0\in\Omega} \frac{1}{|f_{x_0}'(x_0)|}.
\end{equation}
Here $f'$ denotes the complex derivative.
Indeed, with the notation above the Green function of $\Omega$ with singularity at $x_0$ is given by 
\[
G^\Omega_{x_0}(y)=-\frac{1}{2\,\pi}\,\log|f_{x_0}(y)|,\qquad y\in\Omega\setminus\{x_0\}.
\]
We can rewrite it as 
\[
G^\Omega_{x_0}(y)=-\frac{1}{2\,\pi}\, \log|f_{x_0}(y)-f_{x_0}(x_0)|=-\frac{1}{2\,\pi}\log|y-x_0|-\frac{1}{2\,\pi}\log\frac{|f_{x_0}(y)-f_{x_0}(x_0)|}{|y-x_0|}.
\]
By recalling the definition \eqref{harmonicradius} of harmonic radius, we get
\[
\mathcal{I}_\Omega=\sup_{x_0\in\Omega} \left\{\lim_{y\to x_0}\exp\left(-\log\frac{|f_{x_0}(y)-f_{x_0}(x_0)|}{|y-x_0|}\right)\right\}=\sup_{x_0\in\Omega} \frac{1}{|f'_{x_0}(x_0)|},
\]
which proves \eqref{equivalent}. 
\par
We now prove \eqref{PH}. Let $u\in W^{1,2}_0(B_1)$ be the first positive Dirichlet eigenfunction of $B_1$, with unit $L^2$ norm. For $x_0\in\Omega$, we consider $f_{x_0}:\Omega\to B_1$ as above, then we set
\[
v=u\circ f.
\]
By conformality we preserve the Dirichlet integral, i.e.
\[
\int_\Omega |\nabla v|^2\,dx=\int_{B_1} |\nabla u|^2\,dx=\lambda_1(B_1).
\]
On the other hand, by the change of variable formula we have
\[
\int_\Omega |v|^2\,dx=\int_{B_1} |u|^2\,|(f^{-1}_{x_0})'|^2\,dx.
\]
We now observe that $|(f^{-1}_{x_0})'|^2$ is sub-harmonic, thus the function
\begin{equation}
\label{subarmo}
\Phi(\varrho)=\frac{1}{2\,\pi\,\varrho}\,\int_{\{|x|=\varrho\}} |(f^{-1}_{x_0})'|^2\,d\mathcal{H}^1,
\end{equation}
is non-decreasing. In particular, we have
\[
\Phi(\varrho)\ge \Phi(0)=|(f^{-1}_{x_0})'(0)|^2.
\]
Thus we obtain
\[
\begin{split}
\int_\Omega |v|^2\,dx&=\int_\Omega |u|^2\,|(f^{-1}_{x_0})'|^2\,dx=2\,\pi\,\int_0^1 u^2\,\varrho\,\Phi(\varrho)\,d\varrho\\
&\ge \left(2\,\pi\,\int_0^1 u^2\,\varrho\,d\varrho\right)\,|(f^{-1}_{x_0})'(0)|^2=|(f^{-1}_{x_0})'(0)|^2,
\end{split}
\]
since $u$ has unitary $L^2$ norm.
By using the variational characterization of $\lambda_1(\Omega)$, this finally shows
\[
|(f^{-1}_{x_0})'(0)|^2\,\lambda_1(\Omega)\le \lambda_1(B_1).
\]
By taking the supremum over $\Omega$ and using \eqref{equivalent}, we get the conclusion.
\end{proof}
\begin{remark}[Conformal radius]
Historically, the quantity 
\[
\max_{x_0\in\Omega} \left|(f_{x_0}^{-1})'(0)\right|=\max_{x_0\in\Omega} \frac{1}{|f_{x_0}'(x_0)|},
\]
has been first introduced under the name {\it conformal radius of $\Omega$}. The definition of harmonic radius is due to Hersch \cite{MR0247572}, as we have seen this gives a genuine extension to general sets of the conformal radius.
\end{remark}

Among open sets with given measure, the harmonic radius is maximal on balls. By recalling that for a ball the harmonic radius coincides with the radius {\it tout court}, we thus have the scaling invariant estimate
\begin{equation}
\label{isoradio}
\frac{|\Omega|^{2/N}}{\mathcal{I}_\Omega^2}\ge \omega_N^{2/N}.
\end{equation}
This can be deduced by joining \eqref{PH} and the Faber-Krahn inequality. If we replace the latter by Theorem \ref{thm:fkstability}, we get a quantitative version of \eqref{isoradio}. This is the content of the next result.
\begin{corollary}[Stability of the harmonic radius]
Let $\Omega\subset\mathbb{R}^N$ be an open bounded set. Then we have
\begin{equation}
\label{confra}
\frac{|\Omega|^{2/N}}{\mathcal{I}_\Omega^2}-\omega_N^{2/N}\ge c\,\mathcal{A}(\Omega)^2,
\end{equation}
for some constant $c>0$.
\end{corollary}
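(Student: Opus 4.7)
The plan is to derive \eqref{confra} by combining the two spectral inputs already at our disposal: the Hersch–Pólya–Szegő inequality \eqref{PH}, which bounds $\lambda_1(\Omega)$ \emph{from above} by $\lambda_1(B_1)/\mathcal{I}_\Omega^2$, and the sharp quantitative Faber–Krahn inequality (Theorem \ref{thm:fkstability} with $q=2$), which bounds $|\Omega|^{2/N}\lambda_1(\Omega)$ \emph{from below} by $\omega_N^{2/N}\lambda_1(B_1)+\gamma_{N,2}\,\mathcal{A}(\Omega)^2$. Chaining these two inequalities in the correct direction should immediately produce the desired quantitative comparison between the ``radius'' $|\Omega|^{1/N}/\omega_N^{1/N}$ (of the symmetrized set) and the harmonic radius $\mathcal{I}_\Omega$.

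In more detail, first I would multiply \eqref{PH} by $|\Omega|^{2/N}$ to obtain
\[
|\Omega|^{2/N}\,\lambda_1(\Omega) \;\le\; \frac{|\Omega|^{2/N}}{\mathcal{I}_\Omega^2}\,\lambda_1(B_1).
\]
Then I would invoke \eqref{eq:fkstability} for $q=2$, which gives
\[
|\Omega|^{2/N}\,\lambda_1(\Omega) \;\ge\; |B_1|^{2/N}\,\lambda_1(B_1) + \gamma_{N,2}\,\mathcal{A}(\Omega)^2 \;=\; \omega_N^{2/N}\,\lambda_1(B_1) + \gamma_{N,2}\,\mathcal{A}(\Omega)^2,
\]
where I used $|B_1|=\omega_N$. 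Combining the two displays and dividing by the positive constant $\lambda_1(B_1)$ yields
\[
\frac{|\Omega|^{2/N}}{\mathcal{I}_\Omega^2} - \omega_N^{2/N} \;\ge\; \frac{\gamma_{N,2}}{\lambda_1(B_1)}\,\mathcal{A}(\Omega)^2,
\]
so one can take $c=\gamma_{N,2}/\lambda_1(B_1)=\gamma_{N,2}/j_{(N-2)/2,1}^2$ in view of \eqref{lambda1palla}.

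There is essentially no obstacle: the two ingredients are already proved in the excerpt and point in complementary directions. The only small point to double-check is that $\lambda_1(B_1)>0$ (obvious) so that division is legitimate, and that the Hersch–Pólya–Szegő inequality \eqref{PH} is available without extra topological assumptions on $\Omega$ in every dimension — which is exactly the generality stated in the theorem cited from \cite{MR1391227}. Thus the proof reduces to two lines: quote \eqref{PH}, quote \eqref{eq:fkstability}, divide.
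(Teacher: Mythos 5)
Your proof is correct and follows exactly the same route as the paper: chain the Hersch–Pólya–Szegő inequality \eqref{PH} with the sharp quantitative Faber–Krahn inequality \eqref{eq:fkstability} for $q=2$, then divide by $\lambda_1(B_1)$. The paper packages the arithmetic slightly differently (multiplying by $|\Omega|^{2/N}/\omega_N^{2/N}$ instead of $|\Omega|^{2/N}$), but the two ingredients and the logic are identical, and your explicit constant $c=\gamma_{N,2}/j_{(N-2)/2,1}^2$ is the right one.
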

\begin{proof}
We multiply both sides of \eqref{PH} by $|\Omega|^{2/N}/\omega_N^{2/N}$, then we get
\[
\frac{|\Omega|^{2/N}\,\lambda_1(\Omega)}{\omega_N^{2/N}\,\lambda_1(B_1)}-1\le \frac{1}{\omega_N^{2/N}}\,\left(\frac{|\Omega|^{2/N}}{\mathcal{I}_\Omega^2}-\omega_N^{2/N}\right).
\]
By recalling that $\omega_N^{2/N}\,\lambda_1(B_1)$ is a universal constant and using the sharp quantitative Faber-Krahn inequality of Theorem \ref{thm:fkstability}, we get the conclusion.
\end{proof}
For simply connected sets in the plane, the previous result has an interesting geometrical consequence, which will be exploited in Section \ref{sec:4}. Indeed, observe that with the notation above we have 
\[
|\Omega|=\int_{B_1} |(f^{-1}_{x_0})'|^2\,dx\ge \pi\,|(f^{-1}_{x_0})'(0)|^2,
\]
where we used again monotonicity of the function \eqref{subarmo}.
If we assume for simplicity that $|\Omega|=\pi$, thus we get
\[
\frac{1}{|f'_{x_0}(x_0)|}=|(f^{-1}_{x_0})'(0)|\le 1
\]
with equality if $\Omega$ is a disc. If $\Omega$ is not a disc, then the inequality is strict and we can add a remainder term. In other words, {\it the local stretching at the origin of the conformal map $f^{-1}_{x_0}$ can tell whether $\Omega$ is a disc or not}.
This is the content of the next result.
\begin{corollary}
\label{lm:kindof}
Let $\Omega\subset\mathbb{R}^2$ be an open bounded simply connected set such that $|\Omega|=\pi$. For every $x_0\in\Omega$, we consider the holomorphic isomorphism
\[
f_{x_0}:\Omega\to B_1,
\] 
such that $f_{x_0}(x_0)=0$. For every $x_0\in\Omega$ we have
\[
\frac{1}{|f'_{x_0}(x_0)|}=|(f_{x_0}^{-1})'(0)|\le \sqrt{1-\frac{1}{C}\, \mathcal{A}(\Omega)^2},
\]
for some $C>4$.
\end{corollary}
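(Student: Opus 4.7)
The plan is to read the statement as a bound on the harmonic radius of $\Omega$ and then invoke \eqref{confra}. Indeed, by the characterization \eqref{equivalent} of $\mathcal{I}_\Omega$ in the simply connected planar case, one has
\[
|(f_{x_0}^{-1})'(0)| \le \sup_{y_0 \in \Omega} |(f_{y_0}^{-1})'(0)| = \mathcal{I}_\Omega, \qquad x_0 \in \Omega,
\]
so it is enough to prove $\mathcal{I}_\Omega^2 \le 1 - \mathcal{A}(\Omega)^2/C$ with $C > 4$, under the normalization $|\Omega| = \pi$.

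First, I would specialize the quantitative harmonic radius estimate \eqref{confra} to $N=2$, $\omega_2 = \pi$, and the chosen normalization $|\Omega| = \pi$. This yields
\[
\frac{\pi}{\mathcal{I}_\Omega^2} - \pi \ge c\,\mathcal{A}(\Omega)^2,
\]
equivalently,
\[
\mathcal{I}_\Omega^2 \le \frac{1}{1 + \kappa\,\mathcal{A}(\Omega)^2}, \qquad \kappa := \frac{c}{\pi},
\]
with $c>0$ the constant of \eqref{confra}.

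Next, I would perform an elementary algebraic step to convert this reciprocal bound into the additive form required by the statement. Using the identity $1/(1+t) = 1 - t/(1+t)$ with $t = \kappa\,\mathcal{A}(\Omega)^2$ together with the universal bound $\mathcal{A}(\Omega) \le 2$, I would obtain
\[
\mathcal{I}_\Omega^2 \le 1 - \frac{\kappa}{1 + \kappa\,\mathcal{A}(\Omega)^2}\,\mathcal{A}(\Omega)^2 \le 1 - \frac{\kappa}{1 + 4\kappa}\,\mathcal{A}(\Omega)^2.
\]
Setting $1/C := \kappa/(1+4\kappa)$, one has $C = \pi/c + 4 > 4$, which is precisely the requirement on the constant.

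Combining the two displays with the supremum characterization above gives $|(f_{x_0}^{-1})'(0)|^2 \le 1 - \mathcal{A}(\Omega)^2/C$ for every $x_0 \in \Omega$, and extracting square roots yields the claim. There is no genuine obstacle here; the only delicate point is the bookkeeping ensuring $C > 4$, which relies on using $\mathcal{A}(\Omega) \le 2$ only to lose a factor $1+4\kappa$ (rather than, e.g., linearizing $1/(1+t)\approx 1-t$, which would produce $C = \pi/c$ with no guarantee that this exceeds $4$).
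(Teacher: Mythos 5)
Your proposal is correct and follows the same route as the paper: specialize the quantitative harmonic radius bound \eqref{confra} to $N=2$ with $|\Omega|=\pi$, rearrange algebraically, and conclude via the characterization \eqref{equivalent} of $\mathcal{I}_\Omega$ as a supremum over base points. The paper's proof compresses the middle step to ``with simple manipulations we get $\mathcal{I}_\Omega^2\le 1-\mathcal{A}(\Omega)^2/C$,'' and you have usefully filled in exactly those manipulations — the identity $1/(1+t)=1-t/(1+t)$ combined with $\mathcal{A}(\Omega)\le 2$ — making transparent both the choice of $C=\pi/c+4$ and why it must exceed $4$, which the paper asserts without explanation.
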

\begin{proof}
We observe that from \eqref{confra} we get
\[
\frac{1}{\mathcal{I}_\Omega^2}\ge 1+\frac{c}{\pi}\,\mathcal{A}(\Omega)^2,
\]
where we used that $|\Omega|=\pi$.
From this, with simple manipulations we get
\[
\mathcal{I}_\Omega^2\le 1-\frac{1}{C}\,\mathcal{A}(\Omega)^2.
\]
It is now sufficient to use the characterization \eqref{equivalent} to conclude.
\end{proof}

\section{Stability for the Szeg\H{o}-Weinberger inequality}
\label{sec:4}

\subsection{A quick overview of the Neumann spectrum}
In the case of homogeneous Neumann boundary conditions, the first eigenvalue $\mu_1(\Omega)$ is always $0$ and corresponds to constant functions. This reflects the fact that the Poincar\'e inequality
\[
c\, \int_\Omega |u|^2\, dx\le \int_\Omega |\nabla u|^2\, dx ,\qquad u\in W^{1,2}(\Omega),
\]
can hold only in the trivial case $c=0$. For an open set $\Omega\subset\mathbb{R}^N$ with finite measure, 
we define its {\it first non trivial Neumann eigenvalue} by
\[
\mu_2(\Omega):=\inf_{u\in W^{1,2}(\Omega)\setminus\{0\}}\left\{\frac{\displaystyle\int_\Omega |\nabla u|^2\, dx}{\displaystyle\int_\Omega |u|^2\, dx}\, :\, \int_\Omega u\, dx=0\right\}.
\] 
In other words, this is the sharp constant in the Poincar\'e-Wirtinger inequality
\[
c\,\int_{\Omega} \left|u-\fint_\Omega u\right|^2\,dx\le \int_\Omega |\nabla u|^2\,dx,\qquad u\in W^{1,2}_0(\Omega).
\]
When $\Omega\subset\mathbb{R}^N$ has Lipschitz boundary, the embedding $W^{1,2}(\Omega)\hookrightarrow L^2(\Omega)$ is compact (see \cite[Theorem 5.8.2]{MR0482102}) and the infimum above is attained.
In this case the Neumann Laplacian has a discrete spectrum $\{\mu_1(\Omega),\mu_2(\Omega),\dots\}$. The successive Neumann eigenvalues can be defined similarly, that is $\mu_k(\Omega)$ is obtained by minimizing the same Rayleigh quotient, among functions orthogonal (in the $L^2(\Omega)$ sense) to the first $k-1$ eigenfunctions.
\par
If $\Omega$ has $k$ connected components, we have $\mu_1(\Omega)=\dots=\mu_k(\Omega)=0$, with corresponding eigenfunctions given by a constant function on each connected component of $\Omega$. We still have the scaling property
\[
\mu_k(t\,\Omega)=t^{-2}\, \mu_k(\Omega),\qquad t>0,
\]
and there holds the Szeg\H{o}-Weinberger inequality\footnote{We point out that Szeg\H{o}-Weinberger inequality holds for every open set with finite measure, without smoothness assumptions. In other words, the proof does not use neither discreteness of the Neumann spectrum of $\Omega$, nor that the infimum in the definition of $\mu_2(\Omega)$ is attained.}
\begin{equation}
\label{segowine}
|\Omega|^{2/N}\,\mu_2(\Omega)\le |B|^{2/N}\,\mu_2(B),
\end{equation}
with equality if and only if $\Omega$ is a ball. 
\par
For a ball $B_r$ of radius $r$, $\mu_2(B_r)$ has multiplicity $N$, that is $\mu_2(B_r)=\dots=\mu_{N+1}(B_r)$. This value can be explicitely computed, together with its corresponding eigenfunctions. Indeed, these are given by (see \cite{MR2310200})
\begin{equation}
\label{autopalle}
\xi_i(x):=|x|^\frac{2-N}{2}\,J_\frac{N}{2}\left(\frac{\beta_{N/2,1}|x|}{r}\right)\,\frac{x_i}{|x|},\qquad i=1,\,\dots,\,N.
\end{equation}
Here $J_{N/2}$ is still a Bessel function of the first kind, 
while $\beta_{N/2,1}$ denotes the first positive zero of the derivative of $t\mapsto t^{(2-N)/2}\,J_{N/2}(t)$, i.e. it verifies
\[
\beta_{N/2,1}\,J'_\frac{N}{2}(\beta_{N/2,1})+\left(\frac{2-N}{2}\right) J_\frac{N}{2}(\beta_{N/2,1})=0\,.
\]
Observe in particular that the radial part of $\xi_i$ 
\begin{equation}
\label{Q}
\varphi_N(|x|):=|x|^{1-\frac{N}{2}}\,J_{N/2}\left(\frac{\beta_{N/2,1}|x|}{r}\right)\,,
\end{equation}
satisfies the ODE (of Bessel type)
\[
g''(t)+\frac{N-1}{t}\,g'(t)+\left(\mu_2(B_r)-\frac{N-1}{t^2}\right)\,g(t)=0,
\]
and one can compute
\[
\mu_2(B_r)=\left(\frac{\beta_{N/2,1}}{r}\right)^2.
\]
Finally, we recall that in dimension $N=2$ inequality \eqref{segowine} can be sharpened. Namely, for every $\Omega\subset\mathbb{R}^2$ simply connected open set we have
\begin{equation}
\label{sego}
\frac{1}{|\Omega|}\,\left(\frac{1}{\mu_2(\Omega)}+\frac{1}{\mu_3(\Omega)}\right)\ge \frac{1}{|B|}\,\left(\frac{1}{\mu_2(B)}+\frac{1}{\mu_3(B)}\right),
\end{equation}
where $B\subset\mathbb{R}^2$ is any open disc.
This result has been proved by Szeg\H{o} in \cite{MR0061749} by means of conformal maps, we will recall his proof below.
 By recalling that for a disc $\mu_2=\mu_3$, from \eqref{sego} we immediately get \eqref{segowine} for simply connected sets in $\mathbb{R}^2$.

\begin{remark}
The higher dimensional analogue of \eqref{sego} would be
\[
\frac{1}{|\Omega|^{2/N}}\,\sum_{k=2}^{N+1} \frac{1}{\mu_k(\Omega)}\ge \frac{1}{|B|^{2/N}}\,\sum_{k=2}^{N+1} \frac{1}{\mu_k(B)}.
\]
However, the validity of such an inequality is still an open problem, see \cite[page 106]{MR2251558}.
\end{remark}

\subsection{A two-dimensional result by Nadirashvili}

One of the first quantitative improvements of the Szeg\H{o}-Weinberger inequality was due to Nadirashvili, see \cite{MR1476715}. Even if his result is limited to simply connected sets in the plane, this is valid for the stronger inequality \eqref{sego}. We reproduce the original proof, up to some modifications (see Remark \ref{oss:capacity} below). We will also highlight a quicker strategy suggested to us by Mark S. Ashbaugh (see Remark \ref{oss:mark} below).
\begin{theorem}[Nadirashvili]
\label{thm:nikolaisego}
There exists a constant $C>0$ such that for every $\Omega\subset\mathbb{R}^2$ smooth simply connected open set we have
\begin{equation}
\label{nikolaisego}
\frac{1}{|\Omega|}\,\left(\frac{1}{\mu_2(\Omega)}+\frac{1}{\mu_3(\Omega)}\right)-\frac{1}{|B|}\,\left(\frac{1}{\mu_2(B)}+\frac{1}{\mu_3(B)}\right)\ge \frac{1}{C}\, \mathcal{A}(\Omega)^2.
\end{equation}
Here $B\subset\mathbb{R}^2$ is any open disc.
\end{theorem}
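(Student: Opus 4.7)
The plan is to revisit Szegő's original proof of \eqref{sego} by conformal transplantation, injecting the quantitative conformal-radius estimate from Corollary \ref{lm:kindof} as the new ingredient. First normalise $|\Omega|=\pi$; for every $x_0\in\Omega$ let $g_{x_0}\colon B_1\to\Omega$ be the Riemann map with $g_{x_0}(0)=x_0$, and set $v_i:=\xi_i\circ g_{x_0}^{-1}$ for $i=1,2$, where $\xi_1,\xi_2$ are the two disc eigenfunctions for $\mu_2(B_1)$ given by \eqref{autopalle}. Szegő's Brouwer-degree argument (varying $x_0\in\Omega$, using the $S^1$-action coming from the rotational freedom of $g_{x_0}$) produces some $x_0$ for which $\int_\Omega v_1\,dy=\int_\Omega v_2\,dy=0$, so the $v_i$ are admissible in the variational characterisation of $\mu_2(\Omega),\mu_3(\Omega)$. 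Because $\xi_1,\xi_2$ are $L^2$-orthogonal on $B_1$ and share the same eigenvalue, $\int_{B_1}\nabla\xi_1\cdot\nabla\xi_2\,dx=0$, and the $2$-dimensional conformal invariance of the Dirichlet integral preserves this: $\int_\Omega\nabla v_1\cdot\nabla v_2\,dy=0$.

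With the Dirichlet Gram matrix on $V=\mathrm{span}(v_1,v_2)$ diagonal, the min--max characterisation of $\mu_2,\mu_3$ (applied via the $2\times 2$ generalised eigenvalue problem on $V$) yields
\[
\frac{1}{\mu_2(\Omega)}+\frac{1}{\mu_3(\Omega)}\ \ge\ \sum_{i=1,2}\frac{\int_\Omega v_i^2}{\int_\Omega|\nabla v_i|^2}\ =\ \frac{\int_{B_1}\varphi_2^2(|x|)\,|g_{x_0}'(x)|^2\,dx}{\mu_2(B_1)\,I},
\]
where $I=\int_{B_1}\xi_1^2$ and I used $\xi_1^2+\xi_2^2=\varphi_2^2(|\cdot|)$ together with the conformal invariance $\int_\Omega|\nabla v_i|^2=\int_{B_1}|\nabla\xi_i|^2=\mu_2(B_1)\,I$. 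Since the disc version is $1/\mu_2(B)+1/\mu_3(B)=\int_{B_1}\varphi_2^2\,/\,(\mu_2(B_1)\,I)$, after dividing by $|\Omega|=\pi$ the whole deficit in \eqref{nikolaisego} reduces to producing a universal $c>0$ such that
\[
\int_{B_1}\varphi_2^2(|x|)\,\big(|g_{x_0}'(x)|^2-1\big)\,dx\ \ge\ c\,\mathcal{A}(\Omega)^2.
\]

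For this last step I will expand $g_{x_0}'(z)=\sum_{n\ge 0}a_n z^n$ and work with the circular average $\Phi(r):=\tfrac{1}{2\pi r}\int_{\{|x|=r\}}|g_{x_0}'|^2\,d\mathcal H^1=\sum_{n\ge 0}|a_n|^2 r^{2n}$, automatically non-decreasing (this recovers Szegő's subharmonic monotonicity of $|g_{x_0}'|^2$ in sharp form). The normalisation $\int_{B_1}|g_{x_0}'|^2=\pi$ reads $\sum_{n\ge 0}|a_n|^2/(n+1)=1$, while Corollary \ref{lm:kindof} provides the crucial input $|a_0|^2=|g_{x_0}'(0)|^2\le 1-\mathcal{A}(\Omega)^2/C$, equivalently $\sum_{n\ge 1}|a_n|^2/(n+1)\ge \mathcal{A}(\Omega)^2/C$. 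Expanding also $\varphi_2^2(r)=\sum_{k\ge 1}b_k r^{2k}$ with $b_1=\beta_{1,1}^2/4>0$ (since $J_1(t)\sim t/2$ at $0$) and using the elementary identity $\tfrac{1}{k+n+1}-\tfrac{1}{(k+1)(n+1)}=\tfrac{kn}{(k+1)(n+1)(k+n+1)}$, the deficit integral reduces to the manifestly positive weighted double sum $\sum_{k,n\ge 1}b_k|a_n|^2\,kn/[(k+1)(n+1)(k+n+1)]$, which I will bound below by keeping only the $k=1$ terms to get a constant multiple of $\sum_{n\ge 1}|a_n|^2/(n+1)$, and hence of $\mathcal{A}(\Omega)^2$. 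The main obstacle sits precisely in this final Chebyshev-type step: the bare monotonicity of $\Phi$ (Szegő's original observation) yields only positivity of the deficit, and one must convert the first-order defect $1-|g_{x_0}'(0)|^2$ into a genuine quantitative remainder; the power-series identity above accomplishes this exactly because of the holomorphic structure of $g_{x_0}$ in dimension two and the positivity of the leading Bessel coefficient $b_1$.
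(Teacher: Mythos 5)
Your proposal follows the paper's proof almost line by line up to the final step: same conformal transplantation via the Riemann map, same variational characterisation of $1/\mu_2+1/\mu_3$, same use of Corollary \ref{lm:kindof} to get $|h'(0)|^2\le 1-\mathcal A(\Omega)^2/C$, and the same reduction to a lower bound for $\int_{B_1}\varphi_2^2(|x|)\big(|h'(x)|^2-1\big)\,dx$. The difference is in how the monotonicity of $\varphi_2^2$ is exploited at the end, and there your argument has a genuine gap.

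You write the deficit as the double sum
\[
\sum_{k,n\ge 1} b_k\,|a_n|^2\,\frac{kn}{(k+1)(n+1)(k+n+1)},
\]
call it \emph{manifestly positive}, and propose to bound it below by discarding all $k\ge 2$ terms. This requires $b_k\ge 0$ for all $k$, but the Taylor coefficients of $\varphi_2^2(r)=\mathrm{const}\cdot J_1(\beta_{1,1}r)^2$ \emph{alternate in sign}: since $J_1(z)=\sum_{m\ge 0}\frac{(-1)^m}{m!(m+1)!}(z/2)^{2m+1}$, the coefficient of $r^{2k}$ in $J_1(\beta_{1,1}r)^2$ has sign $(-1)^{k-1}$, so $b_2<0$, $b_4<0$, etc. The double sum is in fact positive (this is exactly the Chebyshev inequality of Lemma \ref{lm:hersch}), but that positivity is not visible term by term in $k$, and truncating to $k=1$ is not a valid lower bound. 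The monotonicity of $\varphi_2^2$ as a function is what makes the inner sum over $k$ nonnegative for each fixed $n$ -- it is not a consequence of sign conditions on the $b_k$.

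This is precisely what Lemma \ref{lm:hersch+} is designed to circumvent: it only uses nonnegativity of the Taylor coefficients of $\Phi$ (which you correctly have, $\alpha_n=|a_n|^2\ge 0$), while the hypothesis on $f=\varphi_2^2$ is monotonicity, exploited through an integration by parts and the auxiliary function $H(t)=\frac{2}{t^2}\int_0^t f(s)\,s\,ds$, whose non-decrease replaces the sign information you are missing on the $b_k$. To repair your argument you would either need to invoke that lemma (or reprove its content by integrating by parts in $k$), or find some other way to show that the inner sum $\sum_{k\ge 1}b_k\,k/[(k+1)(k+n+1)]$ is bounded below uniformly in $n$ -- a statement that does not reduce to keeping the $k=1$ term.
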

\begin{proof}
The proof of \eqref{nikolaisego} introduces some quantitative ingredients in the original proof by Szeg\H{o}. 
For the reader's convenience, it is thus useful to recall at first the proof of \eqref{sego}.
\par
By scale invariance, we can suppose that $|\Omega|=|B|=\pi$ and we may take the disc $B$ to be centered at the origin.
From \eqref{autopalle} above, we know that 
\[
\xi_1(x)=c\,J_{1}(\beta_{1,1}\,|x|)\,\frac{x_1}{|x|}\qquad \mbox{ and }\qquad \xi_2(x)=c\,J_{1}(\beta_{1,1}\,|x|)\,\frac{x_2}{|x|},
\]
are two linearly independent Neumann eigenfunctions in $B$, corresponding to $\mu_2(B)=\mu_3(B)$. The normalization constant $c$ is chosen so to guarantee that $\xi_1$ and $\xi_2$ have unit $L^2$ norm.
\par
Since $\Omega\subset\mathbb{R}^2$ is simply connected, given $x_0\in \Omega$ by the Riemann Mapping Theorem there exists an analytic isomorphism $f_{x_0}:\Omega\to B$ such that $f_{x_0}(x_0)=0$. For notational simplicity, we will omit the index $x_0$ and simply write $f$. Szeg\H{o} proved that we can choose $x_0\in\Omega$ in such a way that if we set $v_i=\xi_i\circ f$ ($i=1,2$) then
\[
\int_\Omega v_i\,dx=0,\qquad i=1,2.
\]
Then if we set $h=f^{-1}$ we have 
\begin{equation}
\label{olomorfa}
\int_\Omega |v_i|^2\,dx=\int_B \xi_i^2\,|h'|^2\,dx,\qquad \int_\Omega |\nabla v_i|^2\,dx=\int_B |\nabla \xi_i|^2\,dx,\qquad i=1,2,
\end{equation}
where $h'$ denotes the complex derivative. Also observe that by conformality we have
\[
\int_\Omega \langle \nabla v_1,\nabla v_2\rangle\,dx=\int_B \langle \nabla \xi_1,\nabla \xi_2\rangle\,dx=0.
\]
By recalling the following variational formulation for sum of inverses of Neumann eigenvalues (see for example \cite[Theorem 1]{MR1251868})
\[
\frac{1}{\mu_2(\Omega)}+\frac{1}{\mu_3(\Omega)}=\max_{u\in W^{1,2}(\Omega)\setminus\{0\}}\left\{\frac{\displaystyle\int_\Omega |u_1|^2\,dx}{\displaystyle \int_\Omega |\nabla u_1|^2\,dx}+\frac{\displaystyle\int_\Omega |u_2|^2\,dx}{\displaystyle \int_\Omega |\nabla u_2|^2\,dx}\, :\, \begin{array}{c}\displaystyle \int_{\Omega} u_1\,dx=\int_\Omega u_2\,dx=0\\
\displaystyle\int_\Omega \langle \nabla u_1,\nabla u_2\rangle\,dx=0
\end{array}\right\},
\]
and using that $\mu_2(B)=\mu_3(B)$, from \eqref{olomorfa} we get
\begin{equation}
\label{szego}
\begin{split}
\frac{1}{\mu_2(\Omega)}+\frac{1}{\mu_3(\Omega)}\ge \frac{\displaystyle\int_\Omega |v_1|^2\,dx}{\displaystyle \int_\Omega |\nabla v_1|^2\,dx}+\frac{\displaystyle \int_\Omega |v_2|^2\,dx}{\displaystyle\int_\Omega |\nabla v_2|^2\,dx}&=\frac{\displaystyle\int_B |\xi_1|^2\,|h'|^2\,dx}{\displaystyle \mu_2(B)}+\frac{\displaystyle \int_B |\xi_2|^2\,|h'|^2\,dx}{\displaystyle \mu_3(B)}\\
&= \frac{\displaystyle\int_B \Big(|\xi_1|^2+|\xi_2|^2\Big)\,|h'|^2\,dx}{\displaystyle \mu_2(B)}.
\end{split}
\end{equation}
Since $h'$ is holomorphic, the function $|h'|^2$ is subharmonic, thus 
\[
r\mapsto \Phi(r):=\fint_{\{|x|=r\}} |h'|^2\,d\mathcal{H}^1,
\]
is a monotone nondecreasing function.
The same is true for the radial function 
\[
|\xi_1|^2+|\xi_2|^2=c^2 J_{1}(\beta_{1,1}\,|x|)^2,
\]
thus by Lemma \ref{lm:hersch} we have
\begin{equation}
\label{notte}
\begin{split}
\int_B \Big(|\xi_1|^2+|\xi_2|^2\Big)\,|h'|^2\,dx&=2\,\pi\,\int_0^1 \Big(|\xi_1|^2+|\xi_2|^2\Big)\,\Phi(\varrho)\,\varrho\,d\varrho\\
&\ge 2\,\pi\,\frac{\displaystyle\int_0^1 \Big(|\xi_1|^2+|\xi_2|^2\Big)\,\varrho\,d\varrho}{\displaystyle\int_0^1 \varrho\,d\varrho}\,\int_0^1\, \Phi(\varrho)\,\varrho\,d\varrho \\
&=2\,\int_0^1 \Big(|\xi_1|^2+|\xi_2|^2\Big)\,\varrho\,d\varrho\,\left(\int_0^1\int_{\{|x|=\varrho\}} |h'|^2\,d\mathcal{H}^1\,d\varrho\right)\,\\
&=2\,\pi\,\int_0^1 \Big(|\xi_1|^2+|\xi_2|^2\Big)\,\varrho\,d\varrho=\int_B \Big(|\xi_1|^2+|\xi_2|^2\Big)\,dx=2,
\end{split}
\end{equation}
where we used that
\[
\int_0^1\int_{\{|x|=\varrho\}} |h'|^2\,d\mathcal{H}^1\,d\varrho=\int_B |h'|^2\,dx=\pi.
\] 
By using the previous estimate in \eqref{szego}, we finally get \eqref{sego}.
\vskip.2cm\noindent
We now come to the proof of \eqref{nikolaisego}. By using Corollary \ref{lm:kindof} from the previous section, we get
\begin{equation}
\label{primocoeff}
|h'(0)|^2\le 1-\frac{1}{C}\,\mathcal{A}(\Omega)^2.
\end{equation}
Since $h$ is analytic, we have
\[
h'(z)=\sum_{n=1}^\infty n\,a_n\,z^{n-1},
\]
and thus
\[
\Phi(\varrho)=\fint_{\{|x|=\varrho\}} |h'|^2\,d\mathcal{H}^1=\sum_{n=1}^\infty n^2\,a_n^2\,\varrho^{2\,(n-1)}.
\]
The latter can be rewritten as
\[
\Phi(\varrho)=\sum_{n=0}^\infty \alpha_n\,\varrho^n,\qquad \mbox{ where } \alpha_n=\begin{cases}\left(\frac{n+2}{2}\right)^2\,a^2_{\frac{n+2}{2}}, & n \mbox{ even},\\
0, & n \mbox{ odd},
\end{cases}
\]
and from \eqref{primocoeff}
\[
\alpha_0=a_1^2=|h'(0)|^2\le 1-\frac{1}{C}\,\mathcal{A}(\Omega)^2=2\,\left(1-\frac{1}{C}\,\mathcal{A}(\Omega)^2\right)\,\int_0^1 \Phi(\varrho)\,\varrho\,d\varrho.
\]
We can thus apply Lemma \ref{lm:hersch+}, with the choices
\[
f=|\xi_1|^2+|\xi_2|^2=c^2 J_1^2,\qquad \Phi(\varrho)=\fint_{\{|x|=\varrho\}} |h'|^2\,d\mathcal{H}^1\quad \mbox{ and }\quad \gamma=2\,\left(1-\frac{1}{C}\,\mathcal{A}(\Omega)^2\right).
\]
Thus in place of \eqref{notte} we now obtain
\[
\begin{split}
\int_B \Big(|\xi_1|^2+|\xi_2|^2\Big)\,|h'|^2\,dx&\ge 2\,\pi\,\frac{\displaystyle\int_0^1 \Big(|\xi_1|^2+|\xi_2|^2\Big)\,\varrho\,d\varrho}{\displaystyle\int_0^1 \varrho\,d\varrho}\,\int_0^1\, \Phi(\varrho)\,\varrho\,d\varrho\\
&+2\,c'\,\mathcal{A}(\Omega)^2\,\int_0^1 \Phi(\varrho)\,\varrho\,d\varrho=2+c'\,\mathcal{A}(\Omega)^2.
\end{split}
\]
By using this improved estimate in \eqref{szego}, we get
\[
\frac{1}{\mu_2(\Omega)}+\frac{1}{\mu_3(\Omega)}\ge \frac{2}{\mu_2(B)}+\frac{c'}{\mu_2(B)}\,\mathcal{A}(\Omega)^2,
\]
which concludes the proof.
\end{proof}
\begin{remark}
\label{oss:capacity}
The crucial point of the previous proof is to obtain estimate \eqref{primocoeff} on $h'(0)=(f^{-1})'(0)$. The argument we used to obtain it is slightly different with respect to the original one by Nadirashvili. The latter exploits a stability result of Hansen and Nadirashvili (see \cite[Corollary 2]{MR1222458}) for the logarithmic capacity in dimension $N=2$, which assures that\footnote{As explained in the Introduction of \cite{MR1243100}, for {\it connected} open sets in $\mathbb{R}^2$ inequality \eqref{incapacità} follows from an inequality linking capacity and moment of inertia which can be found in the book \cite{MR0043486}. This observation is attributed to Keady. In \cite{MR1222458} the result is extended to general open sets in $\mathbb{R}^2$.}
\begin{equation}
\label{incapacità}
\mathrm{Cap}(\Omega)-\mathrm{Cap}(B)\ge c\, \mathcal{A}(\Omega)^2,\qquad \mbox{ if }|\Omega|=|B|.
\end{equation}
Here on the contrary we rely on the stability estimate of Corollary \ref{lm:kindof}, which in turn is a consequence of the quantitative Faber-Krahn inequality, as we saw in Section \ref{sec:2}.
\end{remark}
\begin{remark}[An overlooked inequality]
\label{oss:mark}
Inequality \eqref{sego} in turn can be sharpened. Indeed, in \cite{MR0295172} Hersch and Monkewitz have shown that there exists a constant $c>0$ such that for every $\Omega\subset\mathbb{R}^2$ simply connected open set we have
\begin{equation}
\label{HMsego}
\frac{1}{|\Omega|}\,\left(\frac{1}{\mu_2(\Omega)}+\frac{1}{\mu_3(\Omega)}+\frac{c}{\lambda_1(\Omega)}\right)\ge \frac{1}{|B|}\,\left(\frac{1}{\mu_2(B)}+\frac{1}{\mu_3(B)}+\frac{c}{\lambda_1(B)}\right).
\end{equation}
By using this inequality, we can provide a quicker proof of Theorem \ref{thm:nikolaisego}.
Indeed, let us suppose for simplicity that $|\Omega|=1$, from \eqref{HMsego} we get
\[
\left(\frac{1}{\mu_2(\Omega)}+\frac{1}{\mu_3(\Omega)}\right)-\left(\frac{1}{\mu_2(\Omega^*)}+\frac{1}{\mu_3(\Omega^*)}\right)\ge \frac{c}{\lambda_1(\Omega^*)\,\lambda_1(\Omega)}\,\Big(\lambda_1(\Omega)-\lambda_1(\Omega^*)\Big),
\]
where $\Omega^*$ is a disc such that $|\Omega^*|=|\Omega|=1$. We now observe that if $\lambda_1(\Omega)\ge 2\,\lambda_1(\Omega^*)$, the right-hand side above can be bounded from below as follows
\[
\frac{c}{\lambda_1(\Omega^*)\,\lambda_1(\Omega)}\,\Big(\lambda_1(\Omega)-\lambda_1(\Omega^*)\Big)\ge \frac{c}{2\,\lambda_1(\Omega^*)}\ge \frac{c}{8\,\lambda_1(\Omega^*)}\,\mathcal{A}(\Omega)^2,
\]
where we used that $\mathcal{A}(\Omega)<2$.
If on the contrary $\lambda_1(\Omega)<2\,\lambda_1(\Omega^*)$, then from the sharp quantitative Faber-Krahn inequality (Theorem \ref{thm:fkstability}) we get
\[
\frac{c}{\lambda_1(\Omega^*)\,\lambda_1(\Omega)}\,\Big(\lambda_1(\Omega)-\lambda_1(\Omega^*)\Big)\ge \frac{c'}{\lambda_1(\Omega^*)^2}\,\mathcal{A}(\Omega)^2.
\] 
In conclusion, we can infer the existence of a constant $c''>0$ such that
\[
\left(\frac{1}{\mu_2(\Omega)}+\frac{1}{\mu_3(\Omega)}\right)-\left(\frac{1}{\mu_2(\Omega^*)}+\frac{1}{\mu_3(\Omega^*)}\right)\ge c''\mathcal{A}(\Omega)^2,
\]
thus proving Theorem \ref{thm:nikolaisego}. We thank Mark S. Ashbaugh for kindly pointing out the reference \cite{MR0295172}.
\end{remark}

\subsection{The Szeg\H{o}-Weinberger inequality in sharp quantitative form}

From Theorem \ref{thm:nikolaisego}, one can easily get a quantitative improvement of the Szeg\H{o}-Weinberger inequality, in the case of simply connected sets in the plane. 
For general open sets in any dimension, we have the following result proved by Brasco and Pratelli in \cite[Theorem 4.1]{MR2899684}.
\begin{theorem}
\label{th:quanto_sw}
For every $\Omega\subset\mathbb{R}^N$ open set with finite measure, we have
\begin{equation}
\label{quantSW}
|B|^{2/N}\,\mu_2(B)\, -|\Omega|^{2/N}\,\mu_2(\Omega)\ge \rho_N\, \mathcal{A}(\Omega)^2,
\end{equation}
where $\rho_N>0$ is an explicit dimensional constant (see Remark \ref{oss:costanteSW} below).
\end{theorem}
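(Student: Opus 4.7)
\emph{Proof proposal.} I would prove \eqref{quantSW} by quantifying the classical Szeg\H{o}--Weinberger argument. Let $B^*$ be a ball with $|B^*|=|\Omega|$ and radius $R$, and let $g:[0,\infty)\to\mathbb{R}$ equal $\varphi_N$ on $[0,R]$ and be extended by the constant $\varphi_N(R)$ for $r>R$. For each tentative center $x_0\in\mathbb{R}^N$ consider the trial field
\[
\xi_i(x)=g(|x-x_0|)\,\frac{(x-x_0)_i}{|x-x_0|},\qquad i=1,\dots,N.
\]
A standard continuity/degree argument produces an $x_0$ for which $\int_\Omega\xi_i\,dx=0$ for every $i$; center $B^*$ at this point. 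Plugging the $\xi_i$ into the Rayleigh quotient defining $\mu_2(\Omega)$, summing over $i$, and invoking the elementary identities $\sum_i\xi_i^2=g(|x-x_0|)^2$ and $\sum_i|\nabla\xi_i|^2=g'(|x-x_0|)^2+(N-1)g(|x-x_0|)^2/|x-x_0|^2$ gives
\[
\mu_2(\Omega)\int_\Omega A(|x-x_0|)\,dx\;\le\;\int_\Omega B(|x-x_0|)\,dx,
\]
with $A(r):=g(r)^2$ and $B(r):=g'(r)^2+(N-1)g(r)^2/r^2$.

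Since on $B^*$ the $\xi_i$ are actual Neumann eigenfunctions for $\mu_2(B^*)$, integration over $B^*$ yields $\int_{B^*}F=0$ for $F(r):=\mu_2(B^*)A(r)-B(r)$, and rearranging the above inequality gives
\[
\bigl[\mu_2(B^*)-\mu_2(\Omega)\bigr]\int_\Omega A(|x-x_0|)\,dx\;\ge\;\int_\Omega F(|x-x_0|)\,dx-\int_{B^*}F(|x-x_0|)\,dx.
\]
The classical proof relies on $F$ being non-decreasing on $[0,\infty)$; I would additionally exploit that the boundary condition $g'(R)=0$ yields $F'(R^-)=F'(R^+)=2(N-1)g(R)^2/R^3>0$ strictly, so that $F'\ge c_0>0$ on a fixed dimensional interval $[R-\delta,R+\delta]$. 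A layer-cake rewriting converts the deficit into
\[
\int_\Omega F-\int_{B^*}F=\int_0^\infty F'(t)\bigl[|B^*\cap B_t(x_0)|-|\Omega\cap B_t(x_0)|\bigr]\,dt,
\]
whose integrand is pointwise non-negative (by the elementary volume bound $|\Omega\cap B_t|\le\min\{|B_t|,|\Omega|\}=|B^*\cap B_t|$) and attains $|\Omega\setminus B^*|=\tfrac12|\Omega\Delta B^*|$ at $t=R$.

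The main obstacle is the final quantitative step: bounding this radial deficit from below by $c\,\mathcal{A}(\Omega)^2$. I would first dispense with the large-asymmetry regime $\mathcal{A}(\Omega)\ge\mathcal{A}_0$ by a soft compactness/continuity argument on the Szeg\H{o}--Weinberger defect, using only $\mathcal{A}(\Omega)<2$. In the complementary small-asymmetry regime, the positivity of $F'$ on $[R-\delta,R+\delta]$ combined with an elementary shell-filling bound of the form $|(\Omega\setminus B^*)\cap\{R<|x-x_0|<R+\eta\}|\le CR^{N-1}\eta$ and its interior analogue should give, via a dichotomy on whether $\Omega\Delta B^*$ is concentrated in a thin shell of width $\sim|\Omega\Delta B^*|$ around $\partial B^*$ or has a substantial portion at distance $\gtrsim\delta$ from $\partial B^*$, the lower bound
\[
\int_{R-\delta}^{R+\delta}\bigl[|B^*\cap B_t(x_0)|-|\Omega\cap B_t(x_0)|\bigr]\,dt\;\ge\;c'\,|\Omega\Delta B^*|^2.
\]
This is the delicate point, since the Fraenkel asymmetry is weak enough to be insensitive to thin tentacles, forcing the two sub-regimes to be handled by distinct quantitative inequalities. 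Once this is in hand, the identity $|\Omega\setminus B^*|\simeq\mathcal{A}(\Omega)\,|\Omega|$ and a careful tracking of the constants through $c_0$ and the isoperimetric factor deliver \eqref{quantSW} with an explicit $\rho_N$.
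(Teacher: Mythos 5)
Your proposal follows the same overall scheme as the paper: Weinberger's test functions $\xi_i$, the degree argument for the vanishing mean, the angular-summation trick producing the radial inequality, and then a quantitative version of the weak Hardy--Littlewood comparison. Your layer-cake identity
\[
\int_\Omega F-\int_{B^*}F=\int_0^\infty F'(t)\,\big[|B^*\cap B_t(x_0)|-|\Omega\cap B_t(x_0)|\big]\,dt
\]
is precisely the content of the paper's quantitative Hardy--Littlewood Lemma \ref{lm:HLquanto}, just written as an integral over thresholds rather than as a spherical-shell rearrangement of $\Omega\Delta\Omega^*$. So the skeleton is right.

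Where you go astray is in the final quantitative step, and in a way that opens a genuine gap. First, the "soft compactness/continuity argument" you invoke to dispense with the regime $\mathcal{A}(\Omega)\ge\mathcal{A}_0$ does not exist in this setting: the class of all open sets with finite measure is not compact in any topology that makes the Szeg\H{o}--Weinberger deficit continuous, and knowing $\mathcal{A}(\Omega)<2$ gives no a priori lower bound on the deficit. What you would be assuming (deficit small forces asymmetry small) is exactly the statement to be proven, so the argument is circular; and even if such an argument existed it could not produce the explicit $\rho_N$ that the theorem claims. Second, the dichotomy between "thin shell" and "substantial mass far from $\partial B^*$" is superfluous. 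The elementary bound $|B^*\cap B_t|-|\Omega\cap B_t|\ge |\Omega\setminus B^*|-\big(|B_t|-|B^*|\big)$ for $t\ge R$, together with $F'(t)\ge 2(N-1)g(R)^2/t^3$ on $(R,\infty)$, already yields, for $\eta=|\Omega\setminus B^*|/(N\omega_N R^{N-1})\le R$,
\[
\int_R^{R+\eta}F'(t)\big[|B^*\cap B_t|-|\Omega\cap B_t|\big]\,dt\ge \frac{(N-1)g(R)^2}{4R^3}\cdot\frac{\eta}{2}\cdot|\Omega\setminus B^*|\gtrsim |\Omega\setminus B^*|^2,
\]
and this holds uniformly over all admissible $\Omega$, with no smallness restriction. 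This is exactly what the paper's Lemma \ref{lm:HLquanto} packages: the worst case for a non-increasing $f$ is when $\Omega\Delta\Omega^*$ is a spherical shell around $\partial\Omega^*$, and even there the estimate is quadratic. Your worry about thin tentacles is misplaced here; that difficulty is specific to the Faber--Krahn argument (where asymmetry has to propagate to superlevel sets of the eigenfunction), while here the layer-cake formula directly radializes the problem, and tentacles can only help. Replacing the compactness step and the dichotomy with the direct uniform estimate above (i.e., with Lemma \ref{lm:HLquanto}) repairs the argument and recovers the explicit constant.
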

\begin{proof}
Here as well, we first recall the proof of \eqref{segowine}.
As always, we denote by $\Omega^*$ the ball centered at the origin and such that $|\Omega^*|=|\Omega|$. Since \eqref{quantSW} is scaling invariant, we can suppose that $|\Omega|=\omega_N$, i.e. the radius of $\Omega^*$ is $1$. Observe that the eigenfunctions $\xi_i$ of $\Omega^*$ defined in \eqref{autopalle} have the following property, which will be crucially exploited:
\[
x\mapsto \sum_{i=1}^N |\xi_i(x)|^2\qquad \mbox{ and }\qquad x\mapsto \sum_{i=1}^N |\nabla \xi_i(x)|^2\qquad \mbox{ are monotone radial functions}.
\]
Indeed, we have
\begin{equation}
\label{radiali}
\sum_{i=1}^N |\xi_i(x)|^2=\varphi_N(|x|)^2 \qquad \mbox{ and }\qquad \sum_{i=1}^N |\nabla \xi_i(x)|^2=\varphi'_N(|x|)^2+(N-1)\, \frac{\varphi_N(|x|)^2}{|x|^2},
\end{equation}
and the first one is radially increasing, while the second is decreasing.
Moreover, since each $\xi_i$ is an eigenfunction of the ball, we have
\[
\mu_2(\Omega^*)\, \int_{\Omega^*} |\xi_i|^2\,dx=\int_{\Omega^*} |\nabla \xi_i|^2\,dx,\qquad i=1,\dots,N.
\]
If we sum the previous identities and use \eqref{radiali}, we thus end up with
\begin{equation}
\label{palla}
\mu_2(\Omega^*)=\frac{\displaystyle\int_{\Omega^*} \left[\varphi'_N(|x|)^2+(N-1)\,\frac{\varphi_N(|x|)^2}{|x|^2}\right]\, dx}{\displaystyle\int_{\Omega^*}\varphi_N(|x|)^2\, dx}.
\end{equation}
We then extend $\varphi_N$ to the whole $[0,+\infty)$ as follows
\[
\phi_N(t)=\left\{\begin{array}{rl}
\varphi_N(t),& 0\le t\le 1,\\
\varphi_N(1),& t>1,\\
\end{array}
\right.
\]
and consider the new functions defined on $\mathbb{R}^N$
\[
\Xi_i(x)=\phi_N(|x|)\,\frac{x_i}{|x|},\qquad i=1,\, \dots\, ,\,N.
\]
Observe that if we define 
\[
F_N(t)=\int_0^t \phi_N(s)\,ds,\qquad t\ge 0,
\]
this is a $C^1$ convex increasing function, which diverges at infinity. This means that the function 
\[
x\mapsto \int_\Omega F_N(|x-y|)\, dy,
\] 
admits a global minimum point $x_0\in\mathbb{R}^N$ and thus
\[
(0,\dots,0)=\int_\Omega F_N'(|x_0-y|)\, \frac{x_0-y}{|x_0-y|}\, dy=\left(\int_\Omega \Xi_1(x_0-y)\, dy,\dots,\,\int_\Omega \Xi_N(x_0-y)\, dy\right). 
\]
Thus it is always possible to choose the origin of the coordinate axes in such a way that\footnote{We avoid here the original argument based on the Brouwer Fixed Point Theorem.} 
\[
\int_\Omega \Xi_i(x)\, dx=0,\ i=1,\dots,N.
\] 
By making such a choice for the origin, the functions $\Xi_i$ can be used to estimate $\mu_2(\Omega)$ and we can infer
\[
\mu_2(\Omega)\le \frac{\displaystyle\int_\Omega |\nabla \Xi_i|^2\, dx}{\displaystyle\int_\Omega \Xi_i^2\, dx},\qquad   i=1,\,\dots,\,N\,.
\]
Again, a summation over $i=1,\dots,N$ yields
\[
\mu_2(\Omega)\le \frac{\displaystyle\sum\limits_{i=1}^N \int_\Omega |\nabla \Xi_i|^2\, dx}{\displaystyle\sum\limits_{i=1}^N\int_\Omega \Xi^2_i\, dx},
\]
and the summation trick makes the angular variables disappear and one ends up with
\begin{equation}
\label{nonpalla}
\mu_2(\Omega)\leq \frac{\displaystyle\int_\Omega \left[\phi'_N(|x|)^2+(N-1)\,\frac{\phi_N(|x|)^2}{|x|^2}\right]\, dx}{\displaystyle\int_\Omega \phi_N(|x|)^2\, dx}.
\end{equation}
We set
\[
f(t)=\phi'_N(t)^2+(N-1)\,\frac{\phi_N(t)^2}{t^2}\qquad \mbox{ and }\qquad g(t)=\phi_N(t)^2,\qquad t\ge 0,
\]
and recall that $f$ is non-increasing, while $g$ is non-decreasing.
Then from \eqref{palla} and \eqref{nonpalla} we get
\begin{equation}
\label{P}
\begin{split}
\mu_2(\Omega^*) &\int_{\Omega^*} g(|x|)\, dx - \mu_2(\Omega) \int_\Omega g(|x|)\, dx \ge \int_{\Omega^*} f(|x|)\, dx-\int_\Omega f(|x|)\, dx.
\end{split}
\end{equation}
By using the weak Hardy-Littlewood inequality (see Lemma \ref{lm:HL}) and the monotonicity of $g$, we have
\[
\int_\Omega g(|x|)\, dx\ge \int_{\Omega^*} g(|x|)\, dx=\int_{\{|y|\le 1\}} |y|^{2-N}\, J_{N/2}(\beta_{N/2,1}\,|y|)^2\, dy=\omega_N^{2/N}\, \eta_N,
\]
where we used the definition of $\phi_N$ and that of $\varphi_N$, see \eqref{Q}.
The dimensional constant $\eta_N$ is defined by
\[
\eta_N:=N\,\omega_N^\frac{N-2}{N}\, \int_0^1 J_{N/2}(\beta_{N/2,1}\,\varrho)^2\,\varrho\, d\varrho>0.
\] 
Thus, by recalling that $|\Omega|=|\Omega^*|=\omega_N$, inequality \eqref{P} yields
\begin{equation}
\label{P2}
\begin{split}
|\Omega^*|^{2/N}\,\mu_2(\Omega^*) &- |\Omega|^{2/N}\,\mu_2(\Omega)\ge \frac{1}{\eta_N}\,\left[\int_{\Omega^*} f(|x|)\, dx-\int_\Omega f(|x|)\, dx\right].
\end{split}
\end{equation}
The proof by Weinberger now uses Lemma \ref{lm:HL} again to ensure that the right-hand side of \eqref{P2} is positive, which leads to \eqref{segowine}. 
\par
If on the contrary we replace Lemma \ref{lm:HL} by its improved version Lemma \ref{lm:HLquanto}, we can get a quantitative lower bound. 
Since $f$ is non-increasing, by using \eqref{HLquanto} in \eqref{P2} we get
\begin{equation}
\label{P''}
\begin{split}
|\Omega^*|^{2/N}\, \mu_2(\Omega^*) - |\Omega|^{2/N}\,\mu_2(\Omega)& \ge \frac{N\,\omega_N}{\eta_N}\, \int_{R_1}^{R_2} |f(\varrho)-f(1)|\,\varrho^{N-1}\, dx\\
&\ge \frac{N\,\omega_N}{\eta_N}\,\int_{1}^{R_2} [f(1)-f(\varrho)]\,d\varrho. 
\end{split}
\end{equation}
The radii $R_1<1<R_2$ are such that 
\[
|\Omega^*|-|B_{R_1}|=|\Omega^*\setminus \Omega|\qquad \mbox{ and }\qquad |B_{R_2}|-|\Omega^*|=|\Omega\setminus \Omega^*|.
\]
By recalling that $|\Omega|=\omega_N$, they are defined by 
\[
R_1=\left(\frac{|\Omega\cap\Omega^*|}{\omega_N}\right)^\frac{1}{N}\qquad \mbox{ and }\qquad R_2=\left(\frac{|\Omega\setminus\Omega^*|}{\omega_N}+1\right)^\frac{1}{N}
\]
In order to conclude it is now sufficient to observe that
\[
f(1)-f(\varrho)\ge (N-1)\,\phi_N(1)^2\, \left[\frac{\varrho^2-1}{\varrho^2}\right]\ge\frac{N-1}{2^{1/N}}\,\phi_N(1)^2\, (\varrho-1),\qquad \mbox{ for }R_2\ge\varrho\ge 1,
\]
where we also used that $\varrho\le R_2\le 2^{1/N}$.
Thus from \eqref{P''} we get
\[
\begin{split}
|\Omega^*|^{2/N}\, \mu_2(\Omega^*) - |\Omega|^{2/N}\,\mu_2(\Omega)
&\ge\frac{N\,(N-1)\,\omega_N}{2^{(N+1)/N}\,\eta_N}\,\varphi_N(1)^2\,\left(R_2-1\right)^2. 
\end{split}
\]
By using the definition of $R_2$ we have
\begin{equation}
\label{anellosuper}
\left(R_2-1\right)^2=\left(\left(1+\frac{|\Omega\setminus\Omega^*|}{\omega_N}\right)^\frac{1}{N}-1\right)^2\ge (2^{1/N}-1)^2\,\left(\, \frac{|\Omega\setminus\Omega^*|}{\omega_N}\right)^2,
\end{equation}
thanks to the elementary inequality
\[
(1+t)^{1/N}\ge 1+(2^{1/N}-1)\,t,\qquad \mbox{ for every } t\in[0,1],
\]
which follows from concavity. By observing that $|\Omega\Delta\Omega^*|=2\,|\Omega\setminus \Omega^*|$ and recalling the definition of Fraenkel asymmetry, we get the conclusion.
\end{proof}
\begin{remark}
\label{oss:costanteSW}
An inspection of the proof reveals that a feasible choice for the constant $\rho_N$ appearing in \eqref{quantSW} is
\[
\rho_N=(N-1)\,\frac{(2^{1/N}-1)^2}{8\cdot\,2^{1/N}}\,\, \frac{\omega_N^{2/N}\, J_{N/2}(\beta_{N/2,1})^2}{\displaystyle\int_0^1 J_{N/2}(\beta_{N/2,1}\,\varrho)^2\, \varrho\,d\varrho}.
\]
By observing that $\varrho\mapsto J_{N/2}(\beta_{N/2,1}\,\varrho)^2\, \varrho$ is increasing on $(0,1)$, we can estimate this constant from below by
\[
\rho_N\ge (N-1)\,\frac{(2^{1/N}-1)^2}{8\cdot\,2^{1/N}}\,\omega_N^{2/N}.
\]
\end{remark}

\subsection{Checking the sharpness}

As one may see, the proof of the sharp quantitative Szeg\H{o}-Weinberger inequality is considerably simpler than that for the Faber-Krahn inequality. But there is a subtlety here:
indeed, checking sharpness of Theorem \ref{th:quanto_sw} is now much more complicate. The argument used for $\lambda_1$ can not be applied here: indeed, the shape functional
\[
\Omega\mapsto \mu_2(\Omega),
\]
{\it is not differentiable} at the ``maximum point'', i.e. for a ball $B$. This is due to the fact that $\mu_2(B)$ is a multiple eigenvalue (see \cite[Chapter 2]{MR2251558}). Thus what now can happen is that $\mu_2(B)-\mu_2(\Omega_n)$ behaves {\it linearly} along some family $\{\Omega_\varepsilon\}_{\varepsilon>0}$ converging to $B$, i.e.  
\[
\mu_2(B)-\mu_2(\Omega_\varepsilon)\simeq \mathcal{A}(\Omega_\varepsilon),\qquad |\Omega_\varepsilon|=|B|.
\]
Quite surprisingly, the familiy of ellipsoids $\{E_\varepsilon\}_{\varepsilon>0}$ from the previous section exactly exhibits this behaviour. Indeed, by using the same notation as in Section \ref{sec:sciarpaFK}, we have
\[
\mu_2(E_\varepsilon)=\min_{u\in W^{1,2}(B_1)\setminus\{0\}} \left\{\frac{\displaystyle \int_{B_1} \langle \widetilde M_\varepsilon \nabla u,\nabla u\rangle \,dx}{\displaystyle\int_{B_1} |u|^2\,dx}\, :\, \int_{B_1} u\,dx=0\right\}.
\]
By recalling that
\[
\int_{B_1}\langle \widetilde M_\varepsilon \nabla u,\nabla u\rangle\,dx=\int_{B_1}\left(\frac{|u_{x_1}|^2}{(1+\varepsilon)^2}+(1+\varepsilon)^2\,|u_{x_2}|^2+\sum_{i=3}^N |u_{x_i}|^2\right)\,dx,
\]
and 
\[
\frac{1}{(1+\varepsilon)^2}\le 1-2\,\varepsilon+3\,\varepsilon^2,
\]
if we use a $L^2-$normalized eigenfunction of the ball $\xi$ relative to $\mu_2(B)$, we obtain
\[
\begin{split}
\mu_2(E_\varepsilon)\le\mu_2(B_1)&+2\,\varepsilon\,\int_{B_1} \left(|\xi_{x_2}|^2-|\xi_{x_1}|^2\right)\,dx+\varepsilon^2\,\int_{B_1} \left(3\,|\xi_{x_1}|^2+|\xi_{x_2}|^2\right)\,dx.
\end{split}
\]
An important difference with respect to the Dirichlet case now arises. Indeed, $\xi$ is {\it not} radial and with a suitable choice of $\xi$ we can obtain
\[
\int_{B_1} \left(|\xi_{x_2}|^2-|\xi_{x_1}|^2\right)\,dx<0.
\] 
Thus we finally get for $0<\varepsilon\ll 1$
\[
|B_1|^{2/N}\,\mu_2(B_1)-|E_\varepsilon|^{2/N}\,\mu_2(E_\varepsilon)\ge \frac{1}{C}\,\varepsilon\simeq \frac{1}{C}\, \mathcal{A}(E_\varepsilon).
\]
This shows that the family of ellipsoids $\{E_\varepsilon\}_{\varepsilon>0}$ has (at most) a linear decay rate and thus it can not be used to show optimality of the estimate \eqref{quantSW}.
\vskip.2cm
The difficult point is to detect families of deformations of a ball such that $\mu_2(B)-\mu_2(\Omega_\varepsilon)$ behaves quadratically. In other words, we need to identify directions along which $\Omega\mapsto\mu_2(\Omega)$ is smooth around the maximum point. The next result presents a general way to construct such families. This statement generalizes the one in \cite[Section 6]{MR2899684} and comes from the analogous discussion for the Steklov case, treated in \cite[Section 6]{MR2913683}.
\begin{theorem}[Sharpness of the quantitative Szeg\H{o}-Weinberger inequality]
\label{thm:brapradepruf}
Let the function $\psi\in C^\infty(\partial B_1)$ satisfy the following assumptions: 
\begin{itemize}
\item for every $a\in\mathbb{R}^N$, there holds
\begin{equation}
\label{ka1}
\int_{\partial B_1} \langle a,x\rangle\,\psi\, d\mathcal H^{N-1}=0;
\end{equation}
\item for every $a\in\mathbb{R}^N$, there holds
\begin{equation}
\label{ka2}
\int_{\partial B_1} \langle a,x\rangle^2\, \psi\, d\mathcal H^{N-1}=0.
\end{equation}
\end{itemize}
Then the corresponding family $\{\Omega_\varepsilon\}_{\varepsilon>0}$ of nearly spherical domains
\[
\Omega_\varepsilon=\left\{x\in\mathbb{R}^N\, :\, x=0\ \mbox{ or }\ |x|< 1+\varepsilon\,\psi\left(\frac{x}{|x|}\right)\right\},
\]
is such that
\[
\mathcal{A}(\Omega_\varepsilon) \simeq \frac{\big| \Omega_\varepsilon \Delta B_1\big|}{\big| \Omega_\varepsilon\big|}\simeq \varepsilon\qquad \mbox{ and } \qquad |B_1|^{2/N}\,\mu_2(B_1)-|\Omega_\varepsilon|^{2/N}\,\mu_2(\Omega_\varepsilon) \simeq \varepsilon^2,\qquad \varepsilon\ll 1.
\]
\end{theorem}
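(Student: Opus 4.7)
The proof has two independent parts: the geometric claim $\mathcal{A}(\Omega_\varepsilon)\simeq\varepsilon$ and the spectral claim $|B_1|^{2/N}\mu_2(B_1)-|\Omega_\varepsilon|^{2/N}\mu_2(\Omega_\varepsilon)\simeq\varepsilon^2$. The role of the hypotheses \eqref{ka1} and \eqref{ka2} is precisely to guarantee that the first-order correction in $\varepsilon$ of all the relevant spectral quantities vanishes, so that the leading perturbation occurs at order $\varepsilon^2$.

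\emph{Asymmetry.} Writing $x=r\theta$ with $\theta\in\partial B_1$, I would Taylor-expand
\[
|\Omega_\varepsilon| = \frac{1}{N}\int_{\partial B_1}(1+\varepsilon\psi)^N\,d\mathcal{H}^{N-1},\qquad \int_{\Omega_\varepsilon}x_i\,dx = \frac{1}{N+1}\int_{\partial B_1}(1+\varepsilon\psi)^{N+1}\theta_i\,d\mathcal{H}^{N-1},
\]
in powers of $\varepsilon$. The trace of condition \eqref{ka2} (taking $a$ through an orthonormal basis and using $|x|^2=1$ on $\partial B_1$) gives $\int_{\partial B_1}\psi\,d\mathcal{H}^{N-1}=0$, hence $|\Omega_\varepsilon|=|B_1|+O(\varepsilon^2)$; condition \eqref{ka1} then forces the barycenter $x_{\Omega_\varepsilon}$ to be $O(\varepsilon^2)$. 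A direct change of variables yields $|\Omega_\varepsilon\Delta B_1|/|\Omega_\varepsilon| = \varepsilon\,|B_1|^{-1}\int_{\partial B_1}|\psi|\,d\mathcal{H}^{N-1}+O(\varepsilon^2)$, and a standard Fuglede-type argument (valid for nearly spherical sets with near-centered barycenter) upgrades this to $\mathcal{A}(\Omega_\varepsilon)\simeq\varepsilon$.

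\emph{Spectral deficit.} The lower bound is free: by Theorem~\ref{th:quanto_sw} combined with the previous step, the deficit is $\geq \rho_N\,\mathcal{A}(\Omega_\varepsilon)^2\gtrsim\varepsilon^2$. For the upper bound, the key observation is that $\mu_2(B_1)$ has multiplicity $N$, with eigenspace spanned by $\xi_i(x)=\varphi_N(|x|)\,x_i/|x|$, $i=1,\ldots,N$. Under the normal perturbation with speed $\psi$, the standard analytic perturbation theory for multiple eigenvalues tells us that the cluster $\{\mu_2(\Omega_\varepsilon),\ldots,\mu_{N+1}(\Omega_\varepsilon)\}$ admits the expansion $\mu_k(\Omega_\varepsilon) = \mu_2(B_1) + \varepsilon\,\lambda_k(M) + O(\varepsilon^2)$, where $M$ is the symmetric $N\times N$ matrix
\[
M_{ij} = \int_{\partial B_1}\bigl[\nabla\xi_i\cdot\nabla\xi_j - \mu_2(B_1)\,\xi_i\xi_j\bigr]\,\psi\,d\mathcal{H}^{N-1}.
\]
Using the Neumann condition $\varphi_N'(1)=0$ and the explicit restrictions $\xi_i\xi_j = \varphi_N(1)^2\,\theta_i\theta_j$, $\nabla\xi_i\cdot\nabla\xi_j = \varphi_N(1)^2(\delta_{ij}-\theta_i\theta_j)$ on $\partial B_1$, this reduces to
\[
M_{ij} = \varphi_N(1)^2\Bigl[\delta_{ij}\!\int_{\partial B_1}\!\psi\,d\mathcal{H}^{N-1} - (1+\mu_2(B_1))\!\int_{\partial B_1}\!\theta_i\theta_j\,\psi\,d\mathcal{H}^{N-1}\Bigr].
\]
Both integrals vanish: the first by the trace of \eqref{ka2}, the second by polarization of \eqref{ka2} itself. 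Hence $M\equiv 0$ and every eigenvalue in the cluster, in particular $\mu_2(\Omega_\varepsilon)$, equals $\mu_2(B_1)+O(\varepsilon^2)$. Combined with $|\Omega_\varepsilon|^{2/N}=|B_1|^{2/N}(1+O(\varepsilon^2))$, this yields the matching upper bound $|B_1|^{2/N}\mu_2(B_1)-|\Omega_\varepsilon|^{2/N}\mu_2(\Omega_\varepsilon) = O(\varepsilon^2)$.

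\emph{Main obstacle.} The delicate point is that a naive Rayleigh-quotient computation with the extensions $\Xi_i(x)=\phi_N(|x|)x_i/|x|$ as test functions, entirely parallel to Section~\ref{sec:sciarpaFK}, only delivers an upper bound on $\mu_2(\Omega_\varepsilon)$, which in the Neumann case controls the deficit from \emph{below} rather than from above; this asymmetry arises because $\lambda_1$ was simple while $\mu_2(B_1)$ is $N$-fold. One must therefore either invoke the analytic perturbation theory for the full cluster (as above) or, equivalently, pull back the eigenvalue problem to $B_1$ via a diffeomorphism $T_\varepsilon$ and perform a second-order expansion of the transformed coefficients; either route produces the same matrix $M$ and hence the same cancellation, dictated by \eqref{ka1}--\eqref{ka2}. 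A secondary technical point is the Fuglede-type comparison $\mathcal{A}(\Omega_\varepsilon)\gtrsim|\Omega_\varepsilon\Delta B_1|/|\Omega_\varepsilon|$, which is standard for nearly spherical sets but must be handled carefully given the small barycenter shift ensured by \eqref{ka1}.
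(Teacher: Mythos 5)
Your argument is correct, but it takes a genuinely different route from the one the paper sketches. The paper does not invoke spectral perturbation theory: it constructs an admissible test function $v_\varepsilon = u_\varepsilon\cdot 1_{B_1}-\fint_{B_1}u_\varepsilon$ for $\mu_2(B_1)$ starting from a normalized eigenfunction $u_\varepsilon$ of $\Omega_\varepsilon$, and estimates the Rayleigh quotient $\mu_2(B_1)\le(\mu_2(\Omega_\varepsilon)+\mathcal{R}_1(\varepsilon))/(1+\mathcal{R}_2(\varepsilon)-K\varepsilon^2)$ directly. The work then consists in showing that the naively $O(\varepsilon)$ error terms $\mathcal{R}_1,\mathcal{R}_2$ are actually $O(\varepsilon^2)$: expanding in polar coordinates, they reduce at leading order to $\varepsilon\int_{\partial B_1}\psi\,|\nabla_\tau u_\varepsilon|^2\,d\mathcal{H}^{N-1}$ and $\varepsilon\int_{\partial B_1}\psi\,|u_\varepsilon|^2\,d\mathcal{H}^{N-1}$, and since one can prove $\|u_\varepsilon-\xi\|_{C^1}=O(\varepsilon)$ for some ball eigenfunction $\xi$, these vanish at first order thanks to precisely the identities \eqref{crucial}, which are the relations you obtain from polarizing \eqref{ka2}. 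You instead pull the problem back to the fixed domain $B_1$ and apply perturbation theory for the $N$-fold cluster at $\mu_2(B_1)$, observing that the first-order reduced matrix $M$ vanishes under \eqref{ka1}–\eqref{ka2}; this gives $\mu_2(\Omega_\varepsilon)\ge\mu_2(B_1)-C\varepsilon^2$ in one stroke. The two proofs hinge on the same algebraic cancellation — the paper states it as \eqref{crucial}, you state it as $M\equiv 0$ — but your route makes the multiplicity-$N$ structure explicit and bypasses the $C^1$-stability of eigenfunctions, at the cost of the machinery needed to set up the reduced cluster problem; the paper's is more elementary but must prove $u_\varepsilon\to\xi$ in $C^1$ at rate $\varepsilon$ before the surface integrals can be compared. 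You also correctly identify the reason a direct test-function argument on $\Omega_\varepsilon$ fails: plugging the ball's extended eigenfunctions $\Xi_i$ into the Rayleigh quotient on $\Omega_\varepsilon$ gives $\mu_2(\Omega_\varepsilon)\le\cdots$, which is the trivial direction of the inequality once the eigenvalue is multiple; this is precisely why the paper reverses the roles and tests $\mu_2(B_1)$ with an eigenfunction of $\Omega_\varepsilon$. One minor caution: the cluster branches need not be individually $C^1$ through degeneracies, so rather than asserting $\mu_k(\Omega_\varepsilon)=\mu_2(B_1)+\varepsilon\lambda_k(M)+O(\varepsilon^2)$ for each $k$, it is safer to say that $M=0$ forces every eigenvalue of the reduced finite-dimensional family to lie within $O(\varepsilon^2)$ of $\mu_2(B_1)$, which is all you need for the lower bound on $\mu_2(\Omega_\varepsilon)$.
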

\begin{remark}
We may notice that the second condition \eqref{ka2} implies also
\begin{equation}
\label{ka3}
\int_{\partial B_1} \psi\, d\mathcal H^{N-1}=0.
\end{equation}
Indeed, we have
\[
\int_{\partial B_1} \psi\, d\mathcal H^{N-1}=\sum_{i=1}^N \int_{\partial B_1}\langle x,\mathbf{e}_i\rangle^2\,\psi\, d\mathcal H^{N-1}=0.
\]
\begin{figure}
\includegraphics[scale=.3]{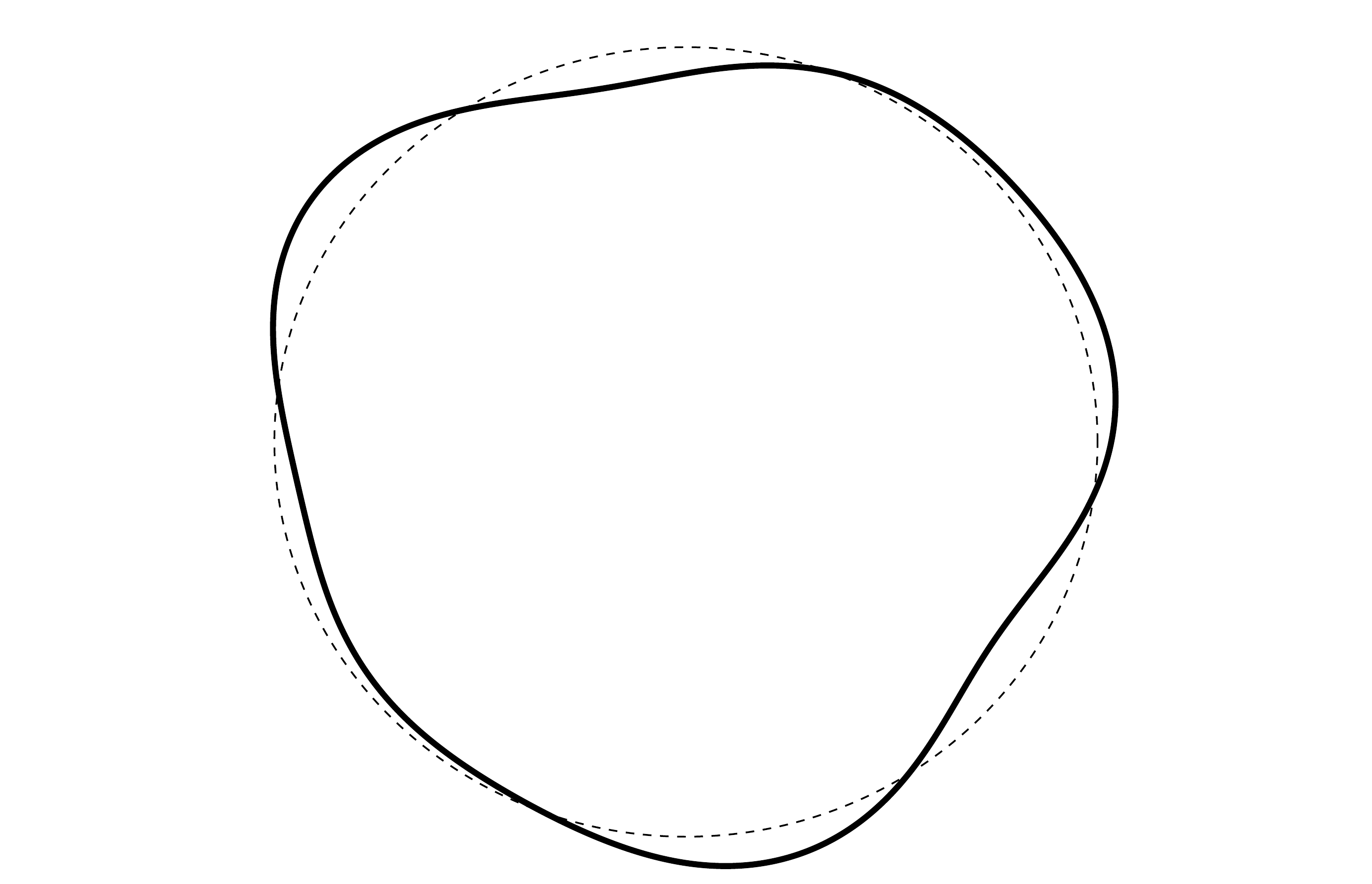}
\caption{The sets $\Omega_\varepsilon$ corresponding to the choice $\psi(\vartheta)=2\,\sin 3\vartheta+\cos5\vartheta$. Such a function satisfies \eqref{ka1} and \eqref{ka2}.}
\end{figure}
\end{remark}
\begin{remark}[Meaning of the assumptions on $\psi$]
Conditions \eqref{ka1}, \eqref{ka2} and \eqref{ka3} are equivalent to say that $\psi$ is orthogonal in the $L^2(\partial B_1)$ sense to the first three eigenspace of the Laplace-Beltrami operator on $\partial B_1$, i.e. to spherical harmonics of order $0, 1$ and $2$ respectively (see \cite{MR1483320} for a comprehensive account on spherical harmonics).
\par
Each of these conditions plays a precise role in the construction: \eqref{ka3} implies that $|\Omega_\varepsilon|-|B_1|\simeq\varepsilon^2$. The first condition \eqref{ka1} implies that $\Omega_\varepsilon$ has the same barycenter as $B_1$, still up to an error of order $\varepsilon^2$. Then this order coincides with the magnitude of $\mathcal{A}(\Omega_\varepsilon)^2$. 
\par
In order to understand the second condition \eqref{ka2}, one should recall that every Neumann eigenfunction $\xi$ relative to $\mu_2(B_1)$ is a linear combination of those defined by \eqref{autopalle}. Thus it has the form
\[
\xi(x)=\varphi_N(|x|)\,\sum_{i=1}^N a_i\,x_i=\varphi_N(|x|)\,\langle a,x\rangle,\qquad x\in B_1,
\]
where $\varphi_N$ is the radial function appearing in \eqref{Q} and $a\in\mathbb{R}^N\setminus\{0\}$. We then obtain for $x\in\partial B_1$
\[
|\xi(x)|^2=\varphi_N(1)^2\,\langle a,x\rangle^2,
\]
and for the tangential gradient $\nabla_\tau$
\[
\begin{split}
|\nabla_\tau \xi|^2=|\nabla \xi-\langle \nabla \xi,x\rangle\,x|^2&=|\nabla \xi|^2-\langle \nabla\xi,x\rangle^2\\
&=-\varphi_N(1)^2\,\langle a,x\rangle^2+\varphi_N(1)^2\,|a|^2.\\
\end{split}
\]
Thus condition \eqref{ka2} implies
\begin{equation}
\label{crucial}
\int_{\partial B_1} \psi\, |\xi|^2\, d\mathcal{H}^{N-1}=0\qquad \mbox{ and } \qquad \int_{\partial B_1} \psi\, |\nabla_\tau \xi|^2\, d\mathcal{H}^{N-1}=0.
\end{equation}
Relations \eqref{crucial} are crucial in order to prove that $\mu_2(B_1)-\mu_2(\Omega_\varepsilon)\simeq\varepsilon^2$.
\end{remark}
We now sketch the proof of Theorem \ref{thm:brapradepruf}. In order to compare $\mu_2(\Omega_\varepsilon)$ with $\mu_2(B_1)$, we define an admissible test function in $B_1$, starting from an eigenfunction $u_\varepsilon$ of $\Omega_\varepsilon$. First of all, we smoothly extend $u_\varepsilon$ outside $\Omega_\varepsilon$, in order to have it defined in a set containing $\Omega_\varepsilon\cup B_1$. Then we take the test function
\[
v_\varepsilon=u_\varepsilonì\cdot 1_{B_1}-\delta_\varepsilon,\qquad \mbox{ where }\delta_\varepsilon =\fint_{B_1} u_\varepsilon\, dx=O(\varepsilon).
\]
By construction, it is not difficult to see that
\begin{equation}
\label{wb}
\left|\int_{B_1} v_\varepsilon^2\,dx-\int_{B_1} u_\varepsilon^2\,dx\right|\le K\,\varepsilon^2,
\end{equation}
By \eqref{wb} and assuming that $u_\varepsilon$ has unit $L^2$ norm on $\Omega_\varepsilon$, we can estimate $\mu_2(B_1)$ 
\begin{equation}
\label{fundamental}
\begin{split}
\mu_2(B_1)\le \frac{\displaystyle\int_{B_1} |\nabla v_\varepsilon|^2\, dx}{\displaystyle\int_{B_1} v_\varepsilon^2\, dx}&\le\frac{\displaystyle\int_{B_1\cap \Omega_\varepsilon} |\nabla u_\varepsilon|^2\, dx+\int_{B_1\setminus \Omega_\varepsilon}|\nabla u_\varepsilon|^2\, dx}{\displaystyle\int_{B_1\cap \Omega_\varepsilon}  u_\varepsilon^2\, dx +\int_{B_1\setminus \Omega_\varepsilon} u_\varepsilon^2\, dx-K\,\varepsilon^2}\\
&\le \frac{\mu_2(\Omega_\varepsilon)+\mathcal{R}_1(\varepsilon)}{1+\mathcal{R}_2(\varepsilon)-K\varepsilon^2}.
\end{split}
\end{equation}
The two {\it error terms} $\mathcal{R}_1(\varepsilon)$ and $\mathcal{R}_2(\varepsilon)$ above are given by
\[
\mathcal{R}_1(\varepsilon)=\int_{B_1\setminus \Omega_\varepsilon}|\nabla u_\varepsilon|^2\,dx-\int_{\Omega_\varepsilon\setminus B_1}|\nabla u_\varepsilon|^2\,dx \qquad\mbox{ and }\qquad \mathcal{R}_2(\varepsilon)=\int_{B_1\setminus \Omega_\varepsilon} u_\varepsilon^2\,dx-\int_{\Omega_\varepsilon\setminus B_1} u_\varepsilon^2\,dx.
\]
It is not difficult to see that the following rough estimate holds
\begin{align}
\label{easyexp1}
\big|\mathcal{R}_1(\varepsilon)\big| \leq K'\,\varepsilon,\qquad \big|\mathcal{R}_2(\varepsilon)\big| \leq K'\,\varepsilon,
\end{align}
for some $K'>0$. Indeed, as $\Omega_\varepsilon$ is a small smooth deformation of $B_1$, then $u_\varepsilon$ satisfies uniform regularity estimates, thus for example $\|\nabla u_\varepsilon\|_{L^\infty}+\|u_\varepsilon\|_{L^\infty}\le C$ and
\[
|\mathcal{R}_1(\varepsilon)|+|\mathcal{R}_2(\varepsilon)|\le 2\,C^2\, |B_1\Delta \Omega_\varepsilon|,
\]
thus giving \eqref{easyexp1}.
By inserting this in \eqref{fundamental}, one would get
\[
\mu_2(B_1)\leq \mu_2(\Omega_\varepsilon)+ K''\,\varepsilon.
\]
This shows that in order to get the correct decay estimate for the deficit, we need to improve \eqref{easyexp1} by replacing $\varepsilon$ with $\varepsilon^2$.
\par
We now explain how the assumptions on the function $\psi$ (i.e. on the boundary of $\partial\Omega_\varepsilon$) imply that the rough estimate \eqref{easyexp1} can be enhanced.
For ease of readability, we present below the heuristic argument, referring the reader to \cite[Section 6]{MR2899684} and \cite[Section 6]{MR2913683} for the rigorous proof.
We focus on the term $\mathcal{R}_1(\varepsilon)$, the ideas for $\mathcal{R}_2(\varepsilon)$ being exactly the same. By using polar coordinates
\[
\int_{\Omega_\varepsilon\setminus B_1}|\nabla u_\varepsilon|^2\, dx = \int_{\{y\in\partial B_1\,:\, \psi(y)\ge0 \}}\int_{1}^{1+\varepsilon\,\psi(y)}\left((\partial_\varrho u_\varepsilon)^2+\frac{N-1}{\varrho^2}|\nabla_\tau u_\varepsilon|^2\right)\,\varrho^{N-1}\,d\varrho\,d\mathcal{H}^{N-1},
\]
and
\[
\int_{B_1\setminus \Omega_\varepsilon}|\nabla u_\varepsilon|^2\, dx = \int_{\{y\in\partial B_1\,:\, \psi(y)\le0 \}}\int_{1+\varepsilon\,\psi(y)}^1\bigg((\partial_\varrho u_\varepsilon)^2+\frac{N-1}{\varrho^2}\,|\nabla_\tau u_\varepsilon|^2\bigg)\,\varrho^{N-1}\,d\varrho\,d\mathcal{H}^{N-1},
\]
where we recall that $\nabla_\tau$ is the tangential gradient and $\partial_\varrho$ is the derivative in the radial direction.
The homogeneous Neumann condition of $u_\varepsilon$ on $\partial \Omega_\varepsilon$ implies that the gradient $\nabla u_\varepsilon$ is ``almost tangential'' in the small sets $B_1\setminus \Omega_\varepsilon$ and $\Omega_\varepsilon\setminus B_1$. In particular
\[
\partial_\varrho u_\varepsilon=O(\varepsilon)\qquad \mbox{ for }\varrho =1+ O(\varepsilon),
\]
and
\[
|\nabla_\tau u_\varepsilon(\varrho,y)|=|\nabla_\tau u_\varepsilon(1,y)|+O(\varepsilon),\qquad \mbox{ for }\varrho = 1 + O(\varepsilon).
\]
By observing that $|B_1\setminus \Omega_\varepsilon| = |\Omega_\varepsilon\setminus B_1| \simeq \varepsilon$, one can compute
\[
\begin{split}
\int_{\Omega_\varepsilon\setminus B_1}|\nabla u_\varepsilon|^2\, dx
&=\int_{\{y\in\partial B_1\,:\, \psi(y)\ge0 \}}\int_1^{1+\varepsilon\,\psi(y)}|\nabla_\tau u_\varepsilon|^2\,\varrho^{N-3}\,d\varrho\,d\mathcal{H}^{N-1}+o(\varepsilon^2)\\
&=\varepsilon\,\int_{\{y\in\partial B_1\,:\, \psi(y)\ge0 \}}\psi\,|\nabla_\tau u_\varepsilon|^2\, d\mathcal{H}^{N-1}+O(\varepsilon^2),
\end{split}
\]
and similarly
\[
\begin{split}
\int_{B_1\setminus \Omega_\varepsilon}|\nabla u_\varepsilon|^2\, dx
&=\int_{\{y\in\partial B_1\,:\, \psi(y)\le0 \}}\int_{1+\varepsilon\psi(y)}^1\left|\nabla_\tau u_\varepsilon\right|^2\,\varrho^{N-3}\,d\varrho\,d\mathcal{H}^{N-1}+o(\varepsilon^2)\\
&=-\varepsilon\int_{\{y\in\partial B_1\,:\, \psi(y)\le0 \}}\psi\,|\nabla_\tau u_\varepsilon|^2\, d\mathcal{H}^{N-1}+O(\varepsilon^2).
\end{split}
\]
Hence, recalling the definition of $\mathcal{R}_1(\varepsilon)$, one gets
\begin{equation}
\label{lastcalculation}
\begin{split}
\mathcal{R}_1(\varepsilon) &= -\varepsilon\int_{\{y\in\partial B_1\,:\, \psi(y)\le0 \}}\psi\,|\nabla_\tau u_\varepsilon|^2\, d\mathcal{H}^{N-1}-\varepsilon\int_{\{y\in\partial B_1\,:\, \psi(y)\ge0 \}}\psi\,|\nabla_\tau u_\varepsilon|^2\, d\mathcal{H}^{N-1}+O(\varepsilon^2)\\
&=-\varepsilon \int_{\partial B_1}\psi\,|\nabla_\tau u_\varepsilon|^2\,d\mathcal{H}^{N-1}+ O(\varepsilon^2).
\end{split}
\end{equation}
It is precisely here that the condition \eqref{ka2} on $\psi$ enters.
Indeed, since $\Omega_\varepsilon$ is smoothly converging to $B_1$, one can guess that $u_\varepsilon$ is sufficiently close to an eigenfunction $\xi$ for $\mu_2(B)$. 
If we assume that we have 
\begin{equation}
\label{icineicine}
u_\varepsilon=\xi+O(\varepsilon),
\end{equation}
then substituting $u_\varepsilon$ with $\xi$ in \eqref{lastcalculation}, one would get
\[
\mathcal{R}_1(\varepsilon)=O(\varepsilon^2).
\]
Indeed, we have seen that \eqref{ka2} implies \eqref{crucial} and thus
\[
\int_{\partial B} \psi\,|\nabla_\tau \xi|^2\,d\mathcal{H}^{N-1}=0.
\]
This would enhance the rate of convergence to $0$ of the term $\mathcal{R}_1(\varepsilon)$ up to an order $\varepsilon^2$. The same arguments can be applied to $\mathcal{R}_2(\varepsilon)$, this time using the first relation in \eqref{crucial}. By inserting these informations in \eqref{fundamental}, one would finally get
\[
\mu_2(B)\le \mu_2(\Omega_\varepsilon)+K\,\varepsilon^2,
\]
as desired. Of course, the most delicate part of the argument is to prove that the guess \eqref{icineicine} is correct in a $C^1$ sense, i.e. that $\|u_\varepsilon-\xi\|_{C^1}=O(\varepsilon)$.

\begin{remark}[Back to the ellipsoids]
Observe that if on the contrary $\psi$ violates condition \eqref{ka2}, we can not assure that all the first-order term in the previous estimates cancel out. For example, for the case of the ellipsoids $E_\varepsilon$ considered above, their boundaries can be described as follows (let us take $N=2$ for simplicity)
\[
\partial E_\varepsilon=\left\{x=\varrho_\varepsilon(y)\,y\, :\, y\in\partial B_1\quad \mbox{ and }\quad \varrho_\varepsilon(y)=\left((1+\varepsilon)^2\,y_2^2+\frac{y_1^2}{(1+\varepsilon)^2}\right)^{-\frac{1}{2}}\right\}.
\]
Observe that 
\[
\varrho_\varepsilon(y)\simeq 1+\varepsilon\,(y_1^2-y_2^2),\qquad y\in\partial B_1,
\]
and the function $\psi(y)=y_1^2-y_2^2$ crucially {\it fails} to satisfy\footnote{This function is indeed a spherical harmonic of order $2$.} \eqref{ka2}. This confirms that
\[
\mu_2(B)-\mu_2(E_\varepsilon)\simeq \varepsilon,
\] 
i.e. ellipsoids do not exhibit the sharp decay rate for the Szeg\H{o}-Weinberger inequality.
\end{remark}

\section{Stability for the Brock-Weinstock inequality}
\label{sec:5}

\subsection{A quick overview of the Steklov spectrum}

Let $\Omega\subset\mathbb{R}^N$ be an open bounded set with Lipschitz boundary. We define its first nontrivial Steklov eigenvalue by
\[
\sigma_2(\Omega):=\inf_{u\in W^{1,2}(\Omega)\setminus\{0\}}\left\{\frac{\displaystyle\int_\Omega |\nabla u|^2\, dx}{\displaystyle\int_{\partial\Omega} |u|^2\, d\mathcal{H}^{N-1}}\, :\, \int_{\partial\Omega} u\, d\mathcal{H}^{N-1}=0\right\},
\] 
where the boundary integral at the denominator has to be intended in the trace sense.
In other words, this is the sharp constant in the Poincar\'e-Wirtinger trace inequality
\[
c\,\int_{\partial\Omega} \left|u-\fint_{\partial\Omega} u\right|^2\,d\mathcal{H}^{N-1}\le \int_\Omega |\nabla u|^2\,dx,\qquad u\in W^{1,2}(\Omega).
\]
Thanks to the assumptions on $\Omega$, the embedding $W^{1,2}(\Omega)\hookrightarrow L^2(\partial\Omega)$ is compact (see \cite[Section 6.10.5]{MR0482102}) and the infimum above is attained. We have again discreteness of the spectrum of the Steklov Laplacian, that we denote by $\{\sigma_1(\Omega),\sigma_2(\Omega),\dots\}$. The first eigenvalue $\sigma_1(\Omega)$ is $0$ and corresponds to constant eigenfunctions. These are the only real numbers $\sigma$ for which the boundary value problem
\[
\left\{\begin{array}{rcll}
-\Delta u&=&0,& \mbox{ in } \Omega,\\
\langle \nabla u,\nu_\Omega\rangle&=&\sigma\, u,& \mbox{ on }\partial\Omega,
\end{array}
\right.
\]
admits nontrivial solutions. Here $\nu_\Omega$ stands for the exterior normal versor. As always, $\sigma_k(\Omega)$ is obtained by minimizing the same Rayleigh quotient, among functions orthogonal (in the $L^2(\partial\Omega)$ sense, this time) to the first $k-1$ eigenfunctions.
The scaling law of Steklov eigenvalues is now
\[
\sigma_k(t\,\Omega)=t^{-1}\,\sigma_k(\Omega)
\]
and we have the sharp inequality due to Brock
\begin{equation}
\label{brocco}
|\Omega|^{1/N}\, \sigma_2(\Omega)\le |B|^{1/N}\,\sigma_2(B),
\end{equation}
with equality if and only if $\Omega$ is a ball. 
\par
As in the case of the Neumann Laplacian, here as well for any ball $B_r$ the first nontrivial eigenvalue has multiplicity $N$. We have $\sigma_2(B_r)=\dots=\sigma_{N+1}(B_r)$ and the corresponding eigenfunctions are just the coordinate functions
\begin{equation}
\label{eigenballstek}
\xi_i(x)=x_i,\qquad i=1,\dots,N.
\end{equation}
Accordingly, we have
\[
\sigma_2(B_r)=\frac{1}{r}.
\]
Actually, in dimension $N=2$ and for simply connected sets, a result stronger than \eqref{brocco} holds. Indeed, if we recall the notation $P(\Omega)$ for the perimeter of a set $\Omega$, for every $\Omega\subset\mathbb{R}^2$ open simply connected bounded set with smooth boundary, we have
\begin{equation}
\label{weinstock}
P(\Omega)\,\sigma_2(\Omega)\le P(B)\,\sigma_2(B),
\end{equation}
where $B$ is any open disc.
This is the {\it Weinstock inequality}, proved in \cite{MR0064989} by means of conformal mappings. Observe that by using the planar isoperimetric inequality
\[
\frac{P(\Omega)}{\sqrt{|\Omega|}}\ge \frac{P(B)}{\sqrt{|B|}},
\]
from \eqref{weinstock} we get
\[
\sqrt{|\Omega|}\,\sigma_2(\Omega)=P(\Omega)\,\sigma_2(\Omega)\,\frac{\sqrt{|\Omega|}}{P(\Omega)}\le P(B)\,\sigma_2(B)\,\frac{\sqrt{|B|}}{P(B)}=\sqrt{|D|}\,\sigma_2(D),
\]
thus for simply connected sets in the plane, inequality \eqref{weinstock} implies \eqref{brocco}. \begin{remark}[The role of topology]
Weinstock inequality is false if we remove the simple connectedness assumption, see \cite[Example 4.2.5]{GP}. On the other hand, the quantity 
\[
P(\Omega)\,\sigma_2(\Omega),
\]
is uniformly bounded from above, but it is still an open problem to compute the sharp bound, see \cite{GP} for more details.
\end{remark}
We also recall that it is possible to provide isoperimetric-like estimates for sums of inverses. For example, for simply connected set in the plane Hersch and Payne in \cite{MR0243210} showed that \eqref{weinstock} can be enforced as follows
\[
\frac{1}{P(\Omega)}\,\left(\frac{1}{\sigma_2(\Omega)}+\frac{1}{\sigma_3(\Omega)}\right)\ge \frac{1}{P(B)}\,\left(\frac{1}{\sigma_2(B)}+\frac{1}{\sigma_3(B)}\right).
\]
In general dimension and without restrictions on the topology of the sets, in \cite{MR1808500} Brock proved that for every $\Omega\subset\mathbb{R}^N$ open bounded set with Lipschitz boundary, we have the stronger inequality
\begin{equation}
\label{inv}
\frac{1}{|\Omega|^{1/N}}\sum\limits_{k=2}^{N+1} \frac{1}{\sigma_k(\Omega)}\ge \frac{1}{|B|^{1/N}}\sum\limits_{k=2}^{N+1} \frac{1}{\sigma_k(B)}.
\end{equation}
Equality in \eqref{inv} holds if and only if $\Omega$ is a ball. By recalling that for a ball we have $\sigma_2(B)=\dots=\sigma_{N+1}(B)$, we see that \eqref{inv} implies \eqref{brocco}.

\subsection{Weighted perimeters}

The proof of \eqref{brocco} is similar to Weinberger's proof of \eqref{segowine}. Namely, one obtains an upper bound on $\sigma_2(\Omega)$ by inserting in the relevant Rayleigh quotient the Steklov eigenfunctions \eqref{eigenballstek} of the ball. This would give
\[
\sigma_2(\Omega)\le \frac{\displaystyle\int_\Omega |\nabla x_i|^2\, dx}{\displaystyle\int_{\partial\Omega} |x_i|^2\, d\mathcal{H}^{N-1}}=\frac{\displaystyle|\Omega|}{\displaystyle\int_{\partial\Omega} |x_i|^2\, d\mathcal{H}^{N-1}},\qquad i=1,\dots,N.
\]
Observe that the chosen test functions are admissible, up to translate $\Omega$ so that its boundary has the barycenter at the origin.
By summing up all the inequalities above, one gets 
\[
\sigma_2(\Omega)\le \frac{\displaystyle N\,|\Omega|}{\displaystyle\int_{\partial\Omega} |x|^2\, d\mathcal{H}^{N-1}}.
\]
Since for a ball we have equality in the previous estimate, in order to conclude the key ingredient is the following weighted isoperimetric inequality
\begin{equation}
\label{BBMP}
\int_{\partial\Omega} |x|^2\,d\mathcal{H}^{N-1}\ge \int_{\partial \Omega^*} |x|^2\,d\mathcal{H}^{N-1},
\end{equation}
where $\Omega^*$ is the ball centered at the origin such that $|\Omega^*|=|\Omega|$. Inequality \eqref{BBMP} has been proved by Betta, Brock, Mercaldo and Posteraro in \cite{MR1734159}. If we use the notation
\[
P_2(\Omega)=\int_{\partial \Omega} |x|^2\, \mathcal H^{N-1},
\] 
and observe that $P_2$ scales like a length to the power $N+1$, \eqref{BBMP} can be rephrased in scaling invariant form as
\begin{equation}
\label{BBMPscale}
|\Omega|^{-\frac{N+1}{N}}\,P_2(\Omega)\ge |B|^{-\frac{N+1}{N}}\,P_2(B),
\end{equation}
where $B$ is any ball {\it centered at the origin}. Equality in \eqref{BBMPscale} holds if and only if $\Omega$ is a ball centered at the origin.
\par
In order to get a quantitative improvement of the Brock-Weinstock inequality, it is sufficient to prove stability of \eqref{BBMPscale}. This has been done in \cite{MR2913683}, by means of an alternative proof of \eqref{BBMPscale} based on a sort of {\it calibration technique} (related ideas can be found in the paper \cite{MR2858470}). 
\begin{theorem}
\label{teo:bradepruf}
For every  $\Omega\subset\mathbb{R}^N$ open bounded set with Lipschitz boundary, we have 
\begin{equation}
\label{stability}
|\Omega|^{-\frac{N+1}{N}}\,P_2(\Omega)-|B|^{-\frac{N+1}{N}}\,P_2(B)\ge \mathfrak{c}_{N}\,\left(\frac{|\Omega\Delta \Omega^*|}{|\Omega|}\right)^2,
\end{equation}
where $\mathfrak{c}_{N}>0$ is an explicit dimensional constant (see Remark \ref{oss:costanteberry} below).
\end{theorem}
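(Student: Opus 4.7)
The plan is to follow the calibration hint at the end of the excerpt and then match it with a sharp quantitative bathtub principle. By the common scaling of both sides of \eqref{stability}, one can normalize $|\Omega|=\omega_N$, so that $\Omega^*=B_1$, $P_2(B_1)=N\omega_N$, and the target inequality reduces to
\[
P_2(\Omega)-N\omega_N\ \ge\ \mathfrak{c}'_N\left(\frac{|\Omega\Delta B_1|}{\omega_N}\right)^2.
\]

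The calibration I would use is the vector field $X(x):=|x|\,x$, which satisfies
\[
|X(x)|=|x|^2 \qquad\text{and}\qquad \mathrm{div}\,X=(N+1)|x|.
\]
Applying the divergence theorem together with the pointwise Cauchy--Schwarz bound $\langle X,\nu\rangle\le |X|=|x|^2$ on $\partial\Omega$ yields
\[
P_2(\Omega)\ \ge\ \int_{\partial\Omega}\langle X,\nu\rangle\,d\mathcal H^{N-1}\ =\ \int_\Omega \mathrm{div}\,X\,dx\ =\ (N+1)\int_\Omega |x|\,dx,
\]
with equality when $\Omega=B_1$, since $\nu=x/|x|$ and hence $\langle X,\nu\rangle=|x|^2$ on $\partial B_1$. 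This already provides a clean calibration proof of \eqref{BBMPscale}, because the Hardy--Littlewood inequality (Lemma \ref{lm:HL}) applied to the radial non-decreasing weight $|x|$ gives $\int_\Omega|x|\,dx\ge\int_{B_1}|x|\,dx$.

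The stability improvement then comes from upgrading this last Hardy--Littlewood step. Set $V:=|\Omega\setminus B_1|=|B_1\setminus\Omega|$ (equal because $|\Omega|=|B_1|$) and introduce the radii $R_1<1<R_2$ defined by $\omega_N R_2^N=\omega_N+V$ and $\omega_N R_1^N=\omega_N-V$. Since $|x|\ge 1$ on $\Omega\setminus B_1$ and $|x|\le 1$ on $B_1\setminus\Omega$, a layer-cake argument (equivalent to Lemma \ref{lm:HLquanto}) gives
\[
\int_{\Omega\setminus B_1}|x|\,dx\ \ge\ \int_{B_{R_2}\setminus B_1}|x|\,dx, \qquad \int_{B_1\setminus\Omega}|x|\,dx\ \le\ \int_{B_1\setminus B_{R_1}}|x|\,dx.
\]
Computing these spherical integrals explicitly and setting $t:=V/\omega_N=|\Omega\Delta B_1|/(2\omega_N)\in[0,1]$ yields
\[
\int_\Omega|x|\,dx-\int_{B_1}|x|\,dx\ \ge\ \frac{N\omega_N}{N+1}\Bigl[(1+t)^{(N+1)/N}+(1-t)^{(N+1)/N}-2\Bigr].
\]

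Combining this with the calibration bound one obtains
\[
P_2(\Omega)-P_2(B_1)\ \ge\ N\omega_N\Bigl[(1+t)^{(N+1)/N}+(1-t)^{(N+1)/N}-2\Bigr],
\]
and the conclusion follows from the elementary inequality $(1+t)^{(N+1)/N}+(1-t)^{(N+1)/N}-2\ge c_N\,t^2$ on $[0,1]$, which is proved by a second order Taylor expansion at $0$ (giving the coefficient $(N+1)/N^2>0$) coupled with a direct lower bound on any interval $[\delta,1]$. The only genuinely creative step is the choice of the calibration $X(x)=|x|\,x$; once this is in hand everything else is forced, and the explicit value of $\mathfrak{c}_N$ can be tracked through the elementary steps above.
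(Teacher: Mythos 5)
Your proof is correct and follows essentially the same route as the paper: the calibration with the vector field $X(x)=|x|\,x$, the divergence theorem plus Cauchy--Schwarz to reduce to $\int_\Omega|x|\,dx$, and a quantitative Hardy--Littlewood step to conclude. The only (minor) difference is in the final estimate: you keep both annular contributions and arrive at the cleaner bound $N\omega_N\bigl[(1+t)^{(N+1)/N}+(1-t)^{(N+1)/N}-2\bigr]\ge \tfrac{N+1}{N}\,\omega_N\,t^2$ by convexity, whereas the paper drops the inner annulus and uses the cruder estimate $(1+t)^{1/N}\ge 1+(2^{1/N}-1)t$, so your argument actually yields a slightly better explicit constant.
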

\begin{proof}
As always, by scale invariance we can suppose that $|\Omega|=\omega_N$, so that the radius of the ball $\Omega^*$ is $1$.
We start observing that the vector field $x\mapsto |x|\,x$ is such that
\[
\mathrm{div}\left( |x|\,x\right)=(N+1)\,|x|,\quad x\in\mathbb{R}^N.
\]
By integrating the previous quantity on $\Omega$, applying the Divergence Theorem and using Cauchy-Schwarz inequality, we then obtain
\[
\begin{split}
(N+1)\,\int_{\Omega} |x|\, dx&=\int_{\partial\Omega} |x|^2\,\left\langle \frac{x}{|x|},\nu_\Omega(x)\right\rangle\, d\mathcal H^{N-1}\le P_2(\Omega).
\end{split}
\]
On the other hand, by integrating on the ball $\Omega^*$ we get
\[
\begin{split}
(N+1)\,\int_{\Omega^*} |x|\, dx&=\int_{\partial \Omega^*} |x|^2\, d\mathcal H^{N-1}=P_2(\Omega^*),
\end{split}
\]
since $\nu_{\Omega^*}(x)=x/|x|$. We thus obtain the following lower bound for the isoperimetric deficit
\[
\begin{split}
P_2(\Omega)-P_2(\Omega^*)\ge (N+1)\,\left[\int_{\Omega } |x|\, dx-\int_{\Omega^*} |x|\, dx\right].
\end{split}
\]
The proof is now similar to that of the quantitative Szeg\H{o}-Weinberger inequality. By applying again the quantitative Hardy-Littlewood inequality of Lemma \ref{lm:HLquanto}, we get
\[
P_2(\Omega)-P_2(\Omega^*)\ge (N+1)\,N\,\omega_N\,\int_{R_1}^{R_2} |\varrho-1|\, \varrho^{N-1}\, d\varrho.
\]
The radii $R_1<1<R_2$ are still defined by
\[
R_1=\left(\frac{|\Omega\cap\Omega^*|}{\omega_N}\right)^\frac{1}{N}\qquad \mbox{ and }\qquad R_2=\left(\frac{|\Omega\setminus\Omega^*|}{\omega_N}+1\right)^\frac{1}{N}.
\]
With simple manipulations we arrive at
\begin{equation}
\label{deficitPv}
P_2(\Omega)-P_2(\Omega^*)\ge (N+1)\,N\,\omega_N\,\int_{1}^{R_2} (\varrho-1)\, d\varrho.
\end{equation}
As in the proof of the quantitative Szeg\H{o}-Weinberger inequality, we have
\[
\begin{split}
\int_{1}^{R_2} (\varrho-1)\,  d\varrho=\frac{(R_2-1)^2}{2}\ge \frac{(2^{1/N}-1)^2}{2}\,\left(\, \frac{|\Omega\setminus\Omega^*|}{|\Omega|}\right)^2,
\end{split}
\]
where we used again \eqref{anellosuper}. By using this in \eqref{deficitPv} and recalling that $|\Omega\setminus \Omega^*|=|\Omega^*\setminus \Omega|$, we get the desired conclusion.
\end{proof}
\begin{remark}
\label{oss:costanteberry}
The previous proof produces the following constant
\begin{equation}
\label{costanteberry}
\mathfrak{c}_{N}=\frac{(N+1)\,N}{\omega_N^{1/N}}\,\frac{(2^{1/N}-1)^2}{8},
\end{equation}
in inequality \eqref{stability}.
\end{remark}
\begin{remark}
The results of \cite{MR1734159} and \cite{MR2913683} hold for more general weighted perimeters of the form
\[
P_V(\Omega)=\int_{\partial \Omega} V(|x|)\,d\mathcal{H}^{N-1},
\]
under suitable assumptions on the weight $V$. One may also consider anisotropic variants where the Euclidean norm is replaced by a general norm, see \cite[Appendix A]{MR3128695}.
\end{remark}

\subsection{The Brock-Weinstock inequality in sharp quantitative form}

By using Theorem \ref{teo:bradepruf}, one can obtain a quantitative improvement of the stronger inequality \eqref{inv} for the sum of inverses. This has been proved by the Brasco, De Philippis and Ruffini in \cite[Theorem 5.1]{MR2913683}.
\begin{theorem}
For every $\Omega\subset\mathbb{R}^N$ open bounded set with Lipschitz boundary, we have
\begin{equation}
\label{inverso}
\frac{1}{|\Omega|^{1/N}}\,\sum\limits_{k=2}^{N+1} \frac{1}{\sigma_k(\Omega)}-\frac{1}{|B|^{1/N}}\sum\limits_{k=2}^{N+1} \frac{1}{\sigma_k(B)}
\ge \mathfrak{c}_{N}\, \left(\frac{|\Omega\Delta (\Omega^*+x_{\partial\Omega})|}{|\Omega|}\right)^2,
\end{equation}
where the dimensional constant $\mathfrak{c}_{N}>0$ is given by \eqref{costanteberry} and $x_{\partial\Omega}$ is the barycenter of the boundary $\partial\Omega$, i.e.
\[
x_{\partial\Omega}=\fint_{\partial \Omega} x\, d\mathcal H^{N-1}.
\]
\end{theorem}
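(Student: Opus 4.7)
The plan is to adapt Brock's proof of \eqref{inv}---coordinate functions inserted into a Hersch--Payne--Schiffer type variational identity for $\sum 1/\sigma_k$, followed by the weighted isoperimetric inequality \eqref{BBMPscale}---and then to replace the qualitative step \eqref{BBMPscale} by the quantitative version Theorem \ref{teo:bradepruf}. Both sides of \eqref{inverso} are translation invariant, so I first translate $\Omega$ by $-x_{\partial\Omega}$; after this reduction, the ball $\Omega^*+x_{\partial\Omega}$ appearing in the asymmetry becomes exactly the origin-centered ball $\Omega^*$ of volume $|\Omega|$, which is precisely the setup of Theorem \ref{teo:bradepruf}.

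The key tool is the Steklov analog of the identity used in the proof of \eqref{sego}, namely
\[
\sum_{k=2}^{N+1}\frac{1}{\sigma_k(\Omega)} \ =\ \max\left\{\sum_{i=1}^N \frac{\displaystyle\int_{\partial\Omega} u_i^2\, d\mathcal{H}^{N-1}}{\displaystyle\int_\Omega |\nabla u_i|^2\, dx}\right\},
\]
the maximum taken over $N$-tuples $(u_1,\ldots,u_N)\subset W^{1,2}(\Omega)$ with $\int_{\partial\Omega} u_i\, d\mathcal{H}^{N-1}=0$ for every $i$ and $\int_\Omega \nabla u_i\cdot\nabla u_j\, dx = 0$ for $i\neq j$. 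This is a routine Ky-Fan-type trace identity for the inverse Dirichlet-to-Neumann operator, verified by expanding the $u_i$ in the Steklov eigenbasis of $L^2(\partial\Omega)$, rescaling the coefficients by $\sqrt{\sigma_k}$, and noting that the rescaled coefficient vectors form an orthonormal family in $\ell^2$ thanks to the gradient orthogonality. I then test the identity against the coordinate functions $u_i(x)=x_i$, $i=1,\ldots,N$: the mean-zero condition is exactly the barycenter normalization, while $\int_\Omega \nabla x_i\cdot\nabla x_j\, dx = |\Omega|\,\delta_{ij}$ gives the required orthogonality, and $\int_\Omega|\nabla x_i|^2\,dx = |\Omega|$. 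Summing the resulting ratios yields
\[
\sum_{k=2}^{N+1}\frac{1}{\sigma_k(\Omega)} \ \geq\ \frac{1}{|\Omega|}\int_{\partial\Omega}|x|^2\, d\mathcal{H}^{N-1} \ =\ \frac{P_2(\Omega)}{|\Omega|},
\]
with equality on $\Omega^*$ since, by \eqref{eigenballstek}, the coordinate functions are precisely the Steklov eigenfunctions of an origin-centered ball.

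Subtracting the ball identity and applying Theorem \ref{teo:bradepruf} (valid since $|\Omega|=|\Omega^*|$ and $\Omega^*$ is centered at the origin) gives
\[
\sum_{k=2}^{N+1}\frac{1}{\sigma_k(\Omega)} - \sum_{k=2}^{N+1}\frac{1}{\sigma_k(\Omega^*)} \ \geq\ \frac{P_2(\Omega)-P_2(\Omega^*)}{|\Omega|} \ \geq\ \mathfrak{c}_N\,|\Omega|^{1/N}\left(\frac{|\Omega\Delta\Omega^*|}{|\Omega|}\right)^2.
\]
Dividing by $|\Omega|^{1/N}$, undoing the initial translation (which converts $|\Omega\Delta\Omega^*|$ into $|\Omega\Delta(\Omega^*+x_{\partial\Omega})|$ by translation invariance of Lebesgue measure), and using the scale invariance of both sides to substitute an arbitrary reference ball $B$ for $\Omega^*$ on the left yields \eqref{inverso}. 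The only mildly technical ingredient is the Ky-Fan identity in the second paragraph, which is standard; beyond that, Brock's coordinate-function calibration transmits the quantitative weighted isoperimetric gain of Theorem \ref{teo:bradepruf} directly to the spectral inequality with no loss, so I do not anticipate any serious obstacle.
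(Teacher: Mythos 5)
Your proposal is correct and follows the same route as the paper's own proof: translate so that the barycenter of $\partial\Omega$ is at the origin, invoke the Hile--Xu variational characterization of $\sum 1/\sigma_k$ (your Rayleigh-quotient form is equivalent to the normalized form the paper cites from \cite{MR1251868}), insert the coordinate functions to obtain the lower bound $P_2(\Omega)/|\Omega|$, and then plug in the quantitative weighted isoperimetric inequality of Theorem~\ref{teo:bradepruf}. The bookkeeping (dividing by $|\Omega|^{1/N}$, undoing the translation, replacing $\Omega^*$ by an arbitrary reference ball via scale invariance) is exactly what the paper does.
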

\begin{proof}
We start by reviewing the proof of Brock. The first ingredient is a variational characterization for the sum of inverses of eigenvalues. In the case of Steklov eigenvalues, the following formula holds (see \cite[Theorem 1]{MR1251868}, for example):
\[
\sum\limits_{k=2}^{N+1} \frac{1}{\sigma_k(\Omega)}=\max\limits_{(v_2,\dots,v_{N+1})\in\mathcal{E}}\sum\limits_{k=2}^{N+1} \int_{\partial\Omega} v_k^2\, d\mathcal H^{N-1},
\]
where the set of admissible functions is given by 
\[
\mathcal{E}=\left\{(v_2,\dots,v_{N+1})\in (W^{1,2}(\Omega))^N\, :\, \int_{\partial\Omega} v_i(x)\, d\mathcal H^{N-1}=0,\, \int_{\Omega} \langle \nabla v_i(x),\nabla v_j(x)\rangle\, dx=\delta_{ij}\right\}.
\]
The quantities $\sigma_i(\Omega)$ are translation invariant, so without loss of generality we can suppose that the barycenter of $\partial \Omega$ is at the origin, i.e. $x_{\partial\Omega}=0$.
This implies that the eigenfunctions relative to $\sigma_2(\Omega^*)=\dots=\sigma_{N+1}(\Omega^*)$ are admissible in the maximization problem above. More precisely, as admissible trial functions we take 
\[
v_i(x)=\frac{x_{i-1}}{\sqrt{|\Omega|}}, \qquad i=2,\dots,N+1.
\] 
In this way, we obtain
\[
\frac{1}{|\Omega|^{1/N}}\,\sum\limits_{k=2}^{N+1} \frac{1}{\sigma_i(\Omega)}\ge \frac{1}{|\Omega|^{1+1/N}}\int_{\partial\Omega} |x|^2\, d\mathcal H^{N-1}=|\Omega|^{-\frac{N+1}{N}}\,P_2(\Omega),
\]
which implies
\begin{equation}
\label{controllino}
\frac{1}{|\Omega|^{1/N}}\,\sum\limits_{k=2}^{N+1} \frac{1}{\sigma_k(\Omega)}-\frac{1}{|\Omega^*|^{1/N}}\sum\limits_{k=2}^{N+1} \frac{1}{\sigma_k(\Omega^*)}\ge |\Omega|^{-\frac{N+1}{N}}\,P_2(\Omega)- |\Omega^*|^{-\frac{N+1}{N}}\,P_2(\Omega^*).
\end{equation}
In the inequality above we used that (recall that $|\Omega^*|^{1/N}\,\sigma_2(\Omega^*)=\omega_N^{1/N}$)
\[
\frac{1}{|\Omega^*|^{1/N}}\sum\limits_{k=2}^{N+1} \frac{1}{\sigma_k(\Omega^*)}=\frac{N}{\omega_N^{1/N}}=|\Omega^*|^{-\frac{N+1}{N}}\,P_2(\Omega^*).
\]
It is then sufficient to use the quantitative estimate \eqref{stability} in \eqref{controllino} in order to conclude.
\end{proof}
As a corollary, we get the following sharp quantitative version of the Brock-Weinstock inequality.
\begin{theorem}
For every $\Omega\subset\mathbb{R}^N$ open bounded set with Lipschitz boundary, we have
\begin{equation}
\label{quanto_bw}
|B|^{1/N}\,\sigma_2(B)-|\Omega|^{1/N}\, \sigma_2(\Omega) \ge \widetilde{\mathfrak{c}}_N\, \left(\frac{|\Omega\Delta(\Omega^*+x_{\partial\Omega})|}{|\Omega|}\right)^2,
\end{equation}
where $\widetilde{\mathfrak{c}}_N>0$ is an explicit constant depending only on $N$ only (see Remark \ref{oss:costantestek} below).
\end{theorem}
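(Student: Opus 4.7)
The plan is to reduce \eqref{quanto_bw} to the previously established quantitative estimate \eqref{inverso} for the sum of inverse eigenvalues, exploiting the fact that on a ball the eigenvalues $\sigma_2,\dots,\sigma_{N+1}$ all coincide, while on a general domain $\sigma_2$ is the smallest among them. First I would write the right-hand side of \eqref{inverso} in terms of $\sigma_2$ alone: since $\sigma_k(\Omega)\ge \sigma_2(\Omega)$ for $k=2,\dots,N+1$, we have
\[
\sum_{k=2}^{N+1}\frac{1}{\sigma_k(\Omega)}\le \frac{N}{\sigma_2(\Omega)},
\]
whereas for the ball, as recalled in the excerpt, equality holds in this estimate because $\sigma_2(B)=\dots=\sigma_{N+1}(B)$. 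Plugging these two observations into \eqref{inverso} yields
\[
\frac{N}{|\Omega|^{1/N}\,\sigma_2(\Omega)}-\frac{N}{|B|^{1/N}\,\sigma_2(B)}\ge \mathfrak{c}_N\,\mathcal{D}(\Omega)^2,
\]
where, for brevity, $\mathcal{D}(\Omega):=|\Omega\Delta(\Omega^*+x_{\partial\Omega})|/|\Omega|$.

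Multiplying through by $|\Omega|^{1/N}\sigma_2(\Omega)\cdot|B|^{1/N}\sigma_2(B)/N$ gives
\[
|B|^{1/N}\sigma_2(B)-|\Omega|^{1/N}\sigma_2(\Omega)\ge \frac{\mathfrak{c}_N}{N}\,|\Omega|^{1/N}\sigma_2(\Omega)\cdot|B|^{1/N}\sigma_2(B)\,\mathcal{D}(\Omega)^2.
\]
The only nuisance is that the factor $|\Omega|^{1/N}\sigma_2(\Omega)$ on the right could in principle be small; to get rid of it I would split into two regimes. Set $\Delta:=|B|^{1/N}\sigma_2(B)-|\Omega|^{1/N}\sigma_2(\Omega)\ge 0$ and recall $|B|^{1/N}\sigma_2(B)=\omega_N^{1/N}$.

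In the \emph{large-deficit regime} $\Delta\ge \tfrac12\,\omega_N^{1/N}$, estimate \eqref{quanto_bw} follows trivially from $\mathcal{D}(\Omega)\le 2$, since
\[
\Delta\ge \tfrac12\,\omega_N^{1/N}\ge \tfrac{1}{8}\,\omega_N^{1/N}\,\mathcal{D}(\Omega)^2.
\]
In the \emph{small-deficit regime} $\Delta<\tfrac12\,\omega_N^{1/N}$, we have $|\Omega|^{1/N}\sigma_2(\Omega)\ge \tfrac12\,\omega_N^{1/N}$, hence
\[
|\Omega|^{1/N}\sigma_2(\Omega)\cdot |B|^{1/N}\sigma_2(B)\ge \tfrac12\,\omega_N^{2/N},
\]
and the inequality displayed above yields $\Delta\ge \frac{\mathfrak{c}_N\,\omega_N^{2/N}}{2N}\,\mathcal{D}(\Omega)^2$. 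Taking
\[
\widetilde{\mathfrak{c}}_N:=\min\left\{\frac{\omega_N^{1/N}}{8},\;\frac{\mathfrak{c}_N\,\omega_N^{2/N}}{2N}\right\}
\]
with $\mathfrak{c}_N$ as in \eqref{costanteberry} concludes the proof.

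There is no genuine obstacle here, since all the work is already contained in Theorem for \eqref{inverso}; the only mildly delicate point is the case split, which is needed because $\sigma_2(\Omega)$ is not controlled from below by $\sigma_2(B)$ a priori. This case split is harmless, however, because the asymmetry $\mathcal{D}(\Omega)$ is bounded by $2$, so the large-deficit regime is automatic.
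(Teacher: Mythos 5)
Your proof is correct and follows essentially the same route as the paper: it invokes the quantitative inequality \eqref{inverso} for the sum of inverse Steklov eigenvalues, uses $\sigma_k(\Omega)\ge\sigma_2(\Omega)$ to reduce to $\sigma_2$ alone, and handles the degenerate case via the same split on whether $|\Omega|^{1/N}\sigma_2(\Omega)\ge\tfrac12\,|B|^{1/N}\sigma_2(B)$. Even the constant you obtain, $\widetilde{\mathfrak{c}}_N=\min\{\omega_N^{1/N}/8,\ \mathfrak{c}_N\omega_N^{2/N}/(2N)\}$, matches the one recorded in Remark \ref{oss:costantestek}.
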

\begin{proof}
First of all, we can suppose that
\begin{equation}
\label{unmezzo}
|\Omega|^{1/N}\, \sigma_2(\Omega)\ge \frac{1}{2}\, |B|^{1/N}\,\sigma_2(B),
\end{equation}
otherwise estimate \eqref{quanto_bw} is trivially true with constant $\widetilde\delta_N=1/8\,|B|^{1/N}\,\sigma_2(B)$, just by using the fact that
\[
\frac{|\Omega\Delta (\Omega^*+x_{\partial\Omega})|}{|\Omega|}\le 2.
\]
Let us assume \eqref{unmezzo}. By recalling that $\sigma_2(\Omega)\le\sigma_i(\Omega)$ for every $i\ge 3$, from \eqref{inverso} we can infer
\[
\frac{N}{|\Omega|^{1/N}\, \sigma_2(\Omega)}-\frac{N}{|B|^{1/N}\,\sigma_2(B)}\ge \delta_{N}\, \left(\frac{|\Omega\Delta (\Omega^*+x_{\partial\Omega})|}{|\Omega|}\right)^2.
\]
This can be rewritten as
\[
\frac{|B|^{1/N}\,\sigma_2(B)-|\Omega|^{1/N}\, \sigma_2(\Omega)}{\Big(|\Omega|^{1/N}\, \sigma_2(\Omega)\Big)\,\Big(|B|^{1/N}\,\sigma_2(B)\Big)}\ge \frac{\delta_N}{N}\,\left(\frac{|\Omega\Delta (\Omega^*+x_{\partial\Omega})|}{|\Omega|}\right)^2.
\]
By using \eqref{unmezzo}, the previous inequality easily implies \eqref{quanto_bw}.
\end{proof}
\begin{remark}
\label{oss:costantestek}
By recalling that for every ball $|B|^{1/N}\,\sigma_2(B)=\omega_N^{1/N}$, the constant $\widetilde{\mathfrak{c}}_N$ above is given by
\[
\widetilde{\mathfrak{c}}_N=\frac{\omega_{N}^{1/N}}{2}\,\min\left\{\frac{\delta_N}{N}\,\omega_N^{1/N},\frac{1}{4}\right\},
\]
and $\mathfrak{c}_N$ is the same as in \eqref{costanteberry}.
\end{remark}

\begin{openpb}[Stability of the Weinstock inequality]
Prove that for every $\Omega\subset\mathbb{R}^2$ simply connected open bounded set with smooth boundary, we have
\[
P(B)\,\sigma_2(B)-P(\Omega)\, \sigma_2(\Omega)\ge c_N\,\mathcal{A}(\Omega)^2,
\]
and
\[
\frac{1}{P(\Omega)}\,\left(\frac{1}{\sigma_2(\Omega)}+\frac{1}{\sigma_3(\Omega)}\right)-\frac{1}{P(B)}\,\left(\frac{1}{\sigma_2(B)}+\frac{1}{\sigma_3(B)}\right)\ge c_N\,\mathcal{A}(\Omega)^2.
\]
\end{openpb}

\subsection{Checking the sharpness}
The discussion here is very similar to that of the quantitative Szeg\H{o}-Weinberger inequality. Indeed, the ball is the ``maximum point'' of
\[
\Omega\mapsto |\Omega|^{1/N}\,\sigma_2(\Omega),
\]
and $\sigma_2$ is multiple for a ball, thus again we do not have differentiability. Then verifying that the exponent $2$ on $\mathcal{A}$ is sharp is necessarily involved, exactly like in the Neumann case.
In order to check sharpness of \eqref{th:quanto_sw} we can use exactly the same family of Theorem \ref{thm:brapradepruf}. The heuristic ideas are the same as in the Neumann case, we refer the reader to \cite[Section 6]{MR2913683} for the proof. About the condition \eqref{ka2}, i.e.
\[
\int_{\partial B_1} \langle a,x\rangle^2\,\psi\,d\mathcal{H}^{N-1}=0,\qquad \mbox{ for every }a\in\mathbb{R}^N,
\]
we notice that this is still related to the peculiar form of Steklov eigenfunction of a ball. Indeed, from \eqref{eigenballstek} we know that each eigenfunction $\xi$ corresponding to $\sigma_2(B)$ has the form
\[
\xi=\langle a,x\rangle,\qquad \mbox{ for some } a\in\mathbb{R}^N.
\]
Then we get
\[
|\xi|^2=\langle a,x\rangle^2\qquad \mbox{ and }\qquad |\nabla_\tau\xi|^2=|a|^2-\langle a,x\rangle^2.
\]
Thus condition \eqref{ka2} implies again
\[
\int_{\partial B_1} \psi\,|\xi|^2\,d\mathcal{H}^{N-1}=0\qquad \mbox{ and }\qquad \int_{\partial B_1} \psi\,|\nabla_\tau\xi|^2\,d\mathcal{H}^{N-1}=0,
\]
which are crucial in order to have the sharp decay rate.
\begin{remark}[Sum of inverses]
Observe that 
\[
\begin{split}
\delta_N\,\mathcal{A}(\Omega)^2&\le \frac{1}{|\Omega|^{1/N}}\sum\limits_{k=2}^{N+1} \frac{1}{\sigma_k(\Omega)}-\frac{1}{|B|^{1/N}}\sum\limits_{k=2}^{N+1} \frac{1}{\sigma_k(B)}\\
&\le \frac{N}{|B|^{1/N}\,\sigma_2(B)}\,\left(\frac{|B|^{1/N}\,\sigma_2(B)}{|\Omega|^{1/N}\,\sigma_2(\Omega)}-1\right).
\end{split}
\]
Since the exponent $2$ for $\mathcal{A}(\Omega)$ is sharp in the quantitative Brock-Weinstock inequality, this automatically proves the optimality of inequality \eqref{inverso} as well.
\end{remark}

\section{Some further stability results}
\label{sec:6}

\subsection{The second eigenvalue of the Dirichlet Laplacian}

Up to now we have just considered isoperimetric-like inequalities for ground states energies of the Laplacian, i.e. for first (or first nontrivial) eigenvalues. In each of the cases previously considered, the optimal set was always a ball.
On the contrary, very few facts are known on optimal shapes for successive eigenvalues. In the Dirichlet case, a well-known result states that disjoint pairs of equal balls (uniquely) minimize the second eigenvalue $\lambda_2$, among sets with given volume. This is the so-called {\it Hong-Krahn-Szego inequality}\footnote{This property of balls has been discovered (at least) three times: first by Edgar Krahn (\cite{Kr}) in the '20s. Then the result has been probably neglected, since in 1955 George P\'olya attributes this observation to Peter Szego (see the final remark of \cite{MR0073047}). However, one year before P\'olya's paper, there appeared the paper \cite{MR0070015} by Imsik Hong, giving once again a proof of this result. We owe these informations to the kind courtesy of Mark S. Ashbaugh.}. In scaling invariant form this reads
\begin{equation}
\label{HKS}
|\Omega|^{2/N}\,\lambda_2(\Omega)\ge 2^{2/N}\,|B|^{2/N}\, \lambda_1(B),
\end{equation}
once it is observed that for the disjoint union of two identical balls, the first eigenvalue has multiplicity $2$ and coincides with the first eigenvalue of one of the two balls. Equality in \eqref{HKS} is attained only for disjoint unions of two identical balls, up to sets of zero capacity.
\par
The proof of \eqref{HKS} is quite simple and is based on the following fact.
\begin{lemma}
\label{lm:tecnico}
Let $\Omega\subset\mathbb{R}^N$ be an open set with finite measure. Then there exist two disjoint open sets $\Omega_+,\,\Omega_-\subset \Omega$ such that
\begin{equation}
\label{lemlam2}
\lambda_2(\Omega)=\max\Big\{\lambda_1(\Omega_+),\,\lambda_1(\Omega_-)\Big\}\,. 
\end{equation}
\end{lemma}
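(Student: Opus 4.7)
The plan is to use the variational characterization of $\lambda_2(\Omega)$ together with the sign structure of any associated eigenfunction. First I would invoke the compactness of $W^{1,2}_0(\Omega)\hookrightarrow L^2(\Omega)$ (which holds because $|\Omega|<\infty$) to obtain an eigenfunction $u_2 \in W^{1,2}_0(\Omega)\setminus\{0\}$ realizing $\lambda_2(\Omega)$, satisfying $-\Delta u_2 = \lambda_2(\Omega)\,u_2$ in $\Omega$ and orthogonal in $L^2(\Omega)$ to a first eigenfunction $u_1$ (which may be taken positive on each component where it does not vanish). I would then define the natural candidates
\[
\Omega_+ := \{u_2>0\}, \qquad \Omega_- := \{u_2<0\},
\]
which are open (by continuity of $u_2$, from elliptic regularity) and disjoint by construction.

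The core of the argument is to prove that $\lambda_1(\Omega_\pm)=\lambda_2(\Omega)$ whenever the corresponding nodal domain is nonempty. The upper bound $\lambda_1(\Omega_\pm)\le \lambda_2(\Omega)$ would follow by using $u_2^\pm := \max(\pm u_2,0) \in W^{1,2}_0(\Omega_\pm)$ as a trial function in the Rayleigh quotient for $\lambda_1(\Omega_\pm)$, after observing that multiplying the eigenvalue equation by $u_2^\pm$ and integrating by parts yields
\[
\int_{\Omega_\pm} |\nabla u_2^\pm|^2\,dx \;=\; \lambda_2(\Omega) \int_{\Omega_\pm} (u_2^\pm)^2\,dx.
\]
For the matching lower bound, I would appeal to the characterization of the first Dirichlet eigenvalue on a connected open set as the unique eigenvalue admitting a strictly positive eigenfunction: since $u_2^+$ is strictly positive on $\Omega_+$ and still solves $-\Delta u_2^+ = \lambda_2(\Omega)\,u_2^+$ there, on each connected component $C$ of $\Omega_+$ it is forced to be a first eigenfunction, giving $\lambda_1(C)=\lambda_2(\Omega)$. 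Passing to the infimum over the components yields $\lambda_1(\Omega_+)=\lambda_2(\Omega)$, and symmetrically for $\Omega_-$.

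Finally, I would handle the degenerate case in which $u_2$ fails to change sign. When $\Omega$ is connected this cannot occur, because the first eigenfunction $u_1$ is strictly positive and the orthogonality $\int_\Omega u_1 u_2\,dx=0$ would then force $u_2\equiv 0$. Hence $\Omega$ must be disconnected, and its Dirichlet spectrum decomposes as the disjoint union of the spectra of its connected components $\{C_i\}$; in this case $\lambda_2(\Omega)=\lambda_1(C_j)$ for some component $C_j$, while another component $C_i$ satisfies $\lambda_1(C_i)\le \lambda_2(\Omega)$. Choosing $\Omega_+=C_j$ and $\Omega_-=C_i$ then gives \eqref{lemlam2}.

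The step I expect to be the main technical obstacle is the justification that $u_2^\pm \in W^{1,2}_0(\Omega_\pm)$ for general, possibly very irregular, open sets $\Omega$, together with the validity of testing the distributional equation against $u_2^\pm$. Both are standard consequences of the fact that truncation preserves $W^{1,2}_0$ and that $u_2^\pm$, extended by zero outside $\Omega_\pm$, lies in $W^{1,2}_0(\Omega)$; but the details require some care since one cannot rely on boundary regularity of the nodal domains.
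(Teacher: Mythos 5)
Your proof follows the standard nodal-domain decomposition that the paper itself alludes to immediately after the lemma statement ("the two subsets $\Omega_+$ and $\Omega_-$ are nothing but the nodal sets of a second eigenfunction"), so the route is essentially the same, and you additionally spell out the disconnected/degenerate case that the paper leaves implicit. The argument is correct. The technical point you flag at the end --- that $u_2^\pm\in W^{1,2}_0(\Omega_\pm)$ for irregular $\Omega$ --- is indeed the only place requiring real care, and it is resolved by the capacity-theoretic characterization of $W^{1,2}_0$ (a function in $W^{1,2}(\mathbb R^N)$ belongs to $W^{1,2}_0(\omega)$ iff its quasi-continuous representative vanishes quasi-everywhere on $\mathbb R^N\setminus\omega$), combined with the interior continuity of $u_2$ from elliptic regularity; alternatively, one can bypass the "positivity characterizes $\lambda_1$" step in the lower bound entirely by a min-max argument: if both $\lambda_1(\Omega_+)$ and $\lambda_1(\Omega_-)$ were strictly below $\lambda_2(\Omega)$, the span of their first eigenfunctions (extended by zero) would be a two-dimensional subspace of $W^{1,2}_0(\Omega)$ on which the Rayleigh quotient stays below $\lambda_2(\Omega)$, contradicting Courant--Fischer.
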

For a connected set, the two subsets $\Omega_+$ and $\Omega_-$ above are nothing but the nodal sets of a second eigenfunction. In this case we have
\[
\lambda_1(\Omega_+)=\lambda_2(\Omega)=\lambda_1(\Omega-).
\]
By using information \eqref{lemlam2} and the Faber-Krahn inequality, we get
\begin{equation}
\label{pezzettini}
|\Omega|^{2/N}\,\lambda_2(\Omega)\ge |B|^{2/N}\lambda_1(B)\,\max\left\{\left(\frac{|\Omega|}{|\Omega_+|}\right)^{\frac{2}{N}},\,\left(\frac{|\Omega|}{|\Omega_-|}\right)^{\frac{2}{N}}\right\}.
\end{equation}
By observing that
\begin{equation}
\label{funzioncina}
\max\left\{\left(\frac{|\Omega|}{a}\right)^{\frac{2}{N}},\, \left(\frac{|\Omega|}{b}\right)^{\frac{2}{N}}\right\}\ge 2^{2/N},\qquad \mbox{ for every }a,b> 0,\ a+b\le |\Omega|,
\end{equation}
we obtain inequality \eqref{HKS}. As for equality cases, we observe that if equality holds in \eqref{HKS}, then we must have equality in \eqref{pezzettini} and \eqref{funzioncina}.
The first one implies that $\Omega_+$ and $\Omega_-$ above must be balls (by using equality cases in the Faber-Krahn inequality). But the lower bound in \eqref{funzioncina} is uniquely attained by the pair $a=b=|\Omega|/2$, thus we finally get that $|\Omega_+|=|\Omega_-|=|\Omega|/2$ and $\Omega$ is a disjoint union of two identical balls.
\vskip.2cm
As before, we are interested in improving \eqref{HKS} by means of a quantitative stability estimate. This has been done in \cite[Theorem 3.5]{MR2899684}.
To present this result, we first need to introduce a suitable variant of the Fraenkel asymmetry. This is the {\it Fraenkel $2-$asymmetry}, which measures the $L^1$ distance of a set from the collection of disjoint pairs of equal balls. It is given by
\[
\mathcal{A}_2(\Omega):=\inf\left\{\frac{|\Omega\Delta (B_+\cup B_-)|}{|\Omega|}\, :\, B_+, B_- \mbox{ balls s.\,t. }  \, B_+\cap B_-=\emptyset,\ |B_+|=|B_-|=\frac{|\Omega|}{2}\right\}.
\]
We refer to \cite[Section 2]{1512.00993} for some interesting studies on the functional $\mathcal{A}_2$.
We then have the following quantitative version of the Hong-Krahn-Szego inequality.
We point out that the exponent on the Fraenkel $2-$asymmetry $\mathcal{A}_2$ in \eqref{quanto_ks} is smaller than that in the original statement contained in \cite{MR2899684}, due to the use of the sharp Faber-Krahn inequality of Theorem \ref{thm:fkstability}.
\begin{theorem}
Let $\Omega\subset\mathbb{R}^N$ be an open set with finite measure. Then
\begin{equation}
\label{quanto_ks}
|\Omega|^{2/N}\, \lambda_2(\Omega)-2^{2/N}\, |B|^{2/N}\, \lambda_1(B)\ge \frac{1}{C_N}\,\mathcal{A}_2(\Omega)^{N+1},
\end{equation}
for a constant $C_N>0$ depending on $N$ only.
\end{theorem}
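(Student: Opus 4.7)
The plan is to combine the nodal decomposition of Lemma~\ref{lm:tecnico} with the sharp quantitative Faber-Krahn inequality of Theorem~\ref{thm:fkstability}, as in the Brasco--Pratelli argument \cite{MR2899684}. The improvement of the exponent relative to the original paper comes precisely from plugging in the sharp (quadratic) Faber-Krahn in place of weaker, higher-exponent versions, and the final power $N+1$ is then dictated by a purely geometric ``separation of balls'' step.

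Set $\delta:=|\Omega|^{2/N}\lambda_2(\Omega)-2^{2/N}|B|^{2/N}\lambda_1(B)\ge 0$. Since $\mathcal{A}_2(\Omega)\le 2$, we may assume $\delta$ is as small as needed. By Lemma~\ref{lm:tecnico} choose disjoint open sets $\Omega_+,\Omega_-\subset\Omega$ with $\lambda_2(\Omega)=\max\{\lambda_1(\Omega_+),\lambda_1(\Omega_-)\}$ and, after relabelling, $t_+:=|\Omega_+|/|\Omega|\le 1/2$. Inserting Theorem~\ref{thm:fkstability} (with $q=2$) in the inequality $\delta\ge|\Omega|^{2/N}\lambda_1(\Omega_\pm)-2^{2/N}|B|^{2/N}\lambda_1(B)$ gives
\[
\delta\ge\bigl(t_\pm^{-2/N}-2^{2/N}\bigr)|B|^{2/N}\lambda_1(B)+t_\pm^{-2/N}\gamma_{N,2}\,\mathcal{A}(\Omega_\pm)^2.
\]
A Taylor expansion at $t_+=1/2$ (together with the trivial inclusion $t_-\in[t_+,1-t_+]\subset[\tfrac12-C\delta,\tfrac12+C\delta]$ derived from the $\Omega_+$ inequality) yields
\[
\bigl|t_\pm-\tfrac12\bigr|\le C\,\delta,\qquad \mathcal{A}(\Omega_\pm)^2\le C\,\delta.
\]

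The next step is to convert these individual estimates into a bound on $\mathcal{A}_2(\Omega)$. Pick balls $\tilde B_\pm$ of volume $|\Omega_\pm|$ almost realising $\mathcal{A}(\Omega_\pm)$, and let $B_\pm^{(0)}$ denote the concentric balls of volume $|\Omega|/2$; since $\bigl||\Omega_\pm|-|\Omega|/2\bigr|\lesssim|\Omega|\delta$, this yields $|\Omega_\pm\Delta B_\pm^{(0)}|\lesssim|\Omega|\sqrt\delta$. Because $\Omega_+\cap\Omega_-=\emptyset$, the overlap of $B_\pm^{(0)}$ is controlled by the same quantity:
\[
\bigl|B_+^{(0)}\cap B_-^{(0)}\bigr|\le\bigl|B_+^{(0)}\setminus\Omega_+\bigr|+\bigl|B_-^{(0)}\setminus\Omega_-\bigr|\lesssim|\Omega|\sqrt\delta.
\]

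The main difficulty is the last, purely geometric, step: the definition of $\mathcal{A}_2$ forces the two balls to be \emph{disjoint}. Two balls of common radius $\bar r\sim|\Omega|^{1/N}$ with centers at distance $2\bar r-2h$ have lens-intersection of volume $\sim\bar r^{(N-1)/2}h^{(N+1)/2}$ when $h\ll\bar r$; inverting this relation with the bound above gives $h\lesssim\bar r\,\delta^{1/(N+1)}$. Translating $B_+^{(0)}$ by $2h$ along the line of centers produces a ball $B_+$ disjoint from $B_-:=B_-^{(0)}$ at the price of a symmetric difference of order $\bar r^{N-1}\cdot h\lesssim|\Omega|\,\delta^{1/(N+1)}$ with $\Omega_+$. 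Combining this with $|\Omega\setminus(\Omega_+\cup\Omega_-)|\le|\Omega|(1-t_+-t_-)\lesssim|\Omega|\delta$, the triangle inequality gives
\[
\mathcal{A}_2(\Omega)\le\frac{|\Omega\Delta(B_+\cup B_-)|}{|\Omega|}\lesssim\sqrt\delta+\delta^{1/(N+1)}+\delta\lesssim\delta^{1/(N+1)},
\]
which is precisely~\eqref{quanto_ks}. The exponent $N+1$ is intrinsic to the argument and comes from the $(N+1)/2$-power behaviour of the lens volume, i.e.\ from the fact that separating two almost-tangent balls is cheap in distance but comparatively expensive in symmetric difference.
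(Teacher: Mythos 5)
Your proposal is correct and follows essentially the same structure as the paper's proof: the nodal decomposition of Lemma \ref{lm:tecnico}, the sharp quantitative Faber--Krahn applied to each of $\Omega_\pm$ to deduce $\mathcal{A}(\Omega_\pm)^2\lesssim\delta$ and $|t_\pm-\tfrac12|\lesssim\delta$, and then a geometric step converting these into a bound on $\mathcal{A}_2(\Omega)$. The one genuine difference is that where the paper invokes \cite[Lemma 3.3]{MR2899684} as a black box for the geometric estimate $\mathcal{A}_2(\Omega)^{(N+1)/2}\lesssim\mathcal{A}(\Omega_+)+\mathcal{A}(\Omega_-)+|\tfrac12-t_+|+|\tfrac12-t_-|$, you give a self-contained derivation via the overlap bound $|B^{(0)}_+\cap B^{(0)}_-|\le|B^{(0)}_+\setminus\Omega_+|+|B^{(0)}_-\setminus\Omega_-|$ and the $\sim\bar r^{(N-1)/2}h^{(N+1)/2}$ scaling of the lens volume; this makes the origin of the exponent $N+1$ entirely transparent and is a worthwhile addition. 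The bookkeeping is sound, including the delicate point (handled by the paper's Case 2) that when $t_->1/2$ the volume term in the Faber--Krahn expansion for $\Omega_-$ has the wrong sign: your inclusion $t_-\in[t_+,1-t_+]\subset[\tfrac12-C\delta,\tfrac12+C\delta]$, drawn from the $\Omega_+$ estimate and disjointness, resolves it in the same spirit.
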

\begin{proof}
Let us set for simplicity
\[
\mathcal{K}(\Omega):=|\Omega|^{2/N}\, \lambda_2(\Omega)-2^{2/N}\, |B|^{2/N}\, \lambda_1(B).
\]
The idea of the proof is to insert quantitative elements in \eqref{pezzettini} and \eqref{funzioncina}, so to obtain an estimate of the type
\begin{equation}
\label{KS}
\mathcal{K}(\Omega)\geq \frac{1}{C_N}\max\left\{\mathcal{A}(\Omega_+)^2+\left|\frac{1}{2}-\frac{|\Omega_+|}{|\Omega|}\right|,\,\mathcal{A}(\Omega_-)^2+\left|\frac{1}{2}-\frac{|\Omega_-|}{|\Omega|}\right|\right\},
\end{equation}
where $\Omega_+$ and $\Omega_-$ are in Lemma \ref{lm:tecnico}. Estimate \eqref{KS} would tell that the deficit on the Hong-Krahn-Szego inequality controls how far $\Omega_+$ and $\Omega_-$ are from being balls having measure $|\Omega|/2$. Once estimate \eqref{KS} is established, the claimed inequality \eqref{quanto_ks} follows from the elementary geometric estimate
\begin{equation}
\label{tassello}
\mathcal{A}_2(\Omega)^\frac{N+1}{2} \leq C_N\, \left(\mathcal{A}(\Omega_+)+\left|\frac{1}{2}-\frac{|\Omega_+|}{|\Omega|}\right|+\mathcal{A}(\Omega_-)+\left|\frac{1}{2}-\frac{|\Omega_-|}{|\Omega|}\right|\right),
\end{equation}
proved in \cite[Lemma 3.3]{MR2899684}.
Observe that since the quantities appearing in the right-hand side of~\eqref{KS} are all bounded by a universal constant, it is not restrictive to prove \eqref{KS} under the further assumption
\begin{equation}
\label{limitazio}
\mathcal{K}(\Omega)\le 2^{2/N}\, |B|^{2/N}\, \lambda_1(B).
\end{equation}
To obtain \eqref{KS}, we need to distinguish two cases.
\vskip.2cm\noindent
{\it Case 1.} Let us suppose that
\[
|\Omega_+|\le \frac{|\Omega|}{2}\qquad \mbox{ and }\qquad |\Omega_-|\le \frac{|\Omega|}{2}.
\]
In this case, let us apply the quantitative Faber-Krahn inequality of Theorem \ref{thm:fkstability} to $\Omega_+$. By recalling \eqref{lemlam2}, we find
\[
\begin{split}
2^{2/N}\,\gamma_{N,2}\, \mathcal{A}(\Omega_+)^2&\leq (2\,|\Omega_+|)^{2/N}\,\lambda_1(\Omega_+)-2^{2/N}\,|B|^{2/N}\,\lambda_1(B)\\
&\leq (2\,|\Omega_+|)^{2/N}\,\lambda_2(\Omega)-2^{2/N}\,|B|^{2/N}\,\lambda_1(B)\\
&=\mathcal{K}(\Omega)+\left((2\,|\Omega_+|)^{2/N}-|\Omega|^{2/N}\right)\,\lambda_2(\Omega)\\
&=\mathcal{K}(\Omega)+|\Omega|^{2/N}\,\lambda_2(\Omega)\,\left[\left(\frac{2\,|\Omega_+|}{|\Omega|}\right)^{2/N}-1\right].\\
\end{split}
\]
By concavity of $\tau\mapsto\tau^{2/N}$, we thus get
\begin{equation}
\label{semprevero}
2^{2/N}\,\gamma_{N,2}\, \mathcal{A}(\Omega_+)^2\le \mathcal{K}(\Omega)+|\Omega|^{2/N}\,\lambda_2(\Omega)\,\frac{4}{N}\,\left(\frac{|\Omega_+|}{|\Omega|}-\frac{1}{2}\right).
\end{equation}
By using the hypothesis on $|\Omega_+|$ and the Hong-Krahn-Szego inequality, we thus obtain
\begin{equation}
\label{avoltevero}
\mathcal{K}(\Omega)\geq c_N\, \mathcal{A}(\Omega_+)^2 +c_N\,\left|\frac{1}{2}-\frac{|\Omega_+|}{|\Omega|}\right|\,.
\end{equation}
Hence, the same computations with $\Omega_-$ in place of $\Omega_+$ yield~(\ref{KS})
\vskip.2cm\noindent
{\it Case 2.} Let us suppose that
\[
|\Omega_+|> \frac{|\Omega|}{2}\qquad \mbox{ and }\qquad |\Omega_-|\le \frac{|\Omega|}{2}.
\]
We still have the estimate \eqref{semprevero} for both $\Omega_+$ and $\Omega_-$. In particular, for the smaller piece $\Omega_-$ we get again \eqref{avoltevero}. On the contrary, this time it is no more true that
\[
\left(\frac{1}{2}-\frac{|\Omega_+|}{|\Omega|}\right) = \left|\frac 12-\frac{|\Omega_+|}{|\Omega|}\right|.
\]
Then for $\Omega_+$ the second term in the right-hand side of \eqref{semprevero} has the wrong sign. The difficulty is that this term could be too big.
However, by recalling that $|\Omega_-|+|\Omega_+|\le |\Omega|$ and using \eqref{avoltevero} for $|\Omega_-|$, we have
\begin{equation}
\label{pezzetto}
\frac{|\Omega_+|}{|\Omega|} -\frac{1}{2}\le \frac{1}{2}-\frac{|\Omega_-|}{|\Omega|}\le \frac{1}{c_N}\, \mathcal{K}(\Omega).
\end{equation}
Therefore, using this information in \eqref{semprevero} and recalling \eqref{limitazio}, we immediately get
\[
\left(\frac{8}{N\,c_N}\,2^{2/N}\,|B|^{2/N}\,\lambda_1(B)+1\right)\mathcal{K}(\Omega)\geq 2^{2/N}\,\gamma_{N,2} \, \mathcal{A}(\Omega_+)^2.
\]
By joining this and \eqref{pezzetto}, we thus obtain estimate \eqref{avoltevero} for $\Omega_+$ as well, possibly with a different constant. Thus we obtain that \eqref{KS} holds in this case as well.
\end{proof}

Concerning the sharpness of estimate \eqref{quanto_ks}, some remarks are in order.
\begin{remark}[Sharpness?]
The proof of \eqref{quanto_ks} consisted of two steps: the first one is the application of the quantitative Faber-Krahn inequality to the two relevant pieces $\Omega_+$ and $\Omega_-$; the second one is the geometric estimate \eqref{tassello}, which enables to switch from the error terms of $\Omega_+$ and $\Omega_-$ to $\mathcal{A}_2(\Omega)$.  Both steps are optimal (for the second one, see \cite[Example 3.4]{MR2899684}), but unfortunately this is of course not a warranty of the sharpness of estimate \eqref{quanto_ks}.\par
Indeed, we are not able to decide whether the exponent for $\mathcal{A}_2$ in~\eqref{quanto_ks} is optimal or not. In any case, we point out that the optimal exponent for the quantitative Hong-Krahn-Szego inequality {\it has to be dimension-dependent}. This follows from the next example.
\end{remark}
\begin{example}
\label{exa:dupalle}
For every $\varepsilon>0$ sufficiently small, we indicate with $B^+_\varepsilon$ and $B^-_\varepsilon$ the open balls of radius $1$, centered at $(1-\varepsilon)\,\mathbf{e}_1$ and $(\varepsilon-1)\,\mathbf{e}_1$ respectively. We also set
\[
\Omega_\varepsilon^+=B_\varepsilon^+\cap \{x_1 \ge 0\} \qquad \mbox{ and }\qquad\Omega_\varepsilon^-=B_\varepsilon^-\cap \{x_1\le 0\},
\]
then we define the set $\Omega_\varepsilon:=
\Omega_\varepsilon^+\cup \Omega_\varepsilon^-\subseteq\mathbb{R}^N$,
for every $\varepsilon>0$ sufficiently small. Observe that we have 
\[
|\Omega^+_\varepsilon|-|B^+_\varepsilon|=O\left(\varepsilon^\frac{N+1}{2}\right)\qquad \mbox{ and }\qquad \lambda_2(\Omega_\varepsilon)-\lambda_1(B^+_\varepsilon)\le O\left(\varepsilon^\frac{N+1}{2}\right),
\]
for the second estimate see for example \cite[Lemma 2.2]{MR3068840}. 
\par
As for asymmetries, it is not difficult to see that 
\[
\mathcal{A}(\Omega_\varepsilon^+)=\mathcal{A}(\Omega^-_\varepsilon)=O\left(\varepsilon^\frac{N+1}{2}\right)\qquad \mbox{ and }\qquad \mathcal{A}_2(\Omega_\varepsilon)=O(\varepsilon).
\]
(see Figure \ref{fig:fendi}). 
\begin{figure}
\includegraphics[scale=.3]{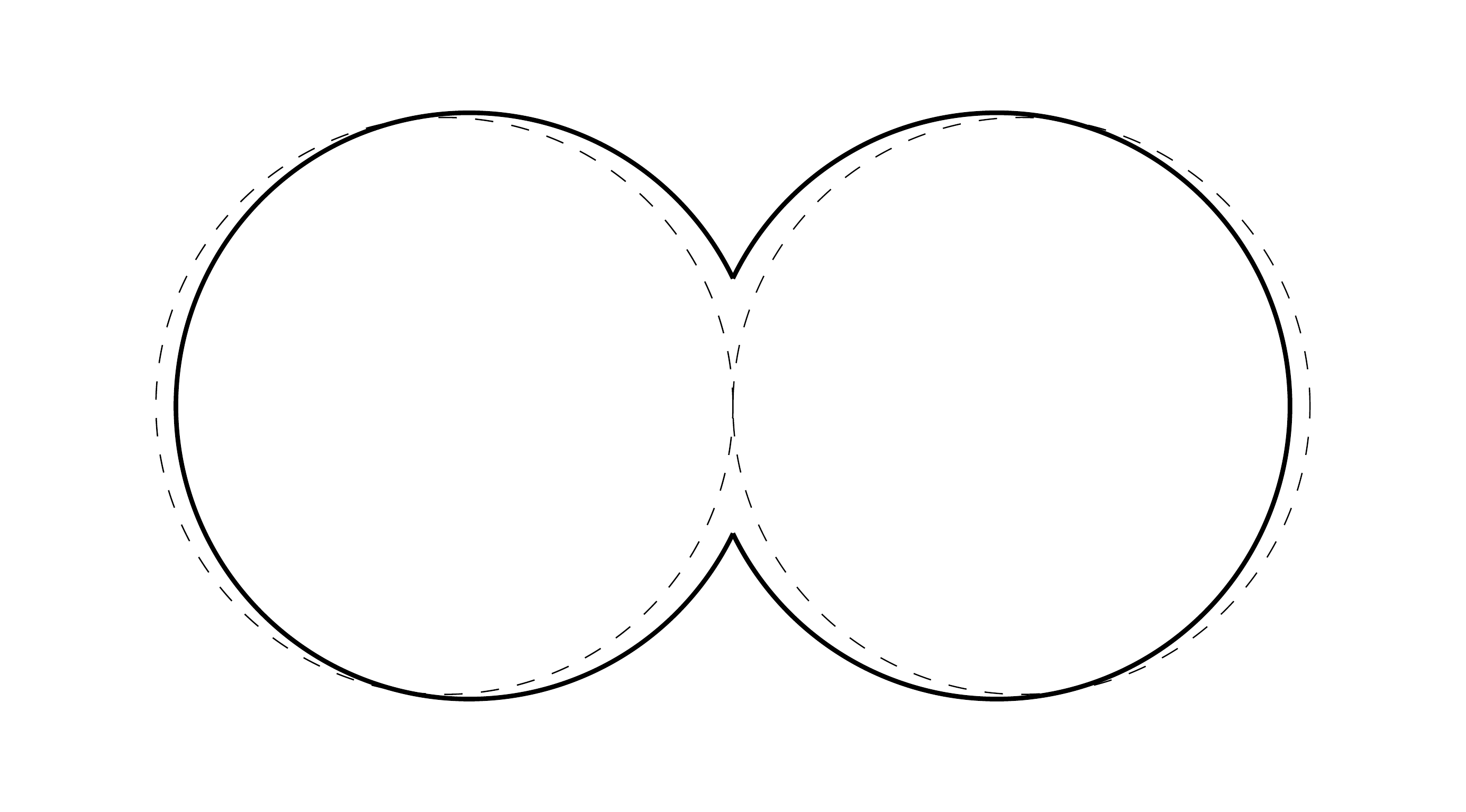}
\caption{The set $\Omega_\varepsilon$ of Example \ref{exa:dupalle} and the pair of balls achieving $\mathcal{A}_2(\Omega_\varepsilon)$.}
\label{fig:fendi}
\end{figure}
Then we get
\[
\begin{split}
|\Omega_\varepsilon|^{2/N}\, \lambda_2(\Omega_\varepsilon)-2^{2/N}\, |B|^{2/N}\, \lambda_1(B)&=2^{2/N}\,\Big(|\Omega_\varepsilon^+|^{2/N}\, \lambda_2(\Omega_\varepsilon)-|B|^{2/N}\, \lambda_1(B)\Big)\\
&=O\left(\varepsilon^\frac{N+1}{2}\right)=O\left(\mathcal{A}_2(\Omega_\varepsilon)^{(N+1)/2}\right).
\end{split}
\] 
This shows that the sharp exponent in \eqref{quanto_ks} has to depend on the dimension and is comprised between $(N+1)/2$ and $N+1$.
\end{example}

\begin{openpb}[Sharp quantitative Hong-Krahn-Szego inequality]
Prove or disprove that the exponent $N+1$ in \eqref{quanto_ks} is sharp. If $N+1$ is not sharp, find the optimal exponent.
\end{openpb}

\subsection{The ratio of the first two Dirichlet eigenvalues}

Another well-known spectral inequality which involves the second Dirichlet eigenvalue $\lambda_2$ is the so-called {\it Ashbaugh-Benguria inequality}. This asserts that the ratio $\lambda_2/\lambda_1$ is maximal on balls and has been proved in \cite{MR1166646, MR1085824}. In other words, for every open set $\Omega\subset\mathbb{R}^N$ with finite measure we have
\begin{equation}
\label{AB}
\frac{\lambda_2(\Omega)}{\lambda_1(\Omega)}\le \frac{\lambda_2(B)}{\lambda_1(B)}.
\end{equation}
\begin{remark}[Equality cases]
Equality cases in \eqref{AB} are a bit subtle: indeed, in general it is not true that equality in \eqref{AB} is attained for balls only. As a counter-example, it is sufficient to consider any disjoint union of the type
\[
\Omega=B\cup \Omega',
\]
with $\Omega'$ open set such that $\lambda_1(\Omega')>\lambda_2(B)$. 
In general equality in \eqref{AB} only implies that the connected component of $\Omega$ supporting $\lambda_1$ and $\lambda_2$ is a ball. 
\end{remark}
\begin{remark}
Inequality \eqref{AB} is an example of {\it universal inequality}. With this name we usually designate spectral inequalities involving eigenvalues only, without any other geometric quantity (see for example \cite{MR1894540}). In particular, inequality \eqref{AB} is valid in the larger class of open sets having discrete spectrum, but not necessarily finite measure.
\end{remark}
The first stability result for \eqref{AB} is due to Melas, see \cite[Theorem 3.1]{MR1168980}. To the best of our knowledge, this is still the best known result on the subject. The original statement was for the asymmetry $d_\mathcal{M}$ defined in \eqref{melas}. Here on the contrary we state the result for the Fraenkel asymmetry.
\begin{theorem}[Melas]
\label{thm:melasrap}
Let $\Omega\subset\mathbb{R}^N$ be an open bounded convex set. Then we have
\begin{equation}
\label{nam}
\frac{\lambda_2(B)}{\lambda_1(B)}-\frac{\lambda_2(\Omega)}{\lambda_1(\Omega)}\ge \frac{1}{C}\, \mathcal{A}(\Omega)^{m},
\end{equation}
for some $C=C(N)>0$ and $m=m(N)>10$ (see Remark \eqref{oss:esponente_melas} below) depending on the dimension $N$ only.
\end{theorem}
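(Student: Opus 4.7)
The strategy is to quantify Chiti's proof of the Ashbaugh--Benguria inequality by combining it with the quantitative Faber--Krahn inequality of Theorem~\ref{thm:fkstability}. After scaling so that $|\Omega|=|B|=\omega_N$, let $\Omega^{\#}=B_r$ be the ball centered at the origin with $\lambda_1(\Omega^{\#})=\lambda_1(\Omega)$; by Faber--Krahn $r\le 1$, and since $\lambda_2/\lambda_1$ is scale-invariant on balls, the Ashbaugh--Benguria inequality is equivalent to $\lambda_2(\Omega)\le\lambda_2(\Omega^{\#})$, so that the deficit rewrites as
\[
\frac{\lambda_2(B)}{\lambda_1(B)}-\frac{\lambda_2(\Omega)}{\lambda_1(\Omega)} \;=\; \frac{\lambda_2(\Omega^{\#})-\lambda_2(\Omega)}{\lambda_1(\Omega)}.
\]
The identity $\lambda_1(\Omega^{\#})=\lambda_1(B_1)/r^2$ combined with Theorem~\ref{thm:fkstability} immediately yields the radius gap $1-r\ge c_N\,\mathcal{A}(\Omega)^2$.

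To bound $\lambda_2(\Omega^{\#})-\lambda_2(\Omega)$ from below I would quantify the test-function argument of Ashbaugh--Benguria. Using vector-valued trial functions of the form $g(|x|)\,x_i/|x|$, where $g$ is an appropriate modification of the radial first Dirichlet eigenfunction of $\Omega^{\#}$ (extended constantly beyond $B_r$), and after a Brouwer-type centering that makes them admissible for the variational problem of $\lambda_2(\Omega)$, one obtains
\[
\lambda_2(\Omega) \;\le\; \frac{\displaystyle\int_\Omega \bigl(g'(|x|)^2+(N-1)g(|x|)^2/|x|^2\bigr)\,dx}{\displaystyle\int_\Omega g(|x|)^2\,dx},
\]
with the numerator's integrand radially non-increasing and the denominator's radially non-decreasing. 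The quantitative Hardy--Littlewood inequality of Lemma~\ref{lm:HLquanto} applied to both integrals, comparing $\Omega$ with the concentric ball $\Omega^{\#}$, then reduces $\lambda_2(\Omega^{\#})-\lambda_2(\Omega)$ to a one-dimensional annular integral and produces a lower bound of the form $c\,(1-r)^p$ for some dimensional exponent $p$; inserting the previous radius gap gives $\lambda_2(\Omega^{\#})-\lambda_2(\Omega)\ge c\,\mathcal{A}(\Omega)^{2p}$ for a first intermediate bound.

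The main obstacle is the geometric centering step. The Ashbaugh--Benguria trial functions force $\Omega^{\#}$ to be concentric with the coordinates used for $\Omega$, but Fraenkel asymmetry is an infimum over all ball positions, and the radius gap above in fact only controls a Nadirashvili-type asymmetry around a fixed center. Convexity is used decisively here, through the comparisons between $\mathcal{A}$, $d_\mathcal{N}$ and $d_\mathcal{M}$ from Subsection~\ref{sec:HNM}: for convex sets the last inequality gives $\mathcal{A}(\Omega)\ge c_N\,\bigl(|\Omega|^{1/N}/\mathrm{diam}(\Omega)\bigr)\,d_\mathcal{M}(\Omega)^N$, so converting an estimate in $d_\mathcal{M}$ (which is what the above argument naturally yields after centering $\Omega^{\#}$ at the incenter of $\Omega$) into one in $\mathcal{A}$ costs a further power $N$. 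The appearance of $\mathrm{diam}(\Omega)/|\Omega|^{1/N}$ is in turn controlled from above via convexity and Faber--Krahn: if $\mathrm{diam}(\Omega)$ is large then $\lambda_1(\Omega)$ is so large that the Ashbaugh--Benguria deficit is trivially of order one. Accumulating the losses—the factor $2$ from quantitative Faber--Krahn, the factor $p$ from the quantitative Hardy--Littlewood step, and the factor $N$ from the convex comparison $d_\mathcal{M}\leftrightarrow\mathcal{A}$—yields an exponent $m=m(N)>10$ and completes the proof.
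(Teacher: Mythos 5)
Your general sketch — quantify the Chiti/Ashbaugh--Benguria argument, feed in the quantitative Faber--Krahn bound to control the radius gap, use convexity to control degeneracies, assemble the exponents — is in the right spirit, and the final step (converting $|\Omega|-|B|$, where $\lambda_1(B)=\lambda_1(\Omega)$, into a power of $\mathcal{A}(\Omega)$ via quantitative Faber--Krahn) is essentially Lemma~\ref{lm:porzione}, which the paper also uses. But the core mechanism you propose for bounding the spectral residue is the wrong one, and you are missing the lemma that is actually the heart of Melas' argument.

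First, a structural error: the test-function inequality you write,
\[
\lambda_2(\Omega)\;\le\;\frac{\int_\Omega\bigl(g'(|x|)^2+(N-1)\,g(|x|)^2/|x|^2\bigr)\,dx}{\int_\Omega g(|x|)^2\,dx},
\]
is the Weinberger (Neumann) form, not the Ashbaugh--Benguria (Dirichlet) one. The radial fields $g(|x|)\,x_i/|x|$ do not vanish on $\partial\Omega$, so they are not admissible in the Dirichlet Rayleigh quotient. The Ashbaugh--Benguria trial functions are $P_iu_1$ with $u_1$ the first Dirichlet eigenfunction of $\Omega$, and after testing the first-eigenvalue equation against $P_i^2u_1$ one gets
\[
\lambda_2(\Omega)-\lambda_1(\Omega)\;\le\;\frac{\int_\Omega|\nabla P_i|^2\,u_1^2\,dx}{\int_\Omega P_i^2\,u_1^2\,dx}.
\]
The weight $u_1^2$ cannot be discarded, and it is precisely this weight that makes the problem hard. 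Because of it, the quantitative Hardy--Littlewood inequality (Lemma~\ref{lm:HLquanto}) does not apply: that lemma deals with unweighted integrals $\int_\Omega f(|x|)\,dx$ of a monotone radial kernel against $1_\Omega$, and there is no version for the measure $u_1^2\,dx$. The paper uses only the ordinary Hardy--Littlewood inequality (Lemma~\ref{lm:HL}) together with Chiti's pointwise comparison between $u_1^*$ and the eigenfunction $z$ of the equal-$\lambda_1$ ball; the quantitative residue that survives has the form
\[
\bigl(g(R)^2-g(r_1)^2\bigr)\,\int_{\Omega^*\setminus B}(u_1^*)^2\,dx,
\]
and the entire difficulty is to bound the factor $\int_{\Omega^*\setminus B}(u_1^*)^2\,dx$ from below — which your plan never addresses.

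This is where the indispensable ingredient enters: Lemma~\ref{lm:keyofthekey}, the pointwise lower bound $u_1(x)\ge c\,\mathrm{dist}(x,\partial\Omega)^\beta$ for convex sets with $|\Omega|=1$ and $\lambda_1(\Omega)\le\Lambda$. Without it, $u_1^*$ could a priori decay arbitrarily fast near the boundary and the residue above could be negligibly small even when $|\Omega^*\setminus B|$ is not. That lemma is the genuine source of the non-explicit dimensional exponent $\beta>1$, and the final exponent is $m=2(4\beta+1)>10$. Convexity is used decisively here (and in the reduction via Lemma~\ref{lm:ABC} to the non-squeezing regime $\lambda_1(\Omega)\le\Lambda$), not through the $d_\mathcal M\leftrightarrow\mathcal A$ comparisons from Subsection~\ref{sec:HNM}: those comparisons play no role in the proof and the centering worry you flag does not actually arise, because the whole comparison runs between the concentric balls $\Omega^*$ and $B$ and the conversion to $\mathcal{A}(\Omega)$ is done at the very end by Lemma~\ref{lm:porzione}. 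In short, your outline is missing Chiti's comparison and — most critically — the lower bound on $u_1$, and it substitutes a tool (quantitative Hardy--Littlewood) that does not apply to the weighted integrals at hand.
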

We are going to present the core of the proof of Theorem \ref{thm:melasrap} below. At first, one needs a handful of technical results.
\begin{lemma}
\label{lm:porzione}
Let $\Omega\subset\mathbb{R}^N$ be an open set with finite measure. Let $B\subset\mathbb{R}^N$ be a ball such that 
\[
\lambda_1(B)=\lambda_1(\Omega).
\]
There exists a constant $C=C(N)>0$ such that
\begin{equation}
\label{porzione}
\frac{|\Omega|-|B|}{|\Omega|}\ge \frac{1}{C}\,\mathcal{A}(\Omega)^2.
\end{equation}
\end{lemma}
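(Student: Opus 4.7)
The plan is to derive \eqref{porzione} as a direct consequence of the sharp quantitative Faber--Krahn inequality (Theorem~\ref{thm:fkstability}). First I would note that since $\lambda_1(\Omega)=\lambda_1(B)$, the classical Faber--Krahn inequality \eqref{faber} immediately gives
\[
|B|^{2/N}\,\lambda_1(B) = |B|^{2/N}\,\lambda_1(\Omega) \le |\Omega|^{2/N}\,\lambda_1(\Omega),
\]
so $|B| \le |\Omega|$ and the left-hand side of \eqref{porzione} is nonnegative.

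Next I would exploit the scaling identity $\lambda_1(B) = c_N\,\omega_N^{2/N}\,|B|^{-2/N}$ with $c_N = j_{(N-2)/2,1}^2$ (cf.~\eqref{lambda1palla}) together with the hypothesis $\lambda_1(\Omega)=\lambda_1(B)$, which allows me to rewrite the Faber--Krahn deficit as
\[
|\Omega|^{2/N}\,\lambda_1(\Omega) - |B|^{2/N}\,\lambda_1(B)
= \lambda_1(B)\bigl(|\Omega|^{2/N}-|B|^{2/N}\bigr)
= c_N\,\omega_N^{2/N}\left[\left(\frac{|\Omega|}{|B|}\right)^{2/N}-1\right].
\]
Bounding the left-hand side from below by Theorem~\ref{thm:fkstability} (with $q=2$), this identity yields
\[
\gamma_{N,2}\,\mathcal{A}(\Omega)^2 \le c_N\,\omega_N^{2/N}\left[\left(\frac{|\Omega|}{|B|}\right)^{2/N}-1\right].
\]

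To convert the right-hand side into a bound involving $(|\Omega|-|B|)/|\Omega|$, I would split into two cases. If $|B|/|\Omega| \le 1/2$, then $(|\Omega|-|B|)/|\Omega|\ge 1/2$ while $\mathcal{A}(\Omega)<2$, so \eqref{porzione} holds trivially with $C=8$. Otherwise $|\Omega|/|B|\in(1,2)$, and concavity of $s\mapsto s^{2/N}$ (for $N\ge 2$) gives the elementary inequality $s^{2/N}-1\le (2/N)(s-1)$ on $[1,2]$, whence
\[
\left(\frac{|\Omega|}{|B|}\right)^{2/N}-1
\le \frac{2}{N}\cdot\frac{|\Omega|-|B|}{|B|}
\le \frac{4}{N}\cdot\frac{|\Omega|-|B|}{|\Omega|},
\]
delivering \eqref{porzione} with an explicit constant $C=C(N)$. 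The argument is essentially routine once Theorem~\ref{thm:fkstability} is in hand; the only mild obstacle is the need to separate the unbalanced regime $|B|\ll|\Omega|$, where the linearization $s^{2/N}-1\lesssim s-1$ is no longer available.
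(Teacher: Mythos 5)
Your proof is correct and follows essentially the same path as the paper: apply Theorem~\ref{thm:fkstability} with the hypothesis $\lambda_1(\Omega)=\lambda_1(B)$ to bound $(|\Omega|/|B|)^{2/N}-1$ below by a multiple of $\mathcal{A}(\Omega)^2$, linearize via concavity of $\tau\mapsto\tau^{2/N}$, and split on whether $|\Omega|\le 2|B|$. One small imprecision in your closing remark: the concavity bound $s^{2/N}-1\le \tfrac{2}{N}(s-1)$ in fact holds for all $s\ge 1$, not only on $[1,2]$; the case split is needed only to convert $\frac{|\Omega|-|B|}{|B|}$ into a multiple of $\frac{|\Omega|-|B|}{|\Omega|}$, which fails when $|B|\ll|\Omega|$.
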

\begin{proof}
Observe that thanks to Theorem \ref{thm:fkstability}, we have
\[
\lambda_1(B)=\lambda_1(\Omega)\ge \frac{|B|^{2/N}}{|\Omega|^{2/N}}\,\lambda_1(B)+\frac{\gamma_{N,2}}{|\Omega|^{2/N}}\,\mathcal{A}(\Omega)^2.
\]
Thus we get
\[
\left(\frac{|\Omega|}{|B|}\right)^\frac{2}{N}-1\ge \frac{\gamma_{N,2}}{|B|^{2/N}\,\lambda_1(B)}\,\mathcal{A}(\Omega)^2.
\]
From the previous inequality, by concavity of the function $\tau\mapsto \tau^{2/N}$ we obtain
\begin{equation}
\label{intermezzo}
c_N'\,\mathcal{A}(\Omega)^2\le \frac{2}{N}\, \frac{|\Omega|-|B|}{|B|}.
\end{equation}
We now distinguish two possibilities: if $|\Omega|\le 2\,|B|$, we have
\[
\frac{|\Omega|-|B|}{|B|}\le 2\, \frac{|\Omega|-|B|}{|\Omega|}.
\]
By inserting this information in the right-hand side of \eqref{intermezzo}, we get \eqref{porzione} as desired.
\par
The case $|\Omega|>2\,|B|$ is even simpler. Indeed, in this case
\[
\frac{|\Omega|-|B|}{|\Omega|}>\frac{1}{2}\ge \frac{1}{8}\,\mathcal{A}(\Omega)^2,
\]
since the asymmetry of a set does not exceed $2$. 
\end{proof}
The key ingredient in the proof by Melas is the following result. It asserts that for non degenerating convex sets with given measure, the values of the first Dirichlet eigenfunction control in a quantitative way the measure of the corresponding sublevel sets. Namely, we have the following.
\begin{lemma}
\label{lm:keyofthekey}
Let $\Lambda>0$, there exist $C=C(N,\Lambda)>0$ and $\beta=\beta(N,\Lambda)>1$ such that for every open convex set $\Omega\subset\mathbb{R}^N$ with
\begin{equation}
\label{nonsqueezing}
|\Omega|=1\qquad \mbox{ and }\qquad \lambda_1(\Omega)\le \Lambda,
\end{equation}
and every $t> 0$ we have
\begin{equation}
\label{controllone}
\frac{1}{C}\,\Big|\{x\in\Omega\, :\, u_1(x)\le t\}\Big|^\beta\le t. 
\end{equation}
Here $u_1$ is the first (positive) Dirichlet eigenfunction of $\Omega$ with unit $L^2$ norm.
\end{lemma}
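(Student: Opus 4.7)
The plan is to establish a uniform pointwise lower bound of the form $u_1(x)\ge c\,d(x,\partial\Omega)^{\alpha}$ throughout $\Omega$, with $c$ and $\alpha$ depending only on $N,\Lambda$, and then to deduce \eqref{controllone} by Chebyshev's inequality. From \eqref{nonsqueezing} and convexity one first extracts uniform geometric control. Using the bound $\lambda_1(\Omega)\ge \pi^2/w(\Omega)^2$ for the minimal width $w$ of a convex set, together with the fact that $|\Omega|=1$ forces a convex set with very small width to be very elongated, one obtains $\mathrm{diam}(\Omega)\le D$ and inradius $r_\Omega\ge r_\ast$, for constants depending only on $N,\Lambda$. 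Moser iteration on $-\Delta u_1=\lambda_1 u_1$ with $\|u_1\|_{L^2}=1$ then gives $\|u_1\|_{L^\infty(\Omega)}\le M(N,\Lambda)$.

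Fix a ball $B_{r_\ast}(x_c)\subset\Omega$. Since $\int u_1^2=1$ and $u_1\le M$, the set $\{u_1\ge 1/(2M)\}$ has measure at least $c_0(N,\Lambda)>0$; and since $\Omega$ has Lipschitz boundary of bounded perimeter, this set must intersect $\Omega_{\eta}=\{d(\cdot,\partial\Omega)\ge\eta\}$ for some uniform $\eta=\eta(N,\Lambda)>0$. A Harnack chain of interior balls---whose radii, number, and containment in $\Omega$ are all controlled via convexity and the uniform constants $r_\ast,D,\eta$---then propagates this interior value to a uniform lower bound $u_1\ge \delta_0(N,\Lambda)$ on $\overline{B_{r_\ast/2}(x_c)}$.

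To transfer the interior bound to the boundary region, let $\phi$ be the bounded harmonic function on $\Omega\setminus\overline{B_{r_\ast/2}(x_c)}$ with $\phi=\delta_0$ on $\partial B_{r_\ast/2}(x_c)$ and $\phi=0$ on $\partial\Omega$. Since $-\Delta(u_1-\phi)=\lambda_1 u_1\ge 0$ in that region and $u_1-\phi\ge 0$ on its boundary, the weak maximum principle gives $u_1\ge \phi$. The supporting-hyperplane property of $\partial\Omega$, together with the uniform geometric control $(r_\ast,D)$, then produces by a standard barrier argument a bound $\phi(x)\ge c_1\,d(x,\partial\Omega)^{\alpha}$ with $\alpha=\alpha(N,\Lambda)\ge 1$ and $c_1=c_1(N,\Lambda)>0$. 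Combined with $u_1\ge\delta_0$ inside $B_{r_\ast/2}(x_c)$, this yields $u_1(x)\ge c_2\, d(x,\partial\Omega)^{\alpha}$ throughout $\Omega$.

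Finally, fix any $\gamma\in (0,1/\alpha)$. Since $\Omega$ is Lipschitz with uniformly bounded diameter and perimeter, $\int_\Omega d(x,\partial\Omega)^{-\alpha\gamma}\,dx\le C(N,\Lambda)$, hence $\int_\Omega u_1^{-\gamma}\,dx\le C'(N,\Lambda)$. Chebyshev's inequality then gives $|\{u_1\le t\}|\le t^{\gamma}\int_\Omega u_1^{-\gamma}\,dx\le C''\,t^{\gamma}$, which rearranges to \eqref{controllone} with $\beta=1/\gamma>1$ and constant $C=(C'')^{\beta}$. The main obstacle will be the uniform boundary-barrier estimate for $\phi$: at smooth boundary points Hopf gives $\alpha=1$, but at the corners of a general convex set the exponent $\alpha$ may need to exceed $1$, and one must verify that $\alpha$ and $c_1$ can be chosen to depend only on $N,\Lambda$ rather than on finer Lipschitz features of $\partial\Omega$.
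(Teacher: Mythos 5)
Your overall route is the same as the paper's (which defers to Melas's original argument): establish the pointwise comparison $u_1(x)\ge c\,\mathrm{dist}(x,\partial\Omega)^{\alpha}$ with $c,\alpha$ depending only on $N,\Lambda$, and then convert that into \eqref{controllone}. The preliminary steps — inradius and diameter control from \eqref{nonsqueezing}, the global $L^\infty$ bound, the positive measure of a superlevel set, the Harnack chain to a fixed inner ball, and the reduction to a harmonic lower barrier $\phi$ via the maximum principle — are all fine (and in the last conversion you can skip Chebyshev entirely: once $u_1\ge c\,d^\alpha$, the set $\{u_1\le t\}$ lies in the collar $\{d\le (t/c)^{1/\alpha}\}$, whose measure is $\le P(\Omega)(t/c)^{1/\alpha}$ by coarea and monotonicity of the perimeter of inner parallel convex sets, giving \eqref{controllone} directly with $\beta=\alpha$).

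The genuine gap is exactly the step you flag as ``the main obstacle,'' and the hint you give for resolving it points in the wrong direction. A supporting hyperplane at $x_0\in\partial\Omega$ places $\Omega$ inside a half-space, and the corresponding half-space barrier yields an \emph{upper} bound $\phi\lesssim d$, not a lower bound; it cannot by itself produce $\phi\ge c_1\,d^\alpha$, precisely because it says nothing about how much of $\Omega$ lies \emph{inside}. What the geometric normalization actually buys is a uniform \emph{interior cone} condition: after centering at $x_c$ one has $B_{r_\ast}(x_c)\subset\Omega\subset B_D(x_c)$, so for every boundary point $x_0$ the truncated cone $K_{x_0}=\mathrm{conv}\bigl(\{x_0\}\cup B_{r_\ast/2}(x_c)\bigr)\subset\Omega$ has half-aperture at least $\theta_\ast:=\arcsin(r_\ast/2D)$, a quantity depending only on $N,\Lambda$. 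The positive harmonic function on the infinite cone of aperture $\theta_\ast$ that vanishes on the lateral boundary is homogeneous of a degree $\alpha=\alpha(N,\theta_\ast)\in(1,\infty)$, and after normalizing it below $\delta_0$ on the spherical cap $\partial B_{r_\ast/2}(x_c)\cap K_{x_0}$, the maximum principle on $K_{x_0}\setminus\overline{B_{r_\ast/2}(x_c)}$ gives $\phi\ge c'\,|x-x_0|^\alpha$ along the cone axis. The remaining subtlety — which your write-up does not address — is that this lower bound is obtained on the \emph{segment} $[x_c,x_0]$, not along the inner normal at $x_0$. This is harmless: every $x\in\Omega\setminus B_{r_\ast/2}(x_c)$ lies on a unique such segment with $x_0\in\partial\Omega$, and the trivial inequality $\mathrm{dist}(x,\partial\Omega)\le|x-x_0|$ converts the axial bound into $u_1(x)\ge\phi(x)\ge c'\,\mathrm{dist}(x,\partial\Omega)^\alpha$, with constants depending only on $N,\Lambda$. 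This is exactly the mechanism the paper alludes to when it remarks that the hypothesis $\lambda_1(\Omega)\le\Lambda$ prevents the opening angle at corners from degenerating, so that $\alpha$ can be taken uniform.
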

We omit the proof of the previous result, the interested reader may find it in \cite[Lemma 3.5]{MR1168980}.
We just mention that \eqref{controllone} follows by proving the comparison estimate
\begin{equation}
\label{dalbassotto}
u_1(x)\ge c\,\mathrm{dist}(x,\partial\Omega)^\beta,\qquad x\in\Omega.
\end{equation}
\begin{remark}[The exponent $\beta$]
By recalling that, on a convex set, the first eigenfunction $u_1$ is always globally Lipschitz continuous, we know that the exponent $\beta$ above can not be smaller than $1$. Moreover, it is quite clear that $\beta$ in \eqref{dalbassotto} heavily depends on the regularity of the boundary $\partial\Omega$. To clarify this point, let us stick for simplicity to the case $N=2$.  If $\partial\Omega$ contains a corner at $x_0\in\partial\Omega$ of opening $\alpha<\pi/2$, classical asymptotic estimates based on comparisons with harmonic homogeneous functions imply that
\[
u_1(x)\simeq \mathrm{dist}(x,\partial\Omega)^\frac{\pi}{2\,\alpha},
\]
around the corner $x_0$. This in particular shows that the smaller the angle $\alpha$ is, the larger the exponent $\beta$ in \eqref{dalbassotto} must be. In particular, without taking any further restriction on the convex sets $\Omega$, it would be impossible to get \eqref{dalbassotto}.
\par
The hypothesis \eqref{nonsqueezing} exactly prevents the convex sets considered to become too narrow and permits to have a control like \eqref{dalbassotto}, with a uniform $\beta$.
\end{remark}
Finally, one also needs the following interesting result, whose proof can be found in \cite[Section 6.1]{MR2439395}. This permits to reduce the proof of Theorem \ref{thm:melasrap} to the case of convex sets satisfying the hypothesis \eqref{nonsqueezing} of the previous result.
A similar statement was contained in the original paper by Melas (this is essentially \cite[Proposition 3.1]{MR1168980}), but the proof in \cite{MR2439395} is quicker and simpler. We reproduce it here, with some minor modifications.
\begin{lemma}
\label{lm:ABC}
Let $\{\Omega_n\}_{n\in\mathbb{N}}\subset\mathbb{R}^N$ be a sequence of open convex sets such that
\[
|\Omega_n|=1\qquad \mbox{ and}\qquad \lim_{n\to\infty} \lambda_1(\Omega_n)=+\infty.
\]
Then we have 
\begin{equation}
\label{auno}
\lim_{n\to\infty} \frac{\lambda_2(\Omega_n)}{\lambda_1(\Omega_n)}=1.
\end{equation}
In particular, for every $\delta>0$ there exists $\Lambda=\Lambda(\delta)>0$ such that
\begin{equation}
\label{fermi!}
\sup\left\{\lambda_1(\Omega)\, : \, \Omega\subset\mathbb{R}^N \mbox{ open convex such that } \frac{\lambda_2(\Omega)}{\lambda_1(\Omega)}\ge (1+\delta)\right\}\le \Lambda.
\end{equation}
\end{lemma}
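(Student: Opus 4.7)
The idea is that a convex sequence of unit-volume sets with $\lambda_1(\Omega_n)\to\infty$ must degenerate into arbitrarily ``thin and elongated'' shapes, and in this regime an explicit trial function for $\lambda_2(\Omega_n)$ exceeds $\lambda_1(\Omega_n)$ only by a lower-order term. The first step is the geometric degeneration: domain monotonicity applied to the inscribed ball gives $\lambda_1(\Omega_n)\le j_{(N-2)/2,1}^2/r_n^2$, so the inradius $r_n\to 0$; the one-dimensional Poincar\'e inequality along lines parallel to a direction $\nu_n$ of minimal width $w_n$ yields the converse bound $\lambda_1(\Omega_n)\ge \pi^2/w_n^2$, hence $w_n\to 0$ too. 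Since $|\Omega_n|=1$, the projection of $\Omega_n$ onto $\nu_n^\perp$ has $(N-1)$-measure at least $1/w_n\to\infty$, so we may select a unit vector $\nu'_n\perp\nu_n$ along which the diameter $L_n$ of $\Omega_n$ satisfies $L_n\gtrsim w_n^{-1/(N-1)}\to\infty$.

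Next, let $u_n$ denote the first Dirichlet eigenfunction of $\Omega_n$ with $\|u_n\|_{L^2}=1$, set $c_n:=\int_{\Omega_n}(\nu'_n\cdot x)u_n^2\,dx$, and consider the competitor
\[
v_n(x):=\bigl(\nu'_n\cdot x-c_n\bigr)u_n(x)\in W^{1,2}_0(\Omega_n).
\]
By construction $\int v_n u_n\,dx=0$, so $v_n$ is admissible in the variational characterization of $\lambda_2(\Omega_n)$. A double integration by parts, using $-\Delta u_n=\lambda_1(\Omega_n)u_n$ and $|\nabla(\nu'_n\cdot x)|\equiv 1$, produces the exact identity
\[
\int_{\Omega_n}|\nabla v_n|^2\,dx=\lambda_1(\Omega_n)\int_{\Omega_n}(\nu'_n\cdot x-c_n)^2 u_n^2\,dx+\int_{\Omega_n}u_n^2\,dx,
\]
whence
\begin{equation}\label{plan:key}
\lambda_2(\Omega_n)\le\lambda_1(\Omega_n)+\frac{1}{V_n},\qquad V_n:=\int_{\Omega_n}(\nu'_n\cdot x-c_n)^2u_n^2\,dx.
\end{equation}
Dividing \eqref{plan:key} by $\lambda_1(\Omega_n)$ reduces \eqref{auno} to proving the variance blow-up $\lambda_1(\Omega_n)\,V_n\to+\infty$.

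The main obstacle is precisely this variance lower bound. Here $V_n$ is the variance, in direction $\nu'_n$, of the probability density $u_n^2$, which on a convex domain is log-concave by the Brascamp--Lieb theorem; its $\nu'_n$-marginal is therefore a log-concave density on the interval $[a_n,b_n]$ of length $L_n$ vanishing at both endpoints. Intuitively this marginal cannot concentrate on a scale below the natural quantum length $w_n\simeq 1/\sqrt{\lambda_1(\Omega_n)}$, and the long extent $L_n$ of $\Omega_n$ along $\nu'_n$ forces $V_n$ to beat $w_n^2$. A concrete strategy is to inscribe --- via John's ellipsoid applied to $\Omega_n$ --- a rectangular tube $T_n\subset\Omega_n$ of thickness $\simeq w_n$ in direction $\nu_n$ and longitudinal extent $\simeq L_n$ in direction $\nu'_n$, and to compare $u_n$ on a macroscopic portion of $T_n$ with the explicit (product) first eigenfunction of $T_n$ via barrier and Harnack arguments; log-concavity then propagates the longitudinal spread of the tube's marginal to the marginal of $u_n^2$, yielding $\lambda_1(\Omega_n)V_n\to+\infty$ and hence \eqref{auno}.

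Finally, \eqref{fermi!} is the contrapositive of \eqref{auno} in scale-invariant form: since $\lambda_2/\lambda_1$ is preserved under rescaling, a hypothetical sequence of open convex sets contradicting \eqref{fermi!} can be normalized to $|\Omega_n|=1$ while retaining $\lambda_1(\Omega_n)\to\infty$ and $\lambda_2(\Omega_n)/\lambda_1(\Omega_n)\ge 1+\delta$, in immediate conflict with \eqref{auno}.
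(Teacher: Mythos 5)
Your approach is genuinely different from the paper's, and the part you do carry out is correct. Using the Payne-type test function $v_n=(\nu'_n\cdot x - c_n)\,u_n$ and integrating by parts twice does give the clean identity
\[
\int_{\Omega_n}|\nabla v_n|^2\,dx = \lambda_1(\Omega_n)\,V_n + 1,\qquad V_n = \int_{\Omega_n}(\nu'_n\cdot x-c_n)^2 u_n^2\,dx,
\]
whence $\lambda_2(\Omega_n)\le \lambda_1(\Omega_n)+1/V_n$. The geometric preliminaries (inradius and minimal width both $\simeq \lambda_1(\Omega_n)^{-1/2}$, and a perpendicular direction with extent $L_n\to\infty$) are also correct. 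So \eqref{auno} reduces correctly to $\lambda_1(\Omega_n)\,V_n\to+\infty$.

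But that reduction is exactly where the argument stops being a proof. You acknowledge the variance lower bound as ``the main obstacle'' and only sketch a strategy. The problem is that log-concavity of $u_n^2$ (Brascamp--Lieb) is by itself far from enough: a log-concave marginal on an interval of length $L_n$ can perfectly well concentrate on a scale as small as the ``quantum'' length $\simeq\lambda_1(\Omega_n)^{-1/2}$ --- for instance an exponential-type profile on a thin triangular domain. Ruling this out needs a \emph{quantitative} localization statement for first Dirichlet eigenfunctions on degenerate convex sets, comparable in depth to the results of Jerison or Grieser--Jerison; the ``John ellipsoid $+$ inscribed tube $+$ barrier/Harnack'' programme you gesture at is essentially a proof of such a theorem, and it is not carried out. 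In its current form, the proposal establishes the implication ``(variance blow-up) $\Rightarrow$ (ratio $\to 1$)'' but not the variance blow-up itself, so there is a genuine gap.

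For contrast, the paper (following \cite{MR2439395}) avoids the issue entirely: a Fubini averaging argument produces a cross-section $\Omega_n\cap\{x_N=t_n\}$ with $\lambda_1(\Omega_n\cap\{x_N=t_n\})\le\lambda_1(\Omega_n)$; by convexity one inscribes in $\Omega_n$ a long cylinder $T_n(h)$ over a scaled copy of this section; since eigenvalues of cylinders split as sums, $\lambda_2(T_n(h))\le \lambda_1(\text{scaled section})+4\pi^2/h^2$, and letting $h\to\infty$ (possible since $\mathrm{diam}(\Omega_n)\to\infty$) kills the correction. This requires neither log-concavity nor any localization estimate for $u_n$. Your route would be more informative if the variance bound could be made precise --- it gives an exact upper bound on the gap $\lambda_2-\lambda_1$ --- but as it stands the paper's argument is both shorter and complete.

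A minor remark on the last step: \eqref{fermi!} only makes sense with the implicit normalization $|\Omega|=1$ (it is how the paper uses it), since $\lambda_1$ is not scale-invariant while $\lambda_2/\lambda_1$ is; your contrapositive deduction is otherwise fine.
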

\begin{proof}
We first observe that \eqref{fermi!} easily follows from the first part of the statement. Thus we just need to prove \eqref{auno}. For every $n\in\mathbb{N}$, we take a pair of points $(a_n,b_n)\in\partial\Omega_n$ such that
\[
|a_n-b_n|=\mathrm{diam}(\Omega_n).
\]
Up to rigid motions, we can suppose that 
\[
a_n=(0,\dots,0,\mathrm{diam}(\Omega_n))\qquad \mbox{ and }\qquad b_n=(0,\dots,0).
\]
Observe that the hypotheses on the sequence $\{\Omega_n\}_{n\in\mathbb{N}}$ implies that
\[
\lim_{n\to+\infty}|a_n-b_n|=\lim_{n\to+\infty}\mathrm{diam}(\Omega_n)=+\infty,
\]
see Remark \ref{rem:ABC}.
We now need to prove that for every $n\in\mathbb{N}$ there exists $0<t_n<\mathrm{diam}(\Omega_n)$ such that
\begin{equation}
\label{sezione}
\lambda_1(\Omega_n)\ge \lambda_1(\Omega_n\cap\{x_N=t_n\}).
\end{equation}
Indeed, let us consider the first (positive) eigenfunction $u_{n}\in W^{1,2}_0(\Omega_n)$ with unit $L^2$ norm. By Fubini Theorem we have
\[
\begin{split}
\lambda_1(\Omega_n)=\int_{\Omega_n} |\nabla u_{n}|^2\,dx&\ge\int_0^{\mathrm{diam}(\Omega_n)} \int_{\Omega_n\cap \{x_N=t\}} |\nabla' u_{n}|^2\,dx'\,dt\\
&\ge \int_0^{\mathrm{diam}(\Omega_n)} \left(\lambda_1(\Omega_n\cap\{x_N=t\})\int_{\Omega_n\cap \{x_N=t\}} |u_{n}|^2\,dx'\right)\,dt,
\end{split}
\]
where we used the notation $x'=(x_1,\dots,x_{N-1})$ and $\nabla'=(\partial_{x_1},\dots,\partial_{x_{N-1}})$. Since we assumed
\[
\int_0^{\mathrm{diam}(\Omega_n)} \int_{\Omega_n\cap \{x_N=t\}} |u_{n}|^2\,dx'\,dt=\int_{\Omega_n} |u_{n}|^2\,dx=1,
\] 
from the previous estimate we get \eqref{sezione}. From the fact that $0<t_n<|a_n|=\mathrm{diam}(\Omega_n)$, we have
\begin{itemize}
\item either
\[
|a_n|-t_n=\mathrm{dist}(a_n,\Omega_n\cap\{x_N=t_n\})\ge \frac{\mathrm{diam}(\Omega_n)}{2};
\]
\item or 
\[
t_n=\mathrm{dist}(b_n,\Omega_n\cap\{x_N=t_n\})\ge \frac{\mathrm{diam}(\Omega_n)}{2}.
\]
\end{itemize}
Without loss of generality we can assume that the first condition is verified, then we consider the cone $\mathcal{C}_n$ given by the convex hull of $\{a_n\}\cup(\Omega_n\cap\{x_N=t_n\})$. By convexity, we have $\mathcal{C}_n\subset\Omega_n$ and for every $0<h<|a_n|-t_n$
\[
T_n(h):=\frac{|a_n|-t_n-h}{|a_n|-t_n}\,(\Omega_n\cup\{x_N=t_n\})\times(t_n,t_n+h)\subset\mathcal{C}_n\subset\Omega_n.
\] 
In other words, $\Omega_n$ contains a cylinder having height $h$ and with basis a scaled copy of the $(N-1)-$dimensional section $\Omega_n\cap\{x_N=t_n\}$, see Figure \ref{fig:A}. 
\begin{figure}
\leftline{\includegraphics[scale=.4]{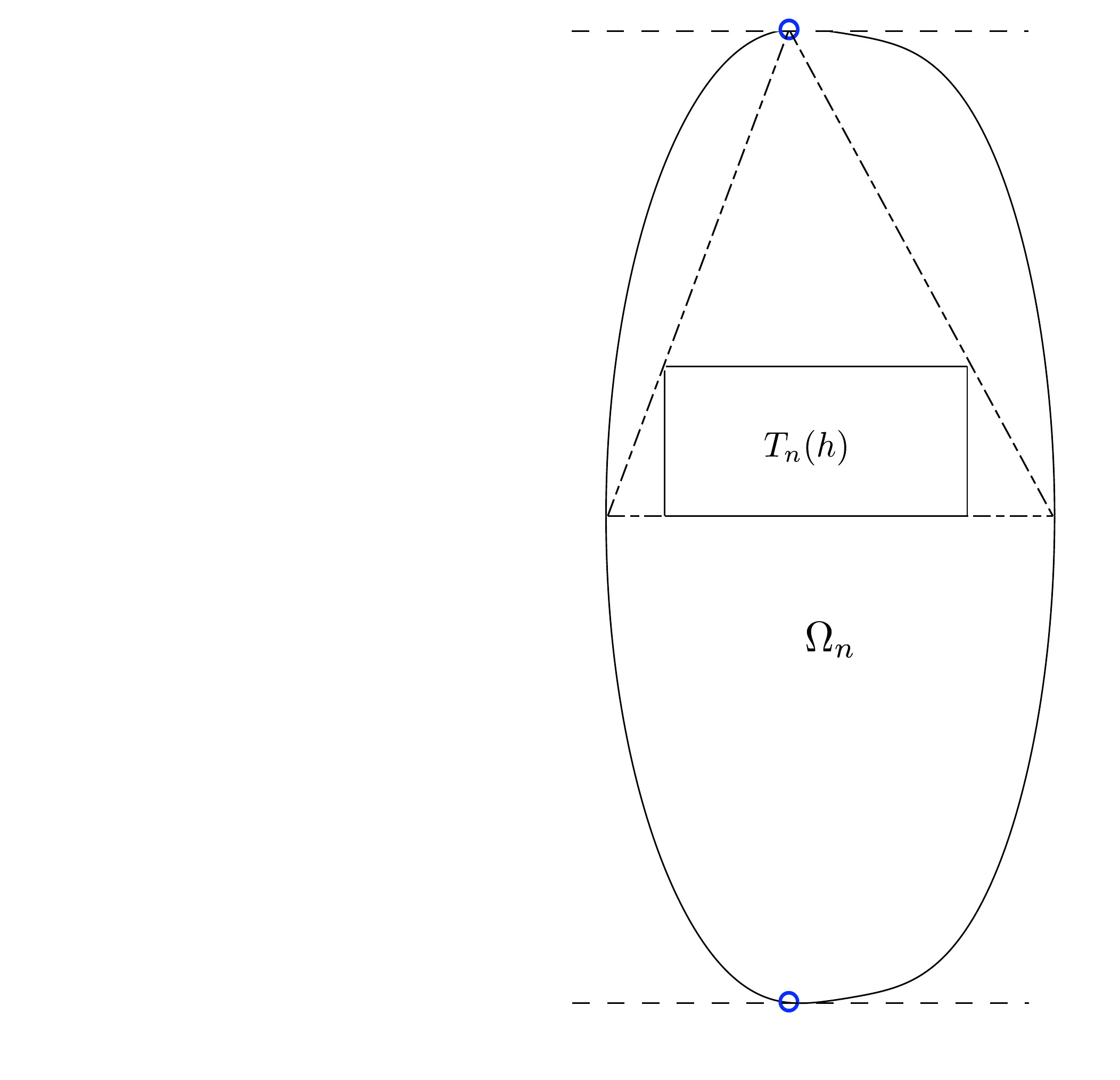}}
\caption{The construction of the cylinder $T_n(h)$ for the proof of Lemma \ref{lm:ABC}.}
\label{fig:A}
\end{figure}
By monotonicity and scaling properties of Dirichlet eigenvalues and \eqref{sezione}, we thus obtain\footnote{We also use that for a cylindric set $\mathcal{O}\times(a,b)$ its Dirichlet eigenfunctions have the form $U(x',x_N)=u(x')\cdot\,v(x_N)$, with $u$ Dirichlet eigenfunction of $\mathcal{O}$ and $v$ Dirichlet eigenfunction of $(a,b)$. The corresponding eigenvalues take the form
\[
\lambda=\lambda'+\frac{n^2\,\pi^2}{(b-a)^2},
\]
where $\lambda'$ is an eigenvalue of $\mathcal{O}$ and $n\in\mathbb{N}$.}
\[
\begin{split}
1\le \frac{\lambda_2(\Omega_n)}{\lambda_1(\Omega_n)}\le \frac{\lambda_2(T_n(h))}{\lambda_1(\Omega_n)}&\le \frac{1}{\lambda_1(\Omega_n)}\,\left[\lambda_1\left(\frac{|a_n|-t_n-h}{|a_n|-t_n}\,(\Omega_n\cup\{x_N=t_n\})\right)+\frac{4\,\pi^2}{h^2}\right]\\
&\le\left[\left(\frac{|a_n|-t_n}{|a_n|-t_n-h}\right)^{2}+\frac{4\,\pi^2}{h^2}\,\frac{1}{\lambda_1(\Omega_n)}\right].
\end{split}
\]
By recalling that $\lambda_1(\Omega_n)$ and $|a_n|-t_n$ are diverging to $+\infty$, we get \eqref{auno} as desired.
\end{proof}
\vskip.2cm\noindent
We now come to the proof of the quantitative Ashbaugh-Benguria inequality.
\begin{proof}[Proof of Theorem \ref{thm:melasrap}]
We first observe that since the functional $\lambda_2/\lambda_1$ is scaling invariant, we can suppose that 
\begin{equation}
\label{1}
|\Omega|=1.
\end{equation}
Moreover, we can always suppose
\begin{equation}
\label{delta}
\lambda_2(\Omega)\ge (1+\delta)\,\lambda_1(\Omega),
\end{equation}
where $\delta$ is the dimensional constant
\[
\delta:=\frac{1}{2}\,\left(\frac{\lambda_2(B)}{\lambda_1(B)}-1\right)>0.
\] 
Indeed, when \eqref{delta} is not verified, then we have
\[
\frac{\lambda_2(B)}{\lambda_1(B)}-\frac{\lambda_2(\Omega)}{\lambda_1(\Omega)}\ge \frac{\lambda_2(B)}{\lambda_1(B)}-(1+\delta)=\frac{1}{2}\,\left(\frac{\lambda_2(B)}{\lambda_1(B)}-1\right)>0,
\]
and the stability estimate is trivially true, with a constant depending on the dimensional constant $\lambda_2(B)/\lambda_1(B)$ only. Finally, thanks to hypothesis \eqref{delta} and Lemma \ref{lm:ABC}, we obtain 
\begin{equation}
\label{nosqueeze}
\lambda_1(\Omega)\le \Lambda,
\end{equation}
with $\Lambda$ depending on $\delta$ only and thus only on the dimension $N$.
\par
We now take the ball $B$ centered at the origin and such that $\lambda_1(\Omega)=\lambda_1(B)$. By \eqref{lambda1palla}, its radius $R$ is given by
\[
R=\frac{j_{N/2-1,1}}{\sqrt{\lambda_1(\Omega)}}.
\]
We set $u_1$ and $z$ for the eigenfunctions corresponding to $\lambda_1(\Omega)$ and $\lambda_1(B)$, normalized by the conditions
\[
\int_\Omega u_1^2\, dx=\int_B z^2\, dx=1.
\] 
We recall that
\[
z(x)=\alpha\,|x|^\frac{2-N}{2}\,J_\frac{N-2}{2}\left(\frac{j_{(N-2)/2,1}}{R}\,|x|\right),
\] 
with the normalization constant $\alpha$ given by
\begin{equation}
\label{costantedim}
\begin{split}
\alpha^2&=\frac{1}{\displaystyle\int_B|x|^{2-N}\,J_{N/2-1}\left(\frac{j_{N/2-1,1}}{R} |x|\right)^2\, dx }\\
&=\frac{1}{R^2\,\displaystyle\int_{\{|y|<1\}} |y|^{2-N}\, J_{N/2-1}\left(j_{N/2-1,1}\, |y|\right)^2\, dy}=:c_N\, R^{-2}. 
\end{split}
\end{equation}
We then compare $\lambda_2(\Omega)$ and $\lambda_2(B)$: since $B$ and $\Omega$ have the same $\lambda_1$, we get
\begin{equation}
\label{lambdi2}
\lambda_2(B)-\lambda_2(\Omega)=\left(\lambda_2(B)-\lambda_1(B)\right)-
\left(\lambda_2(\Omega)-\lambda_1(\Omega)\right).
\end{equation}
We introduce the functions $P_i$ defined as follows 
\[
P_i(x)=g(|x|)\, \frac{x_i}{|x|},\qquad i=1,\dots,N,
\]
with $g$ being the ratio of (the radial parts of) the eigenfunctions corresponding to $\lambda_1(B)$ and $\lambda_2(B)$, that is
\[
g(t)=\frac{J_\frac{N}{2}\left(\frac{j_{N/2,1}}{R}\, t\right)}{J_\frac{N-2}{2}\left(\frac{j_{N/2-1,1}}{R}\, t\right)},
\]
extended as $g(t)\equiv \lim_{s\to R^-} g(s)$ for $t\ge R$. In this way, the functions $P_i$ are defined over $\mathbb{R}^N$. With a suitable choice of the origin of the axes, one can guarantee
(see \cite[Lemma 6.2.2]{MR2251558})
\[
\int_\Omega P_i\, u_1^2\, dx=0,\qquad i=1,\dots,N.
\]
This implies that the functions $P_i\,u_1$ are $L^2-$orthogonal to $u_1$, thus we can use them in the variational problem defining $\lambda_2(\Omega)$. We get
\[
\lambda_2(\Omega)\le \frac{\displaystyle\int_\Omega |\nabla P_i|^2\,u_1^2 dx+\int_\Omega |\nabla u_1|^2\,P_i^2\,dx+2\,\int_\Omega \langle \nabla P_i,\nabla u_1\rangle\,P_i\,u_1\,dx}{\displaystyle\int_\Omega P_i^2\, u_1^2\, dx},
\]
then we observe that by testing the equation $-\Delta u_1=\lambda_1(\Omega)\,u_1$ against $P_i^2\,u_1$, we obtain
\[
\int_\Omega |\nabla u_1|^2\,P_i^2\,dx+2\,\int_\Omega \langle \nabla P_i,\nabla u_1\rangle\,P_i\,u_1\,dx=\lambda_1(\Omega)\,\int_\Omega P_i^2\,u_1^2\,dx.
\] 
This permits to infer
\begin{equation}
\label{peromega}
\lambda_2(\Omega)-\lambda_1(\Omega)\le \frac{\displaystyle\int_\Omega |\nabla P_i|^2\,u_1^2 dx}{\displaystyle\int_\Omega P_i^2\, u_1^2\, dx}.
\end{equation}
The same computations in the ball $B$ give of course equalities everywhere, since in this case $P_i\,u_1$ would coincide with a second Dirichlet eigenfunction of $B$. Thus
\begin{equation}
\label{perpalla}
\frac{\displaystyle\int_B |\nabla P_i|^2\,z^2 dx}{\displaystyle\int_B P_i^2\, z^2\, dx}=\lambda_2(B)-\lambda_1(B).
\end{equation}
We then perform the standard trick of adding these (in)equalities for $i=1,\dots,N$, which let the angular variables disappear, as in the proof of the Szeg\H{o}-Weinberger inequality.
Thus from \eqref{peromega} and \eqref{perpalla}, we obtain
\begin{equation}
\label{lambdi21}
\left(\int_\Omega g^2\, u_1^2\, dx\right)[\lambda_2(\Omega)-\lambda_1(\Omega)]\le \int_\Omega \left[|g'(|x|)|^2+(N-1)\, \left(\frac{g(|x|)}{|x|}\right)^2\right]\, u_1^2\, dx,
\end{equation}
and
\begin{equation}
\label{lambdi22}
\int_B \left[|g'(|x|)|^2+(N-1)\, \left(\frac{g(|x|)}{|x|}\right)^2\right]\, z^2\, dx=
\left(\int_B g^2\, z^2\, dx\right)[\lambda_2(B)-\lambda_1(B)].
\end{equation}
We now use the fact that the function $g$ is monotone non-decreasing on $\mathbb{R}^N$ (see \cite[Theorem 3.3]{MR1166646}), so that by Hardy-Littlewood inequality we obtain\footnote{There is a small subtility here. Indeed, even if $g$ is a radially non-decreasing function, in general we just have 
\[
g_*(r)\ge g(r),\qquad \mbox{ for } 0\le r\le R_\Omega,
\]
and inequality could be strict. This is due to the fact that $g_*$ is constructed by rearranging the super-level sets of $g$ on $\Omega$ and not on the whole $\mathbb{R}^N$ (see \cite[page 606]{MR1166646}).}
\[
\int_\Omega g^2\, u_1^2\, dx\ge \int_{\Omega^*} g_*^2\, (u^*_1)^2\, dx\ge \int_{\Omega^*} g^2\,(u^*_1)^2\,dx,
\]
where $g_*$ denotes the spherically {\it increasing} rearrangement. As always, $\Omega^*$ is the ball centered at the origin such that $|\Omega^*|=|\Omega|=1$. We denote its radius by $R_\Omega$.
\par
Another essential ingredient in the proof by Ashbaugh and Benguria is a {\it comparison result} between $z$ and the spherical rearrangement $u_1^*$ of $u_1$. This is due to Chiti, who proved (see \cite{MR0652376}) that there exists a radius $r_1\in(0,R)$ such that
\begin{equation}
\label{chiti}
\left\{\begin{array}{cc} 
u^*_1(x)\le z(x),& \mbox{ for } 0\le|x|\le r_1,\\ 
&\\
u^*_1(x)\ge z(x),& \mbox{ for } r_1\le|x|\le R.  
\end{array}
\right.
\end{equation}
This in turn implies that
\[
\int_{\Omega^*} g^2\, (u^*_1)^2\, dx\ge \int_B g^2\, z^2\, dx. 
\]
More precisely, by using \eqref{chiti}, the monotonicity of $g$ and polar coordinates we have
\[
\begin{split}
\int_{\Omega^*} g^2\, [(u^*_1)^2-z^2]\, dx&=N\,\omega_N\, \int_0^{r_1} g^2\, [(u^*_1)^2-z^2]\, \varrho^{N-1}\, d\varrho\\
&+N\,\omega_N\, \int_{r_1}^{R} g^2\, [(u^*_1)^2-z^2]\, \varrho^{N-1}\, d\varrho\\
&+N\,\omega_N\,\int_R^{R_\Omega} g^2\, (u^*_1)^2\, \varrho^{N-1}\, d\varrho\\
&\ge g(r_1)^2\, \int_{B} [(u_1^*)^2-z^2]\, dx +g(R)^2\,\int_{\Omega^*\setminus B} (u_1^*)^2\, dx\\
&= \left(g(R)^2-g(r_1)^2\right) \int_{\Omega^*\setminus B} u^*_1(x)^2\, dx.
\end{split}
\]
In the last equality we used that
\[
\int_B z^2\,dx=\int_{\Omega^*} (u^*_1)^2\,dx=1.
\]
We now observe that, using the definition both of $z$ and $g$, we get
\[
\int_B g^2\, z^2\, dx=\alpha^2\,R^{2}\,\int_{\{|y|<1\}} J_\frac{N}{2}(j_{N/2,1}\,|y|)^2\, dy:= C_N,
\]
thanks to \eqref{costantedim}. 
In this way, we have shown
\[
\int_{\Omega^*} g^2\, |u^*_1|^2\, dx\ge C_N +\left(g(R)^2-g(r_1)^2\right) \int_{\Omega^*\setminus B} u^*_1(x)^2\, dx. 
\]
In the end, by using \eqref{lambdi2}, \eqref{lambdi21} and \eqref{lambdi22} we have obtained that
\[
\begin{split}
C_N\,\Big(\lambda_2(B)-\lambda_2(\Omega)\Big)&\ge\left(\int_B g^2\, z^2\, dx\right)\,\Big(\lambda_2(B)-\lambda_1(B)\Big)-\left(\int_\Omega g^2\, |u_1^*|^2\, dx\right)\,\Big(\lambda_2(\Omega)-\lambda_1(\Omega)\Big)\\
&+\left(g(R)^2-g(r_1)^2\right)\,\left(\int_{\Omega^*\setminus B} u^*_1(x)^2\, dx\right)\,\Big(\lambda_2(\Omega)-\lambda_1(\Omega)\Big)\\
&\ge \displaystyle\int_B \left[|g'|^2+(N-1)\, \left(\frac{g}{|x|}\right)^2\right]\,z^2\, dx-\displaystyle\int_\Omega \left[|g'|^2+(N-1)\, \left(\frac{g}{|x|}\right)^2\right]\,u_1^2\, dx\\
&+\delta\,\left(g(R)^2-g(r_1)^2\right)\,\left(\int_{\Omega^*\setminus B} (u^*_1)^2\, dx\right)\,\lambda_1(\Omega),
\end{split}
\]
where we also used hypothesis \eqref{delta} for the last term.
It is now time to use the monotonicity properties of the function
\[
G(t)=g'(t)^2+(N-1)\left(\frac{g(t)}{t}\right)^2,
\]
which is monotone non-increasing (see \cite[Corollary 3.4]{MR1166646}), so that again by Hardy-Littlewood inequality
\[
\int_\Omega G\, u_1^2\, dx\le \int_{\Omega^*} G^*\, (u_1^*)^2\, dx\le \int_{\Omega^*} G\,(u_1^*)^2\,dx.
\]
and we thus have
\[
\begin{split}
C_N\Big(\lambda_2(B)-\lambda_2(\Omega)\Big)&\ge \int_B G\,z^2\, dx-\int_{\Omega^*} G\, (u^*_1)^2\, dx+\delta\,\left(g(R)^2-g(r_1)^2\right)\,\left(\int_{\Omega^*\setminus B} (u^*_1)^2\, dx\right)\lambda_1(\Omega).
\end{split}
\]
Proceeding as before, by using \eqref{chiti}, the monotonicity of $G$ and indicating as always with $R_\Omega$ the radius of $\Omega^*$, we obtain (we omit the details)
\[
\begin{split}
\int_B G\, z^2\, dx-\int_{\Omega^*} G\, (u^*_1)^2\, dx\ge 0. 
\end{split}
\]
What we have obtained so far is the following
\[
\lambda_2(B)-\lambda_2(\Omega)\ge \frac{\delta}{C_N}\, \left(g(R)^2-g(r_1)^2\right)\,\left(\int_{\Omega^*\setminus B} |u^*_1|^2 \, dx\right)\,\lambda_1(\Omega).
\]
Dividing by $\lambda_1(\Omega)=\lambda_1(B)$, the previous can be rewritten as
\begin{equation}
\label{stabile?}
\frac{\lambda_2(B)}{\lambda_1(B)}-\frac{\lambda_2(\Omega)}{\lambda_1(\Omega)}\ge c\,\left(g(R)^2-g(r_1)^2\right)\,\int_{\Omega^*\setminus B} |u^*_1|^2 \, dx,
\end{equation}
where $c>0$ depends on $N$ only.
We now choose $\widetilde R=(R+R_\Omega)/2$, the monotone behaviour of $u^*_1$ permits to infer\footnote{The paper \cite{MR1168980} contains a misprint in this part of the proof. Indeed, it is claimed that (in our notation)
\[
\int_{\Omega^*\setminus B} u^*_1(x)^2 \, dx\ge u^*_1(R)^2\,|\Omega^*\setminus B|,
\]
which is of course not true, since $u_1^*$ is non-increasing.}
\[
\begin{split}
\int_{\Omega^*\setminus B} |u^*_1|^2 \, dx&\ge \int_{B_{\widetilde R}\setminus B} |u^*_1|^2\, dx\ge |B_{\widetilde R}\setminus B|\, u_1^*(\widetilde R)^2\\
&= \frac{\omega_N}{2^N}\,\left[(R_\Omega+R)^N-(2\,R)^N\right] u_1^*(\widetilde R)^2\\
&\ge \frac{\omega_N}{2^N}\,\Big(R_\Omega^N-R^N\Big)\,u_1^*(\widetilde R)^2= \frac{1}{2^N}\, |\Omega^*\setminus B|\,u_1^*(\widetilde R)^2.
\end{split}
\]
In the second inequality above we used the elementary fact
\begin{equation}
\label{increment}
(a+h)^N-(b+h)^N\ge a^N-b^N,\qquad a\ge b\ge 0,\ h\ge 0,
\end{equation}
which follows from convexity.
Thus from \eqref{stabile?} we obtain
\[
\begin{split}
\frac{\lambda_2(B)}{\lambda_1(B)}-\frac{\lambda_2(\Omega)}{\lambda_1(\Omega)}&\ge c\,\left(g(R)^2-g(r_1)^2\right)\,|\Omega^*\setminus B|\, u^*_1(\widetilde R)^2.
\end{split}
\]
We now use further properties of $g$: indeed, in addition to being increasing on $[0,R]$, this is also concave on the same interval, with $g''<0$ on $(0,R]$ (see \cite[proof of Theorem 3.3]{{MR1166646}}). Thus we get  
\[
g(R)^2-g(r_1)^2\ge g(R)\,(g(R)-g(r_1))\ge C\,\int_{r_1}^R (-g''(\tau))\,(\tau-r_1)\,d\tau,
\]
by Taylor formula and the fact that $C=g(R)>0$ is a constant depending on $N$ only. This in particular implies
\[
g(R)^2-g(r_1)^2\ge c\,(R-r_1)^2,
\]
for a possibly different constant $c>0$. We join this with the fact that $z$ is a Lipschitz functions. Thus for some $c'>0$ depending on $N$ only we have 
\[
(R-r_1)^2\ge c'\,\Big(z(r_1)-z(R)\Big)^2=c'\,z(r_1)^2=c'\,u^*_1(r_1)^2\ge c'\,u^*_1(\widetilde R)^2.
\]
By resuming, we finally obtain
\begin{equation}
\label{3.25}
\frac{\lambda_2(B)}{\lambda_1(B)}-\frac{\lambda_2(\Omega)}{\lambda_1(\Omega)}\ge c\,|\Omega^*\setminus B|\, u^*_1(\widetilde R)^4,
\end{equation}
for some $c>0$, still depending on $N$ only. In order to conclude, we now use the key Lemma \ref{lm:keyofthekey}. Indeed, by recalling \eqref{1} and \eqref{nosqueeze}, we can infer from \eqref{controllone}
\[
\frac{1}{C}\,\Big|\{x\in\Omega\, :\, u_1(x)\le u_1^*(\widetilde R)\}\Big|^\beta\le u_1^*(\widetilde R).
\]
Moreover, by definition of $u_1^*$ we have
\[
\begin{split}
\Big|\{x\in\Omega\, :\, u_1(x)\le u_1^*(\widetilde R)\}\Big|=|\Omega^*\setminus B_{\widetilde{R}}|&=\frac{\omega_N}{2^N}\,\left((2\,R_\Omega)^N-(R+R_\Omega)^N\right) \\
&\ge \frac{\omega_N}{2^N}\,\Big(R_\Omega^N-R^N\Big)=\frac{1}{2^N}\, |\Omega^*\setminus B|,
\end{split}
\]
again thanks to \eqref{increment}. By using the last two estimates in \eqref{3.25}, we get
\[
\frac{\lambda_2(B)}{\lambda_1(B)}-\frac{\lambda_2(\Omega)}{\lambda_1(\Omega)}\ge c\,|\Omega^*\setminus B|^{4\,\beta+1},
\]
for some constant $c=c(N)>0$. By recalling that $|\Omega^*\setminus B|=|\Omega|-|B|=1-|B|$ and that $\lambda_1(B)=\lambda_1(\Omega)$, we can use Lemma \ref{lm:porzione} and finally get the conclusion by \eqref{porzione}.
\end{proof}
\begin{remark}
\label{oss:esponente_melas}
An inspection of the proof reveals that the exponent $m$ appearing in \eqref{nam} is given by
\[
m=2\,(4\,\beta+1)>10,
\]
with $\beta$ being the exponent coming from Lemma \ref{lm:keyofthekey}.
\end{remark}

\begin{openpb}[Sharp quantitative Ashbaugh-Benguria inequality]
Prove that there exists a dimensional constant $c_N>0$ such that for every open bounded convex set $\Omega\subset\mathbb{R}^N$ 
\[
\frac{\lambda_2(B)}{\lambda_1(B)}-\frac{\lambda_2(\Omega)}{\lambda_1(\Omega)}\ge c_N\, \mathcal{A}(\Omega)^2.
\]
The same family of sets in Theorem \ref{thm:brapradepruf} should give that the exponent $2$ is the best possible (recall that $\lambda_2$ is multiple for a ball and thus not differentiable).
\end{openpb}

\subsection{Neumann VS. Dirichlet}
It is immediate to see that by joining Faber-Krahn and Szeg\H{o}-Weinberger inequalities, one gets the universal inequality
\begin{equation}
\label{against}
\frac{\mu_2(\Omega)}{\lambda_1(\Omega)}\le \frac{\mu_2(B)}{\lambda_1(B)}.
\end{equation}
Equality in \eqref{against} holds only for balls. By observing that\footnote{This can be obtained by direct computation. However, it is also possible to prove directly that $\theta_N<1$, without computing its explicit value. This is indeed a consequence of the sharp estimate for convex sets
\[
\mu_2(\Omega)<\lambda_1(B)\,\left(\frac{\mathrm{diam\,}(B)}{\mathrm{diam\,}(\Omega)}\right)^2,
\]
which holds with strict inequality sign, see \cite[Theorem 3.1 \& Corollary 3.2]{BraNitTro}.}
\begin{equation}
\label{thetaN}
\theta_N:=\frac{\mu_2(B)}{\lambda_1(B)}=\left(\frac{\beta_{N/2,1}}{j_{N/2-1,1}}\right)^2<1,
\end{equation}
one can obtain
\[
\mu_2(\Omega)<\lambda_1(\Omega).
\]
We have the following quantitative improvement of \eqref{against}. 
\begin{corollary}
For every $\Omega\subset\mathbb{R}^N$ open set with finite measure, we have
\begin{equation}
\label{mu2la1}
\frac{\mu_2(B)}{\lambda_1(B)}-\frac{\mu_2(\Omega)}{\lambda_1(\Omega)} \geq \kappa_N\,\mathcal{A}(\Omega)^2,
\end{equation}
where $\kappa_N>0$ is an explicit constant depending on $N$ only.
\end{corollary}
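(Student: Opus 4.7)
The plan is to derive \eqref{mu2la1} by combining the sharp quantitative Faber--Krahn inequality of Theorem \ref{thm:fkstability} (with $q=2$) and the sharp quantitative Szeg\H{o}--Weinberger inequality of Theorem \ref{th:quanto_sw}, via the algebraic identity for the difference of two ratios. Since both members of \eqref{mu2la1} are scaling invariant, the first step will be to reduce to the normalization $|\Omega|=|B|$, after which $|B|^{2/N}$, $\lambda_1(B)$ and $\mu_2(B)$ become universal constants depending on $N$ only.

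Next, I would split according to the size of $\lambda_1(\Omega)$ with respect to $\lambda_1(B)$. In the \emph{trivial regime} $\lambda_1(\Omega)\ge 2\lambda_1(B)$, the Szeg\H{o}--Weinberger inequality \eqref{segowine} already yields $\mu_2(\Omega)\le \mu_2(B)$, so
\[
\frac{\mu_2(B)}{\lambda_1(B)}-\frac{\mu_2(\Omega)}{\lambda_1(\Omega)}\ge \frac{\mu_2(B)}{\lambda_1(B)}-\frac{\mu_2(B)}{2\,\lambda_1(B)}=\frac{\mu_2(B)}{2\,\lambda_1(B)}\ge \frac{\mu_2(B)}{8\,\lambda_1(B)}\,\mathcal{A}(\Omega)^2,
\]
using $\mathcal{A}(\Omega)\le 2$. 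This handles the case in which $\lambda_1(\Omega)$ may be unbounded, and provides a dimensional constant.

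In the \emph{main regime} $\lambda_1(\Omega)<2\lambda_1(B)$, I would use the identity
\[
\frac{\mu_2(B)}{\lambda_1(B)}-\frac{\mu_2(\Omega)}{\lambda_1(\Omega)}=\frac{\mu_2(B)\,[\lambda_1(\Omega)-\lambda_1(B)]+\lambda_1(B)\,[\mu_2(B)-\mu_2(\Omega)]}{\lambda_1(B)\,\lambda_1(\Omega)}.
\]
Applying Theorem \ref{thm:fkstability} to the first bracket and Theorem \ref{th:quanto_sw} to the second, both under the normalization $|\Omega|=|B|$, the numerator is bounded from below by
\[
\Big(\mu_2(B)\,\gamma_{N,2}+\lambda_1(B)\,\rho_N\Big)\,|B|^{-2/N}\,\mathcal{A}(\Omega)^2,
\]
while the denominator is at most $2\,\lambda_1(B)^2$, a quantity depending only on $N$. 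Dividing yields \eqref{mu2la1} with an explicit constant
\[
\kappa_N=\min\left\{\frac{\mu_2(B)}{8\,\lambda_1(B)},\ \frac{\mu_2(B)\,\gamma_{N,2}+\lambda_1(B)\,\rho_N}{2\,\lambda_1(B)^2\,|B|^{2/N}}\right\},
\]
depending only on the dimension $N$, the dimensional constants $\gamma_{N,2}$ (Theorem \ref{thm:fkstability}) and $\rho_N$ (Remark \ref{oss:costanteSW}).

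There is essentially no obstacle in this argument: the only mildly delicate point is the a priori unboundedness of $\lambda_1(\Omega)$ in the denominator, which is exactly why a case split is needed. Once one isolates the regime in which $\lambda_1(\Omega)$ is comparable to $\lambda_1(B)$, both quantitative ingredients plug in directly.
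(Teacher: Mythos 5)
Your proof is correct and follows essentially the same route as the paper's: the same additive decomposition (your identity for the numerator is algebraically identical to the paper's split of $\mu_2(B)/\lambda_1(B)-\mu_2(\Omega)/\lambda_1(\Omega)$ into two differences), the same case dichotomy on whether $\lambda_1(\Omega)$ exceeds $2\lambda_1(B)$, and the same use of the quantitative Szeg\H{o}--Weinberger inequality. The only cosmetic difference is that in the main regime you keep the quantitative Faber--Krahn contribution as well, whereas the paper discards that term using only the non-quantitative Faber--Krahn inequality to ensure nonnegativity; this marginally improves the constant but does not change the structure of the argument.
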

\begin{proof}
Let $B$ be a ball such that $|B|=|\Omega|$, then we write
\begin{equation}
\label{becille}
\begin{split}
\frac{\mu_2(B)}{\lambda_1(B)}-\frac{\mu_2(\Omega)}{\lambda_1(\Omega)}=\left(\frac{\mu_2(B)}{\lambda_1(B)}-\frac{\mu_2(B)}{\lambda_1(\Omega)}\right)+\left(\frac{\mu_2(B)}{\lambda_1(\Omega)}-\frac{\mu_2(\Omega)}{\lambda_1(\Omega)}\right).
\end{split}
\end{equation}
If we suppose that $\lambda_1(\Omega)>2\,\lambda_1(B)$, by using Szeg\H{o}-Weinberger for the second term in the right-hand side of \eqref{becille} we get
\[
\frac{\mu_2(B)}{\lambda_1(B)}-\frac{\mu_2(\Omega)}{\lambda_1(\Omega)}\ge \mu_2(B)\,\left(\frac{1}{\lambda_1(B)}-\frac{1}{\lambda_1(\Omega)}\right)\ge \frac{\mu_2(B)}{2\,\lambda_1(B)}\ge \frac{\mu_2(B)}{8\,\lambda_1(B)}\,\mathcal{A}(\Omega)^2.
\]
As always, we used that $\mathcal{A}(\Omega)<2$. Thus the conclusion follows in this case. 
\par
If on the contrary $\lambda_1(\Omega)\le 2\,\lambda_1(B)$, then by using the Faber-Krahn inequality for the first term in the right-hand side of \eqref{becille}, we obtain
\[
\frac{\mu_2(B)}{\lambda_1(B)}-\frac{\mu_2(\Omega)}{\lambda_1(\Omega)}\ge \left(\frac{\mu_2(B)}{\lambda_1(\Omega)}-\frac{\mu_2(\Omega)}{\lambda_1(\Omega)}\right)\ge \frac{1}{2\,\lambda_1(B)}\,\Big(\mu_2(B)-\mu_2(\Omega)\Big).
\]
It is now sufficient to use Theorem \ref{th:quanto_sw} to conclude. 
\end{proof}

\begin{remark}
A feasible value for the constant $\kappa_N$ appearing in \eqref{mu2la1} is 
\[
\kappa_N=\frac{1}{2}\,\min\left\{\frac{\theta_N}{4},\, \frac{\rho_N}{|B|^{2/N}\,\lambda_1(B)}\right\}.
\]
Here $\theta_N$ is defined in \eqref{thetaN} and $\rho_N$ is the same constant appearing in \eqref{quantSW}.
\end{remark}

\section{Notes and comments}
\label{sec:7}

\subsection{Other references}
\label{sec:71}
We wish to mention that one of the first paper to introduce quantitative elements in the P\'olya-Szeg\H{o} principle \eqref{PS} was \cite{MR0202053} by Makai. There the scope was to add some remainder term in order to infer uniqueness of balls as extremals for the Saint-Venant inequality \eqref{sv}. More recently, sophisticated quantitative improvements of the P\'olya-Szeg\H{o} principle have been proven in \cite{MR3250365} and \cite{MR2376285}. 
\par
The first papers to prove quantitative improvements of the Faber-Krahn inequality for general open sets with respect to the Fraenkel asymmetry have been \cite{MR1480023} by Sznitman (for $N=2$) and \cite{MR1701519} by Povel (for $N\ge 3$). It is interesting to notice that both papers prove such a kind of estimates for probabilistic purposes. In \cite{MR1480023} these estimates are employed to study the asimptotic behaviour of the first eigenvalue of $-\Delta+V_\omega$ of a square $(0,\ell)\times(0,\ell)$ for large $\ell$. Here $V_\omega$ is a soft repulsive random potential. In \cite{MR1701519} a quantitative Faber-Krahn inequality is used to estimate the extinction time of a Brownian motion in presence of (random) absorbing obstacles.
\par
Among the contributions to the subject, it is mandatory to mention the papers \cite{MR1836803} by Bhattacharya and \cite{MR2512200} by Fusco, Maggi and Pratelli. Both papers consider the more general case of the $p-$Laplacian operator $\Delta_p$, defined by $\Delta_p u=\mathrm{div}(|\nabla u|^{p-2}\,\nabla u)$. More precisely, they consider the quantities
\[
\min_{u\in W^{1,p}_0(\Omega)}\left\{\int_\Omega |\nabla u|^p\,dx\, :\, \|u\|_{L^q(\Omega)}=1\right\},\qquad 1<q<p^*=\left\{\begin{array}{rl}
\displaystyle\frac{N\,p}{N-p},& \mbox{ if } 1<p<N,\\
+\infty, & \mbox{ if }p\ge N.
\end{array}\right.
\]
It is not difficult to generalize the Faber-Krahn inequalities \eqref{FKgen} to these quantities, again thanks to the P\'olya-Szeg\H{o} principle.
Then in \cite{MR1836803, MR2512200} some (non sharp) quantitative versions of these Faber-Krahn inequalities are proved, similar to Theorem \ref{teo:HN}.
\par 
Finally, we wish to cite the recent paper \cite{1512.00993} by Mazzoleni and Zucco. There it is shown that quantitative versions of the Faber-Krahn and Hong-Krahn-Szego inequalities can be used to show topological properties of minimizers for a particular spectral optimization problem (see \cite[Theorem 1.2]{1512.00993}). Namley, the minimization of the convex combination $t\,\lambda_1+(1-t)\,\lambda_2$, with volume constraint. 

\subsection{Nodal domains and Pleijel's Theorem}
Let $\Omega\subset\mathbb{R}^2$ be an open bounded set. For every $k\in\mathbb{N}$ let us note by $\mathcal{N}(k)$ the number of nodal domains of the Dirichlet eigenfunction $\varphi_k$ corresponding to $\lambda_k(\Omega)$. A classical result by Pleijel (see \cite{MR0080861}) asserts that 
\begin{equation}
\label{pleijel}
\limsup_{k\to\infty} \frac{\mathcal{N}(k)}{k}\le \left(\frac{2}{j_0}\right)^2.
\end{equation}
By observing that $2/j_0<1$, this results in particular asymptotically improves the classical {\it Courant nodal Theorem} (see \cite[Theorem 1.3.2]{MR2251558}), which asserts that $\mathcal{N}(k)\le k$.
The proof of \eqref{pleijel} can be obtained by combining the Faber-Krahn inequality on every nodal domain $\Omega_i$
\[
|\Omega|\,\lambda_k(\Omega)=\sum_{i=1}^{\mathcal{N}(k)} |\Omega_i|\,\lambda_1(\Omega_i)\ge \mathcal{N}(k)\,\pi\,j_0^2,
\] 
 and the classical {\it Weyl law}, which describes the asymptotic distribution of eigenvalues, i.e. 
\[
 \lim_{t\to\infty} \frac{\#\{\lambda \mbox{ eigenvalue} \, :\, \lambda\le t\}}{t}=\frac{|\Omega|}{4\,\pi}.
\]
As observed by Bourgain in \cite{MR3340367}, the estimate \eqref{pleijel} can be (slightly) improved by using the quantitative Faber-Krahn inequality and the packing density of balls in the Euclidean space. More precisely, the result of \cite{MR3340367} gives an explicit improvement in dimension $N=2$ by appealing to the Hansen-Nadirashvili result of Theorem \ref{teo:HNoriginal}, which comes indeed with an explicit constant. On the same problem, we also mention the paper \cite{MR3272823} by Steinerberger.
\subsection{Quantitative estimates in space forms}
In this manuscript we only discussed the Euclidean case. We briefly mention that some partial results are known for some special classes of manifolds (essentially the so-called {\it space forms}). 
\par
For example, the paper \cite{MR1315661} by Xu proves a stability result for the Szeg\H{o}-Weinberger inequality for smooth (geodesically) convex domains contained in a nonpositively curved space form (i.e. the hyperbolic space $\mathbb{H}^N$ or $\mathbb{R}^N$). More precisely, \cite[Theorem 4]{MR1315661} proves a {\it pinching result} which shows that if the spectral deficit $\mu_2(B)-\mu_2(\Omega)$ converges to $0$, then the Hausdorff distance from the set of geodesic balls with given volume goes to $0$.
\par
The paper \cite{MR1888036} by Avila proves a quantitative Faber-Krahn inequality for smooth (geodesically) convex domains on the hemisphere $\mathbb{S}^2_+$ (\cite[Theorem 0.1]{MR1888036}) or on the hyperbolic space $\mathbb{H}^2$ (see \cite[Theorem 0.2]{MR1888036}). These can be seen as the natural counterparts of Melas' result Theorem \ref{teo:melas}, indeed stability is measured with a suitable variant of his asymmetry $d_\mathcal{M}$. In \cite[Theorem 0.3]{MR1888036} the aforementioned Xu's result is extended to a sufficiently narrow polar cap contained in $\mathbb{S}_+^N$.
\par
In \cite{MR2439395} Aubry, Bertrand and Colbois prove pinching results for the Faber-Krahn and Ashbaugh-Benguria inequalities for convex sets in $\mathbb{S}^N$, $\mathbb{R}^N$ and $\mathbb{H}^N$. These results show that if the relevant spectral deficit is small, then the set is close to a ball in the Hausdorff metric. As for the hyperbolic space $\mathbb{H}^N$, it should be noticed that the inequality
\[
\frac{\lambda_2(\Omega)}{\lambda_1(\Omega)}\le \frac{\lambda_2(B)}{\lambda_1(B)},
\]  
{\it does not hold true} and that the correct replacement of the Ashbaugh-Benguria inequality is 
\begin{equation}
\label{babascione}
\lambda_2(\Omega)\le \lambda_2(B),\qquad \mbox{ if } \lambda_1(\Omega)=\lambda_1(B),
\end{equation}
where $B$ denotes a geodesic ball (see \cite[Theorem 1.1]{MR2359820}). Then for $\mathbb{H}^N$ the pinching result of \cite[Theorem 1.5]{MR1888036} exactly concerns inequality \eqref{babascione}.
\vskip.2cm

\appendix

\section{The Kohler-Jobin inequality and the Faber-Krahn hierarchy}
\label{sec:KJ}

Let $q>1$ be an exponent satisfying \eqref{q}, for every $\Omega\subset\mathbb{R}^N$ open set with finite measure we still denote $\lambda^q_1(\Omega)$ its first semilinear Dirichlet eigenvalue, see definition \eqref{autolavoro}. The {\it Kohler-Jobin inequality} states that 
\begin{equation}
\label{KJ}
T(\Omega)^{\vartheta(q,N)} \lambda_1^q(\Omega)\ge T(B)^{\vartheta(q,N)}\, \lambda_1^q(B),\qquad \mbox{ where }\ \vartheta(q,N)=\frac{2+\displaystyle\frac{2}{q}\,N-N}{N+2}<1.
\end{equation}
The original statement by Marie-Th\'er\`ese Kohler-Jobin is for the first eigenvalue of the Laplacian, i.e. for $q=2$ (see \cite[Th\'eor\`eme 1]{MR0511908}). This can be equivalently reformulated by saying that balls are the only solutions to the following problem
\[
\min\{\lambda_1(\Omega)\, :\, T(\Omega)=c\}.
\]
We refer to \cite[Theorem 3]{MR0641547} and \cite[Theorem 1.1]{MR3264206} for the general version \eqref{KJ} of Kohler-Jobin inequality.
\par
An important consequence of Kohler-Jobin inequality is that the whole family of Faber-Krahn inequalities \eqref{FKgen} for the first semilinear eigenvalue $\lambda^q_1$
can be derived by combining the Saint-Venant inequality \eqref{sv} and \eqref{KJ}. Indeed, we have
\[
\begin{split}
|\Omega|^{\frac{2}{N}+\frac{2}{q}-1}\, \lambda_1^q(\Omega)&=\left(|\Omega|^{\frac{2}{N}+\frac{2}{q}-1}\,T(\Omega)^{-\vartheta(q,N)}\right)\, \left(T(\Omega)^{\vartheta(q,N)}\,\lambda_1^q(\Omega)\right)\\
&=\left(|\Omega|^{-\frac{N+2}{N}}\,T(\Omega)\right)^{-\vartheta(q,N)}\, \left(T(\Omega)^{\vartheta(q,N)}\,\lambda^q_1(\Omega)\right)\\
&\ge \left(|B|^{-\frac{N+2}{N}}\,T(B)\right)^{-\vartheta(q,N)}\, \left(T(B)^{\vartheta(q,N)}\,\lambda_1^q(B)\right)=|B|^{\frac{2}{N}+\frac{2}{q}-1}\, \lambda_1^q(B).
\end{split}
\]
More interestingly, we can translate every quantitative improvement of the Saint-Venant inequality into a similar statement for $\lambda_1^q$. Namely, we have the following expedient result.
\begin{proposition}[Faber-Krahn hierarchy]
\label{prop:gerarchia}
Let $q\ge 1$ be an exponent verifying \eqref{q}. 
Suppose that there exists $C>0$ and
\begin{itemize} 
\item $\mathcal{G}:[0,\infty)\to [0,\infty)$ a continuous increasing function vanishing at the origin only,
\vskip.2cm
\item $\Omega\mapsto d(\Omega)$  a scaling invariant shape functional vanishing on balls and bounded by some constant $M>0$,
\end{itemize}
such that 
for every open set $\Omega\subset\mathbb{R}^N$ with finite measure we have
\begin{equation}
\label{goal}
|B|^{-\frac{N+2}{N}}\,T(B)-|\Omega|^{-\frac{N+2}{N}}\,T(\Omega) \ge \frac{1}{C}\,\mathcal{G}(d(\Omega)).
\end{equation}
Then we also have
\[
|\Omega|^{\frac{2}{N}+\frac{2}{q}-1}\,\lambda_1^q(\Omega)-|B|^{\frac{2}{N}+\frac{2}{q}-1}\, \lambda_1^q(B)\ge c\, \mathcal{G}(d(\Omega)).
\]
The constant $ c>0$ depends on $C,\,N,\,q$ and $\mathcal{G}(M)$ only and is given by
\[
c=(2^{\vartheta}-1)\,|B|^{\frac{2}{N}+\frac{2}{q}-1}\,\lambda_1^q(B)\,\min\left\{\frac{1}{C}\,\frac{|B|^\frac{N+2}{N}}{T(B)},\,\frac{1}{\mathcal{G}(M)}\right\}.
\]
\end{proposition}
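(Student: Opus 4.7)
The strategy is to use the Kohler-Jobin inequality \eqref{KJ} as a bridge converting the quantitative Saint-Venant information \eqref{goal} into the quantitative Faber-Krahn statement. Since the three shape functionals
\[
\Omega\mapsto |\Omega|^{-\frac{N+2}{N}}T(\Omega),\qquad \Omega\mapsto |\Omega|^{\frac{2}{N}+\frac{2}{q}-1}\lambda_1^q(\Omega),\qquad \Omega\mapsto d(\Omega)
\]
are all scale-invariant, a homothety immediately reduces matters to the normalization $|\Omega|=|B|$. In this case the hypothesis \eqref{goal} becomes
\[
T(B)-T(\Omega)\ge \frac{|B|^{\frac{N+2}{N}}}{C}\,\mathcal{G}(d(\Omega)),
\]
and the target inequality collapses to $\lambda_1^q(\Omega)-\lambda_1^q(B)\ge c'\,\mathcal{G}(d(\Omega))$ for a suitable $c'$.

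Now I apply \eqref{KJ}, which after rearrangement reads
\[
\lambda_1^q(\Omega)-\lambda_1^q(B)\ge \lambda_1^q(B)\,\Big(t^{\vartheta}-1\Big),\qquad t:=\frac{T(B)}{T(\Omega)}\ge 1,
\]
where $t\ge 1$ follows from the (already known) Saint-Venant inequality \eqref{sv}. Everything has been reduced to an elementary scalar lower bound on $t^\vartheta-1$, for which I would split into two regimes chosen so that one always supplies the quantitative estimate and the other always supplies a uniform bound:

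\textbf{Case A} ($t\ge 2$, i.e.\ $T(\Omega)\le T(B)/2$): here simply $t^{\vartheta}-1\ge 2^{\vartheta}-1$. Since by assumption $d(\Omega)\le M$ one has $\mathcal{G}(d(\Omega))/\mathcal{G}(M)\le 1$, hence
\[
\lambda_1^q(\Omega)-\lambda_1^q(B)\ge (2^{\vartheta}-1)\,\lambda_1^q(B)\,\frac{\mathcal{G}(d(\Omega))}{\mathcal{G}(M)}.
\]

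\textbf{Case B} ($1\le t\le 2$): concavity of $s\mapsto s^{\vartheta}$ on $[1,2]$ (recall $\vartheta<1$) yields the chord inequality $t^{\vartheta}-1\ge (2^{\vartheta}-1)(t-1)$. Combining with $t-1=(T(B)-T(\Omega))/T(\Omega)\ge (T(B)-T(\Omega))/T(B)$ and the rescaled hypothesis gives
\[
\lambda_1^q(\Omega)-\lambda_1^q(B)\ge \frac{(2^{\vartheta}-1)\,\lambda_1^q(B)\,|B|^{\frac{N+2}{N}}}{C\,T(B)}\,\mathcal{G}(d(\Omega)).
\]

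Taking the smaller of the two multiplicative constants that appear in Cases A and B, and then multiplying through by the scale-invariant factor $|B|^{\frac{2}{N}+\frac{2}{q}-1}$, reproduces precisely the constant
\[
c=(2^{\vartheta}-1)\,|B|^{\frac{2}{N}+\frac{2}{q}-1}\,\lambda_1^q(B)\,\min\!\left\{\frac{1}{C}\,\frac{|B|^{\frac{N+2}{N}}}{T(B)},\,\frac{1}{\mathcal{G}(M)}\right\}
\]
stated in the proposition. There is no real obstacle: the Kohler-Jobin inequality does all the heavy analytic lifting, and what remains is elementary scalar calculus. The only subtlety worth stressing is the dichotomy at $t=2$: without the uniform bound $d(\Omega)\le M$ one could not treat the regime $T(\Omega)\to 0$, where the quantitative Saint-Venant hypothesis, being an inequality rather than an identity, does not directly control the ratio $(T(B)-T(\Omega))/T(B)$ from below by $\mathcal{G}(d(\Omega))/\mathcal{G}(M)$.
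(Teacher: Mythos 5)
Your argument is correct and matches the paper's proof step for step: both normalize to equal measures, invoke the Kohler--Jobin inequality to transfer the deficit from $T$ to $\lambda_1^q$, apply the concavity bound $t^\vartheta-1\ge(2^\vartheta-1)(t-1)$ together with $T(\Omega)\le T(B)$ in the regime $t\le 2$, and fall back on the uniform bound $d(\Omega)\le M$ when $t\ge 2$, yielding precisely the stated constant.
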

\begin{proof}
Without loss of generality, we can suppose that $|\Omega|=1$ and let $B$ be a ball having unit measure. We also use the shortcut notation $\vartheta=\vartheta(q,N)$. By \eqref{KJ} one obtains
\begin{equation}
\label{cappio}
\frac{\lambda_1^q(\Omega)}{\lambda_1^q(B)}-1\ge \left(\frac{T(B)}{T(\Omega)}\right)^\vartheta-1.
\end{equation}
Since $0<\vartheta\le 1$, by concavity we have
\[
t^\vartheta-1\ge (2^{\vartheta}-1)\, (t-1),\qquad t\in[1,2].
\]
Thus from \eqref{cappio} and \eqref{goal} we can easily infer that if $T(B)\le 2\, T(\Omega)$, then
\[
\frac{\lambda_1^q(\Omega)}{\lambda_1^q(B)}-1\ge (2^\vartheta-1)\, \left(\frac{T(B)}{T(\Omega)}-1\right)\ge \frac{1}{C}\,\frac{2^\vartheta-1}{T(B)}\, \mathcal{G}(d(\Omega)).
\]
In the last inequality we also used that $T(\Omega)\le T(B)$ by Saint-Venant inequality. On the other hand, if $T(B)>2\, T(\Omega)$, still by \eqref{cappio} we get
\[
\frac{\lambda_{1}^q(\Omega)}{\lambda_{1}^q(B)}-1\ge 2^\vartheta-1\ge \frac{2^\vartheta-1}{\mathcal{G}(M)}\, \mathcal{G}(d(\Omega)),
\]
since $d(\Omega)\le M$ and $\mathcal{G}$ is increasing by hypothesis.
\end{proof}
\begin{remark}
Examples of shape functionals $d$ as in the previous statement are: $d_\mathcal{M}$ defined in \eqref{melas}, $d_\mathcal{N}$ defined in \eqref{d1} and the Fraenkel asymmetry $\mathcal{A}$.
\end{remark}

\section{An elementary inequality for monotone functions}

Szeg\H{o}'s proof of the inequality 
\[
\frac{1}{|\Omega|}\,\left(\frac{1}{\mu_2(\Omega)}+\frac{1}{\mu_3(\Omega)}\right)\ge \frac{1}{|B|}\,\left(\frac{1}{\mu_2(B)}+\frac{1}{\mu_3(B)}\right),
\]
is based on the following elementary inequality for monotone functions of one real variable. We give its proof for completeness.
\begin{lemma}[Monotonicity lemma]
\label{lm:hersch}
Let $f:[0,1]\to\mathbb{R}_+$ and $\Phi:[0,1]\to\mathbb{R}_+$ be two non-decreasing functions, not identically vanishing. Then we have
\begin{equation}
\label{hersch}
\displaystyle\int_0^1 f(t)\,\Phi(t)\,t\,dt\ge \left(\frac{\displaystyle\int_0^1 f(t)\,t\,dt}{\displaystyle\int_0^1 t\,dt}\right)\, \int_0^1\, \Phi(t)\,t\,dt
\end{equation}
\end{lemma}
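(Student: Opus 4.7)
The plan is to use the standard two-variable symmetrization trick that underlies Chebyshev's sum/integral inequality, adapted to the weight $w(t)=t$. The key observation is that since $f$ and $\Phi$ are both non-decreasing on $[0,1]$, the product
\[
\bigl(f(s)-f(t)\bigr)\bigl(\Phi(s)-\Phi(t)\bigr)
\]
is pointwise non-negative for every $(s,t)\in[0,1]\times[0,1]$ (both factors change sign simultaneously with $s-t$). Multiplying by the non-negative weight $s\,t$ and integrating, I would introduce
\[
I:=\int_0^1\!\!\int_0^1 \bigl(f(s)-f(t)\bigr)\bigl(\Phi(s)-\Phi(t)\bigr)\,s\,t\,ds\,dt\ge 0.
\]

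Next I would expand the integrand and apply Fubini's theorem term by term. The two diagonal contributions $f(s)\Phi(s)\,st$ and $f(t)\Phi(t)\,st$ each integrate, by symmetry, to
\[
\Bigl(\int_0^1 f(r)\Phi(r)\,r\,dr\Bigr)\Bigl(\int_0^1 r\,dr\Bigr),
\]
while each of the two cross terms $f(s)\Phi(t)\,st$ and $f(t)\Phi(s)\,st$ integrates to
\[
\Bigl(\int_0^1 f(r)\,r\,dr\Bigr)\Bigl(\int_0^1 \Phi(r)\,r\,dr\Bigr).
\]
Combining these four contributions, the inequality $I\ge 0$ becomes
\[
2\Bigl(\int_0^1 r\,dr\Bigr)\int_0^1 f(r)\Phi(r)\,r\,dr\;\ge\;2\Bigl(\int_0^1 f(r)\,r\,dr\Bigr)\Bigl(\int_0^1 \Phi(r)\,r\,dr\Bigr).
\]
Dividing by $2\int_0^1 r\,dr=1>0$ yields exactly \eqref{hersch}.

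There is no real obstacle here: the proof is essentially a two-line computation once the correct non-negative quadratic form is written down. The only mildly delicate point is confirming the sign of $(f(s)-f(t))(\Phi(s)-\Phi(t))$, which is a direct consequence of the common monotonicity of $f$ and $\Phi$, and noting that the hypothesis that neither function is identically zero is not needed for the inequality itself (it only ensures that both sides are strictly positive, which is used in the application to \eqref{notte} where one divides by $\int_0^1 f\,t\,dt$).
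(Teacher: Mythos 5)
Your proof is correct, and it takes a genuinely different route from the paper's. You use the classical Chebyshev (correlation) inequality: the double integral of $(f(s)-f(t))(\Phi(s)-\Phi(t))\,st$ is pointwise non-negative because both factors change sign simultaneously with $s-t$, and expanding via Fubini immediately yields \eqref{hersch}. This is more elementary than the paper's proof, which approximates $\Phi$ by $C^1$ functions, integrates by parts to transfer the inequality onto $\int_0^1\bigl(\overline f\,t^2/2-F(t)\bigr)\Phi'(t)\,dt$ (where $F(t)=\int_0^t f\,s\,ds$), and then establishes the pointwise bound $F(t)\le\overline f\,t^2/2$ by showing that $H(t)=2t^{-2}\int_0^t f\,s\,ds$ is non-decreasing. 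Your approach avoids both the approximation step and integration by parts, and you are also right that the ``not identically vanishing'' hypothesis plays no role in the inequality itself. The trade-off is that the paper's argument is set up to feed directly into the quantitative Lemma \ref{lm:hersch+}: there the same integration-by-parts identity is rewritten as $\sum_n\frac{n\alpha_n}{2}\int_0^1(H(1)-H(t))t^{n+1}\,dt$ using the power series of $\Phi$, and the strict monotonicity of $H$ produces the explicit remainder term $c\,(2-\gamma)\int_0^1\Phi\,t\,dt$. Extracting a comparable quantitative gap from the Chebyshev double integral would require an additional argument isolating the contribution of $\alpha_0$, so the paper's route is the one that scales to the improved lemma the authors actually need.
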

\begin{proof}
By approximation, we can assume that $\Phi$ is $C^1$. We first observe that if we set 
\[
F(t)=\int_0^t f(s)\,s\,ds\qquad \mbox{ and }\qquad \overline f=\frac{\displaystyle\int_0^1 f(t)\,t\,dt}{\displaystyle\int_0^1 t\,dt},
\]
then \eqref{hersch} is equivalent to
\[
\int_0^1 \Big(F'(t)-\overline f\,t\Big)\, \Phi(t)\,dt\ge 0.
\]
If we perform an integration by parts, this in turn is equivalent to
\[
\int_0^1 \Big(\overline f\,\frac{t^2}{2}-F(t)\Big)\, \Phi'(t)\,dt\ge 0.
\]
Since $\Phi'\ge 0$, in order to conclude it would be sufficient to prove 
\begin{equation}
\label{prehersch}
F(t)\le \overline f\,\frac{t^2}{2},\qquad \mbox{ i.\,e. }\quad \frac{2}{t^2}\,\int_0^t f(s)\,s\,ds\le 2\,\int_0^1 f(s)\,s\,ds.
\end{equation}
In turn, in order to show \eqref{prehersch} it would be enough to prove that the function
\[
H(t)=\frac{2}{t^2}\,\int_0^t f(s)\,s\,ds,
\]
is monotone non-decreasing. A direct computation gives
\[
H'(t)=-\frac{4}{t^3}\,\int_0^t f(s)\,s\,ds+\frac{2}{t}\,f(t)=\frac{2}{t}\,\left[f(t)-\frac{2}{t^2}\,\int_0^t f(s)\,s\,ds\right]\ge 0,
\]
where in the last inequality we used that $f$ is non-decreasing. This proves \eqref{prehersch} and thus \eqref{hersch}.
\end{proof}
The result of Theorem \ref{thm:nikolaisego} is based on the following improvement of the previous inequality.
\begin{lemma}[Improved monotonicity lemma]
\label{lm:hersch+}
Let $f:[0,1]\to\mathbb{R}_+$ be a strictly increasing function. Let $\Phi:[0,1]\to\mathbb{R}_+$ be a non-decreasing function, such that
\begin{equation}
\label{fihona}
\Phi(t)=\sum_{n=0}^\infty \alpha_n\,t^n,\qquad \mbox{ with }\qquad\alpha_n\ge 0\quad \mbox{ and }\quad \alpha_0=\Phi(0)\le \gamma\,\int_0^1 \Phi(t)\,t\,dt,
\end{equation}
for some $0\le \gamma<2$.
Then we have
\begin{equation}
\label{hersch+}
\displaystyle\int_0^1 f(t)\,\Phi(t)\,t\,dt\ge \left(\frac{\displaystyle\int_0^1 f(t)\,t\,dt}{\displaystyle\int_0^1 t\,dt}\right)\, \int_0^1\, \Phi(t)\,t\,dt+c\,(2-\gamma)\,\int_0^1 \Phi(t)\,t\,dt,
\end{equation}
for some $c>0$ depending on $f$.
\end{lemma}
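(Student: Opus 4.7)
The plan is to refine the proof of Lemma~\ref{lm:hersch} by exploiting the power series structure of $\Phi$. Setting $F(t):=\int_0^t f(s)\,s\,ds$ and $\bar f := 2F(1)$, the same integration by parts used there (the boundary terms vanish because $F(1)=\bar f/2$) rewrites the quantity of interest as
\[
\int_0^1 f(t)\Phi(t)t\,dt - \bar f\int_0^1\Phi(t)t\,dt \;=\; \int_0^1 G(t)\Phi'(t)\,dt,
\]
where $G(t):=\bar f\, t^2/2 - F(t)$ vanishes at $t=0,1$, satisfies $G'(t)=t(\bar f-f(t))$, and is strictly positive on $(0,1)$ since $f$ is strictly increasing.

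Next I would expand $\Phi'(t)=\sum_{n\ge 1}n\alpha_n t^{n-1}$ and use Tonelli (all terms non-negative) to obtain
\[
\int_0^1 G(t)\Phi'(t)\,dt \;=\; \sum_{n\ge 1}\alpha_n\,a_n,\qquad a_n \;:=\; n\int_0^1 G(t)t^{n-1}\,dt \;=\; \int_0^1 t^{n+1}\bigl(f(t)-\bar f\bigr)\,dt,
\]
the last identity coming from a second integration by parts (using $G(0)=G(1)=0$).

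The hard part will be to establish the uniform lower bound
\[
(n+2)\,a_n \;\ge\; 2c \;>\; 0\qquad\text{for every } n\ge 1,
\]
with $c=c(f)>0$; this is the only quantitative input needed. The factor $(n+2)$ is essential, since $a_n\to 0$ as $n\to\infty$. Letting $M_n:=(n+2)\int_0^1 f(t)t^{n+1}\,dt$ (the expectation of $f$ under the probability density $(n+2)t^{n+1}\,dt$ on $[0,1]$), we have $M_0=\bar f$ and $(n+2)a_n=M_n-\bar f$. I would show $n\mapsto M_n$ is non-decreasing by writing
\[
M_{n+1}-M_n \;=\; \int_0^1 f(t)\,t^{n+1}\bigl[(n+3)t-(n+2)\bigr]\,dt,
\]
noting that the bracketed weight has zero mean and changes sign exactly at $t^{\ast}_n:=(n+2)/(n+3)$, and subtracting the constant $f(t^{\ast}_n)$ from $f$ (which leaves the integral unchanged) to produce a non-negative integrand thanks to the monotonicity of $f$. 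Hence $(n+2)a_n\ge 3a_1$ for all $n\ge 1$, and $3a_1=3\int_0^1 G\,dt>0$ because $f$ is strictly increasing; so $2c:=3a_1$ does the job.

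Finally, combining the bounds and recalling $\int_0^1\Phi(t)t\,dt=\sum_{n\ge 0}\alpha_n/(n+2)$,
\[
\int_0^1 G(t)\Phi'(t)\,dt \;\ge\; 2c\sum_{n\ge 1}\frac{\alpha_n}{n+2} \;=\; 2c\left(\int_0^1\Phi(t)t\,dt - \frac{\alpha_0}{2}\right),
\]
and the hypothesis $\alpha_0\le\gamma\int_0^1\Phi(t)t\,dt$ yields $\int_0^1\Phi(t)t\,dt - \alpha_0/2 \ge \tfrac{2-\gamma}{2}\int_0^1\Phi(t)t\,dt$, completing the proof of \eqref{hersch+} with the constant $c=3a_1/2$ depending only on $f$.
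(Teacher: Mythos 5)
Your proof is correct, and it follows the same overall architecture as the paper's: rewrite the difference as $\int_0^1 G(t)\,\Phi'(t)\,dt$ with $G(t)=\bar f\,t^2/2-F(t)$, expand $\Phi'$ as a power series, reduce the problem to a uniform lower bound of the form $a_n=n\int_0^1 G(t)\,t^{n-1}\,dt \ge 2c/(n+2)$, sum against $\alpha_n$, and invoke the hypothesis $\alpha_0\le \gamma\int_0^1\Phi\,t\,dt$. The difference lies entirely in how the key bound on $a_n$ is obtained.

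The paper restricts the integral $\int_0^1(H(1)-H(t))\,t^{n+1}\,dt$ (where $H(t)=2F(t)/t^2$) to the subinterval $[0,1-1/(2n)]$, uses the monotonicity of $H$ already established in Lemma~\ref{lm:hersch}, and then argues via the observation $(1-\tfrac{1}{2n})^{n+2}\ge 1/8$ together with the limit $\lim_n n[H(1)-H(1-\tfrac{1}{2n})]=H'(1)/2>0$ to extract a positive constant. You instead prove directly that $M_n:=(n+2)\int_0^1 f\,t^{n+1}\,dt$ is non-decreasing in $n$ by writing $M_{n+1}-M_n$ as an integral of $f$ against a signed weight with zero mean, subtracting the constant $f(t^{\ast}_n)$, and using the monotonicity of $f$ to make the integrand non-negative. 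This gives $(n+2)a_n = M_n-M_0\ge M_1-M_0 = 3a_1$ in one stroke, yielding the explicit constant $c=\tfrac{3}{2}\int_0^1 G(t)\,dt$. Your route is cleaner and more self-contained (it does not rely on the previous lemma's monotonicity of $H$, nor on any asymptotic consideration), and it produces an explicit value for $c$ where the paper only obtains an unspecified positive constant. Both arguments require the strict monotonicity of $f$ only at the final step, to ensure $\int_0^1 G\,dt>0$.
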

\begin{proof}
We use the same notation as in the proof of Lemma \ref{lm:hersch}. We then recall that
\[
\begin{split}
\displaystyle\int_0^1 f(t)\,\Phi(t)\,t\,dt-\left(\frac{\displaystyle\int_0^1 f(t)\,t\,dt}{\displaystyle\int_0^1 t\,dt}\right)\, \int_0^1\, \Phi(t)\,t\,dt&=
\int_0^1 \Big(\overline f\,\frac{t^2}{2}-F'(t)\Big)\, \Phi'(t)\,dt\\
&=\sum_{n=1}^\infty n\,\alpha_n\, \int_0^1\Big(\overline f\,\frac{t^2}{2}-F(t)\Big)\,t^{n-1}\,dt\\
&=\sum_{n=1}^\infty \frac{n\,\alpha_n}{2}\, \int_0^1\Big(H(1)-H(t)\Big)\,t^{n+1}\,dt.
\end{split}
\]
For the last term, we recall that $H'>0$ on the interval $[0,1]$, thus for $n\ge 1$ 
\[
\begin{split}
\int_0^1\Big(H(1)-H(t)\Big)\,t^{n+1}\,dt&\ge\int_{0}^{1-\frac{1}{2\,n}}\Big(H(1)-H(t)\Big)\,t^{n+1}\,dt\\
&\ge \left[H(1)-H\left(1-\frac{1}{2\,n}\right)\right]\,\frac{1}{n+2}\,\left(1-\frac{1}{2\,n}\right)^{n+2}.
\end{split}
\]
Observe that from \eqref{fihona} we get
\[
\sum_{n=1}^\infty \frac{\alpha_n}{n+2}=\int_0^1 \Phi(t)\,t\,dt-\alpha_0\,\int_0^1 t\,dt\ge \left(1-\frac{\gamma}{2}\right)\,\int_0^1 \Phi(t)\,t\,dt.
\]
Thus in order to conclude the proof of \eqref{hersch+} we need to prove that for $n\ge 1$
\[
n\,\left[H(1)-H\left(1-\frac{1}{2\,n}\right)\right]\,\left(1-\frac{1}{2\,n}\right)^{n+2}\ge c>0.
\]
By observing that 
\[
\left(1-\frac{1}{2\,n}\right)^{n+2}\ge \frac{1}{8},\qquad \lim_{n\to\infty}n\,\left[H(1)-H\left(1-\frac{1}{2\,n}\right)\right]=\frac{H'(1)}{2}=f(1)-2\,\int_0^1 f(s)\,s\,ds>0,
\]
we conclude that this holds true.
\end{proof}

\section{A weak version of the Hardy-Littlewood inequality}

The proofs by Weinberger and Brock are based on a test function argument and an isoperimetric-like property of balls with respect to weighted volumes of the type
\[
\Omega\mapsto \int_\Omega f(|x|)\,dx,
\]
with $f$ positive monotone function.
This is encoded in the following result, which can be seen as a particular case of the Hardy-Littlewood inequality.
\begin{lemma}
\label{lm:HL}
Let $f:\mathbb{R}_+\to\mathbb{R}_+$ be a non-increasing function and $g:\mathbb{R}_+\to\mathbb{R}_+$ a non-decreasing function. Let $\Omega\subset\mathbb{R}^N$ be an open set with finite measure, we denote by $\Omega^*$ the ball centered at the origin such that $|\Omega|=|\Omega^*|$. Then we have
\[
\int_{\Omega^*} f(|x|)\, dx\ge \int_{\Omega} f(|x|)\, dx\qquad \mbox{ and }\qquad \int_{\Omega^*} g(|x|)\, dx\le \int_{\Omega} g(|x|)\, dx.
\]
\end{lemma}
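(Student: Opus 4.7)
The plan is to reduce both inequalities to a pointwise comparison of measures of level sets, via the layer-cake (Cavalieri) representation. The key observation is that because $f$ depends only on $|x|$ and is monotone in $|x|$, its superlevel sets are concentric balls (or complements thereof), and these are \emph{extremal} against $\Omega^\ast$, which is already a ball centered at the origin.

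First I would handle the non-increasing case. For each $t \ge 0$, the superlevel set $\{y \in \mathbb{R}^N : f(|y|) > t\}$ is, up to a null set, an open (or closed) ball $B_{r(t)}$ centered at the origin, since $f$ is non-increasing in $|y|$. Writing
\[
\int_\Omega f(|x|)\,dx = \int_0^\infty \big|\Omega \cap B_{r(t)}\big|\,dt,
\qquad
\int_{\Omega^\ast} f(|x|)\,dx = \int_0^\infty \big|\Omega^\ast \cap B_{r(t)}\big|\,dt,
\]
it suffices to prove the pointwise bound
\[
\big|\Omega \cap B_{r(t)}\big| \le \big|\Omega^\ast \cap B_{r(t)}\big| \qquad \text{for every } t\ge 0.
\]
This splits into two trivial cases depending on whether $|B_{r(t)}| \le |\Omega^\ast| = |\Omega|$ or $|B_{r(t)}| \ge |\Omega^\ast|$: in the first case $B_{r(t)} \subset \Omega^\ast$ so the right-hand side equals $|B_{r(t)}|$, which is an obvious upper bound for the left-hand side; in the second case the right-hand side equals $|\Omega^\ast|=|\Omega|$, again an obvious upper bound. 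Integrating over $t$ yields the first inequality.

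For the non-decreasing case I would apply the same layer-cake idea, but now the superlevel sets of $g(|\,\cdot\,|)$ are \emph{complements} of balls: $\{y: g(|y|) > t\} = \{|y| > s(t)\}$ up to a null set. So
\[
\int_\Omega g(|x|)\,dx = \int_0^\infty \Big(|\Omega| - \big|\Omega \cap B_{s(t)}\big|\Big)\,dt,
\]
and analogously for $\Omega^\ast$. Using $|\Omega| = |\Omega^\ast|$, the desired inequality reduces to
\[
\big|\Omega \cap B_{s(t)}\big| \le \big|\Omega^\ast \cap B_{s(t)}\big|,
\]
which is exactly the pointwise bound already established above. (Alternatively, apply the first part to $f(r) = M - g(r)$ truncated on a bounded range where needed, but the direct layer-cake argument is cleaner.)

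There is essentially no hard step: the only mild care needed is a measurability/approximation check so that the layer-cake formula applies (e.g., truncate $f$ and $g$ to bounded non-increasing/non-decreasing functions and use monotone convergence to remove the truncation, and note that both integrands are Lebesgue measurable as compositions of a monotone function with $|\cdot|$). The potentially delicate point is handling the case in which $f$ or $g$ is not finite everywhere or $\Omega$ has infinite-weight integral; in that regime both integrals are $+\infty$ and the statement holds trivially, so no additional argument is needed.
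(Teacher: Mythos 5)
Your proof is correct. The paper does not actually supply a proof of Lemma~\ref{lm:HL}: it observes that it is a particular case of the Hardy--Littlewood rearrangement inequality and moves on, instead proving only the quantitative refinement Lemma~\ref{lm:HLquanto}. Your layer-cake (Cavalieri) argument---reducing both inequalities to the pointwise estimate $|\Omega\cap B_r|\le|\Omega^*\cap B_r|$ for concentric balls $B_r$, which splits into the two trivial cases $|B_r|\lessgtr|\Omega|$---is exactly the standard proof underlying the Hardy--Littlewood inequality in this special case, so it is in substance the argument the authors are pointing to. The one thing worth noting is that the proof the authors \emph{do} write out, namely that of Lemma~\ref{lm:HLquanto}, uses a genuinely different mechanism: it works directly with the symmetric difference, decomposing
\[
\int_{\Omega^*}f(|x|)\,dx-\int_{\Omega}f(|x|)\,dx=\int_{\Omega^*\setminus\Omega}f(|x|)\,dx-\int_{\Omega\setminus\Omega^*}f(|x|)\,dx,
\]
and then ``moves'' the mass of $\Omega^*\setminus\Omega$ and $\Omega\setminus\Omega^*$ onto spherical shells adjacent to $\partial\Omega^*$ via a bathtub-type comparison using the monotonicity of $f$. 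That approach is better adapted to producing an explicit remainder (the integral over $[R_1,R_2]$ in \eqref{HLquanto}), whereas the layer-cake route is cleaner for the qualitative statement but would need extra bookkeeping to extract a quantitative deficit. Since Lemma~\ref{lm:HLquanto} implies Lemma~\ref{lm:HL} (the right-hand side of \eqref{HLquanto} is nonnegative, and the $g$ case follows by the same duality you describe), either route closes the argument; your truncation/monotone-convergence remark and the handling of the $+\infty$ case are exactly the right caveats.
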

The quantitative versions of Szego-Weinberger and Brock-Weinstock inequalities are based on the following simple but useful improved version of Lemma \ref{lm:HL}.
\begin{lemma}
\label{lm:HLquanto}
Let $f:\mathbb{R}_+\to\mathbb{R}_+$ be a nonincreasing function. Let $\Omega\subset\mathbb{R}^N$ be an open set with finite measure, we denote by $\Omega^*$ the ball centered at the origin such that $|\Omega|=|\Omega^*|$. Then we have
\begin{equation}
\label{HLquanto}
\int_{\Omega^*} f(|x|)\, dx-\int_{\Omega} f(|x|)\, dx\ge N\,\omega_N\, \int_{R_1}^{R_2} |f(\varrho)-f(R_\Omega)|\,\varrho^{N-1}\, d\varrho,
\end{equation}
where 
\begin{equation}
\label{raggi}
R_\Omega=\left(\frac{|\Omega|}{\omega_N}\right)^\frac{1}{N},\qquad R_1=\left(\frac{|\Omega\cap\Omega^*|}{\omega_N}\right)^\frac{1}{N}\qquad \mbox{ and }\qquad R_2=\left(\frac{|\Omega\setminus\Omega^*|+|\Omega|}{\omega_N}\right)^\frac{1}{N}.
\end{equation}
\end{lemma}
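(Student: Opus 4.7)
The strategy is to decompose the symmetric difference and exploit the monotonicity of $f$ together with the equimeasurability $|\Omega^*\setminus\Omega|=|\Omega\setminus\Omega^*|$, which follows from $|\Omega^*|=|\Omega|$.

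First I would write
\[
\int_{\Omega^*} f(|x|)\,dx-\int_{\Omega} f(|x|)\,dx=\int_{\Omega^*\setminus\Omega} f(|x|)\,dx-\int_{\Omega\setminus\Omega^*} f(|x|)\,dx,
\]
by removing the common piece $\Omega\cap\Omega^*$ from both sides. The two remaining sets have the same Lebesgue measure $m:=|\Omega^*|-|\Omega\cap\Omega^*|=\omega_N(R_\Omega^N-R_1^N)=\omega_N(R_2^N-R_\Omega^N)$.

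Next, I would bound each term by a radially symmetric rearrangement. For the first term, $\Omega^*\setminus\Omega\subset B_{R_\Omega}$, and since $f(|\cdot|)$ is radially non-increasing, among subsets of $B_{R_\Omega}$ of measure $m$ the integral of $f(|x|)$ is minimized by the annulus $B_{R_\Omega}\setminus B_{R_1}$ (push mass outward to where $f$ is smallest), giving
\[
\int_{\Omega^*\setminus\Omega} f(|x|)\,dx\ge N\,\omega_N\int_{R_1}^{R_\Omega} f(\varrho)\,\varrho^{N-1}\,d\varrho.
\]
Symmetrically, for the second term $\Omega\setminus\Omega^*\subset\mathbb{R}^N\setminus B_{R_\Omega}$ the integral is maximized by pulling mass as close to $\partial B_{R_\Omega}$ as possible, i.e.\ by the annulus $B_{R_2}\setminus B_{R_\Omega}$, yielding
\[
\int_{\Omega\setminus\Omega^*} f(|x|)\,dx\le N\,\omega_N\int_{R_\Omega}^{R_2} f(\varrho)\,\varrho^{N-1}\,d\varrho.
\]
Subtracting,
\[
\int_{\Omega^*} f(|x|)\,dx-\int_{\Omega} f(|x|)\,dx\ge N\,\omega_N\left[\int_{R_1}^{R_\Omega}f(\varrho)\,\varrho^{N-1}\,d\varrho-\int_{R_\Omega}^{R_2}f(\varrho)\,\varrho^{N-1}\,d\varrho\right].
\]

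Finally, to reach the stated form I would add and subtract the constant $f(R_\Omega)$ inside each integral. Since $f$ is non-increasing, $f(\varrho)-f(R_\Omega)=|f(\varrho)-f(R_\Omega)|$ on $[R_1,R_\Omega]$ and $f(R_\Omega)-f(\varrho)=|f(\varrho)-f(R_\Omega)|$ on $[R_\Omega,R_2]$. The $f(R_\Omega)$ contributions are $f(R_\Omega)\,\omega_N(R_\Omega^N-R_1^N)$ and $f(R_\Omega)\,\omega_N(R_2^N-R_\Omega^N)$, which are equal to $f(R_\Omega)\,m$ and cancel precisely because $|\Omega^*\setminus\Omega|=|\Omega\setminus\Omega^*|$. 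This produces the claimed inequality \eqref{HLquanto}. The only nontrivial step is the two rearrangement bounds, and these reduce to the standard bathtub/Hardy--Littlewood principle for the single function $f(|\cdot|)$ restricted to $B_{R_\Omega}$ and to its complement respectively; I do not foresee any real obstacle.
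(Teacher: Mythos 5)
Your proof is correct and follows essentially the same route as the paper: the same decomposition over $\Omega^*\setminus\Omega$ and $\Omega\setminus\Omega^*$, the same radial rearrangement into the two spherical shells $B_{R_\Omega}\setminus B_{R_1}$ and $B_{R_2}\setminus B_{R_\Omega}$, and the same cancellation of the $f(R_\Omega)$ term using $|\Omega^*\setminus\Omega|=|\Omega\setminus\Omega^*|$. The only presentational difference is that you invoke the bathtub/Hardy--Littlewood principle for the two shell bounds, whereas the paper proves them by a direct set-theoretic argument (the identity $|(\Omega^*\cap\Omega)\setminus B_{R_1}|=|(\Omega^*\setminus\Omega)\cap B_{R_1}|$ plus monotonicity of $f$).
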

\begin{proof}
The proof is quite simple, first of all we observe that 
\begin{equation}
\label{elidi}
\int_{\Omega^*} f(|x|)\, dx-\int_{\Omega} f(|x|)\, dx=\int_{\Omega^*\setminus\Omega} f(|x|)\,dx-\int_{\Omega\setminus\Omega^*} f(|x|)\,dx.
\end{equation}
Then the idea is to rearrange the set $\Omega^*\setminus\Omega$ into a spherical shell having radii $R_\Omega$ and $R_2$, and similarly to rearrange the set $\Omega\setminus \Omega^*$ into a spherical shell having radii $R_\Omega$ and $R_1$ (see Figure \ref{fig:anelli}).
\begin{figure}
\includegraphics[scale=.4]{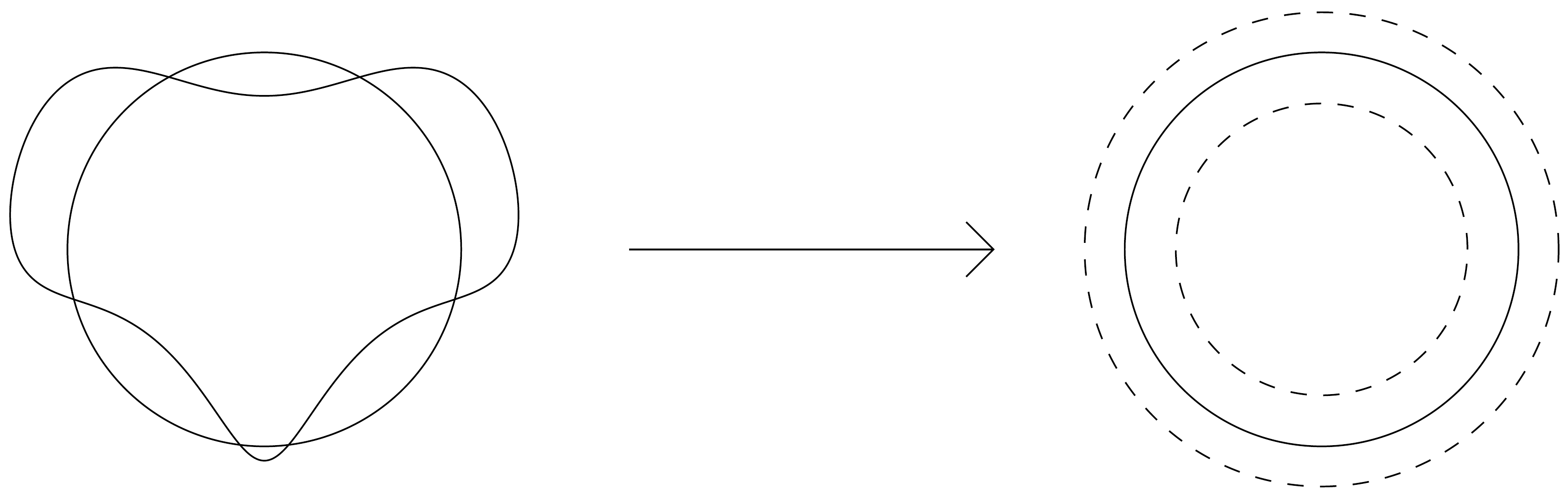}
\caption{The rearrangement of $\Omega\Delta \Omega^*$ for the proof of \eqref{HLquanto}}
\label{fig:anelli}
\end{figure} 
Thanks to the definition \eqref{raggi} of $R_1$ and $R_2$, we have 
\[
|B_{R_2}|-|\Omega^*|=|\Omega^*\setminus\Omega|\qquad \mbox{ and }\qquad |\Omega^*|-|B_{R_1}|=|\Omega^*\setminus \Omega|,
\]
i.e. the two spherical shells mentioned above will preserve the measure. We will prove below that this property and the monotonicity of $f$ entail
\begin{equation}
\label{elidi1}
\int_{\Omega^*\setminus\Omega} f(|x|)\, dx\ge \int_{\Omega^*\setminus B_{R_1}} f(|x|)\, dx\qquad \mbox{ and }\qquad \int_{\Omega\setminus\Omega^*} f(|x|)\, dx\le \int_{B_{R_2}\setminus \Omega^*} f(|x|)\,dx.
\end{equation}
This means that the worst scenario for the right-hand side of \eqref{elidi} is when all the mass is uniformly distributed around $\partial\Omega^*$.
Thus from \eqref{elidi} and \eqref{elidi1}, we can obtain
\[
 \int_{\Omega^*} f(|x|)\, dx-\int_{\Omega} f(|x|)\, dx\ge \int_{\Omega^*\setminus B_{R_1}} f(|x|)\, dx-\int_{B_{R_2}\setminus \Omega^*} f(|x|)\,dx.
\]
In order to conclude, we just observe that since by contruction $|\Omega^*\setminus B_{R_1}|=|B_{R_2}\setminus \Omega^*|$, then we get
\[
\int_{\Omega^*\setminus B_{R_1}} f(|x|)\, dx-\int_{B_{R_2}\setminus \Omega^*} f(|x|)\,dx=\int_{\Omega^*\setminus B_{R_1}} [f(|x|)-f(R_\Omega)]\, dx-\int_{B_{R_2}\setminus \Omega^*} [f(|x|)-f(R_\Omega)]\,dx.
\]
This finally gives \eqref{HLquanto}, by using polar coordinates and using once again that $f$ is nonincreasing.
\vskip.2cm\noindent
Let us now prove \eqref{elidi1}. We first observe that we have
\begin{equation}
\label{misurine}
|(\Omega^*\cap\Omega)\setminus B_{R_1}|=|(\Omega^*\setminus \Omega)\cap B_{R_1}|.
\end{equation}
Indeed, we get
\[
\begin{split}
|(\Omega^*\cap\Omega)\setminus B_{R_1}|+|(\Omega^*\setminus \Omega)\setminus B_{R_1}|=|\Omega^*\setminus B_{R_1}|&=|\Omega^*\setminus\Omega|\\
&=|(\Omega^*\setminus\Omega)\cap B_{R_1}|+|(\Omega^*\setminus\Omega)\setminus B_{R_1}|,
\end{split}
\]
which proves \eqref{misurine}. By using this and the monotonicity of $f$, we get
\[
\begin{split}
\int_{\Omega^*\setminus\Omega} f(|x|)\, dx&=\int_{(\Omega^*\setminus \Omega)\cap B_{R_1}} f(|x|)\,dx+\int_{(\Omega^*\setminus \Omega)\setminus B_{R_1}} f(|x|)\,dx\\
&\ge f(R_1)\,|(\Omega^*\setminus \Omega)\cap B_{R_1}|+\int_{(\Omega^*\setminus \Omega)\setminus B_{R_1}} f(|x|)\,dx\\
&=f(R_1)\,|(\Omega^*\cap\Omega)\setminus B_{R_1}|+\int_{(\Omega^*\setminus \Omega)\setminus B_{R_1}} f(|x|)\,dx\\
&\ge \int_{(\Omega^*\cap\Omega)\setminus B_{R_1}} f(|x|)\,dx+\int_{(\Omega^*\setminus \Omega)\setminus B_{R_1}} f(|x|)\,dx=\int_{\Omega^*\setminus B_{R_1}} f(|x|)\,dx,
\end{split}
\]
which proves the first inequality in \eqref{elidi1}. The second one is proved similarly.
\end{proof}

\section{Some estimates for convex sets}

We still denote by $r_\Omega$ the inradius of a set and by $\mathrm{Haus}$ the Hausdorff distance between sets, defined by \eqref{indahaus}.
\begin{lemma}
\label{lm:blasphemy}
Let $\Omega\subset\mathbb{R}^N$ be an open bounded convex set. For every ball $B_R$ of radius $R$, we have
\[
\mathrm{Haus}(\Omega,B_R)\ge R-r_\Omega.
\]
\end{lemma}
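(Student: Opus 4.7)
The plan is to rephrase the target inequality as a statement about inballs. Setting $d := \mathrm{Haus}(\Omega, B_R)$ and disposing of the trivial cases $r_\Omega \ge R$ and $d \ge R$ (in both of which the claimed bound is immediate since $r_\Omega\ge 0$), the task reduces to showing $r_\Omega \ge R - d$ whenever $d < R$. Since the inradius is the radius of the largest ball sitting inside $\Omega$, it suffices to exhibit a ball of radius $R - d$ contained in $\Omega$; the natural candidate is the ball concentric with $B_R$, namely $B_{R-d}(y_0)$, where $y_0$ denotes the center of $B_R$.

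By definition of the Hausdorff distance, every point of $B_R$ lies within distance $d$ of $\overline\Omega$, and this extends by continuity of the distance function to the inclusion $\overline{B_R(y_0)} \subset \overline\Omega + \overline{B_d(0)}$. I will show that, in the presence of convexity of $\Omega$, this forces $B_{R-d}(y_0) \subset \overline\Omega$. Because $B_{R-d}(y_0)$ is open and $\Omega$ is an open convex set (hence $\Omega = \mathrm{int}\,\overline\Omega$), that inclusion automatically upgrades to $B_{R-d}(y_0) \subset \Omega$, which yields $r_\Omega \ge R-d$ as desired.

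The heart of the argument is a one-shot Hahn--Banach separation. Assume for contradiction that some $z \in B_{R-d}(y_0)$ lies outside $\overline\Omega$. Then there exist a unit vector $\nu \in \mathbb{R}^N$ and a scalar $c \in \mathbb{R}$ with $\overline\Omega \subset \{w : \langle w,\nu\rangle \le c\}$ and $\langle z,\nu\rangle > c$. I would then apply the Hausdorff inclusion to the specific boundary point $y^* := y_0 + R\,\nu \in \overline{B_R(y_0)}$, producing some $x \in \overline\Omega$ with $|x-y^*|\le d$, and therefore
\[
c \;\ge\; \langle x,\nu\rangle \;\ge\; \langle y^*,\nu\rangle - d \;=\; \langle y_0,\nu\rangle + R - d.
\]
On the other hand, Cauchy--Schwarz together with $|z-y_0| < R-d$ gives
\[
\langle z,\nu\rangle \;=\; \langle y_0,\nu\rangle + \langle z - y_0,\nu\rangle \;<\; \langle y_0,\nu\rangle + R - d \;\le\; c,
\]
contradicting $\langle z,\nu\rangle > c$.

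No serious obstacle is anticipated: the only delicate choice is the direction $\nu$ at which to probe the Hausdorff inclusion, and selecting $y^* = y_0 + R\nu$ along the separating normal makes the upper and lower bounds on $c$ collide by exactly $R - d$, which is precisely the budget the theorem allocates. Convexity of $\Omega$ is used in a single place (Hahn--Banach); if that hypothesis is dropped, the argument breaks down, in line with the role that convexity plays throughout Section~2.
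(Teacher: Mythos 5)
Your proof is correct, and it takes a genuinely different route from the paper. The paper works with the Minkowski expansion $\Omega_\delta := \Omega + \delta\,B_1(0)$, where $\delta = \mathrm{Haus}(\Omega, B_R)$: it notes that $B_R \subset \Omega_\delta$, establishes the inradius estimate $r_{\Omega_\delta} \le r_\Omega + \delta$ by analysing the distance function of $\Omega_\delta$ (splitting over $\Omega$ and $\Omega_\delta \setminus \Omega$), and concludes $R \le r_{\Omega_\delta} \le r_\Omega + \delta$. Your argument instead produces an explicit inscribed ball: assuming $d := \mathrm{Haus}(\Omega, B_R) < R$, you show that the ball $B_{R-d}(y_0)$ concentric with $B_R$ sits inside $\Omega$, by separating any putative point $z \in B_{R-d}(y_0) \setminus \overline{\Omega}$ from $\overline{\Omega}$ with a hyperplane and then probing the Hausdorff inclusion at the extreme point $y^* = y_0 + R\nu$ of $\overline{B_R}$ in the separating direction. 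This is shorter and more geometrically pointed: it not only proves $r_\Omega \ge R - d$ but identifies \emph{where} a ball of radius $R-d$ fits. The paper's argument is somewhat softer and does not construct an explicit inball, and its verification that $x'_\delta = x' + \delta(x'-x)/|x-x'|$ actually lies on $\partial\Omega_\delta$ is itself a hidden use of convexity (supporting hyperplane at $x'$), so neither route saves a hypothesis over the other. Both proofs are of comparable length and both use convexity exactly once; yours localizes that use cleanly in a single Hahn--Banach separation.
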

\begin{proof}
We first observe that if $\mathrm{Haus}(\Omega,B)\ge R$ there is nothing to prove. Thus, we set for simplicity $\delta=\mathrm{Haus}(\Omega,B_R)$ and suppose $\delta <R$. By definition of Hausdorff distance, we have that
\begin{equation}
\label{contiene}
B_R\subset \Omega+\delta\,B_1(0)=:\Omega_\delta,
\end{equation}
where $+$ denotes the Minkowski sum of sets. Let $x\in\Omega$ and $x'\in\partial\Omega$ be such that 
\[
|x-x'|=\mathrm{dist}(x,\partial\Omega).
\]
We also consider the point $x'_\delta=x'+\delta\,(x'-x)/|x-x'|\in\partial\Omega_\delta$, then we obtain for every $x\in\Omega$
\[
\mathrm{dist}(x,\partial\Omega)=|x-x'|\ge |x-x'_\delta|-|x'_\delta-x'|\ge \mathrm{dist}(x,\partial\Omega_\delta)-\delta.
\]
Since $r_\Omega$ coincides with the supremum on $\Omega$ of the distance function, this shows
\begin{equation}
\label{prandi}
r_\Omega+\delta\ge \sup_{x\in\Omega}\mathrm{dist}(x,\partial\Omega_\delta).
\end{equation}
We now want to show that
\begin{equation}
\label{posti}
\sup_{x\in\Omega_\delta\setminus \Omega}\mathrm{dist}(x,\partial\Omega_\delta)\le\delta.
\end{equation}
Let us take $x\in \Omega_\delta\setminus\Omega$, then we know that
\[
x=x'+t\,\omega,\qquad \mbox{ for some } x'\in\partial\Omega,\ 0\le t<\delta,\ \omega\in\mathbb{S}^{N-1}.
\] 
The point $x''=x'+\delta\,\omega$ lies on the boundary of $\partial\Omega_\delta$, thus we get
\[
\mathrm{dist}(x,\partial\Omega_\delta)\le |x-x''|=(\delta-t)<\delta.
\]
This shows \eqref{posti}. By putting \eqref{prandi} and \eqref{posti} together, we thus get
\[
r_{\Omega_\delta}=\sup_{x\in\Omega_\delta} \mathrm{dist}(x,\partial\Omega_\delta)=\max\left\{\sup_{\Omega} \mathrm{dist}(x,\partial\Omega_\delta),\,\sup_{\Omega_\delta\setminus \Omega} \mathrm{dist}(x,\partial\Omega_\delta)\right\}\le r_\Omega+\delta.
\]
It is only left to observe that from \eqref{contiene}, we get
\[
R\le r_{\Omega_\delta}\le r_\Omega+\delta,
\]
as desired.
\end{proof}
The following result asserts that for convex sets $\lambda_1$ is equivalent to the inradius.
\begin{proposition}
For every $\Omega\subset\mathbb{R}^N$ open convex set such that $r_\Omega<+\infty$ we have
\begin{equation}
\label{inradius}
\frac{1}{4\,r_\Omega^2}\le\lambda_1(\Omega)\le \frac{\lambda_1(B_1)}{r_\Omega^2},
\end{equation}
where $B_1$ is any $N-$dimensional ball of radius $1$.
\end{proposition}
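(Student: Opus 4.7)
The plan is to handle the two sides of \eqref{inradius} by rather different means. The upper bound is elementary: it is a consequence of the monotonicity of $\lambda_1$ with respect to set inclusion together with the scaling law $\lambda_1(t\,\Omega) = t^{-2}\lambda_1(\Omega)$. The lower bound is the substantive part and will be obtained from a \emph{Hardy inequality on convex domains} involving the distance to the boundary; this reduces the spectral estimate to a purely geometric feature of convex sets.

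For the upper bound, by definition of $r_\Omega$, for every $\varepsilon \in (0, r_\Omega)$ there exists $x_\varepsilon \in \Omega$ such that $B_{r_\Omega - \varepsilon}(x_\varepsilon) \subset \Omega$. Since any admissible test function for the Rayleigh quotient on the smaller set extends by zero to an admissible one on $\Omega$, we have $\lambda_1(\Omega) \le \lambda_1(B_{r_\Omega - \varepsilon}(x_\varepsilon))$, and by translation invariance and the scaling law the right-hand side equals $\lambda_1(B_1)/(r_\Omega - \varepsilon)^2$. Letting $\varepsilon \to 0^+$ gives the desired bound.

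For the lower bound, I would set $d(x) := \mathrm{dist}(x, \partial\Omega)$ and rely on two well-known facts: convexity of $\Omega$ implies that $d$ is concave on $\Omega$, hence $|\nabla d| = 1$ almost everywhere and $-\Delta d \ge 0$ in the distributional sense on $\Omega$; moreover $d(x) \le r_\Omega$ for every $x \in \Omega$ by definition of the inradius. I would then establish, for every $u \in C^\infty_0(\Omega)$, the Hardy-type inequality
\[
\int_\Omega |\nabla u|^2 \, dx \ge \frac{1}{4}\int_\Omega \frac{u(x)^2}{d(x)^2}\,dx.
\]
The standard way to see this is to write $u = d^{1/2}\,v$ (after a suitable approximation of $d$ by smooth positive functions, to avoid the singular behaviour of $d^{1/2}$ near $\partial\Omega$), expand the square $|\nabla u|^2$, integrate the cross term by parts to get $-\tfrac{1}{2}\int (\Delta d)\,v^2$, and thus arrive at
\[
\int_\Omega |\nabla u|^2\,dx = \int_\Omega \frac{|\nabla d|^2}{4\,d^2}\,u^2\,dx + \int_\Omega \frac{-\Delta d}{2\,d}\,u^2\,dx + \int_\Omega d\,|\nabla v|^2\,dx.
\]
Using $|\nabla d| = 1$ a.e., $-\Delta d \ge 0$, and dropping the nonnegative last term yields the claimed Hardy inequality. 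Combining it with $d(x) \le r_\Omega$ then gives
\[
\int_\Omega |\nabla u|^2\,dx \ge \frac{1}{4\,r_\Omega^2}\int_\Omega u^2\,dx,
\]
which by the variational definition of $\lambda_1$ proves $\lambda_1(\Omega) \ge 1/(4\,r_\Omega^2)$.

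The only delicate point in the argument is the justification of the integration by parts producing $-\tfrac12\int (\Delta d)\,v^2$ when $d$ is only concave and Lipschitz (not smooth), and when $u$ is allowed to be a generic test function. The cleanest route is to approximate $d$ from below by a sequence of smooth concave functions $d_k \nearrow d$ (for instance via sup-convolutions) so that $-\Delta d_k \ge 0$ classically, apply the identity above with $d_k$ in place of $d$, and pass to the limit using Fatou's lemma on the nonnegative right-hand side; this avoids any explicit manipulation of $\Delta d$ as a measure. Once this is in place, the rest of the argument is formulaic and the full inequality \eqref{inradius} follows.
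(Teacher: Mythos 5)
Your proof is correct and follows essentially the same route as the paper: the upper bound comes from domain monotonicity plus scaling of $\lambda_1$, and the lower bound comes from the Hardy inequality $\frac14\int_\Omega |u/d_\Omega|^2\,dx \le \int_\Omega |\nabla u|^2\,dx$ on convex sets combined with $d_\Omega \le r_\Omega$. The only difference is that the paper simply cites the Hardy inequality (to Davies's survey), whereas you reproduce a proof of it via the substitution $u = d^{1/2}v$ and the concavity of the distance function; both are fine.
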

\begin{proof}
The upper bound easily follows from the monotonicity and scaling properties of $\lambda_1$. For the lower bound, we can use the {\it Hardy inequality} for convex sets (see \cite{MR174788})
\[
\frac{1}{4}\,\int_{\Omega} \left|\frac{u}{d_\Omega}\right|^2\,dx<\int_\Omega |\nabla u|^2\,dx,\qquad u\in W^{1,2}_0(\Omega),
\]
where we used the notation $d_\Omega(x)=\mathrm{dist}(x,\partial\Omega)$. By recalling that the inradius $r_\Omega$ coincides with the maximum of $d_\Omega$, we get $d_\Omega\le r_\Omega$ and taking the infimum over $W^{1,2}_0(\Omega)$ we get the conclusion.
\end{proof}

\begin{lemma}
For every $\Omega\subset\mathbb{R}^N$ open bounded convex set, we have
\begin{equation}
\label{eccentricity}
\frac{1}{N\,\omega_N}\,\frac{|\Omega|}{r_\Omega}\le\mathrm{diam}(\Omega)^{N-1}.
\end{equation}
\end{lemma}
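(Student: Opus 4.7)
The plan is to apply the coarea formula to the distance function $f(x) = \mathrm{dist}(x, \partial\Omega)$, whose maximum over $\Omega$ is precisely $r_\Omega$. Because $\Omega$ is convex, $f$ is concave (the midpoint of two points at distance $r_1, r_2$ from the boundary is at distance at least $(r_1+r_2)/2$, since the convex combination of two inscribed balls is inscribed) and $1$-Lipschitz. A standard consequence is that the nearest-boundary projection is unique almost everywhere, so that $|\nabla f|=1$ Lebesgue-a.e. on $\Omega$. Coarea then yields
\[
|\Omega|=\int_\Omega |\nabla f|\,dx=\int_0^{r_\Omega}\mathcal{H}^{N-1}(\{f=s\})\,ds=\int_0^{r_\Omega}|\partial\Omega_s|\,ds,
\]
where $\Omega_s=\{x\in\Omega:f(x)>s\}$ is the inner parallel set at distance $s$.

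Each $\Omega_s$ is a convex body, being a super-level set of the concave function $f$, and since $\Omega_s\subseteq\Omega$ one has $\mathrm{diam}(\Omega_s)\le \mathrm{diam}(\Omega)=:D$. The next step is the key geometric lemma: for a convex body $K$ contained in a Euclidean ball of radius $R$, the perimeter satisfies $|\partial K|\le|\partial B_R|=N\omega_N R^{N-1}$. This follows from Cauchy's projection formula $|\partial K|=\frac{1}{\omega_{N-1}}\int_{S^{N-1}}|\pi_{\nu^\perp}(K)|\,d\sigma(\nu)$, because projections preserve convex inclusions, so $|\pi_{\nu^\perp}(K)|\le|\pi_{\nu^\perp}(B_R)|$ for every $\nu$. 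Picking any point $y_s\in\Omega_s$ one has $\Omega_s\subseteq B_D(y_s)$, so this gives
\[
|\partial\Omega_s|\le N\omega_N\,\mathrm{diam}(\Omega)^{N-1}.
\]

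Inserting this pointwise bound into the coarea identity produces
\[
|\Omega|\le\int_0^{r_\Omega}N\omega_N\,\mathrm{diam}(\Omega)^{N-1}\,ds=N\omega_N\,r_\Omega\,\mathrm{diam}(\Omega)^{N-1},
\]
which is the claimed estimate after dividing by $N\omega_N\,r_\Omega$.

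The main obstacle I expect is a clean justification of the coarea step, specifically the identity $|\nabla f|=1$ a.e.\ for the distance function on a convex body. This relies on the fact that, for convex $\Omega$, the set of interior points admitting more than one nearest boundary point has Lebesgue measure zero (a consequence of the Rademacher differentiability of $f$ combined with the characterization of $\partial f$ in terms of nearest-point directions); everything else is routine. Cauchy's projection formula and the monotonicity of perimeter under convex inclusion are standard but must be quoted rather than proved by hand.
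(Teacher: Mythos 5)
Your proof is correct and follows essentially the same route as the paper: coarea for the distance function to reduce the volume to an integral of perimeters of the convex inner parallel sets, then monotonicity of perimeter under convex inclusion to bound each by the surface area of a ball of radius $\mathrm{diam}(\Omega)$. The only cosmetic difference is that the paper factors the bound through $P(\Omega_t)\le P(\Omega)$ before invoking the ball, whereas you apply the ball bound to each level set directly; you also spell out the $|\nabla d_\Omega|=1$ a.e.\ point that the paper leaves implicit.
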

\begin{proof}
By using Coarea formula, we obtain
\[
\begin{split}
|\Omega|=\int_\Omega\,dx=\int_{0}^{r_\Omega} P(\{x\in\Omega\, :\, d_\Omega(x)=t\})\,dt\le r_\Omega\,P(\Omega),
\end{split}
\]
thanks to the convexity of the level sets of the distance function\footnote{We use that on convex sets, the perimeter is monotone with respect to inclusion.}. Since $\Omega$ is contained in a ball with radius $\mathrm{diam}(\Omega)$, we have
\[
P(\Omega)\le N\,\omega_N\,\mathrm{diam}(\Omega)^{N-1}.
\]
This concludes the proof.
\end{proof}
\begin{remark}
\label{rem:ABC}
By joining \eqref{eccentricity} and \eqref{inradius}, we obtain the estimate
\[
\lambda_1(\Omega)\le (N\,\omega_N)^2\,\lambda_1(B_1)\,\left(\frac{\mathrm{diam}(\Omega)^{N-1}}{|\Omega|}\right)^2.
\]
Thus in particular for every sequence of open convex sets $\{\Omega_n\}_{n\in\mathbb{N}}\subset\mathbb{R}^N$ such that
\[
|\Omega_n|=1\qquad \mbox{ and }\qquad \lim_{n\to\infty} \lambda_1(\Omega_n)=+\infty,
\]
then the diameters diverge to $+\infty$ as well. This fact has been used in the proof of Theorem \ref{thm:melasrap}.
\end{remark}

\end{document}